\newtheorem{theorem}{Theorem}[section]
\newtheorem{proposition}[theorem]{Proposition}
\newtheorem{corollary}[theorem]{Corollary}
\newtheorem{lemma}[theorem]{Lemma}
\newtheorem{question}[theorem]{Question}
\theoremstyle{definition}
\newtheorem{definition}[theorem]{Definition}
\newtheorem{example}[theorem]{Example}
\newtheorem{construction}[theorem]{Construction}
\theoremstyle{remark}
\newtheorem*{remark}{Remark}
\numberwithin{equation}{section}
\numberwithin{figure}{section}
\def\C{\mathbb C}
\def\D{\mathbb D}
\def\I{\mathbb I}
\def\R{\mathbb R}
\def\T{\mathbb T}
\def\Z{\mathbb Z}
\def\sK{\mathcal K}
\def\k{\mathbf k}
\def\phi{\varphi}
\newcommand{\mb}[1]{{\textbf {\textit#1}}}
\renewcommand{\ge}{\geqslant}
\renewcommand{\le}{\leqslant}
\def\dbs{/\!\!/}
\newcommand{\id}{\mathrm{id}}
\def\Ann{\mathop{\mathrm{Ann}}}
\renewcommand{\Im}{\mathop{\mathrm{Im}}\nolimits}
\newcommand{\Ker}{\mathop{\rm Ker}}
\newcommand{\rank}{\mathop{\mathrm{rank}}}
\renewcommand{\Re}{\mathop{\mathrm{Re}}}
\def\Spec{\mathop{\mathrm{Spec}}}
\newcommand{\cc}{\mathop{\rm cc}}
\newcommand{\lk}{\mathop{\rm lk}\nolimits}
\newcommand{\st}{\mathop{\rm St}\nolimits}
\def\conv{\mathop{\mathrm{conv}}}
\newcommand{\zk}{\mathcal Z_{\mathcal K}}
\newcommand{\zp}{\mathcal Z_P}
\begin{document}
\title{Geometric structures on moment-angle manifolds}

\author{Taras Panov}
\address{Department of Mathematics and Mechanics, Moscow
State University, Leninskie Gory, 119991 Moscow, Russia,
\newline\indent Institute for Theoretical and Experimental Physics,
Moscow, Russia,\quad \emph{and}
\newline\indent Institute for Information Transmission Problems,
Russian Academy of Sciences} \email{tpanov@mech.math.msu.su}

\thanks{The author was supported by the Russian Foundation for
Basic Research, grants~12-01-00873 and 13-01-91151-ГФЕН, a grant
from Dmitri Zimin's `Dynasty' foundation, grants НШ-4995-2012.1
and МД-111.2013.1 from the President of Russia, and grant
11.G34.31.0053 from the Government of Russia.}


\begin{abstract}
The moment-angle complex $\zk$ is cell complex with a torus action
constructed from a finite simplicial complex~$\sK$. When this
construction is applied to a triangulated sphere~$\sK$ or, in
particular, to the boundary of a simplicial polytope, the result
is a manifold. Moment-angle manifolds and complexes are central
objects in toric topology, and currently are gaining much interest
in homotopy theory, complex and symplectic geometry.

The geometric aspects of the theory of moment-angle complexes are
the main theme of this survey. We review constructions of
non-K\"ahler complex-analytic structures on moment-angle manifolds
corresponding to polytopes and complete simplicial fans, and
describe invariants of these structures, such as the Hodge numbers
and Dolbeault cohomology rings. Symplectic and Lagrangian aspects
of the theory are also of considerable interest. Moment-angle
manifolds appear as level sets for quadratic Hamiltonians of torus
actions, and can be used to construct new families of
Hamiltonian-minimal Lagrangian submanifolds in a complex space,
complex projective space or toric varieties.
\end{abstract}

\maketitle

\tableofcontents

\section{Introduction}
Moment-angle complex $\zk$ is a cell complex with a torus action
patched from products of discs $D^2$ and circles~$S^1$ which are
parametrised by faces of a simplicial complex~$\sK$. By replacing
the pair $(D^2,S^1)$ by an arbitrary cellular pair $(X,A)$ we
obtain the \emph{polyhedral product}~$(X,A)^\sK$. Moment-angle
complexes and polyhedral products are key players in the emerging
field of \emph{toric topology}, which lies on the borders between
topology, algebraic and symplectic geometry, and
combinatorics~\cite{bu-pa12}.

Both homotopical and geometric aspects of the theory of
moment-angle complexes and polyhedral products have been actively
studied recently. On the homotopy-theoretical side of the story,
the stable and unstable decomposition techniques developed
in~\cite[Ch.~6]{bu-pa02}, \cite{gr-th07}, \cite{b-b-c-g10},
\cite{ir-ki} have led to an improved understanding of the topology
of moment-angle complexes and related toric spaces.

In this survey we concentrate on the geometric aspects of the
theory. The construction of moment-angle complexes has many
interesting geometric interpretations. For example, the
moment-angle complex $\zk$ is homotopy equivalent to the
complement $U(\sK)$ of the arrangement of coordinate subspaces
in~$\C^m$ defined by~$\sK$. The space $U(\sK)$ plays an important
role in geometry of toric varieties and the theory of
configuration spaces.

The moment-angle complex $\zk$ corresponding to a triangulated
sphere~$\sK$ is a topological manifold. Moment-angle manifolds
corresponding to simplicial polytopes or, more generally, complete
simplicial fans, are smooth. In the polytopal case a smooth
structure arises from the realisation of $\zk$ by a nondegenerate
intersection of Hermitian quadrics in $\C^m$, similar to a level
set of the moment map in the construction of symplectic quotients.
The relationship between polytopes and systems of quadrics is
described by the convex-geometric notion of Gale duality.

Another way to give $\zk$ a smooth structure is to realise it as
the quotient of the coordinate subspace arrangement complement
$U(\sK)$ by an action of the multiplicative group~$\R^{m-n}_>$.
This is similar to the well-known quotient construction of toric
varieties in algebraic geometry. The quotient of a non-compact
manifold $U(\sK)$ by the action of a non-compact group
$\R^{m-n}_>$ is Hausdorff precisely when $\sK$ is the underlying
complex of a simplicial fan.

If $m-n=2\ell$, then the action of the real group $\R^{m-n}_>$ on
$U(\sK)$ can be turned into a holomorphic action of a complex (but
not algebraic) group isomorphic to~$\C^{\ell}$. In this way the
moment-angle manifold $\zk\cong U(\sK)/\C^\ell$ acquires a
complex-analytic structure. The resulting family of non-K\"ahler
complex manifolds generalises the well-known series of Hopf and
Calabi--Eckmann manifolds (see~\cite{bo-me06} and~\cite{pa-us12}).

Finally, the intersections of Hermitian quadrics defining
polytopal moment-angle manifolds were also used in~\cite{miro04}
to construct Lagrangian submanifolds in $\C^m$ with special
minimality properties.

\medskip

Different spaces with torus actions, or \emph{toric spaces}, will
feature throughout the paper. The most basic example of a toric
space is the complex $m$-dimensional space~$\C^m$, on which the
\emph{standard torus}
\[
  \T^m=\bigl\{\mb t=(t_1,\ldots,t_m)\in\C^m\colon|t_i|=1
  \quad\text{for }i=1,\ldots,m\bigr\}
\]
acts coordinatewise. That is, the action is given by
\begin{align*}
  &\T^m\times\C^m\longrightarrow\C^m,\\
  &(t_1,\ldots,t_m)\cdot(z_1,\ldots,z_m)=(t_1z_1,\ldots,t_mz_m).
\end{align*}
The quotient $\C^m/\T^m$ of this action is the \emph{positive
orthant}
\[
  \R^m_\ge=\bigl\{(y_1,\ldots,y_m)\in\R^m\colon y_i\ge0
  \quad\text{for }
  i=1,\ldots,m\bigr\}
\]
with the quotient projection given by
\begin{align*}
  \mu\colon
  \C^m&\longrightarrow\R^m_\ge,\\
  (z_1,\ldots,z_m)&\longmapsto(|z_1|^2,\ldots,|z_m|^2).
\end{align*}

We use the blackboard bold capitals in the notation $\I^m$,
$\T^m$, $\D^m$ of the standard unit cube in $\R^m$, the standard
(unit) torus, and the unit polydisc in $\C^m$ respectively. We use
italic $T^m$ to denote an abstract $m$-torus, i.e. a compact
abelian Lie group isomorphic to a product of $m$ circles. The
underlying space of the unit disc $\D$ is a topological 2-disc,
which we denote by $D^2$. We shall also denote the standard unit
circle by $\mathbb S$ or $\mathbb T$ occasionally, to distinguish
it from an abstract circle~$S^1$.

\section{Preliminaries: polytopes and Gale
duality}\label{galediag} Let $\R^n$ be a Euclidean space with
scalar product $\langle\;,\:\rangle$. A convex \emph{polyhedron}
$P$ is an intersection of finitely many halfspaces in~$\R^n$.
Bounded polyhedra are called \emph{polytopes}. Alternatively, a
polytope can be defined as the convex hull $\conv(\mb
v_1,\ldots,\mb v_q)$ of a finite set of points $\mb v_1,\ldots,\mb
v_q\in\R^n$.

A \emph{supporting hyperplane} of $P$ is a hyperplane $H$ which
has common points with $P$ and for which the polyhedron is
contained in one of the two closed half-spaces determined by~$H$.
The intersection $P\cap H$ with a supporting hyperplane is called
a \emph{face} of the polyhedron. Denote by $\partial P$ and
$\mathop{\mathrm{int}} P=P\setminus\partial P$ the topological
boundary and interior of $P$ respectively. In the case $\dim P=n$
the boundary $\partial P$ is the union of all faces of~$P$.
Zero-dimensional faces are called \emph{vertices}, one-dimensional
faces are \emph{edges}, and faces of codimension one are
\emph{facets}.

Two polytopes are \emph{combinatorially equivalent} if there is a
bijection between their faces preserving the inclusion relation. A
\emph{combinatorial polytope} is a class of combinatorially
equivalent polytopes. Two polytopes are combinatorially equivalent
if there is a homeomorphism between them preserving the face
structure.

The faces of a given polytope $P$ form a partially ordered set (a
\emph{poset}) with respect to inclusion. It is called the
\emph{face poset} of~$P$. Two polytopes are combinatorially
equivalent if and only if their face posets are isomorphic.

\medskip

Consider a system of $m$ linear inequalities defining a convex
polyhedron in~$\R^n$:
\begin{equation}\label{ptope}
  P=\bigl\{\mb x\in\R^n\colon\langle\mb a_i,\mb x\rangle+b_i\ge0\quad\text{for }
  i=1,\ldots,m\bigr\},
\end{equation}
where $\mb a_i\in\R^n$ and $b_i\in\R$. We refer to~\eqref{ptope}
as a \emph{presentation} of the polyhedron~$P$ by inequalities.
These inequalities contain more information than the
polyhedron~$P$, for the following reason. It may happen that some
of the inequalities $\langle\mb a_i,\mb x\rangle+b_i\ge0$ can be
removed from the presentation without changing~$P$; we refer to
such inequalities as \emph{redundant}. A presentation without
redundant inequalities is called \emph{irredundant}. An
irredundant presentation of a given polyhedron is unique up to
multiplication of the pairs $(\mb a_i,b_i)$ by positive numbers.

We shall assume (unless stated otherwise) that the polyhedron $P$
defined by~\eqref{ptope} has a vertex, which is equivalent to that
the vectors $\mb a_1,\ldots,\mb a_m$ span the whole~$\R^n$. This
condition is automatically satisfied for polytopes.

A presentation~\eqref{ptope} is said to be \emph{generic} if $P$
is nonempty and the hyperplanes defined by the equations
$\langle\mb a_i,\mb x\rangle+b_i=0$ are in general position at any
point of~$P$ (that is, for any $\mb x\in P$ the normal vectors
$\mb a_i$ of the hyperplanes containing $\mb x$ are linearly
independent). If presentation~\eqref{ptope} is generic, then $P$
is $n$-dimensional. If $P$ is a polytope, then the existence of a
generic presentation implies that $P$ is \emph{simple}, that is,
exactly $n$ facets meet at each vertex of~$P$. A generic
presentation may contain redundant inequalities, but, for any such
inequality, the intersection of the corresponding hyperplane with
$P$ is empty (i.e., the inequality is strict at any $\mb x\in P$).
We set
\[
  F_i=\bigl\{\mb x\in P\colon\langle\mb a_i,\mb x\rangle+b_i=0\bigr\}.
\]
If presentation~\eqref{ptope} is generic, then each $F_i$ is
either a facet of~$P$ or is empty.

The \emph{polar set} of a polyhedron $P\subset\R^n$ is defined as
\begin{equation}\label{polarset}
  P^*=\bigl\{\mb u\in\R^n\colon\langle\mb u,\mb x\,\rangle+1\ge0
  \:\text{ for all }\:\mb x\in P\bigr\}.
\end{equation}
The set $P^*$ is a convex polyhedron. (In fact, it is naturally a
subset in the dual space $(\R^n)^*$, but we shall not make this
distinction, assuming $\R^n$ to be Euclidean.)

The following properties are well known in convex geometry:

{\samepage
\begin{theorem}[{see~\cite[\S2.9]{bron83} or~\cite[Theorem~2.11]{zieg95}}]\label{polarity}\
\begin{itemize}
\item[(a)] $P^*$ is bounded if and only if $\mathbf 0\in\mathop{\mathrm{int}} P$;

\item[(b)] $P\subset (P^*)^*$, and $(P^*)^*=P$ if and only if $\mathbf 0\in
P$;

\item[(c)] if a polytope $Q$ is given as a convex hull,
$Q=\mathop{\mathrm{conv}(\mb a_1,\ldots,\mb a_m)}$, then $Q^*$ is
given by inequalities~\eqref{ptope} with $b_i=1$ for $1\le i\le
m$; in particular, $Q^*$ is a convex polyhedron, but not
necessarily bounded;

\item[(d)] if a polytope $P$ is given by inequalities~\eqref{ptope}
with $b_i=1$, then $P^*=\mathop{\mathrm{conv}(\mb a_1,\ldots,\mb
a_m)}$; furthermore, $\langle\mb a_i,\mb x\,\rangle+1\ge0$ is a
redundant inequality if and only if $\mb a_i\in\conv(\mb a_j\colon
j\ne i)$.
\end{itemize}
\end{theorem}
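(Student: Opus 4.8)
The plan is to establish the four items essentially from the definition \eqref{polarset}, working directly with the defining inequalities and using standard separation arguments in convex geometry. I would organise the proof around the biduality statement (b), since (a), (c), (d) either feed into it or follow from the same computations.

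First I would prove (a). If $\mathbf 0\in\mathop{\mathrm{int}} P$, then a small ball $\varepsilon B^n$ is contained in $P$, so every $\mb u\in P^*$ satisfies $\langle\mb u,\mb x\rangle\ge-1$ for all $\mb x\in\varepsilon B^n$; taking $\mb x=-\varepsilon\mb u/|\mb u|$ gives $|\mb u|\le1/\varepsilon$, hence $P^*$ is bounded. Conversely, if $\mathbf 0\notin\mathop{\mathrm{int}} P$, a supporting-hyperplane argument produces a nonzero $\mb u_0$ with $\langle\mb u_0,\mb x\rangle\ge0$ for all $\mb x\in P$ (if $\mathbf 0\notin P$ one separates strictly and can even do better), and then the whole ray $\{\lambda\mb u_0:\lambda\ge0\}$ lies in $P^*$, so $P^*$ is unbounded. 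Next, for (b), the inclusion $P\subset(P^*)^*$ is immediate by unwinding definitions: $\mb x\in P$ means $\langle\mb u,\mb x\rangle+1\ge0$ for every $\mb u\in P^*$, which is exactly the condition for $\mb x\in(P^*)^*$. For the reverse inclusion under the hypothesis $\mathbf 0\in P$, I would argue by contradiction: if $\mb x_0\notin P$, apply the separating hyperplane theorem to the point $\mb x_0$ and the closed convex set $P$ to get $\mb c$ and $\gamma$ with $\langle\mb c,\mb x\rangle\le\gamma<\langle\mb c,\mb x_0\rangle$ for all $\mb x\in P$; since $\mathbf 0\in P$ we have $\gamma\ge0$. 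If $\gamma>0$, rescale by setting $\mb u=-\mb c/\gamma$, so that $\langle\mb u,\mb x\rangle+1\ge0$ on $P$ (i.e. $\mb u\in P^*$) but $\langle\mb u,\mb x_0\rangle+1<0$, contradicting $\mb x_0\in(P^*)^*$. The case $\gamma=0$ (which can only occur when $\mathbf 0\in\partial P$) needs a small perturbation of $\mb c$ to push $\gamma$ strictly positive while keeping strict separation of $\mb x_0$; this is the one slightly delicate point. Finally, if $\mathbf 0\notin P$ then $(P^*)^*\ni\mathbf 0$ always (it is a polar set), so $(P^*)^*\ne P$, completing (b).

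For (c), if $Q=\conv(\mb a_1,\ldots,\mb a_m)$, then $\mb u\in Q^*$ iff $\langle\mb u,\mb x\rangle+1\ge0$ for all $\mb x\in Q$; by linearity of $\mb x\mapsto\langle\mb u,\mb x\rangle$ and the fact that a linear functional attains its minimum over a polytope at a vertex, this holds iff $\langle\mb u,\mb a_i\rangle+1\ge0$ for $i=1,\ldots,m$, which is precisely presentation \eqref{ptope} with $\mb a_i$ in the role of the normals and all $b_i=1$. This is a polyhedron but need not be bounded, by part (a), precisely because $\mathbf 0$ need not lie in the interior of $Q$. For (d), suppose $P$ is given by \eqref{ptope} with all $b_i=1$; then $P$ is the polar of $Q:=\conv(\mb a_1,\ldots,\mb a_m)$ by (c), and since $P$ is assumed bounded, (a) gives $\mathbf 0\in\mathop{\mathrm{int}} Q\subset Q$, so by (b) we get $P^*=(Q^*)^*=Q=\conv(\mb a_1,\ldots,\mb a_m)$. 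The redundancy criterion is then a direct convexity statement: the inequality $\langle\mb a_i,\mb x\rangle+1\ge0$ is implied by the others iff it holds on $\{\mb x:\langle\mb a_j,\mb x\rangle+1\ge0\ \text{for all}\ j\ne i\}=(\conv(\mb a_j:j\ne i))^*$, and by the same vertex-minimisation principle as in (c) this is equivalent to $\mb a_i$ lying in $\conv(\mb a_j:j\ne i)$ (one direction is Carath\'eodory-type convexity, the other is separating a point outside the convex hull from it).

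The main obstacle is not any single deep fact — everything rests on the separating hyperplane theorem and the observation that a linear functional on a polytope is minimised at a vertex — but rather the boundary case $\mathbf 0\in\partial P$ in the proof of biduality (b), where strict separation of $\mb x_0\notin P$ from $P$ gives a threshold $\gamma$ that may equal $0$ and must be perturbed to be made strictly positive without losing the strict inequality at $\mb x_0$; handling this cleanly is the only place requiring genuine care. Since the statement is cited as standard (\cite[\S2.9]{bron83}, \cite[Theorem~2.11]{zieg95}), one could alternatively just invoke those references, but the self-contained argument above is short enough to include.
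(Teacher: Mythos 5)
The paper does not actually prove this theorem --- it only cites Br\o nsted and Ziegler --- so a self-contained argument is welcome, and your parts (a), (b), (c) and the biduality half of (d) are correct. The point you single out as delicate in (b) is in fact a non-issue: strict separation gives $\sup_{\mb x\in P}\langle\mb c,\mb x\rangle<\langle\mb c,\mb x_0\rangle$, and since $\mathbf 0\in P$ that supremum is $\ge 0$, so choosing $\gamma$ strictly between the supremum and $\langle\mb c,\mb x_0\rangle$ makes $\gamma>0$ automatically; no perturbation is needed.

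The genuine gap is in the redundancy criterion at the end of (d). Saying that $\langle\mb a_i,\mb x\rangle+1\ge0$ holds on $\bigl(\conv(\mb a_j\colon j\ne i)\bigr)^*$ is, by definition of the polar, exactly the statement $\mb a_i\in\bigl(\bigl(\conv(\mb a_j\colon j\ne i)\bigr)^*\bigr)^*$, and this double polar equals $\conv\bigl(\mathbf 0,\mb a_j\colon j\ne i\bigr)$ (see the Remark after the theorem), which is strictly larger than $\conv(\mb a_j\colon j\ne i)$ whenever $\mathbf 0$ is not in the latter. Your separation argument correspondingly fails when $\mb a_i$ lies in $\conv(\mathbf 0,\mb a_j\colon j\ne i)$ but not in $\conv(\mb a_j\colon j\ne i)$: the separating hyperplane then has $\mathbf 0$ on the same side as $\mb a_i$, and the candidate violating point $-t\mb v$ never satisfies the remaining inequalities. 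The equivalence is in fact \emph{false} without boundedness: for $n=1$, $\mb a_1=1$, $\mb a_2=2$, $b_i=1$, the inequality $x+1\ge0$ is redundant on $\{2x+1\ge0\}=[-\frac12,\infty)$, yet $1\notin\conv(2)=\{2\}$. So any correct proof must use the hypothesis that $P$ is a polytope, which your sketch never invokes for this step. The repair is short and reuses your earlier parts: if the $i$th inequality is redundant, then $P=\bigl(\conv(\mb a_j\colon j\ne i)\bigr)^*$ is bounded, so (a) gives $\mathbf 0\in\mathop{\mathrm{int}}\conv(\mb a_j\colon j\ne i)$, and then (b) identifies the double polar with $\conv(\mb a_j\colon j\ne i)$ itself, whence $\mb a_i\in\conv(\mb a_j\colon j\ne i)$. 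The converse direction (a convex combination of the other normals makes the inequality redundant) is correct as you state it.
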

}

\begin{remark}
A polyhedron $P$ admits a presentation~\eqref{ptope} with $b_i=1$
if and only if $\mathbf 0\in\mathop{\mathrm{int}} P$. In general,
$(P^*)^*=\conv(P,{\bf0})$.
\end{remark}

Any combinatorial polytope $P$ has a presentation~\eqref{ptope}
with $b_i=1$ (take the origin to the interior of~$P$ by a parallel
transform, an then divide each of the inequalities
in~\eqref{ptope} by the corresponding~$b_i$). Then $P^*$ is also a
polytope with ${\bf 0}\in P^*$, and $(P^*)^*=P$. We refer to the
combinatorial polytope $P^*$ as the \emph{dual} of the
combinatorial polytope~$P$. (We shall not introduce a new notation
for the dual polytope, keeping in mind that polarity is a
convex-geometric notion, while duality of polytopes is
combinatorial.)

\begin{theorem}[{see~\cite[\S2.10]{bron83}}]\label{dualpol}
If $P$ and $P^*$ are dual polytopes, then the face poset of $P^*$
is obtained from the face poset of $P$ by reversing the inclusion
relation.
\end{theorem}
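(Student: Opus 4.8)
The plan is to prove the duality of face posets by first establishing the bijection on faces and then checking that it reverses inclusions; the key tool throughout is Theorem~\ref{polarity}(b), which guarantees $(P^*)^*=P$ since $\mathbf 0\in\mathop{\mathrm{int}}P\subset P$ when $P$ and $P^*$ are dual in the combinatorial sense described above. First I would set up the \emph{conjugate face} correspondence: for a face $F$ of $P$, define
\[
  \widehat F=\bigl\{\mb u\in P^*\colon\langle\mb u,\mb x\rangle+1=0
  \text{ for all }\mb x\in F\bigr\},
\]
the set of points of $P^*$ whose defining inequality is tight on all of $F$. One checks that $\widehat F$ is a face of $P^*$: it is the intersection of $P^*$ with a supporting hyperplane (or with several, intersected), using that the linear functionals vanishing on $F$ together with the value $-1$ cut out exactly this set. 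The reverse relation $\widehat\varnothing=P^*$ and $\widehat P=\varnothing$ (the latter because $\mathbf 0\in\mathop{\mathrm{int}}P$ forces $\langle\mb u,\mb x\rangle+1>0$ for some $\mb x$ unless $\mb u=\mathbf 0$, which is interior to $P^*$) handle the extreme cases.

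Next I would show the assignment $F\mapsto\widehat F$ is order-reversing: if $F_1\subset F_2$, then a functional tight on $F_2$ is tight on $F_1$, so $\widehat{F_2}\subset\widehat{F_1}$. The crucial point is that this map is a bijection. For injectivity and surjectivity I would use the symmetry of the construction: applying the same recipe to $P^*$ gives a map $G\mapsto\widehat G$ from faces of $P^*$ to faces of $(P^*)^*=P$, and I would verify $\widehat{\widehat F}=F$. This last identity is where the substance lies: the inclusion $F\subset\widehat{\widehat F}$ is formal, while equality requires that $F$ is \emph{exactly} the set of points of $P$ on which every functional "conjugate" to $F$ is tight — equivalently, that faces of $P$ are recovered as intersections of facets, and that the facets of $P$ correspond under $\mb u\mapsto$ (its defining inequality) precisely to the vertices of $P^*$. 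Here I would invoke Theorem~\ref{polarity}(c),(d): writing $P$ by irredundant inequalities $\langle\mb a_i,\mb x\rangle+1\ge0$, the vertices of $P^*=\conv(\mb a_1,\ldots,\mb a_m)$ are exactly the non-redundant $\mb a_i$, and these index the facets $F_i$ of $P$.

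I expect the main obstacle to be the verification that $\widehat{\widehat F}=F$, i.e.\ that no "spurious" points of $P$ get added back: one must rule out the possibility that some face $G$ of $P^*$ larger than $\widehat F$ still has $\widehat G\supset F$. The clean way around this is to reduce everything to vertices and facets. Concretely, I would first prove the correspondence at the level of vertices and facets (a vertex $\mb v$ of $P$ lies on exactly $n$ facets $F_{i_1},\ldots,F_{i_n}$ by simplicity — though note this theorem does not assume simplicity, so in general $\mb v$ lies on some set of facets whose normals positively span a pointed cone — and $\widehat{\{\mb v\}}$ is the face of $P^*$ spanned by the corresponding $\mb a_{i_j}$), then extend to all faces by the observation that every face of $P$ is the intersection of the facets containing it and $\widehat{(\,\cdot\,)}$ converts intersections of facets into joins (convex hulls) of the dual vertices. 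Combining the order-reversing property with bijectivity then yields that the two posets are anti-isomorphic, completing the proof.
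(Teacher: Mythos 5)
The paper offers no proof of this theorem --- it is quoted directly from Br\o nsted --- so there is no internal argument to compare with; your proposal reconstructs the standard proof from that source and is essentially sound. Two remarks. First, $\widehat F$ is cut out by a \emph{single} supporting hyperplane, not ``several, intersected'': if $\mb x_0$ is a relative-interior point of a nonempty face $F$, then the nonnegative affine function $\mb u\mapsto\langle\mb u,\mb x\rangle+1$ restricted to $F$ and vanishing at $\mb x_0$ vanishes on all of $F$, so $\widehat F=P^*\cap\{\mb u\colon\langle\mb u,\mb x_0\rangle=-1\}$, and this hyperplane supports $P^*$ because $\mb x_0\in P$. Second, the detour through vertices and facets that you anticipate needing for $\widehat{\widehat F}=F$ is avoidable, and it would import the nontrivial fact that every face is an intersection of facets. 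Directly: write a proper nonempty face as $F=P\cap H$ with $H=\{\mb x\colon\langle\mb u_0,\mb x\rangle=c\}$ and $\langle\mb u_0,\mb x\rangle\le c$ on $P$; since $\mathbf 0\in\mathop{\mathrm{int}}P$ one has $c>0$, so $-\mb u_0/c\in\widehat F$, and any $\mb x_1\in P\setminus F$ satisfies $\langle-\mb u_0/c,\mb x_1\rangle+1=1-\langle\mb u_0,\mb x_1\rangle/c>0$, hence $\mb x_1\notin\widehat{\widehat F}$. This gives $\widehat{\widehat F}\subset F$ at once; combined with your order-reversal and the symmetry via $(P^*)^*=P$ (Theorem~\ref{polarity}(b)), the anti-isomorphism of face posets follows as you describe.
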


If $P$ is a simple polytope, then it follows from the theorem
above that each face of $P^*$ is a simplex. Such a polytope is
called \emph{simplicial}.

The following construction realises any polytope~\eqref{ptope} of
dimension $n$ by the intersection of the orthant $\R^m_\ge$ with
an affine $n$-plane. It will be used in the next section to define
intersections of quadrics and moment-angle manifolds.

\begin{construction}\label{dist}
Form the $n\times m$-matrix $A$ whose columns are the vectors $\mb
a_i$ written in the standard basis of~$\R^n$. Note that $A$ is of
rank~$n$ if and only if the polyhedron $P$ has a vertex. Also, let
$\mb b=(b_1,\ldots,b_m)^t\in\R^m$ be the column vector of~$b_i$s.
Then we can write~\eqref{ptope} as
\[
  P=P(A,\mb b)=\bigl\{\mb x\in\R^n\colon(A^t\mb x+\mb b)_i\ge 0\quad
  \text{for }i=1,\ldots,m\},
\]
where $\mb x=(x_1,\ldots,x_n)^t$ is the column of coordinates.
Consider the affine map
\[
  i_{A,\mb b}\colon \R^n\to\R^m,\quad i_{A,\mb b}(\mb x)=A^t\mb x+\mb b
  =\bigl(\langle\mb a_1,\mb x\rangle+b_1,\ldots,\langle\mb a_m,\mb x\rangle+b_m\bigr)^t.
\]
If $P$ has a vertex, then the image of $\R^n$ under $i_{A,\mb b}$
is an $n$-dimensional affine plane in $\R^m$, which we can write
by $m-n$ linear equations:
\begin{equation}\label{iabrn}
\begin{aligned}
  i_{A,\mb b}(\R^n)&=\{\mb y\in\R^m\colon\mb y=A^t\mb x+\mb b\quad
  \text{for some }\mb x\in\R^n\}\\
  &=\{\mb y\in\R^m\colon\varGamma\mb y=\varGamma\mb b\},
\end{aligned}
\end{equation}
where $\varGamma=(\gamma_{jk})$ is an $(m-n)\times m$-matrix whose
rows form a basis of linear relations between the vectors~${\mb
a}_i$. That is, $\varGamma$ is of full rank and satisfies the
identity $\varGamma A^t=0$.

We have $i_{A,\mb b}(P)=\R^m_\ge\cap i_{A,\mb b}(\R^n)$.
\end{construction}

\begin{construction}[Gale duality]\label{galeduality}
Let $\mb a_1,\ldots,\mb a_m$ be a configuration of vectors that
span the whole~$\R^n$. Form an $(m-n)\times m$-matrix
$\varGamma=(\gamma_{jk})$ whose rows form a basis in the space of
linear relations between the vectors~${\mb a}_i$. The set of
columns $\gamma_1,\ldots,\gamma_m$ of $\varGamma$ is called a
\emph{Gale dual} configuration of $\mb a_1,\ldots,\mb a_m$. The
transition from the configuration of vectors $\mb a_1,\ldots,\mb
a_m$ in $\R^n$ to the configuration of vectors
$\gamma_1,\ldots,\gamma_m$ in $\R^{m-n}$ is called the (linear)
\emph{Gale transform}. Each configuration determines the other
uniquely up to isomorphism of its ambient space. In other words,
each of the matrices $A$ and $\varGamma$ determines the other
uniquely up to multiplication by an invertible matrix from the
left.

Using the coordinate-free notation, we may think of $\mb
a_1,\ldots,\mb a_m$ as a set of linear functions on an
$n$-dimensional space~$W$. Then we have an exact sequence
\[
  0\to W\stackrel
  {A^t}\longrightarrow\R^m\stackrel{\varGamma}\longrightarrow L\to0,
\]
where $A^t$ is given by $\mb x\mapsto \bigl(\langle\mb a_1,\mb
x\rangle,\ldots,\langle\mb a_m,\mb x\rangle\bigr)$, and the map
$\varGamma$ takes $\mb e_i$ to $\gamma_i\in L\cong\R^{m-n}$.
Similarly, in the dual exact sequence
\[
  0\to L^*\stackrel
  {\varGamma^t}\longrightarrow\R^m\stackrel{A}\longrightarrow W^*\to0,
\]
the map $A$ takes $\mb e_i$ to $\mb a_i\in W^*\cong\R^n$.
Therefore, two configurations $\mb a_1,\ldots,\mb a_m$ and
$\gamma_1,\ldots,\gamma_m$ are Gale dual if they are obtained as
the images of the standard basis of $\R^m$ under the maps $A$ and
$\varGamma$ in a pair of dual short exact sequences.
\end{construction}

Here is an important property of Gale dual configurations:

\begin{theorem}\label{galespan}
Let $\mb a_1,\ldots,\mb a_m$ and $\gamma_1,\ldots,\gamma_m$ be
Gale dual configurations of vectors in $\R^n$ and $\R^{m-n}$
respectively, and let $I=\{i_1,\ldots,i_k\}$. Then the subset
$\{\mb a_i\colon i\in I\}$ is linearly independent if and only if
the subset $\{\gamma_j\colon j\notin I\}$ spans the
whole~$\R^{m-n}$.
\end{theorem}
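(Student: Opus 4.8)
The plan is to use the two dual short exact sequences from Construction~\ref{galeduality} and translate the linear-algebraic conditions into statements about these sequences. Recall that $A\colon\R^m\to W^*$ sends $\mb e_i\mapsto\mb a_i$ with kernel $\Im\varGamma^t=L^*$, while $\varGamma\colon\R^m\to L$ sends $\mb e_i\mapsto\gamma_i$ with kernel $\Im A^t=W$. For a subset $I\subset\{1,\ldots,m\}$, write $\R^I=\langle\mb e_i\colon i\in I\rangle\subset\R^m$ and let $J=\{1,\ldots,m\}\setminus I$. The two properties to be compared are: (i) $\{\mb a_i\colon i\in I\}$ is linearly independent, i.e.\ $A|_{\R^I}$ is injective, equivalently $\R^I\cap\Ker A=\R^I\cap L^*=0$; and (ii) $\{\gamma_j\colon j\in J\}$ spans $\R^{m-n}$, i.e.\ $\varGamma|_{\R^J}$ is surjective, equivalently $\R^J+\Ker\varGamma=\R^J+W=\R^m$.

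First I would observe that $\R^I\cap L^*=0$ is equivalent to saying that the composite $L^*\hookrightarrow\R^m\twoheadrightarrow\R^m/\R^I=\R^J$ is injective; dually, $\R^J+W=\R^m$ is equivalent to saying that the composite $\R^J\hookrightarrow\R^m\twoheadrightarrow\R^m/W\cong L$ is surjective. So the claim reduces to: the map $L^*\to\R^m/\R^I$ is injective if and only if the map $\R^J\to L$ is surjective. Now $\R^m/\R^I\cong\R^J$ canonically (project onto the $J$-coordinates), and under the identification of $L$ with $\R^m/W$ via $\varGamma$, the map $\R^J\to L$ is precisely $\varGamma|_{\R^J}$, whose transpose is the composite $L^*\xrightarrow{\varGamma^t}\R^m\twoheadrightarrow\R^J$ — which is exactly the map appearing in condition~(i). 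Thus the two maps in question are transpose to each other, and a linear map is injective if and only if its transpose is surjective. This yields the equivalence.

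The only point requiring a little care — and the step I expect to be the main obstacle — is checking that the identifications are compatible, namely that the map $L^*\to\R^J$ obtained as ``$L^*\hookrightarrow\R^m$, then kill $\R^I$'' is genuinely the transpose of the map $\R^J\to L$ obtained as ``$\R^J\hookrightarrow\R^m$, then kill $W$''. This is where the duality of the two exact sequences in Construction~\ref{galeduality} is used: the pairing $\R^m\times\R^m\to\R$ restricts to a perfect pairing between $\R^I$ and $\R^J$ (they are complementary coordinate subspaces) and, because $W=\Im A^t$ and $L^*=\Im\varGamma^t$ are mutual annihilators under this pairing (as $\varGamma A^t=0$ and both have complementary dimensions $n$ and $m-n$), the induced pairing between $\R^m/W$ and $L^*$, and between $\R^m/\R^I=\R^J$ and $\R^I$, are perfect. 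Chasing these identifications shows the two maps are transposes, completing the argument. Alternatively, one can phrase the whole proof as a diagram chase: $\{\mb a_i\colon i\in I\}$ dependent means some nonzero vector supported on $I$ lies in $\Ker A=\Im\varGamma^t$, i.e.\ equals $\varGamma^t\mb w$ for some $\mb w\neq0$; pairing with $\mb e_j$ for $j\in J$ gives $\langle\gamma_j,\mb w\rangle=0$ for all $j\in J$, so $\mb w$ is a nonzero vector orthogonal to all $\gamma_j$, $j\in J$, hence these do not span — and every implication is reversible.
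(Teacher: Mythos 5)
Your proof is correct, and it reduces the statement to the same underlying structure as the paper --- the pair of dual short exact sequences from Construction~\ref{galeduality}, restricted to the complementary coordinate subspaces $\R^I$ and $\R^J$ --- but the mechanism for the final equivalence is different. The paper isolates a general diagram-chase lemma (Lemma~\ref{2ses}): for two short exact sequences crossing at $\R^m$, the composite $A\cdot i$ is injective if and only if the composite $p\cdot\varGamma^t$ is injective; it then notes that the latter is equivalent to $\{\gamma_j\colon j\notin I\}$ spanning. You instead observe that the two relevant maps, $L^*\to\R^J$ and $\R^J\to L$, are mutual transposes (because $\Im A^t$ and $\Im\varGamma^t$ are each other's annihilators and $\R^I$, $\R^J$ are complementary coordinate subspaces) and invoke ``injective iff the transpose is surjective''. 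Your closing element-level chase --- a nonzero vector supported on $I$ lying in $\Ker A=\Im\varGamma^t$ corresponds to a nonzero $\mb w$ with $\langle\gamma_j,\mb w\rangle=0$ for all $j\notin I$ --- is the most elementary version of the argument and is complete as it stands. What the paper's formulation buys that yours does not: Lemma~\ref{2ses} is stated over $\Z$ as well as over a field, including the split-injective case, and is reused in the proof of Theorem~\ref{propmmap}~(c) for lattices, where a transpose/perfect-pairing argument over $\R$ would not apply verbatim.
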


The proof uses an algebraic lemma:

\begin{lemma}\label{2ses}
Let $\k$ be a field or $\Z$, and assume given a diagram
\[
\begin{CD}
  @.@.\begin{array}{c}0\\ \raisebox{2pt}{$\downarrow$}\\
  U\end{array}@.@.\\
  @.@.@VV{i_1}V@.@.\\
  0@>>> R@>{i_2}>> S@>{p_2}>> T @>>>0\\
  @.@.@VV {p_1} V@.@.\\
  @.@.\begin{array}{c} V\\ \downarrow\\ 0\end{array}@.@.\\
\end{CD}
\]
in which both vertical and horizontal lines are short exact
sequences of vector spaces over $\k$ or free abelian groups. Then
$p_1i_2$ is surjective (respectively, injective or split
injective) if and only if $p_2i_1$ is surjective (respectively,
injective or split injective).
\end{lemma}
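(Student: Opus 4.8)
The plan is to exploit the evident symmetry of the hypothesis in the roles of the two exact sequences, and to reduce the three assertions (surjective / injective / split injective) to a single linear-algebra computation. Write $U\xrightarrow{i_1}S$, $S\xrightarrow{p_1}V$ for the vertical sequence and $R\xrightarrow{i_2}S$, $S\xrightarrow{p_2}T$ for the horizontal one, so that $U=\Ker p_1$, $V=\Coker i_1$, $R=\Ker p_2$, $T=\Coker p_2$. The map under scrutiny is $f=p_1 i_2\colon R\to V$ and its ``partner'' is $g=p_2 i_1\colon U\to T$. I would first record the trivial observation that the statement is completely symmetric: interchanging the horizontal and vertical sequences swaps $f$ and $g$, so it suffices to prove, say, each ``only if'' direction (the ``if'' directions then follow by symmetry).

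The key step is to identify both $\Ker f$ and $\Coker f$ in terms intrinsic to the diagram. Concretely, I would show
\[
  \Ker f\;\cong\;i_2^{-1}(\Im i_1)\Big/\,\bigl(i_2^{-1}(0)\bigr)\;\cong\;\Im i_1\cap\Im i_2,
\]
using that $i_2$ is injective so $R\cong\Im i_2$, and that $\Ker p_1=\Im i_1$; an element $r\in R$ lies in $\Ker f$ iff $i_2(r)\in\Ker p_1=\Im i_1$, i.e.\ iff $i_2(r)\in\Im i_1\cap\Im i_2$, and injectivity of $i_2$ makes this a bijection. Dually (now using $p_1,p_2$ surjective),
\[
  \Coker f\;\cong\;S\big/(\Im i_1+\Im i_2)\;\cong\;\Coker g,
\]
because $\Im f=p_1(\Im i_2)=(\Im i_1+\Im i_2)/\Im i_1$ inside $V=S/\Im i_1$, so $\Coker f=S/(\Im i_1+\Im i_2)$, and the same computation with the roles reversed gives $\Coker g=S/(\Im i_2+\Im i_1)$. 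Running the kernel computation with the roles reversed likewise yields $\Ker g\cong\Im i_1\cap\Im i_2$. Thus $\Ker f$ and $\Ker g$ are canonically isomorphic, and $\Coker f$ and $\Coker g$ are canonically isomorphic. This immediately settles the surjective case ($\Coker f=0\iff\Coker g=0$) and the injective case ($\Ker f=0\iff\Ker g=0$).

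For the split-injective case I would argue as follows. Suppose $f=p_1i_2$ is split injective, so there is a retraction $s\colon V\to R$ with $sf=\id_R$; equivalently $\Im i_2$ is a direct summand of $S$ complementary to a submodule containing $\Im i_1$, or, most usefully, $S=i_2(R)\oplus C$ for some submodule $C\supseteq\Im i_1$ (over a field this is automatic once $f$ is injective; over $\Z$ it is exactly the extra content of ``split''). Then $p_2$ restricted to $C$ is surjective onto $T$ with kernel $C\cap\Im i_2=0$, so $p_2|_C\colon C\xrightarrow{\ \sim\ }T$; composing its inverse with $i_1\colon U\to S$ followed by projection onto $C$ along $i_2(R)$ gives a splitting of $g=p_2i_1$ — here one checks that $i_1(U)\subseteq C$, which holds by the choice $C\supseteq\Im i_1$, so the projection is the identity on $i_1(U)$ and $g$ factors as $U\xrightarrow{i_1} C\xrightarrow[\sim]{p_2} T$, an inclusion of a free/vector-space direct summand followed by an isomorphism, hence split injective. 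The reverse implication is the same argument with horizontal and vertical interchanged.

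The main obstacle is the split-injective case over $\Z$: for vector spaces ``injective'' and ``split injective'' coincide and there is nothing to do beyond the kernel identification, but for free abelian groups one must genuinely produce a complement, and the delicate point is verifying that the complement $C$ to $i_2(R)$ can be chosen to contain $\Im i_1$ (equivalently, that the splitting of $f$ can be upgraded to a direct-sum decomposition of $S$ respecting $\Im i_1$). I would handle this by observing that a split injection $f\colon R\hookrightarrow V=S/\Im i_1$ lifts: choose a retraction $V\to R$, precompose with $S\twoheadrightarrow V$ to get $\rho\colon S\to R$ with $\rho i_2=\id_R$, and set $C=\Ker\rho$; then $S=i_2(R)\oplus C$ and $\Im i_1\subseteq\Ker(S\to V\xrightarrow{s}R)=\Ker\rho=C$ by construction, which is exactly what is needed. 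Everything else is bookkeeping with the four canonical identifications above.
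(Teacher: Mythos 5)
Your proof is correct, but it takes a genuinely different route from the paper's. The paper handles surjectivity and injectivity by an element-wise diagram chase, and then settles split injectivity by dualising: $p_1i_2$ split injective gives $i_2^*p_1^*$ surjective, the surjectivity case applied to the dual diagram gives $i_1^*p_2^*$ surjective, whence $p_2i_1$ is split injective. You instead identify $\Ker(p_1i_2)\cong\Im i_1\cap\Im i_2\cong\Ker(p_2i_1)$ and $\mathop{\mathrm{Coker}}(p_1i_2)\cong S/(\Im i_1+\Im i_2)\cong\mathop{\mathrm{Coker}}(p_2i_1)$, which disposes of the first two cases at once and makes the symmetry completely transparent; and for the split case you build an explicit complement $C=\Ker(s\circ p_1)$ containing $\Im i_1$, so that $p_2i_1$ factors through the isomorphism $p_2|_C\colon C\to T$. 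Your treatment of the split case is arguably more robust: the paper's dualisation argument implicitly uses that surjectivity of the dual map $i_1^*p_2^*$ forces $p_2i_1$ to be split injective, which for free abelian groups is cleanest with finite generation, whereas your construction needs nothing beyond the freeness of $V$ (to split the vertical sequence and conclude that $i_1(U)$ is a direct summand of $S$, hence of $C$). That last point — that $i_1\colon U\to C$ is split because a retraction $S\to U$ restricts to $C$ — is the one step you state rather tersely, but it is correct and follows directly from the hypothesis that $V$ is free (or a vector space).
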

\begin{proof}
This is a simple diagram chase. Assume first that $p_1i_2$ is
surjective. Take $\alpha\in T$; we need to cover it by an element
in~$ U$. There is $\beta\in S$ such that $p_2(\beta)=\alpha$. If
$\beta\in i_1( U)$, then we are done. Otherwise set
$\gamma=p_1(\beta)\ne0$. Since $p_1i_2$ is surjective, we can
choose $\delta\in R$ such that $p_1i_2(\delta)=\gamma$. Set
$\eta=i_2(\delta)\ne0$. Hence, $p_1(\eta)=p_1(\beta)(=\gamma)$ and
there is $\xi\in U$ such that $i_1(\xi)=\beta-\eta$. Then
$p_2i_1(\xi)=p_2(\beta-\eta)=\alpha-p_2i_2(\delta)=\alpha$. Thus,
$p_2i_1$ is surjective.

Now assume that $p_1i_2$ is injective. Suppose $p_2i_1(\alpha)=0$
for a nonzero $\alpha\in U$. Set $\beta=i_1(\alpha)\ne0$. Since
$p_2(\beta)=0$, there is a nonzero $\gamma\in R$ such that
$i_2(\gamma)=\beta$. Then
$p_1i_2(\gamma)=p_1(\beta)=p_1i_1(\alpha)=0$. This contradicts the
assumption that $p_1i_2$ is injective. Thus, $p_2i_1$ is
injective.

Finally, if $p_1i_2$ is split injective, then its dual map
$i_2^*p_1^*\colon V^*\to R^*$ is surjective. Then
$i_1^*p_2^*\colon T^*\to U^*$ is also surjective. Thus, $p_2i_1$
is split injective.
\end{proof}

\begin{proof}[Proof of Theorem~\ref{galespan}]
Let $A$ be the $n\times m$-matrix with column vectors $\mb
a_1,\ldots,\mb a_m$, and let $\varGamma$ be the $(m-n)\times
m$-matrix with columns $\gamma_1,\ldots,\gamma_m$. Denote by $A_I$
the $n\times k$-submatrix formed by the columns $\{\mb a_i\colon
i\in I\}$ and denote by $\varGamma_{\widehat I}$ the
$(m-n)\times(m-k)$-submatrix formed by the columns
$\{\gamma_j\colon j\notin I\}$. We also consider the corresponding
maps $A_I\colon\R^k\to\R^n$ and $\varGamma_{\widehat
I}\colon\R^{m-k}\to\R^{m-n}$.

Let $i\colon\R^k\rightarrow\R^m$ be the inclusion of the
coordinate subspace spanned by the vectors $\mb e_i$, $i\in I$,
and let $p\colon\R^m\rightarrow\R^{m-k}$ the projection sending
every such $\mb e_i$ to zero. Then $A_I=A\cdot i$ and
$\varGamma_{\widehat I}^t=p\cdot\varGamma^t$. The vectors $\{\mb
a_i\colon i\in I\}$ are linearly independent if and only if
$A_I=A\cdot i$ is injective, and the vectors $\{\gamma_j\colon
j\notin I\}$ span~$\R^{m-n}$ if and only if $\varGamma_{\widehat
I}^t=p\cdot\varGamma^t$ is injective. These two conditions are
equivalent by Lemma~\ref{2ses}.
\end{proof}

\begin{construction}[Gale diagram]
Let $P$ be a polytope~\eqref{ptope} with $b_i=1$ and let
$P^*=\conv(\mb a_1,\ldots,\mb a_m)$ be the polar polytope. Let
$\widetilde A^t=(A^t\;\mathbf 1)$ be the $m\times(n+1)$-matrix
obtained by appending a column of units to~$A^t$. The matrix
$\widetilde A^t$ has full rank $n+1$ (indeed, otherwise there is
$\mb x\in\R^n$ such that $\langle\mb a_i,\mb x\rangle=1$ for
all~$i$, and then $\lambda\mb x$ is in~$P$ for any $\lambda\ge1$,
so that $P$ is unbounded). A configuration of vectors $G=(\mb
g_1,\ldots,\mb g_m)$ in $\R^{m-n-1}$ which is Gale dual to
$\widetilde A$ is called a \emph{Gale diagram} of~$P^*$. A Gale
diagram $G=(\mb g_1,\ldots,\mb g_m)$ of $P^*$ is therefore
determined by the conditions
\[
  GA^t=0,\quad \rank G=m-n-1,\quad\text{and }
  \sum_{i=1}^m\mb g_i=\mathbf 0.
\]
The rows of the matrix $G$ from a basis of \emph{affine
dependencies} between the vectors $\mb a_1,\ldots,\mb a_m$, i.e. a
basis in the space of $\mb y=(y_1,\ldots,y_m)^t$ satisfying
\[
  y_1\mb a_1+\cdots+y_m\mb a_m=\mathbf0,\quad
  y_1+\cdots+y_m=0.
\]
\end{construction}

\begin{proposition}\label{propcf}
The polyhedron $P=P(A,\mb b)$ is bounded if and only if the matrix
$\varGamma=(\gamma_{jk})$ can be chosen so that the affine plane
$i_{A,\mb b}(\R^n)$ is given by
\begin{equation}\label{bplane}
  i_{A,\mb b}(\R^n)= \left\{\begin{array}{ll}
  \mb y\in\R^m\colon&\gamma_{11}y_1+\cdots+\gamma_{1m}y_m=c,\\[1mm]
  &\gamma_{j1}y_1+\cdots+\gamma_{jm}y_m=0,\quad
  2\le j\le m-n,
  \end{array}\right\}
\end{equation}
where $c>0$ and $\gamma_{1k}>0$ for all $k$.

Furthermore, if $b_i=1$ in~\eqref{ptope}, then the vectors $\mb
g_i=(\gamma_{2i},\ldots,\gamma_{m-n,i})^t$, $i=1,\ldots,m$, form a
Gale diagram of the polar polytope $P^*=\conv(\mb a_1,\ldots,\mb
a_m)$.
\end{proposition}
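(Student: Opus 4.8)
The plan is to establish both directions by working with the affine plane $i_{A,\mb b}(\R^n) = \{\mb y\in\R^m : \varGamma\mb y = \varGamma\mb b\}$ from Construction~\ref{dist}, and translating boundedness of $P$ into a statement about which functionals are constant on this plane. The key observation is that $P$ is bounded if and only if the linear functional $\mb y\mapsto y_1+\cdots+y_m$ is bounded on $i_{A,\mb b}(P) = \R^m_\ge \cap i_{A,\mb b}(\R^n)$: indeed, since all coordinates are nonnegative on this set, boundedness of the coordinate sum is equivalent to boundedness of each coordinate, hence of the whole set, hence (via the affine isomorphism $i_{A,\mb b}$) of~$P$ itself.

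**First I would** analyze when $\mb y\mapsto\sum y_i$ is bounded on $\R^m_\ge$ intersected with the affine plane $\{\varGamma\mb y=\varGamma\mb b\}$. A standard convex-geometry argument shows this happens precisely when the row vector $\mathbf 1=(1,\ldots,1)$ lies in the cone spanned by the rows of $\varGamma$ together with the nonnegative coordinate functionals $\mb e_k^t$ — more precisely, when there is a linear relation $\mathbf 1 = \sum_j \lambda_j (\text{row}_j\,\varGamma) + \sum_k \mu_k \mb e_k^t$ with $\mu_k\ge 0$; equivalently, some positive combination of the rows of $\varGamma$ dominates $\mathbf 1$ coordinatewise. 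After a change of basis in the row space of $\varGamma$ (allowed, since $\varGamma$ is only determined up to left multiplication by an invertible matrix), this positive combination can be taken as the new first row, giving $\gamma_{1k}>0$ for all $k$; the remaining rows can be completed to a basis, and we may further arrange (by subtracting multiples of the first row) that $\gamma_{11}b_1+\cdots+\gamma_{1m}b_m = c > 0$ while $\sum_k\gamma_{jk}b_k=0$ for $j\ge 2$, since $\varGamma\mb b$ is just a fixed vector and we can adjust the basis so that only its first coordinate is nonzero — positivity of $c$ follows because $P$ nonempty forces $i_{A,\mb b}(\R^n)$ to meet $\R^m_\ge$, so $\sum y_i = c$ must be achievable with $y_i\ge 0$, forcing $c\ge 0$, and $c=0$ would force $\mb y=\mathbf 0$, contradicting that $P$ is $n$-dimensional with $n\ge 1$. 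This yields the presentation~\eqref{bplane}. The converse is immediate: if $i_{A,\mb b}(\R^n)$ has the form~\eqref{bplane}, then on $i_{A,\mb b}(P)$ we have $0\le y_k$ and $\sum\gamma_{1k}y_k=c$ with all $\gamma_{1k}>0$, so each $y_k\le c/\min_l\gamma_{1l}$, hence $i_{A,\mb b}(P)$ is bounded and so is~$P$.

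**For the second assertion**, I would specialize to $b_i=1$, so $\mb b=\mathbf 1$ and the constraint $\sum_k\gamma_{jk} = 0$ for $2\le j\le m-n$ says exactly that each vector $\mb g_i=(\gamma_{2i},\ldots,\gamma_{m-n,i})^t$ satisfies $\sum_i\mb g_i=\mathbf 0$. The rows $2,\ldots,m-n$ of $\varGamma$ span an $(m-n-1)$-dimensional space (they are linearly independent, being part of a basis, and the first row is not in their span since it is the unique row with nonzero dot product against $\mb b=\mathbf 1$), so $\rank G = m-n-1$. Finally, since the rows of $\varGamma$ are relations among the $\mb a_i$, we have $\varGamma A^t = 0$, hence in particular $G A^t = 0$; combined with $\sum_i\mb g_i=\mathbf 0$ and $\rank G=m-n-1$, these are exactly the three defining conditions for a Gale diagram of $P^*=\conv(\mb a_1,\ldots,\mb a_m)$ recorded in Construction~(Gale diagram).

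**The main obstacle** I expect is the first direction — producing the basis change that simultaneously makes one row coordinatewise positive and diagonalizes the action on $\varGamma\mb b$. The positivity part is the genuine content: it requires the separating-hyperplane (or Farkas/Gordan) argument showing that boundedness of a linear functional on a polyhedral cone-section is equivalent to membership in the appropriate finitely generated cone, and one must check that the positive combination one extracts can indeed be normalized to have \emph{strictly} positive entries in every coordinate (not just nonnegative), which uses genericity or at least that every coordinate function is not identically zero on $P$. Once that single positive row is in hand, the rest is routine linear algebra: extend to a basis, clear the $\varGamma\mb b$ vector onto the first coordinate, and read off the Gale diagram conditions.
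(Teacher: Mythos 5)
Your proof is correct and takes essentially the same route as the paper's: both arguments reduce boundedness of $P$ to the existence of a coordinatewise strictly positive vector in the row space of $\varGamma$ (your LP/Farkas dual functional is precisely the normal to the separating hyperplane $H_0\supset L_0$ with $H_0\cap\R^m_\ge=\{\mathbf 0\}$ that the paper chooses), and then perform the identical basis change clearing the right-hand sides of rows $2,\ldots,m-n$ and verify the same three defining conditions $GA^t=0$, $\rank G=m-n-1$, $\sum_i\mb g_i=\mathbf 0$ of a Gale diagram. One small slip: in your ``more precisely'' formulation the sign of the correction term is reversed --- the correct certificate is $\sum_j\lambda_j(\mathrm{row}_j\varGamma)=\mathbf 1+\sum_k\mu_k\mb e_k^t$ with $\mu_k\ge0$ (the row combination must \emph{dominate} $\mathbf 1$, as your subsequent ``equivalently'' clause correctly states and actually uses).
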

\begin{proof}
The image $i_{A,\mb b}(P)$ is the intersection of the $n$-plane
$L=i_{A,\mb b}(\R^n)$ with $\R^m_\ge$. It is bounded if and only
if $L_0\cap\R_\ge^m=\{\mathbf0\}$, where $L_0$ is the $n$-plane
through~$\mathbf 0$ parallel to~$L$. Choose a hyperplane $H_0$
through $\mathbf 0$ such that $L_0\subset H_0$ and
$H_0\cap\R_\ge^m=\{\mathbf 0\}$. Let $H$ be the affine hyperplane
parallel to $H_0$ and containing~$L$. Since $L\subset H$, we may
take the equation defining $H$ as the first equation in the system
$\varGamma\mb y=\varGamma\mb b$ defining~$L$. The conditions on
$H_0$ imply that $H\cap\R_\ge^m$ is nonempty and bounded, that is,
$c>0$ and $\gamma_{1k}>0$ for all~$k$. Now, subtracting the first
equation from the other equations of the system $\varGamma\mb
y=\varGamma\mb b$ with appropriate coefficients, we achieve that
the right hand sides of the last $m-n-1$ equations become zero.

To prove the last statement, we observe that in our case
\[
  \varGamma=\begin{pmatrix}
  \gamma_{11}&\cdots&\gamma_{1m}\\
  \mb g_1&\cdots&\mb g_m\end{pmatrix}.
\]
The conditions $\varGamma A=0$ and $\rank\varGamma=m-n$ imply that
$GA=0$ and $\rank G=m-n-1$. Finally, by comparing~\eqref{iabrn}
with~\eqref{bplane} we obtain $\varGamma\mb
b=\begin{pmatrix}c\\\mathbf 0\end{pmatrix}$. Since $b_i=1$, we get
$\sum_{i=1}^m\mb g_i=\mathbf 0$. Thus, $G=(\mb g_1,\ldots,\mb
g_m)$ is a Gale diagram of~$P^*$.
\end{proof}

\begin{corollary}\label{Pbound}
A polyhedron $P=P(A,\mb b)$ is bounded if and only if the vectors
$\mb a_1,\ldots,\mb a_m$ satisfy $\alpha_1\mb
a_1+\cdots+\alpha_m\mb a_m=\mathbf0$ for some positive
numbers~$\alpha_k$.
\end{corollary}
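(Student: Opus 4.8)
The plan is to deduce this directly from Proposition \ref{propcf}, using the description of the affine plane $i_{A,\mb b}(\R^n)$ given in \eqref{bplane}. The point is to translate the geometric boundedness condition into a statement about the rows of $\varGamma$, and then to observe that the first row of $\varGamma$ records precisely a linear relation $\sum_k\gamma_{1k}\mb a_k=\mathbf 0$ among the $\mb a_i$.

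First I would recall that the rows of $\varGamma$ form a basis of the space of linear relations between the vectors $\mb a_1,\ldots,\mb a_m$; in particular, for any choice of $\varGamma$, each row $(\gamma_{j1},\ldots,\gamma_{jm})$ satisfies $\gamma_{j1}\mb a_1+\cdots+\gamma_{jm}\mb a_m=\mathbf 0$. Now suppose $P=P(A,\mb b)$ is bounded. By Proposition \ref{propcf}, $\varGamma$ can be chosen so that \eqref{bplane} holds with $c>0$ and $\gamma_{1k}>0$ for all $k$. Taking $\alpha_k=\gamma_{1k}$ then gives positive numbers with $\alpha_1\mb a_1+\cdots+\alpha_m\mb a_m=\mathbf 0$, as required.

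Conversely, suppose there are positive numbers $\alpha_k$ with $\sum_k\alpha_k\mb a_k=\mathbf 0$. Then the vector $\boldsymbol\alpha=(\alpha_1,\ldots,\alpha_m)^t$ lies in the space of linear relations between the $\mb a_i$, i.e.\ in the row space of $\varGamma$; equivalently, $\boldsymbol\alpha$ is orthogonal to $i_{A,\mb b}(\R^n)-\mb b = i_{A,\mathbf 0}(\R^n)=\Im A^t$, the linear subspace $L_0$ parallel to $L=i_{A,\mb b}(\R^n)$. Thus $L_0$ is contained in the hyperplane $\{\mb y\colon\langle\boldsymbol\alpha,\mb y\rangle=0\}$, which meets the orthant $\R^m_\ge$ only at $\mathbf 0$ because all $\alpha_k>0$. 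Hence $L_0\cap\R^m_\ge=\{\mathbf 0\}$, and since $i_{A,\mb b}(P)=L\cap\R^m_\ge$ (Construction \ref{dist}) is a closed subset of an affine plane whose direction space meets the orthant trivially, it is bounded; therefore $P$ is bounded.

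The only slightly delicate point is the last implication: one must justify that $L\cap\R^m_\ge$ is bounded once $L_0\cap\R^m_\ge=\{\mathbf 0\}$. This is a standard convexity fact — if a closed convex set $L\cap\R^m_\ge$ were unbounded it would contain a ray, whose direction vector would lie in the recession cone $L_0\cap\R^m_\ge$, contradicting triviality of the latter — and it is already used implicitly in the proof of Proposition \ref{propcf}, so I would simply invoke it. Everything else is bookkeeping with the row/column duality of $A$ and $\varGamma$.
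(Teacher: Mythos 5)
Your proof is correct and follows essentially the same route as the paper: the ``bounded $\Rightarrow$ positive relation'' direction is exactly the paper's appeal to Proposition~\ref{propcf} (the first row of $\varGamma$ being a relation among the $\mb a_i$ with positive entries), and the converse rests on the same positivity observation. The only cosmetic difference is that for ``positive relation $\Rightarrow$ bounded'' the paper directly notes that $i_{A,\mb b}(P)$ lies in the bounded set $\{\mb y\in\R^m_\ge\colon\sum_k\alpha_ky_k=\sum_k\alpha_kb_k\}$, whereas you route through the recession-cone criterion $L_0\cap\R^m_\ge=\{\mathbf 0\}$ already used in the proof of Proposition~\ref{propcf}; both are fine.
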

\begin{proof}
If $\mb a_1,\ldots,\mb a_m$ satisfy $\sum_{k=1}^m\alpha_k\mb
a_k=\mathbf0$ with positive~$\alpha_k$, then we can take
$\sum_{k=1}^m\alpha_k y_k=\sum_{k=1}^m\alpha_k b_k$ as the first
equation defining the $n$-plane $i_{A,\mb b}(\R^n)$ in~$\R^m$. It
follows that $i_{A,\mb b}(P)$ is contained in the intersection of
the hyperplane $\sum_{k=1}^m\alpha_k y_k=\sum_{k=1}^m\alpha_k b_k$
with $\R^m_\ge$, which is bounded since all $\alpha_k$ are
positive. Therefore, $P$ is bounded.

Conversely, if $P$ is bounded, then it follows from
Proposition~\ref{propcf} and Gale duality that $\mb a_1,\ldots,\mb
a_m$ satisfy $\gamma_{11}\mb a_1+\cdots+\gamma_{1m}\mb
a_m=\mathbf0$ with $\gamma_{1k}>0$.
\end{proof}
A Gale diagram of $P^*$ encodes its combinatorics (and the
combinatorics of~$P$) completely. We give the corresponding
statement in the generic case only:

\begin{proposition}\label{galecomb}
Assume that~\eqref{ptope} is a generic presentation with $b_i=1$.
Let $P^*=\conv(\mb a_1,\ldots,\mb a_m)$ be the polar simplicial
polytope and $G=(\mb g_1,\ldots,\mb g_m)$ be its Gale diagram.
Then the following conditions are equivalent:
\begin{itemize}
\item[(a)]
$F_{i_1}\cap\cdots\cap F_{i_k}\ne\varnothing$ in~$P=P(A,\mathbf
1)$;

\item[(b)] $\conv(\mb a_{i_1},\ldots,\mb a_{i_k})$ is a face
of~$P^*$;

\item[(c)]
$\mathbf 0\in\conv\bigr(\mb g_j\colon
j\notin\{i_1,\ldots,i_k\}\bigl)$.
\end{itemize}
\end{proposition}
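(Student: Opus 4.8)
The plan is to establish the chain of equivalences $(a)\Leftrightarrow(b)\Leftrightarrow(c)$, exploiting the duality theorems already proved. The equivalence $(a)\Leftrightarrow(b)$ is essentially a restatement of Theorems~\ref{dualpol} and~\ref{polarity}(d): since the presentation is generic with $b_i=1$ and $P$ is simple, $P^*=\conv(\mb a_1,\ldots,\mb a_m)$ is simplicial, and by Theorem~\ref{dualpol} the face poset of $P^*$ is the opposite of that of~$P$. A face $\conv(\mb a_{i_1},\ldots,\mb a_{i_k})$ of~$P^*$ corresponds under this anti-isomorphism to the face $F_{i_1}\cap\cdots\cap F_{i_k}$ of~$P$, so the two are simultaneously nonempty (one should note that the correspondence $\mb a_i\leftrightarrow F_i$ is the one on vertices/facets, and a subset of vertices of the simplicial polytope $P^*$ spans a face exactly when the corresponding facets of~$P$ have nonempty common intersection). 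Here I would be mildly careful about redundant inequalities, but genericity guarantees each $F_i$ is a genuine facet or empty, and the $b_i=1$ normalization together with Theorem~\ref{polarity}(d) rules out the degenerate case $\mb a_i\in\conv(\mb a_j\colon j\ne i)$ for facets.

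The substantive part is $(b)\Leftrightarrow(c)$, which is the defining property of the Gale diagram translated into the language of convex hulls. Recall $G$ is Gale dual to $\widetilde A=(A\;\;\mathbf 1^t)$, so the rows of $G$ are the affine dependencies among $\mb a_1,\ldots,\mb a_m$. The key claim is: $\conv(\mb a_{i_1},\ldots,\mb a_{i_k})$ is a face of $P^*=\conv(\mb a_1,\ldots,\mb a_m)$ if and only if $\mathbf 0\in\conv(\mb g_j\colon j\notin I)$, where $I=\{i_1,\ldots,i_k\}$. To see this I would use the separating-hyperplane characterization of a face: $\conv(\mb a_i\colon i\in I)$ is a face iff there is an affine functional $\ell$ on $\R^n$ with $\ell(\mb a_i)=0$ for $i\in I$ and $\ell(\mb a_j)>0$ for $j\notin I$. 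Writing $\ell(\mb x)=\langle\mb c,\mb x\rangle+d$, the existence of such $\ell$ is equivalent to the existence of a vector $\mb w=(w_1,\ldots,w_m)$ with $w_i=0$ for $i\in I$, $w_j>0$ for $j\notin I$, and $\mb w$ in the image of $(A\;\;\mathbf 1^t)$, i.e.\ $\mb w\perp$ every affine dependency among the $\mb a_i$. By Gale duality this orthogonality says precisely that $\mb w$ is orthogonal to the row space of $G$, equivalently $\sum_j w_j\mb g_j=\mathbf 0$; restricting to coordinates outside~$I$ and using $w_j>0$, after rescaling to $\sum_{j\notin I}w_j=1$ this is exactly the statement $\mathbf 0\in\conv(\mb g_j\colon j\notin I)$. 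Conversely a convex combination $\sum_{j\notin I}\lambda_j\mb g_j=\mathbf 0$ with $\lambda_j>0$ extends (by setting $\lambda_i=0$ for $i\in I$) to a vector annihilating all affine dependencies, hence lying in the image of $(A\;\;\mathbf 1^t)$, hence giving the required affine functional~$\ell$.

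I expect the main obstacle to be bookkeeping the affine (rather than linear) setup cleanly: one must consistently remember that the Gale diagram $G$ is Gale dual to the \emph{augmented} matrix $\widetilde A$, so that its rows are affine dependencies among the $\mb a_i$ and $\sum_i\mb g_i=\mathbf 0$; and one must correctly pass between "$\conv(\mb a_i\colon i\in I)$ is a face of $\conv$ of all the $\mb a_i$" and the existence of a supporting affine functional vanishing exactly on~$I$. A secondary technical point is ensuring the strict inequality $w_j>0$ (as opposed to $\ge 0$) is available; this uses genericity---a face of the simplicial polytope $P^*$ on the vertices indexed by~$I$ is supported by a hyperplane that meets $P^*$ in exactly that face, so all other vertices lie strictly on the positive side. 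Once these conversions are set up, the equivalence $(b)\Leftrightarrow(c)$ is a direct consequence of the exactness of the dual short exact sequences in Construction~\ref{galeduality} (or equivalently, of Theorem~\ref{galespan} applied in the augmented setting, replacing "linearly independent / spans" by "affinely independent and a face / contains $\mathbf 0$ in its convex hull").
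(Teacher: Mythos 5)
Your proof follows essentially the same route as the paper's: (a)$\Leftrightarrow$(b) via Theorems~\ref{polarity} and~\ref{dualpol}, and (b)$\Leftrightarrow$(c) by translating the supporting affine functional $\xi(\mb u)=\langle\mb u,\mb x\rangle+b$ into a vector $\mb y=A^t\mb x+b\mathbf 1$ annihilated by~$G$, which is exactly the Gale-duality computation the paper performs. The one place to be slightly more careful is the converse (c)$\Rightarrow$(b): condition (c) only supplies coefficients $\lambda_j\ge 0$ rather than $\lambda_j>0$, so the resulting functional may vanish on additional vertices and a priori exhibits $\{\mb a_i\colon i\in I\}$ only as a \emph{subset} of the vertex set of a face; one then invokes the simpliciality of $P^*$ (as the paper does) to conclude that it is itself the vertex set of a face.
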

\begin{proof}
The equivalence (a)$\Leftrightarrow\,$(b) follows from
Theorems~\ref{polarity} and~\ref{dualpol}.

(b)$\Rightarrow\,$(c). Let $\conv(\mb a_{i_1},\ldots,\mb a_{i_k})$
be a face of~$P^*$. We first observe that each of $\mb
a_{i_1},\ldots,\mb a_{i_k}$ is a vertex of this face, as otherwise
presentation~\eqref{ptope} is not generic. By definition of a
face, there exists a linear function $\xi$ such that $\xi(\mb
a_j)=0$ for $j\in\{i_1,\ldots,i_k\}$ and $\xi(\mb a_j)>0$
otherwise. The condition $\mathbf 0\in\mathop{\mathrm{int}}P^*$
implies that $\xi({\bf 0})>0$, and we may assume that $\xi$ has
the form $\xi(\mb u)=\langle\mb u,\mb x\rangle+1$ for some $\mb
x\in\R^n$. Set $y_j=\xi(\mb a_j)=\langle\mb a_j,\mb x\rangle+1$,
i.e. $\mb y=A^t\mb x+\mathbf 1$. We have
\[
  \sum_{j\notin\{i_1,\ldots,i_k\}}\mb g_jy_j=
  \sum_{j=1}^m\mb g_j y_j=G\mb y=G(A^t\mb x+\mathbf 1)
  =G\mathbf 1=\sum_{j=1}^m\mb g_j=\mathbf 0,
\]
where $y_j>0$ for $j\notin\{i_1,\ldots,i_k\}$. It follows that
$\mathbf 0\in\conv(\mb g_j\colon j\notin\{i_1,\ldots,i_k\})$.

(c)$\Rightarrow\,$(b). Let $\sum_{j\notin\{i_1,\ldots,i_k\}}\mb
g_jy_j=\mathbf 0$ with $y_j\ge0$ and at least one $y_j$ nonzero.
This is a linear relation between the vectors~$\mb g_j$. The space
of such linear relations has basis formed by the columns of the
matrix $\widetilde A^t=(A^t\;\mathbf 1)$. Hence, there exists $\mb
x\in\R^n$ and $b\in\R$ such that $y_j=\langle\mb a_j,\mb
x\rangle+b$. The linear function $\xi(\mb u)=\langle\mb u,\mb
x\rangle+b$ takes zero values on $\mb a_j$ with
$j\in\{i_1,\ldots,i_k\}$ and takes nonnegative values on the
other~$\mb a_j$. Hence, $\mb a_{i_1},\ldots,\mb a_{i_k}$ is a
subset of the vertex set of a face. Since $P^*$ is simplicial,
$\mb a_{i_1},\ldots,\mb a_{i_k}$ is a vertex set of a face.
\end{proof}

\begin{remark}
We allow redundant inequalities in Proposition~\eqref{galecomb}.
In this case we obtain the equivalences
\[
  F_i=\varnothing\quad\Leftrightarrow\quad
  \mb a_i\in\conv(\mb a_j\colon j\ne i)\quad\Leftrightarrow\quad
  \mathbf 0\notin\conv(\mb g_j\colon j\ne i).
\]
\end{remark}

A configuration of vectors $G=(\mb g_1,\ldots,\mb g_m)$ in
$\R^{m-n-1}$ with the property
\[
  \mathbf 0\in\conv\bigr(\mb g_j\colon
  j\notin\{i_1,\ldots,i_k\}\bigl)\quad\Leftrightarrow\quad
  \conv(\mb
  a_{i_1},\ldots,\mb a_{i_k})\text{ is a face of }P^*
\]
is called a \emph{combinatorial Gale diagram} of~$P^*=\conv(\mb
a_1,\ldots,\mb a_m)$. For example, a configuration obtained by
multiplying each vector in a Gale diagram by a positive number is
a combinatorial Gale diagram. Furthermore, the vectors of a
combinatorial Gale diagram can be moved as long as the origin does
not cross the `walls', i.e. affine hyperplanes spanned by subsets
of $\mb g_1,\ldots,\mb g_m$. A combinatorial Gale diagram of $P^*$
is a Gale diagram of a polytope which is combinatorially
equivalent to~$P^*$.

Gale diagrams provide an efficient tool for studying the
combinatorics of higher-dimensional polytopes with few vertices,
as in this case a Gale diagram translates the higher-dimensional
structure to a low-dimensional one. For example, Gale diagrams are
used to classify $n$-polytopes with up to $n+3$ vertices and to
find unusual examples when the number of vertices is~$n+4$,
see~\cite[\S6.5]{zieg95}.

\section{Intersections of quadrics}\label{intquad}
Here we describe the correspondence between
polyhedra~\eqref{ptope} and intersections of quadrics.

\subsection{From polyhedra to quadrics}
\begin{construction}[{\cite{bu-pa02}, see also~\cite[\S3]{b-p-r07}}]
Let $P=P(A,\mb b)$ be a presentation~\eqref{ptope} of a polyhedron
with a vertex. Recall the map $i_{A,\mb b}\colon\R^n\to\R^m$, \
$\mb x\mapsto A^t\mb x+\mb b$ (see Construction~\ref{dist}). It
embeds $P$ into $\R^m_\ge$ (since the vectors $\mb a_1,\ldots,\mb
a_m$ span~$\R^n$). We define the space $\mathcal Z_{A,\mb b}$ from
the commutative diagram
\begin{equation}\label{cdiz}
\begin{CD}
  \mathcal Z_{A,\mb b} @>i_Z>>\C^m\\
  @VVV\hspace{-0.2em} @VV\mu V @.\\
  P @>i_{A,\mb b}>> \R^m_\ge
\end{CD}
\end{equation}
where $\mu(z_1,\ldots,z_m)=(|z_1|^2,\ldots,|z_m|^2)$. The torus
$\T^m$ acts on $\mathcal Z_{A,\mb b}$ with quotient $P$, and $i_Z$
is a $\T^m$-equivariant embedding.

By replacing $y_k$ by $|z_k|^2$ in the equations defining the
affine plane $i_{A,\mb b}(\R^n)$ (see~\eqref{iabrn}) we obtain
that $\mathcal Z_{A,\mb b}$ embeds into $\C^m$ as the set of
common zeros of $m-n$ quadratic equations (\emph{Hermitian
quadrics}):
\begin{equation}\label{zpqua}
  i_Z(\mathcal Z_{A,\mb b})=\Bigl\{\mb z\in\C^m\colon\sum_{k=1}^m\gamma_{jk}|z_k|^2=
  \sum_{k=1}^m\gamma_{jk}b_k,\;\text{ for }1\le j\le m-n\Bigr\}.
\end{equation}
\end{construction}

The following property of $\mathcal Z_{A,\mb b}$ follows easily
from its construction.

\begin{proposition}\label{easyzp}
Given a point $z\in\mathcal Z_{A,\mb b}$, the $j$th coordinate of
$i_Z(z)\in\C^m$ vanishes if and only if $z$ projects onto a point
$\mb x\in P$ such that $\mb x\in F_j$.
\end{proposition}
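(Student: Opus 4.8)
The plan is to trace through the commutative diagram~\eqref{cdiz} that defines $\mathcal Z_{A,\mb b}$ and use the explicit description of the quadrics in~\eqref{zpqua}, together with the definition of $F_j$. First I would unwind what it means for a point $z\in\mathcal Z_{A,\mb b}$ to "project onto $\mb x\in P$": by construction the left vertical map sends $z$ to the point $\mb x\in P$ with $i_{A,\mb b}(\mb x)=\mu(i_Z(z))$, i.e.\ $(\langle\mb a_k,\mb x\rangle+b_k)_{k=1,\ldots,m}=(|z_k|^2)_{k=1,\ldots,m}$ where we write $z=i_Z(z)\in\C^m$ by the equivariant embedding. Thus for every coordinate $k$ we have the identity $|z_k|^2=\langle\mb a_k,\mb x\rangle+b_k$.

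With this identity in hand the statement is immediate in both directions. If the $j$th coordinate of $i_Z(z)$ vanishes, then $0=|z_j|^2=\langle\mb a_j,\mb x\rangle+b_j$, which is exactly the condition $\mb x\in F_j$ (recall $F_j=\{\mb x\in P\colon\langle\mb a_j,\mb x\rangle+b_j=0\}$). Conversely, if $\mb x\in F_j$ then $\langle\mb a_j,\mb x\rangle+b_j=0$, so $|z_j|^2=0$ and hence $z_j=0$. The only point needing a word of care is that the commutative square really does force $\mu(i_Z(z))=i_{A,\mb b}(\mb x)$ coordinatewise, so that the vanishing of $\mu$-images is equivalent to the vanishing of the $z_k$ themselves; this follows because $\mu(z_1,\ldots,z_m)=(|z_1|^2,\ldots,|z_m|^2)$ and $|z_k|^2=0\iff z_k=0$.

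I expect there is no real obstacle here: the proposition is essentially a restatement of the defining pullback diagram, and the "proof" amounts to reading off coordinate $j$ of the identity $\mu(i_Z(z))=i_{A,\mb b}(\mb x)$. The only thing one should be slightly attentive to is notational: distinguishing $z\in\mathcal Z_{A,\mb b}$ (the abstract point) from $i_Z(z)\in\C^m$ (its image), and noting that the projection $\mathcal Z_{A,\mb b}\to P$ in~\eqref{cdiz} is precisely $i_{A,\mb b}^{-1}\circ\mu\circ i_Z$ since $i_{A,\mb b}$ is injective onto its image. No additional hypotheses beyond "$P$ has a vertex" (used only to guarantee $i_{A,\mb b}$ is an embedding) are needed, and genericity of the presentation plays no role.
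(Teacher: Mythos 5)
Your argument is correct and is exactly what the paper means when it says the proposition ``follows easily from its construction'': the commutative square~\eqref{cdiz} gives $|z_j|^2=\langle\mb a_j,\mb x\rangle+b_j$ coordinatewise, and reading off the $j$th coordinate yields the equivalence. No gaps; the paper offers no more detailed proof than this.
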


\begin{theorem}
\label{zpsmooth} The following conditions are equivalent:
\begin{itemize}
\item[(a)]
presentation~\eqref{ptope} determined by the data $(A,\mb b)$ is
generic;
\item[(b)]
the intersection of quadrics in~\eqref{zpqua} is nonempty and
nondegenerate, so that $\mathcal Z_{A,\mb b}$ is a smooth manifold
of dimension $m+n$.
\end{itemize}
Furthermore, under these conditions the embedding
$i_Z\colon\mathcal Z_{A,\mb b}\to\C^m$ has $\T^m$-equivariantly
trivial normal bundle; a $\T^m$-framing is determined by a choice
of matrix $\varGamma$ in~\eqref{iabrn}.
\end{theorem}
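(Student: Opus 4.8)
The plan is to analyze the defining equations of $\mathcal Z_{A,\mb b}$ in $\C^m$ directly and tie the smoothness criterion to the general-position condition for the hyperplanes in~$P$. Write the $j$th quadric as $q_j(\mb z)=\sum_{k=1}^m\gamma_{jk}|z_k|^2-\sum_{k=1}^m\gamma_{jk}b_k$, so that $\mathcal Z_{A,\mb b}=\{\mb z\colon q_1(\mb z)=\cdots=q_{m-n}(\mb z)=0\}$. Regarding each $q_j$ as a smooth real-valued function on $\C^m\cong\R^{2m}$, a point $\mb z$ is a regular point of the map $(q_1,\ldots,q_{m-n})$ precisely when the differentials $dq_1,\ldots,dq_{m-n}$ are linearly independent at $\mb z$. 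Since $d(|z_k|^2)=2(x_k\,dx_k+y_k\,dy_k)$ where $z_k=x_k+\mathrm iy_k$, one computes $dq_j=2\sum_{k=1}^m\gamma_{jk}(x_k\,dx_k+y_k\,dy_k)$, and the independence of $dq_1,\ldots,dq_{m-n}$ at $\mb z$ is equivalent to the linear independence of the vectors $\gamma^{(1)},\ldots,\gamma^{(m-n)}$ after deleting the columns $k$ with $z_k=0$ — that is, to the independence of $\{\gamma_k\colon z_k\ne0\}$ in $\R^{m-n}$.

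The next step is to translate this into a statement about the $\mb a_i$. By Proposition~\ref{easyzp}, if $\mb z\in\mathcal Z_{A,\mb b}$ projects to $\mb x\in P$, then the set of vanishing coordinates is exactly $\{k\colon\mb x\in F_k\}$; write $I_{\mb x}=\{k\colon\mb x\in F_k\}$ for this index set. So the independence of $\{\gamma_k\colon z_k\ne0\}=\{\gamma_k\colon k\notin I_{\mb x}\}$ is what we need at every point. By Theorem~\ref{galespan} applied with the complementary index set, $\{\gamma_k\colon k\notin I_{\mb x}\}$ spans $\R^{m-n}$ if and only if $\{\mb a_i\colon i\in I_{\mb x}\}$ is linearly independent; and when $\{\gamma_k\colon k\notin I_{\mb x}\}$ does span $\R^{m-n}$ it is automatically a basis or contains one, but we actually want independence of this set, not spanning — so I should instead apply Theorem~\ref{galespan} in the other direction: $\{\gamma_k\colon k\notin I_{\mb x}\}$ is \emph{independent} iff $\{\mb a_i\colon i\in I_{\mb x}\}$ \emph{spans}~$\R^n$. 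Hmm — the cleaner route is: the rank of $\varGamma_{\widehat{I_{\mb x}}}$ equals $(m-n)-\dim(\text{relations among }\{\mb a_i\colon i\notin I_{\mb x}\})$; tracking ranks through $\varGamma A^t=0$ shows $\varGamma_{\widehat I}$ has full rank $m-n$ iff the columns $\mb a_i$, $i\in I$, are linearly independent. Thus nondegeneracy at every $\mb z\in\mathcal Z_{A,\mb b}$ is equivalent to: for every $\mb x\in P$, the normals $\{\mb a_i\colon\mb x\in F_i\}$ are linearly independent — which is exactly the genericity of~\eqref{ptope}. For the equivalence (a)$\Leftrightarrow$(b) one also needs nonemptiness: genericity of~\eqref{ptope} includes $P\ne\varnothing$, hence $\mathcal Z_{A,\mb b}\supseteq i_Z^{-1}(\text{fiber over a vertex})\ne\varnothing$; conversely if the quadrics intersect nondegenerately and nonemptily then $P=\mu(\mathcal Z_{A,\mb b})\cap i_{A,\mb b}(\R^n)$ projection is nonempty, and the rank condition above at every point forces genericity. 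The dimension count is then immediate: $\mathcal Z_{A,\mb b}$ is cut out by $m-n$ functionally independent real equations in $\R^{2m}$, so $\dim\mathcal Z_{A,\mb b}=2m-(m-n)=m+n$.

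For the last assertion, the plan is to exhibit the trivialization explicitly. The gradient vector fields of $q_1,\ldots,q_{m-n}$ with respect to the standard Hermitian metric on~$\C^m$, namely the vector fields whose $k$th complex component is $\gamma_{jk}z_k$ (up to a real scalar), are normal to $\mathcal Z_{A,\mb b}$ in $\C^m$; by the rank computation just carried out they are linearly independent at every point of $\mathcal Z_{A,\mb b}$, so they frame the normal bundle. Each such field has $k$th component proportional to $z_k$, hence is visibly invariant under the coordinatewise $\T^m$-action ($t_kz_k\mapsto\gamma_{jk}t_kz_k$), so the framing is $\T^m$-equivariant, and it depends only on the choice of the matrix $\varGamma$ (a different choice $\varGamma'=C\varGamma$ changes the framing by the constant invertible matrix~$C$). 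The main obstacle is the bookkeeping in the rank computation — making precise that $\rank\varGamma_{\widehat I}=m-n$ iff $\{\mb a_i\colon i\in I\}$ is independent — but this is exactly the content packaged in Theorem~\ref{galespan} and Lemma~\ref{2ses}, so once the differentials are computed the argument is essentially a citation. The only genuinely delicate point is handling redundant inequalities in a generic presentation: for such $i$ the hyperplane misses $P$, so $F_i=\varnothing$, hence $z_i$ never vanishes on $\mathcal Z_{A,\mb b}$, and such indices simply never enter any $I_{\mb x}$ — so they do not affect the rank condition, and the argument goes through unchanged.
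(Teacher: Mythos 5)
Your proposal is correct and follows essentially the same route as the paper: compute the differentials of the quadrics, observe that their rank at $\mb z$ equals the rank of $\varGamma_{\widehat I}$ for $I$ the zero set of $\mb z$, identify $I$ with $I_{\mb x}$ via Proposition~\ref{easyzp}, and invoke Theorem~\ref{galespan}; the framing by the $\T^m$-invariant gradient fields is also the paper's (terser) argument. One remark on your mid-proof hesitation: your first instinct was the right one and the ``correction'' was the error --- linear independence of the $m-n$ differentials means full \emph{row} rank of the $(m-n)\times(m-|I|)$ matrix $\varGamma_{\widehat I}$, which is exactly the condition that its columns $\{\gamma_k\colon k\notin I\}$ \emph{span} $\R^{m-n}$ (not that they be independent), and this is precisely what Theorem~\ref{galespan} equates with linear independence of $\{\mb a_i\colon i\in I\}$; the detour through ``independence of $\{\gamma_k\}$ iff $\{\mb a_i\}$ spans'' is not needed and would prove the wrong thing.
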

\begin{proof}
For simplicity we identify $\mathcal Z_{A,\mb b}$ with its
embedding $i_Z(\mathcal Z_{A,\mb b})\subset\C^m$. We calculate the
gradients of the $m-n$ quadrics in~\eqref{zpqua} at a point $\mb
z=(x_1,y_1,\ldots,x_m,y_m)\in\mathcal Z_{A,\mb b}$, where
$z_k=x_k+iy_k$:
\begin{equation}\label{grve}
  2\left(\gamma_{j1}x_1,\,\gamma_{j1}y_1,\,\dots,\,\gamma_{jm}x_m,\,\gamma_{jm}y_m\right),\quad
  1\le j\le m-n.
\end{equation}
These gradients form the rows of the $(m-n)\times 2m$ matrix
$2\varGamma\varDelta$, where
\[
\varDelta\;=\;
\begin{pmatrix}
  x_1&y_1& \ldots & 0  &0 \\
  \vdots & \vdots & \ddots & \vdots & \vdots\\
  0  & 0 & \ldots &x_m &y_m
\end{pmatrix}.
\]
Let $I=\{i_1,\ldots,i_k\}=\{i\colon z_i=0\}$ be the set of zero
coordinates of~$\mb z$. Then the rank of the gradient matrix
$2\varGamma\varDelta$ at $\mb z$ is equal to the rank of the
$(m-n)\times(m-k)$ matrix $\varGamma_{\widehat I}$ obtained by
deleting the columns $i_1,\ldots,i_k$ from~$\varGamma$.

Now let~\eqref{ptope} be a generic presentation. By
Proposition~\ref{easyzp}, a point $\mb z$ with
$z_{i_1}=\cdots=z_{i_k}=0$ projects to a point in
$F_{i_1}\cap\cdots\cap F_{i_k}\ne\varnothing$. Hence the vectors
$\mb a_{i_1},\ldots,\mb a_{i_k}$ are linearly independent. By
Theorem~\ref{galespan}, the rank of $\varGamma_{\widehat I}$
is~$m-n$. Therefore, the intersection of quadrics~\eqref{zpqua} is
nondegenerate.

On the other hand, if~\eqref{ptope} is not generic, then there is
a point $\mb z\in\mathcal Z_{A,\mb b}$ such that the vectors
$\{\mb a_{i_1},\ldots,\mb a_{i_k}\colon
z_{i_1}=\cdots=z_{i_k}=0\}$ are linearly dependent. By
Theorem~\ref{galespan}, the columns of the corresponding matrix
$\varGamma_{\widehat I}$ do not span~$\R^{m-n}$, so its rank is
less than~$m-n$ and the intersection of quadrics~\eqref{zpqua} is
degenerate at~$\mb z$.

The last statement follows from the fact that $\mathcal Z_{A,\mb
b}$ is a nondegenerate intersection of quadratic surfaces, each of
which is $\T^m$-invariant.
\end{proof}

\subsection{From quadrics to polyhedra}
This time we start with an intersection of $m-n$ Hermitian
quadrics in~$\C^m$:
\begin{equation}\label{zgamma}
  \mathcal Z_{\varGamma,\delta}=\Bigl\{\mb z=(z_1,\ldots,z_m)\in\C^m\colon
  \sum_{k=1}^m\gamma_{jk}|z_k|^2=\delta_j,\quad\text{for }
  1\le j\le m-n\Bigr\}.
\end{equation}
The coefficients of quadrics form an $(m-n)\times m$-matrix
$\varGamma=(\gamma_{jk})$, and we denote its column vectors by
$\gamma_1,\ldots,\gamma_m$. We also consider the column vector of
the right hand sides,
$\delta=(\delta_1,\ldots,\delta_{m-n})^t\in\R^{m-n}$.

These intersections of quadrics are considered up to \emph{linear
equivalence}, which corresponds to applying a nondegenerate linear
transformation of $\R^{m-n}$ to $\varGamma$ and~$\delta$.
Obviously, such a linear equivalence does not change the
set~$\mathcal Z_{\varGamma,\delta}$.

We denote by $\R_\ge\langle\gamma_1,\ldots,\gamma_m\rangle$ the
cone spanned by the vectors $\gamma_1,\ldots,\gamma_m$ (i.e., the
set of linear combinations of these vectors with nonnegative real
coefficients).

A version of the following proposition appeared in~\cite{lope89},
and the proof below is a modification of the argument
in~\cite[Lemma~0.3]{bo-me06}. It allows us to determine the
nondegeneracy of an intersection of quadrics directly from the
data $(\varGamma,\delta)$:

\begin{proposition}\label{zgsmooth}
The intersection of quadrics~\eqref{zgamma} is nonempty and
nondegenerate if and only if the following two conditions are
satisfied:
\begin{itemize}
\item[(a)] $\delta\in
\R_\ge\langle\gamma_1,\ldots,\gamma_m\rangle$;\\[-0.7\baselineskip]

\item[(b)] if $\delta\in\R_\ge\langle
\gamma_{i_1},\ldots\gamma_{i_k}\rangle$, then $k\ge m-n$.
\end{itemize}
Under these conditions, $\mathcal Z_{\varGamma,\delta}$ is a
smooth submanifold in $\C^m$ of dimension $m+n$, and the vectors
$\gamma_1,\ldots,\gamma_m$ span~$\R^{m-n}$.
\end{proposition}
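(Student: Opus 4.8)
The plan is to pass through the moment map $\mu\colon\C^m\to\R^m_\ge$, $\mb z\mapsto(|z_1|^2,\ldots,|z_m|^2)$, so that $\mathcal Z_{\varGamma,\delta}=\mu^{-1}(\Pi)$ where $\Pi=\{\mb y\in\R^m_\ge\colon\varGamma\mb y=\delta\}$ is the intersection of the positive orthant with the affine plane $\{\mb y\colon\varGamma\mb y=\delta\}$. Since $\mu$ maps $\C^m$ onto $\R^m_\ge$, the set $\mathcal Z_{\varGamma,\delta}$ is nonempty if and only if $\Pi$ is, i.e.\ if and only if $\delta$ lies in the image $\varGamma(\R^m_\ge)=\R_\ge\langle\gamma_1,\ldots,\gamma_m\rangle$; this is precisely condition~(a), and it settles nonemptiness.

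For nondegeneracy I would recycle the gradient computation from the proof of Theorem~\ref{zpsmooth}. At a point $\mb z=(x_1+iy_1,\ldots,x_m+iy_m)$ the gradients of the $m-n$ functions in~\eqref{zgamma} are the rows of $2\varGamma\varDelta$, and the $k$th pair of columns of $\varGamma\varDelta$ equals $(x_k\gamma_k,\,y_k\gamma_k)$; hence the rank of this matrix equals the dimension of the linear span of $\{\gamma_k\colon z_k\ne0\}$. Consequently $\mathcal Z_{\varGamma,\delta}$ is nondegenerate if and only if, for every $\mb z\in\mathcal Z_{\varGamma,\delta}$, the vectors $\gamma_k$ with $z_k\ne0$ span~$\R^{m-n}$.

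It remains to match this with~(b). If $\mathcal Z_{\varGamma,\delta}$ is nonempty and nondegenerate and $\delta=\sum_{i\in S}y_i\gamma_i$ with $y_i\ge0$ and $S=\{i_1,\ldots,i_k\}$, then choosing $\mb z$ with $|z_i|^2=y_i$ for $i\in S$ and $z_i=0$ otherwise gives a point of $\mathcal Z_{\varGamma,\delta}$ whose set of nonzero coordinates is contained in $S$; since the corresponding columns of $\varGamma$ must span $\R^{m-n}$ there are at least $m-n$ of them, so $k\ge m-n$, which is~(b). Conversely, assume (a) and~(b); (a) provides a point $\mb z\in\mathcal Z_{\varGamma,\delta}$, and for any such point, with $J=\{j\colon z_j\ne0\}$, we have $\delta=\sum_{j\in J}|z_j|^2\gamma_j$ with all coefficients positive. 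Were $\{\gamma_j\colon j\in J\}$ to span only a proper subspace, the conical version of Carath\'eodory's theorem would express $\delta$ as a nonnegative combination of a linearly independent --- hence strictly fewer than $m-n$ element --- subset of $\{\gamma_j\colon j\in J\}$, contradicting~(b); so these columns span $\R^{m-n}$ and $\mathcal Z_{\varGamma,\delta}$ is nondegenerate.

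For the final assertions, nondegeneracy means exactly that $0$ is a regular value of the map $\C^m\cong\R^{2m}\to\R^{m-n}$ cutting out $\mathcal Z_{\varGamma,\delta}$, so the latter is a smooth submanifold of dimension $2m-(m-n)=m+n$; and $\gamma_1,\ldots,\gamma_m$ must span $\R^{m-n}$, for otherwise $\rank\varGamma<m-n$ and then $\rank(\varGamma\varDelta)<m-n$ at every point of the (by~(a) nonempty) set $\mathcal Z_{\varGamma,\delta}$, contradicting nondegeneracy. The only step that is not pure bookkeeping is the use of Carath\'eodory's theorem for cones in the implication (a)$\,\&\,$(b)$\Rightarrow$nondegeneracy; the rest is the moment-map translation together with the gradient calculation already carried out for Theorem~\ref{zpsmooth}.
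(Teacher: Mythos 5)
Your proposal is correct and follows essentially the same route as the paper: nonemptiness of $\mathcal Z_{\varGamma,\delta}$ is identified with condition~(a) via the moment map, the rank of the gradient matrix at $\mb z$ is identified with the dimension of the span of $\{\gamma_k\colon z_k\ne0\}$, and conical Carath\'eodory together with~(b) forces that span to be all of $\R^{m-n}$. The only difference is organisational — you isolate the equivalence ``nondegenerate $\Leftrightarrow$ active columns span'' before matching it with~(b), whereas the paper argues the two implications directly — but the substance is identical.
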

\begin{proof}
First, assume that (a) and (b) are satisfied. Then (a) implies
that $\mathcal Z_{\varGamma,\delta}\ne\varnothing$. Let $\mb
z\in\mathcal Z_{\varGamma,\delta}$. Then the rank of the matrix of
gradients of~\eqref{zgamma} at $\mb z$ is equal to
$\mathop{\mathrm{rk}}\{\gamma_k\colon z_k\ne0\}$. Since $\mb
z\in\mathcal Z_{\varGamma,\delta}$, the vector $\delta$ is in the
cone generated by those $\gamma_k$ for which $z_k\ne0$. By the
Carath\'eodory Theorem, $\delta$ belongs to the cone generated by
some $m-n$ of these vectors, that is,
$\delta\in\R_\ge\langle\gamma_{k_1},\ldots,\gamma_{k_{m-n}}\rangle$,
where $z_{k_i}\ne0$ for $i=1,\ldots,m-n$. Moreover, the vectors
$\gamma_{k_1},\ldots,\gamma_{k_{m-n}}$ are linearly independent
(otherwise, again by the Carath\'eodory Theorem, we obtain a
contradiction with~(b)). This implies that the $m-n$ gradients of
quadrics in~\eqref{zgamma} are linearly independent at~$\mb z$,
and therefore $\mathcal Z_{\varGamma,\delta}$ is smooth and
$(m+n)$-dimensional.

To prove the other implication we observe that if (b) fails, that
is, $\delta$ is in the cone generated by some $m-n-1$ vectors of
$\gamma_1,\ldots,\gamma_m$, then there is a point $\mb
z\in\mathcal Z_{\varGamma,\delta}$ with at least $n+1$ zero
coordinates. The gradients of quadrics in~\eqref{zgamma} cannot be
linearly independent at such~$\mb z$.
\end{proof}

The torus $\T^m$ acts on $\mathcal Z_{\varGamma,\delta}$, and the
quotient $\mathcal Z_{\varGamma,\delta}/\T^m$ is identified with
the set of nonnegative solutions of the system of $m-n$ linear
equations
\begin{equation}\label{linsys}
  \sum_{k=1}^m\gamma_ky_k=\delta.
\end{equation}
This set may be described as a convex polyhedron $P(A,\mb b)$
given by~\eqref{ptope}, where $(b_1,\ldots,b_m)$ is any solution
of~\eqref{linsys} and the vectors $\mb a_1,\ldots,\mb a_m\in\R^n$
form the transpose of a basis of solutions of the homogeneous
system $\sum_{k=1}^m\gamma_ky_k=\mathbf 0$.  We refer to $P(A,\mb
b)$ as the \emph{associated polyhedron} of the intersection of
quadrics~$\mathcal Z_{\varGamma,\delta}$. If the vectors
$\gamma_1,\ldots,\gamma_m$ span $\R^{m-n}$, then $\mb
a_1,\ldots,\mb a_m$ span~$\R^n$. In this case the two vector
configurations are Gale dual.

We summarise the results and constructions of this section as
follows:

\begin{theorem}\label{polquad}
A presentation of a polyhedron
\[
  P=P(A,\mb b)=\bigl\{\mb x\in\R^n\colon\langle\mb a_i,\mb
  x\rangle+b_i\ge0\quad\text{for }
  i=1,\ldots,m\bigr\}
\]
(with $\mb a_1,\ldots,\mb a_m$ spanning~$\R^n$) defines an
intersection of Hermitian quadrics
\[
  \mathcal Z_{\varGamma,\delta}=\Bigl\{\mb
  z=(z_1,\ldots,z_m)\in\C^m\colon
  \sum_{k=1}^m\gamma_{jk}|z_k|^2=\delta_j\quad\text{for }
  j=1,\ldots,m-n\Bigr\}.
\]
(with $\gamma_1,\ldots,\gamma_m$ spanning~$\R^{m-n}$) uniquely up
to a linear isomorphism of~$\R^{m-n}$, and an intersection of
quadrics~$\mathcal Z_{\varGamma,\delta}$ defines a
presentation~$P(A,\mb b)$ uniquely up to an isomorphism of~$\R^n$.

The systems of vectors $\mb a_1,\ldots,\mb a_m\in\R^n$ and
$\gamma_1,\ldots,\gamma_m\in\R^{m-n}$ are Gale dual, and the
vectors $\mb b\in\R^{m}$ and $\delta\in\R^{m-n}$ are related by
the identity $\delta=\varGamma\mb b$.

The intersection of quadrics $\mathcal Z_{\varGamma,\delta}$ is
nonempty and nondegenerate if and only if the presentation
$P(A,\mb b)$ is generic.
\end{theorem}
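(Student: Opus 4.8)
The plan is to assemble Theorem~\ref{polquad} as a bookkeeping statement that ties together the two constructions already established in the section, so the proof is a matter of matching up data rather than proving anything substantially new.

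First I would explain the correspondence in the direction ``polyhedra to quadrics''. Starting from a presentation $P=P(A,\mb b)$ with $\mb a_1,\ldots,\mb a_m$ spanning~$\R^n$, Construction~\ref{dist} writes the affine plane $i_{A,\mb b}(\R^n)$ as $\{\mb y\colon\varGamma\mb y=\varGamma\mb b\}$, where $\varGamma$ is any $(m-n)\times m$-matrix of full rank with $\varGamma A^t=0$; the rows of $\varGamma$ form a basis of linear relations among the~$\mb a_i$, so by Construction~\ref{galeduality} the columns $\gamma_1,\ldots,\gamma_m$ are a Gale dual configuration, and they span~$\R^{m-n}$ because $\varGamma$ has full rank. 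Substituting $y_k=|z_k|^2$ gives exactly the quadrics~\eqref{zpqua}, i.e.\ $\mathcal Z_{\varGamma,\delta}$ with $\delta=\varGamma\mb b$. The only ambiguity is the choice of basis of relations, which changes $\varGamma$ by an invertible left multiplier, i.e.\ replaces $(\varGamma,\delta)$ by $(C\varGamma,C\delta)$ for $C\in GL_{m-n}(\R)$ — precisely the linear equivalence of intersections of quadrics under which the set $\mathcal Z_{\varGamma,\delta}$ is invariant. That establishes uniqueness up to isomorphism of~$\R^{m-n}$.

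Next I would explain the reverse direction, which is the content of the discussion following Proposition~\ref{zgsmooth}. Given $\mathcal Z_{\varGamma,\delta}$ with $\gamma_1,\ldots,\gamma_m$ spanning~$\R^{m-n}$, the quotient $\mathcal Z_{\varGamma,\delta}/\T^m$ is the solution set of~\eqref{linsys}; choosing any particular solution $\mb b$ and taking $\mb a_1,\ldots,\mb a_m$ to be the transpose of a basis of solutions of the homogeneous system $\sum\gamma_k y_k=\mathbf 0$ recovers a presentation $P(A,\mb b)$ of that polyhedron, with the~$\mb a_i$ spanning~$\R^n$. Here the ambiguity is the choice of basis of the homogeneous solution space together with the choice of particular solution~$\mb b$; the former changes $A$ by an invertible left multiplier (an isomorphism of~$\R^n$), and the latter changes $\mb b$ within a coset of $\ker\varGamma$, which does not change~$P$. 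Since the homogeneous solution space is the kernel of $\varGamma$ acting on $\R^m$, the configuration $\mb a_1,\ldots,\mb a_m$ built this way fits into the short exact sequence $0\to W\xrightarrow{A^t}\R^m\xrightarrow{\varGamma}L\to0$ of Construction~\ref{galeduality}, so it is Gale dual to $\gamma_1,\ldots,\gamma_m$; and $\delta=\varGamma\mb b$ is immediate from $\mb b$ being a solution of~\eqref{linsys}. Finally, the last sentence combines Theorem~\ref{zpsmooth} and Proposition~\ref{zgsmooth}: each asserts, in its own language, the equivalence of genericity of~$P(A,\mb b)$ with nonemptiness-and-nondegeneracy of the quadrics (via Theorem~\ref{galespan} linking linear independence of subsets of the~$\mb a_i$ to spanning by the complementary~$\gamma_j$), so there is nothing left to prove.

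The one point requiring genuine care — and the ``main obstacle'' such as it is — is checking that the two constructions are mutually inverse up to the stated isomorphisms, i.e.\ that passing from $P$ to $\mathcal Z_{\varGamma,\delta}$ and back returns the same Gale-duality class of configurations. This is where I would be most explicit: the key is that in both directions the pair $(A,\varGamma)$ sits in the same dual pair of short exact sequences of Construction~\ref{galeduality} (one determining the kernel, the other the cokernel of the map $\R^m\to W^*$), and that by the discussion there each of $A$ and $\varGamma$ determines the other up to left multiplication by an invertible matrix — exactly the freedom recorded in the statement. Once that is spelled out, the relation $\delta=\varGamma\mb b$ and the genericity $\Leftrightarrow$ nondegeneracy equivalence are simply quotations of the earlier results.
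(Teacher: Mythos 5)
Your proposal is correct and takes essentially the same route as the paper, which states Theorem~\ref{polquad} explicitly as a summary of Constructions~\ref{dist} and~\ref{galeduality}, the discussion of the associated polyhedron following Proposition~\ref{zgsmooth}, and Theorem~\ref{zpsmooth} — exactly the pieces you assemble, including the observation that left multiplication of $\varGamma$ (resp.\ $A$) by an invertible matrix accounts for the stated uniqueness and that $\delta=\varGamma\mb b$ is immediate. (One small imprecision: replacing $\mb b$ by another solution of~\eqref{linsys} translates $P$ rather than leaving it literally unchanged, but this is absorbed by the allowed isomorphism of~$\R^n$.)
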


\begin{example}[$m=n+1$: one quadric]\label{mamsimplex}
If presentation~\eqref{ptope} is generic and $P$ is bounded, then
$m\ge n+1$. The case $m=n+1$ corresponds to a simplex. The
\emph{standard} simplex is given by the following $n+1$
inequalities:
\[
  \varDelta^n=\bigl\{\mb x\in\R^n\colon x_i\ge0\quad\text{for }
  i=1,\ldots,n,\quad\text{and } -x_1-\cdots-x_n+1\ge0\bigr\}.
\]
We therefore have $\mb a_i=\mb e_i$ (the $i$th standard basis
vector) for $i=1,\ldots,n$ and $\mb a_{n+1}=-\mb e_1-\cdots-\mb
e_n$. By taking $\varGamma=(1\cdots1)$ we obtain
\[
  \mathcal Z_{A,\mb b}=\mathbb S^n=\{\mb z\in\C^{n+1}\colon
  |z_1|^2+\cdots+|z_{n+1}|^2=1\}.
\]

More generally, a presentation~\eqref{ptope} with $m=n+1$ and $\mb
a_1,\ldots,\mb a_n$ spanning $\R^n$ can be taken by an isomorphism
of $\R^n$ to the form
\[
  P=\bigl\{\mb x\in\R^n\colon x_i+b_i\ge0\quad\text{for }
  i=1,\ldots,n,\quad\text{and }
  -c_1x_1-\cdots-c_nx_n+b_{n+1}\ge0\bigr\}.
\]
We therefore have $\varGamma=(c_1\cdots c_n\, 1)$, and $\mathcal
Z_{A,\mb b}$ is given by the single equation
\[
  c_1|z_1|^2+\cdots+c_n|z_n|^2+|z_{n+1}|^2=c_1b_1+\cdots+c_nb_n+b_{n+1}.
\]
If the presentation is generic and bounded, then $\mathcal
Z_{A,\mb b}$ is nonempty, nondegenerate and bounded by
Theorem~\ref{zpsmooth}. This implies that all $c_i$ and the right
hand side above are positive, and $\mathcal Z_{A,\mb b}$ is an
ellipsoid.
\end{example}

\section{Moment-angle manifolds from polytopes}\label{mampol}
Here we consider the case when the polyhedron $P$ defined
by~\eqref{ptope} (or equivalently, intersection of
quadrics~\eqref{zgamma}) is bounded. We also assume
that~\eqref{ptope} is a generic presentation, so that $P$ is an
$n$-dimensional simple polytope and $\mathcal Z_{A,\mb b}=\mathcal
Z_{\varGamma,\delta}$ is an $(m+n)$-dimensional closed smooth
manifold.

We start with the construction of an identification space, which
goes back to the work of Vinberg~\cite{vinb71} on Coxeter groups
and was presented in the form described below in the work of Davis
and Januszkiewicz~\cite{da-ja91}. It was the first construction of
what later became known as the moment-angle manifold.

\begin{construction}
Let $[m]=\{1,\ldots,m\}$ be the standard $m$-element set.
For each $I\subset[m]$ we consider the coordinate subtorus
\[
  \T^I=\{(t_1,\ldots,t_m)\in\T^m\colon t_j=1\text{ for
  }j\notin I\}\;\subset\;\T^m.
\]
In particular, $T^\varnothing$ is the trivial
subgroup~$\{1\}\subset\T^m$.

Define the map $\R_\ge\times\T\to\C$ by $(y,t)\mapsto yt$. Taking
product we obtain a map $\R^m_\ge\times\T^m\to\C^m$. The preimage
of a point $\mb z\in\C^m$ under this map is $\mb
y\times\T^{\omega(\mb z)}$, where $y_i=|z_i|$ for $1\le i\le m$
and
\[
  \omega(\mb z)=\{i\colon z_i=0\}\subset[m]
\]
is the set of zero coordinates of~$\mb z$. Therefore, $\C^m$ can
be identified with the quotient
\begin{equation}\label{Cmids}
  \R^m_\ge\times\T^m/{\sim\:}\quad\text{where }(\mb y,\mb t_1)\sim(\mb y,\mb
  t_2)\text{ if }\mb t_1^{-1}\mb t_2\in\T^{\omega(\mb y)}.
\end{equation}

Given $\mb x\in P$, set
\[
  I_{\mb x}=\{i\in[m]\colon{\mb x}\in F_i\}
\]
(the set of facets containing~$\mb x$).
\end{construction}

\begin{proposition}\label{zpids}
$\mathcal Z_{A,\mb b}$ is $\T^m$-equivariantly homeomorphic to the
quotient
\[
  P\times\T^m/{\sim\:}\quad\text{where }
  (\mb x,\mb t_1)\sim(\mb x,\mb t_2)\:\text{ if }\:\mb t_1^{-1}\mb t_2\in
  \T^{I_{\mb x}}.
\]
\end{proposition}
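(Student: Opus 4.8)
The plan is to exhibit an explicit $\T^m$-equivariant homeomorphism by factoring both $\mathcal Z_{A,\mb b}$ and the quotient $P\times\T^m/{\sim}$ through the identification~\eqref{Cmids} of $\C^m$ itself. Concretely, consider the map $\R^m_\ge\times\T^m\to\C^m$, $(\mb y,\mb t)\mapsto\mb y\cdot\mb t$ (coordinatewise product, using $(y,t)\mapsto yt$). Restricting to the preimage of $P\times\T^m$ under the embedding $i_{A,\mb b}\colon P\hookrightarrow\R^m_\ge$ gives a continuous surjection $P\times\T^m\to i_Z(\mathcal Z_{A,\mb b})$: indeed, by the diagram~\eqref{cdiz} a point $\mb z\in i_Z(\mathcal Z_{A,\mb b})$ satisfies $\mu(\mb z)=i_{A,\mb b}(\mb x)$ for a unique $\mb x\in P$, and $\mb z=i_{A,\mb b}(\mb x)^{1/2}\cdot\mb t$ for some $\mb t\in\T^m$ (interpreting the square root coordinatewise on the nonnegative reals). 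This map is clearly $\T^m$-equivariant for the action $\mb s\cdot(\mb x,\mb t)=(\mb x,\mb s\mb t)$ on the source and the coordinatewise action on $\C^m$.

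The main point is then to identify the fibres of this surjection with the equivalence classes of~$\sim$. A pair $(\mb x,\mb t_1)$ and $(\mb x,\mb t_2)$ map to the same point of $\C^m$ precisely when $i_{A,\mb b}(\mb x)^{1/2}\cdot\mb t_1 = i_{A,\mb b}(\mb x)^{1/2}\cdot\mb t_2$, i.e. when $t_{1,j}=t_{2,j}$ for every coordinate $j$ with $(i_{A,\mb b}(\mb x))_j\ne0$. By Proposition~\ref{easyzp} (or directly from the definition of $F_j$), the $j$th coordinate $\langle\mb a_j,\mb x\rangle+b_j$ vanishes exactly when $\mb x\in F_j$, that is, exactly when $j\in I_{\mb x}$. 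Hence $(\mb x,\mb t_1)$ and $(\mb x,\mb t_2)$ have the same image if and only if $\mb t_1^{-1}\mb t_2\in\T^{I_{\mb x}}$, which is precisely the relation~$\sim$. Therefore the surjection descends to a continuous $\T^m$-equivariant bijection $P\times\T^m/{\sim}\;\to\;i_Z(\mathcal Z_{A,\mb b})=\mathcal Z_{A,\mb b}$.

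It remains to upgrade this continuous equivariant bijection to a homeomorphism. Here I would invoke compactness: $P$ is a polytope, hence compact, so $P\times\T^m$ is compact and so is its quotient $P\times\T^m/{\sim}$; meanwhile $\mathcal Z_{A,\mb b}$ is Hausdorff as a subspace of $\C^m$. A continuous bijection from a compact space to a Hausdorff space is a homeomorphism, and the inverse is automatically $\T^m$-equivariant since the forward map is. This completes the proof. The only step requiring genuine care — and the place where one should be a little careful in the write-up — is the fibre computation identifying the coordinatewise "square-root times torus element" description with the abstract relation~$\sim$; everything else is formal, and the compactness argument sidesteps any need to construct a continuous inverse by hand.
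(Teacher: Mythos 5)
Your proof is correct and follows essentially the same route as the paper: both identify $\C^m$ with the quotient $\R^m_\ge\times\T^m/{\sim}$ from~\eqref{Cmids}, restrict over $i_{A,\mb b}(P)$, and use the observation that the vanishing coordinates of $i_{A,\mb b}(\mb x)$ are exactly indexed by~$I_{\mb x}$. You merely spell out two details the paper leaves implicit — the fibre computation and the compact-to-Hausdorff argument upgrading the continuous equivariant bijection to a homeomorphism — both of which are fine.
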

\begin{proof}
Using~\eqref{cdiz}, we identify $\mathcal Z_{A,\mb b}$ with
$i_{A,\mb b}(P)\times\T^m/{\sim\:}$, where $\sim$ is the
equivalence relation from~\eqref{Cmids}. A point $\mb x\in P$ is
mapped by $i_{A,\mb b}$ to $\mb y\in\R^m_\ge$ with $I_{\mb
x}=\omega(\mb y)$.
\end{proof}

An important corollary of this construction is that the
topological type of $\mathcal Z_{A,\mb b}$ depends only on the
combinatorics of~$P$:

\begin{proposition}\label{zpcomb}
Assume given two generic presentations:
\[
  P=\bigl\{\mb x\in\R^n\colon(A^t\mb x+\mb b)_i\ge0\bigr\}\quad\text{and}
  \quad P'=\bigl\{\mb x\in\R^n\colon(A'^t\mb x+\mb b')_i\ge 0\bigr\}
\]
such that $P$ and $P'$ are combinatorially equivalent simple
polytopes.
\begin{itemize}
\item[(a)] If both presentations are irredundant, then the corresponding manifolds
$\mathcal Z_{A,\mb b}$ and $\mathcal Z_{A',\mb b'}$ are
$\T^m$-equivariantly homeomorphic.

\item[(b)] If the second presentation is obtained from the first one by
adding $k$ redundant inequalities, then $\mathcal Z_{A',\mb b'}$
is homeomorphic to a product of $\mathcal Z_{A,\mb b}$ and a
$k$-torus~$T^k$.
\end{itemize}
\end{proposition}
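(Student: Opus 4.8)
The plan is to deduce both parts from the identification-space description of Proposition~\ref{zpids}, which presents $\mathcal Z_{A,\mb b}$ as the quotient $P\times\T^m/{\sim}$, where $(\mb x,\mb t_1)\sim(\mb x,\mb t_2)$ exactly when $\mb t_1^{-1}\mb t_2\in\T^{I_{\mb x}}$ and $I_{\mb x}=\{i\colon\mb x\in F_i\}$. The point I would stress first is that $I_{\mb x}$ is a purely combinatorial datum once the facets are labelled: if $\mb x$ lies in the relative interior of a face $G$ of $P$, then $I_{\mb x}=\{i\colon G\subseteq F_i\}$, which is read off the face poset. Thus the whole equivalence relation on $P\times\T^m$ is determined by the combinatorics of $P$ together with the labelling $F_1,\dots,F_m$ of its facets, and the strategy in both parts is to produce a suitable map of the polytope factors, check it respects the equivalence relations, and pass to quotients.

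For part (a), I would fix a combinatorial equivalence between $P$ and $P'$, use it to label the facets of $P'$ so that $F_i\leftrightarrow F_i'$, and then realise this equivalence by a homeomorphism $f\colon P\to P'$ preserving the face structure (possible since $P,P'$ are polytopes; cf.\ Section~\ref{galediag}). Such an $f$ carries the relative interior of each face $G$ of $P$ onto that of the corresponding face $G'$ of $P'$, and $\{i\colon G\subseteq F_i\}=\{i\colon G'\subseteq F_i'\}$, so $I_{f(\mb x)}=I_{\mb x}$ for every $\mb x\in P$. Consequently $f\times\id_{\T^m}\colon P\times\T^m\to P'\times\T^m$ sends the equivalence relation defining $\mathcal Z_{A,\mb b}$ to the one defining $\mathcal Z_{A',\mb b'}$, hence descends to a homeomorphism of the quotients which is plainly equivariant with respect to the $\T^m$-action on the second factor. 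Proposition~\ref{zpids} then identifies this with the desired $\T^m$-equivariant homeomorphism $\mathcal Z_{A,\mb b}\cong\mathcal Z_{A',\mb b'}$.

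For part (b), the second presentation is obtained from the first (with $m$ inequalities, say) by adjoining $k$ redundant ones, so it has $m+k$ inequalities, indexed by $[m+k]$, with the redundant ones indexed by some $J\subset[m+k]$, $|J|=k$; then $P'=P$ and the sub-presentation indexed by $[m+k]\setminus J$ is the first presentation, still generic (deleting redundant inequalities changes nothing at points of $P$, whose facet hyperplanes miss those inequalities). Since in a generic presentation a redundant inequality has empty facet, $F_j'=\varnothing$ for $j\in J$, so $j\notin I_{\mb x}$ for every $\mb x\in P$; that is, $I_{\mb x}\subseteq[m+k]\setminus J$ always. I would then split $\T^{m+k}=\T^{[m+k]\setminus J}\times\T^J$ and note that in $P\times\T^{m+k}/{\sim}$ the relation $\mb t_1^{-1}\mb t_2\in\T^{I_{\mb x}}$ involves only the $\T^{[m+k]\setminus J}$-coordinates, leaving the $\T^J$-coordinates free; hence the quotient is homeomorphic to $\bigl(P\times\T^{[m+k]\setminus J}/{\sim}\bigr)\times\T^J$. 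By Proposition~\ref{zpids} the first factor is $\mathcal Z_{A,\mb b}$ and the second is a $k$-torus $T^k$, giving $\mathcal Z_{A',\mb b'}\cong\mathcal Z_{A,\mb b}\times T^k$ (in fact equivariantly for $\T^{m+k}=\T^m\times T^k$).

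The one step I expect to need the most care is the upgrade in part (a) from the combinatorial equivalence to an honest face-preserving homeomorphism $P\to P'$ compatible with the chosen facet labelling, that is, that an isomorphism of the face posets of simple polytopes is induced by a homeomorphism of the polytopes. This is standard (one may dualise, realise the induced simplicial isomorphism of the dual boundary spheres by a simplexwise-linear homeomorphism, cone, and dualise back), but it is the only non-formal ingredient; the remaining manipulations are routine bookkeeping with the quotient description of Proposition~\ref{zpids}.
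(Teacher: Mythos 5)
Your proposal is correct and follows essentially the same route as the paper: part (a) via the identification-space model of Proposition~\ref{zpids} together with a face-structure-preserving homeomorphism $P\to P'$ realising the combinatorial equivalence, and part (b) by observing that the redundant indices never occur in $I_{\mb x}$, so the corresponding torus factor $\T^J$ splits off. The extra care you flag about upgrading the poset isomorphism to a homeomorphism is exactly the point the paper treats as standard (it is the characterisation of combinatorial equivalence recalled in Section~\ref{galediag}).
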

\begin{proof} (a) By Proposition~\ref{zpids},
$\mathcal Z_{A,\mb b}\cong P\times\T^m/{\sim\:}$ and $\mathcal
Z_{A',\mb b'}\cong P'\times\T^m/{\sim\:}$. If both presentations
are irredundant, then any $F_i$ is a facet of $P$, and the
equivalence relation $\sim$ depends on the face structure of $P$
only. Therefore, any homeomorphism $P\to P'$ preserving the face
structure extends to a $\T^m$-homeomorphism
$P\times\T^m/{\sim\:}\to P'\times\T^m/{\sim\:}$.

(b) Suppose the first presentation has $m$ inequalities, and the
second has~$m'$ inequalities, so that $m'-m=k$. Let $J\subset[m']$
be the subset corresponding to the $k$ added redundant
inequalities; we may assume that $J=\{m+1,\ldots,m'\}$. Since
$F_j=\varnothing$ for any $j\in J$, we have $I_{\mb x}\cap
J=\varnothing$ for any $\mb x\in P'$. Therefore, the equivalence
relation $\sim$ does not affect the factor $\T^J\subset\T^{m'}$,
and we have
\[
  \mathcal Z_{A',\mb b'}\cong P'\times\T^{m'}/{\sim\:}\cong
  (P\times\T^m/{\sim\:})\times\T^J\cong\mathcal Z_{A,\mb b}\times
  T^k.\qedhere
\]
\end{proof}

\begin{remark}
A $\T^m$-homeomorphism in Proposition~\ref{zpcomb}~(a) can be
replaced by a $\T^m$-diffeomorphism (with respect to the smooth
structures of Theorem~\ref{zpsmooth}), but the proof is more
technical. It follows from the fact that two simple polytopes are
combinatorially equivalent if and only if they are diffeomorphic
as `smooth manifolds with corners'. For an alternative argument,
see~\cite[Corollary~4.7]{bo-me06}.

Statement (a) remains valid without assuming that the presentation
is generic, although $\mathcal Z_{A,\mb b}$ is not a manifold in
this case.
\end{remark}

\begin{definition}
We refer to the $(m+n)$-dimensional manifold $\mathcal Z_{A,\mb
b}$ defined by any irredundant presentation~\eqref{ptope} of an
$n$-dimensional simple polytope $P$ with $m$ facets as the
\emph{moment-angle manifold} corresponding to $P$, and denote it
by~$\zp$. Moment-angle manifolds appearing in this way are called
\emph{polytopal}; more general moment-angle manifolds will be
considered later.
\end{definition}

\begin{proposition}\label{malink}
The moment-angle manifold $\zp$ is $\T^m$-equivariantly
diffeomorphic to a nondegenerate intersection of quadrics of the
following form:
\begin{equation}\label{link}
  \left\{\begin{array}{lrcl}
  \mb z\in\C^m\colon&\sum_{k=1}^m|z_k|^2&=&1,\\[1mm]
  &\sum_{k=1}^m\mb g_k|z_k|^2&=&\mathbf0,
  \end{array}\right\}
\end{equation}
where $(\mb g_1,\ldots,\mb g_m)\subset\R^{m-n-1}$ is a
combinatorial Gale diagram of~$P^*$.
\end{proposition}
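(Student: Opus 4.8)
The plan is to trace the intersection of Hermitian quadrics that realizes $\zp$ through Proposition~\ref{propcf} and then rescale coordinates so that its first equation becomes a sphere. I first reduce to the case $b_i=1$ for all~$i$: by Proposition~\ref{zpcomb}(a) together with the remark following it, the $\T^m$-equivariant diffeomorphism type of $\zp$ depends only on the combinatorial type of the simple polytope~$P$, and every simple polytope admits an irredundant (hence generic) presentation~\eqref{ptope} with $b_i=1$ --- move the origin into $\mathop{\mathrm{int}}P$ by a translation and divide the $i$th inequality by the resulting positive constant term. So assume $b_i=1$. Since $P$ is bounded, Proposition~\ref{propcf} allows me to choose the matrix $\varGamma$ in~\eqref{iabrn} so that the affine plane $i_{A,\mb b}(\R^n)$ is cut out by $\sum_{k=1}^m\gamma_{1k}y_k=c$, with $c>0$ and $\gamma_{1k}>0$ for all~$k$, together with $\sum_{k=1}^m\gamma_{jk}y_k=0$ for $2\le j\le m-n$; moreover, because $b_i=1$, the truncated columns $\mb g_i=(\gamma_{2i},\ldots,\gamma_{m-n,i})^t$ form a Gale diagram of the polar polytope $P^*=\conv(\mb a_1,\ldots,\mb a_m)$.

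Next, substituting $|z_k|^2$ for $y_k$ as in~\eqref{zpqua}, and using $\sum_k\gamma_{1k}b_k=c$ and $\sum_k\gamma_{jk}b_k=0$ for $j\ge2$, exhibits $\zp=\mathcal Z_{A,\mb b}$ inside $\C^m$ as $\{\mb z\colon\sum_k\gamma_{1k}|z_k|^2=c,\ \sum_k\gamma_{jk}|z_k|^2=0\ \text{for}\ 2\le j\le m-n\}$. I then apply the linear self-map of $\C^m$ given coordinatewise by $z_k\mapsto w_k:=\sqrt{\gamma_{1k}/c}\,z_k$; since $\gamma_{1k}/c>0$ this is a diffeomorphism, and it is $\T^m$-equivariant because it commutes with the coordinatewise action of~$\T^m$. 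Under it the first quadric becomes $\sum_k|w_k|^2=1$ and the remaining ones become $\sum_k\mb g'_k|w_k|^2=\mathbf0$ with $\mb g'_k:=\gamma_{1k}^{-1}\mb g_k$, which is exactly the form~\eqref{link}.

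It remains to identify the configuration $(\mb g'_1,\ldots,\mb g'_m)$ and to check nondegeneracy. Each $\mb g'_k$ is a positive multiple of $\mb g_k$, so $(\mb g'_1,\ldots,\mb g'_m)$ is obtained from the Gale diagram $(\mb g_1,\ldots,\mb g_m)$ of $P^*$ by rescaling the individual vectors by positive numbers; as observed right after Proposition~\ref{galecomb}, such a configuration is a combinatorial Gale diagram of~$P^*$. Nondegeneracy is then automatic: $\zp$ is a nondegenerate intersection of quadrics by Theorem~\ref{zpsmooth}, since the presentation~\eqref{ptope} is generic, and this property survives the ambient diffeomorphism. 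I do not expect a real obstacle here; the only thing that needs care is keeping the normalizations consistent --- passing to $b_i=1$ and noting that an irredundant presentation of a simple polytope is automatically generic, so that Proposition~\ref{propcf} and Theorem~\ref{zpsmooth} apply --- while the substantive point is simply the observation, equivalent to the boundedness criterion of Proposition~\ref{propcf} and Corollary~\ref{Pbound}, that one of the $m-n$ defining quadrics can be taken with strictly positive coefficients and hence normalized to a sphere.
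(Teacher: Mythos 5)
Your argument is correct and follows essentially the same route as the paper: apply Proposition~\ref{propcf} to put the quadrics in the form with first equation $\sum_k\gamma_{1k}|z_k|^2=c$, rescale coordinates to turn it into a sphere, and note that the remaining coefficient vectors are only multiplied by positive numbers, hence stay a combinatorial Gale diagram; your extra care about reducing to $b_i=1$ and the $\T^m$-equivariance of the rescaling just makes explicit what the paper leaves implicit. (The only quibble is a harmless constant: under $w_k=\sqrt{\gamma_{1k}/c}\,z_k$ the new vectors are $\mb g'_k=(c/\gamma_{1k})\mb g_k$ rather than $\gamma_{1k}^{-1}\mb g_k$, still a positive multiple.)
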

\begin{proof}
It follows from Proposition~\ref{propcf} that $\zp$ is given by
\[
  \left\{\begin{array}{ll}
  \mb z\in\C^m\colon&\gamma_{11}|z_1|^2+\cdots+\gamma_{1m}|z_m|^2=c,\\[1mm]
  &\mb g_1|z_1|^2+\cdots+\mb g_m|z_m|^2=\mathbf0,
  \end{array}\right\}
\]
where $\gamma_{1k}$ and $c$ are positive. Divide the first
equation by~$c$, and then replace each $z_k$ by $\sqrt\frac
c{\gamma_{1k}}z_k$. As a result, each $\mb g_k$ is multiplied by a
positive number, so that $(\mb g_1,\ldots,\mb g_m)$ remains to be
a combinatorial Gale diagram for~$P^*$.
\end{proof}

By adapting Proposition~\ref{zgsmooth} to the special case of
quadrics~\eqref{link}, we obtain

\begin{proposition}\label{nondeglink}
The intersection of quadrics given by~\eqref{link} is nonempty
nondegenerate if and only if the following two conditions are
satisfied:
\begin{itemize}
\item[(a)] $\mathbf0\in
\conv(\mb g_1,\ldots,\mb g_m)$;

\item[(b)] if ${\mathbf 0}\in\conv(\mb g_{i_1},\ldots,\mb g_{i_k})$,
then $k\ge m-n$.
\end{itemize}
\end{proposition}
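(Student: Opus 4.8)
The plan is to derive Proposition~\ref{nondeglink} as a direct specialisation of Proposition~\ref{zgsmooth}. Recall that in the quadrics~\eqref{link} the coefficient data is given by the $(m-n)\times m$ matrix
\[
  \varGamma=\begin{pmatrix}1&\cdots&1\\ \mb g_1&\cdots&\mb g_m\end{pmatrix},
\]
so that the $k$th column is $\gamma_k=(1,\mb g_k)^t\in\R\times\R^{m-n-1}=\R^{m-n}$, and the right-hand side vector is $\delta=(1,\mathbf 0)^t$. The key observation is the elementary convex-geometric fact that for vectors in $\R^{m-n-1}$ the condition $\mathbf 0\in\conv(\mb g_{i_1},\ldots,\mb g_{i_k})$ is equivalent to $(1,\mathbf 0)^t\in\R_\ge\langle(1,\mb g_{i_1}),\ldots,(1,\mb g_{i_k})\rangle$: a nonnegative combination $\sum_j\lambda_j(1,\mb g_{i_j})=(1,\mathbf 0)$ forces $\sum_j\lambda_j=1$ from the first coordinate, turning the cone relation into a convex-combination relation, and conversely any convex combination yielding $\mathbf 0$ lifts to such a cone relation. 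Applying this with the full index set $\{1,\ldots,m\}$ translates condition~(a) of Proposition~\ref{zgsmooth} into condition~(a) here, namely $\mathbf 0\in\conv(\mb g_1,\ldots,\mb g_m)$.

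For condition~(b) I would argue similarly: the hypothesis $\delta\in\R_\ge\langle\gamma_{i_1},\ldots,\gamma_{i_k}\rangle$ in Proposition~\ref{zgsmooth}(b) becomes, under the same first-coordinate normalisation, exactly $\mathbf 0\in\conv(\mb g_{i_1},\ldots,\mb g_{i_k})$, and the conclusion $k\ge m-n$ is unchanged. Thus Proposition~\ref{zgsmooth}(b) specialises verbatim to condition~(b) here. Combining the two translations, the pair of conditions (a) and (b) of Proposition~\ref{zgsmooth} for the data $(\varGamma,\delta)$ above is equivalent to the pair (a) and (b) in the present statement, and Proposition~\ref{zgsmooth} then gives that~\eqref{link} is nonempty and nondegenerate precisely under these conditions.

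The only point requiring a word of care — and the main (very mild) obstacle — is checking that the first row of ones in $\varGamma$ genuinely acts as a normalising functional, i.e.\ that one may always divide a conical relation among the $\gamma_{i_j}$ by the sum of its coefficients. This is legitimate because $\delta=(1,\mathbf 0)^t$ has a strictly positive first coordinate, so in any relation $\delta=\sum_j\lambda_j\gamma_{i_j}$ with $\lambda_j\ge 0$ the sum $\sum_j\lambda_j$ equals $1$ and in particular is nonzero; hence no degenerate division occurs. With this remark in place the argument is complete, and I would present it in two short paragraphs mirroring the two implications of Proposition~\ref{zgsmooth}, or simply as the single equivalence of condition-pairs just described.
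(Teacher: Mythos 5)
Your proposal is correct and follows exactly the route the paper intends: Proposition~\ref{nondeglink} is stated there as an immediate adaptation of Proposition~\ref{zgsmooth} to the data $\gamma_k=(1,\mb g_k)^t$, $\delta=(1,\mathbf 0)^t$, and your normalisation argument (the first coordinate forcing $\sum_j\lambda_j=1$, so that cone membership of $\delta$ is equivalent to $\mathbf 0$ lying in the convex hull of the corresponding $\mb g_{i_j}$) is precisely the translation the paper leaves to the reader. Nothing is missing.
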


Following~\cite{bo-me06}, we refer to a nondegenerate
intersection~\eqref{link} of $m-n-1$ homogeneous quadrics with a
unit sphere in~$\C^m$ as a \emph{link}. We therefore obtain that
any moment-angle manifold is diffeomorphic to a link, and any link
is a product of a moment-angle manifold and a torus.

As we have seen in Example~\ref{mamsimplex}, the moment-angle
manifold corresponding to an $n$-simplex is a sphere~$S^{2n+1}$.
This is also the link of an empty system of homogeneous quadrics,
corresponding to the case $m=n+1$.

\begin{example}[$m=n+2$: two quadrics]\label{prodsimex}
A polytope $P$ defined by $m=n+2$ inequalities either is
combinatorially equivalent to a product of two simplices (when
there are no redundant inequalities), or is a simplex (when one
inequality is redundant). In the case $m=n+2$ the
link~\eqref{link} has the form
\[
  \left\{\begin{array}{ll}
  \mb z\in\C^m\colon&|z_1|^2+\cdots+|z_m|^2=1,\\[1mm]
  &g_1|z_1|^2+\cdots+g_m|z_m|^2=1,
  \end{array}\right\}
\]
where $g_k\in\R$. Condition~(b) of Proposition~\ref{nondeglink}
implies that all $g_i$ are nonzero; assume that there are $p$
positive and $q=m-p$ negative numbers among them. Then
condition~(a) implies that $p>0$ and $q>0$. Therefore, the link is
the intersection of the cone over a product of two ellipsoids of
dimensions $2p-1$ and $2q-1$ (given by the second quadric) with a
unit sphere of dimension $2m-1$ (given by the first quadric). Such
a link is diffeomorphic to $S^{2p-1}\times S^{2q-1}$. The case
$p=1$ or $q=1$ corresponds to one redundant inequality. In the
irredundant case ($P$ is a product
$\varDelta^{p-1}\times\varDelta^{q-1}$, $p,q>1$) we obtain that
$\zp\cong S^{2p-1}\times S^{2q-1}$.
\end{example}

\section{Hamiltonian toric manifolds and moment maps}\label{symred}
Particular examples of polytopal moment-angle manifolds $\zp$
appear as level sets for the moment maps used in the construction
of Hamiltonian toric manifolds via symplectic reduction. In this
case the left hand sides of the equations in~\eqref{zpqua} are
quadratic Hamiltonians of a torus action on~$\C^m$.

\subsection{Symplectic reduction}
We briefly review the background material in symplectic geometry,
referring the reader to monographs by Audin~\cite{audi91} and
Guillemin~\cite{guil94} for further details.

A \emph{symplectic manifold} is a pair $(W,\omega)$ consisting of
a smooth manifold $W$ and a closed differential 2-form $\omega$
which is nondegenerate at each point. The dimension of a
symplectic manifold $W$ is necessarily even.

Assume now that a torus $T$ acts on $W$ preserving the symplectic
form~$\omega$. We denote the Lie algebra of the torus~$T$ by
$\mathfrak t$ (since $T$ is commutative, its Lie algebra is
trivial, but the construction can be generalised to noncommutative
Lie groups). Given an element $\mb v\in\mathfrak t$, we denote by
$X_{\mb v}$ the corresponding $T$-invariant vector field on~$W$.
The torus action is called \emph{Hamiltonian} if the 1-form
$\omega(X_{\mb v},\,\cdot\,)$ is exact for any $\mb v\in\mathfrak
t$. In other words, an action is Hamiltonian if for any $\mb
v\in\mathfrak t$ there exist a function $H_{\mb v}$ on $W$ (called
a \emph{Hamiltonian}) satisfying the condition
\[
  \omega(X_{\mb v},Y)=dH_{\mb v}(Y)
\]
for any vector field $Y$ on~$W$. The function $H_{\mb v}$ is
defined up to addition of a constant.  Choose a basis $\{\mb
e_i\}$ in $\mathfrak t$ and the corresponding Hamiltonians
$\{H_{\mb e_i}\}$. Then the \emph{moment map}
\[
  \mu\colon W\to\mathfrak t^*,\qquad (x,\mb e_i)\mapsto
  H_{\mb e_i}(x)
\]
(where $x\in W$) is defined. Observe that changing the
Hamiltonians $H_{\mb e_i}$ by constants results in shifting the
image of $\mu$ by a vector in~$\mathfrak t^*$. According to a
theorem of Atiyah and Guillemin--Sternberg, the image $\mu(W)$ of
the moment map is convex, and if $W$ is compact then $\mu(W)$ is a
convex polytope in~$\mathfrak t^*$.

\begin{example}\label{simcm}
The most basic example is $W=\C^m$ with symplectic form
\[
  \omega=i\sum_{k=1}^m dz_k\wedge d\overline{z}_k=
  2\sum_{k=1}^mdx_k\wedge dy_k,
\]
where $z_k=x_k+iy_k$. The coordinatewise action of the torus
$\T^m$ on $\C^m$ is Hamiltonian. The moment map
$\mu\colon\C^m\to\R^m$ is given by
$\mu(z_1,\ldots,z_m)=(|z_1|^2,\ldots,|z_m|^2)$. The image of $\mu$
is the positive orthant~$\R^m_\ge$.
\end{example}

\begin{construction}[symplectic reduction]
Assume given a Hamiltonian action of a torus $T$ on a symplectic
manifold~$W$. Assume further that the moment map $\mu\colon
W\to\mathfrak t^*$ is \emph{proper}, i.e. $\mu^{-1}(V)$ is compact
for any compact subset $V\subset\mathfrak t^*$  (this is always
the case if $W$ itself is compact). Let $\mb u\in\mathfrak t^*$ be
a \emph{regular value} of the moment map, i.e. the differential
$\mathcal T_x W\to\mathfrak t^*$ is surjective for all
$x\in\mu^{-1}(\mb u)$. Then the level set $\mu^{-1}(\mb u)$ is a
smooth compact $T$-invariant submanifold in~$W$. Furthermore the
$T$-action on $\mu^{-1}(\mb u)$ is almost free, i.e. all
stabilisers are finite subgroups.

Assume now that the $T$-action on $\mu^{-1}(\mb u)$ is free. The
restriction of the symplectic form $\omega$ to $\mu^{-1}(\mb u)$
may be degenerate. However, the quotient manifold $\mu^{-1}(\mb
u)/T$ is endowed with a unique symplectic form $\omega'$ such that
\[
  p^*\omega'=i^*\omega,
\]
where $i\colon\mu^{-1}(\mb u)\to W$ is the inclusion and $p\colon
\mu^{-1}(\mb u)\to \mu^{-1}(\mb u)/T$ the projection.

We therefore obtain a new symplectic manifold $(\mu^{-1}(\mb
u)/T,\omega')$ which is referred to as the \emph{symplectic
reduction}, or the \emph{symplectic quotient} of $(W,\omega)$
by~$T$.

The construction of symplectic reduction works also under milder
assumptions on the action (see~\cite{du-he82} and more references
there), but the generality described here will be enough for our
purposes.
\end{construction}

\subsection{The toric case}
We want to study symplectic quotients of $\C^m$ by torus subgroups
$T\subset\T^m$. Such a subgroup of dimension $m-n$ has the form
\begin{equation}\label{Tgamma}
  T_\varGamma=
  \bigl\{\bigr(e^{2\pi i\langle\gamma_1,\varphi\rangle},
  \ldots,e^{2\pi i\langle\gamma_m,\varphi\rangle}\bigl)
  \in\T^m\bigr\},
\end{equation}
where $\varphi\in\R^{m-n}$ is an $(m-n)$-dimensional parameter,
and $\varGamma=(\gamma_1,\ldots,\gamma_m)$ is a set of $m$ vectors
in~$\R^{m-n}$. In order for $T_\varGamma$ to be an $(m-n)$-torus,
the configuration of vectors $\gamma_1,\ldots,\gamma_m$ must be
\emph{rational}, i.e. the set of all their integral linear
combinations $L=\Z\langle\gamma_1,\ldots,\gamma_m\rangle$ must be
an $(m-n)$-dimensional discrete subgroup (\emph{lattice})
in~$\R^{m-n}$. Let
\[
  L^*=\{\lambda^*\in{\mathbb R}^{m-n}\colon
  \langle\lambda^*,\lambda\rangle\in{\mathbb Z}
  \text{ for all }\lambda\in L\}
\]
be the dual lattice. We shall represent the elements of
$T_\varGamma$ by $\varphi\in\R^{m-n}$ occasionally, so that
$T_\varGamma$ is identified with the quotient
$\R^{m-n}/\displaystyle L^*$.

The restricted action of $T_\varGamma\subset\T^m$ on $\C^m$ is
obviously Hamiltonian, and the corresponding moment map is the
composition
\begin{equation}\label{tmoma}
  \mu_\varGamma\colon\C^m\longrightarrow\R^m\longrightarrow
  \mathfrak t_\varGamma^*,
\end{equation}
where $\R^m\to\mathfrak t_\varGamma^*$ is the map of the dual Lie
algebras corresponding to $T_\varGamma\to\T^m$. The map
$\R^m\to\mathfrak t_\varGamma^*$ takes the $i$th basis vector $\mb
e_i\in\R^m$ to $\gamma_i\in\mathfrak t_\varGamma^*$. By choosing a
basis in $L\subset\mathfrak t_\varGamma^*$ we can write the map
$\R^m\to\mathfrak t_\varGamma^*$ by an \emph{integer}
matrix~$\varGamma=(\gamma_{jk})$. The moment map~\eqref{tmoma} is
then given by
\[
  (z_1,\ldots,z_m)\longmapsto
  \Bigl(\sum_{k=1}^m\gamma_{1k}|z_k|^2,\ldots,\sum_{k=1}^m
  \gamma_{m-n,k}|z_k|^2\Bigr).
\]
Its level set $\mu_\varGamma^{-1}(\delta)$ corresponding to a
value $\delta=(\delta_1,\ldots,\delta_{m-n})^t\in\mathfrak
t_\varGamma^*$ is exactly the intersection of quadrics $\mathcal
Z_{\varGamma,\delta}$ given by~\eqref{zgamma}.

To apply the symplectic reduction we need to identify when the
moment map~$\mu_\varGamma$ is proper, find its regular
values~$\delta$, and finally identify when the action of
$T_\varGamma$ on $\mu_\varGamma^{-1}(\delta)=\mathcal
Z_{\varGamma,\delta}$ is free. In Theorem~\ref{propmmap} below,
all these conditions are expressed in terms of the polyhedron $P$
associated with $\mathcal Z_{\varGamma,\delta}$ as described in
Section~\ref{intquad}. We need a couple more definitions before we
state this theorem.

It follows from Gale duality that $\gamma_1,\ldots,\gamma_m$ span
a lattice $L$ in $\R^{m-n}$ if and only if the dual configuration
$\mb a_1,\ldots,\mb a_m$ spans a lattice $N=\Z\langle\mb
a_1,\ldots,\mb a_m\rangle$ in~$\R^n$. We refer to a
presentation~\eqref{ptope} as~\emph{rational} if $\Z\langle\mb
a_1,\ldots,\mb a_m\rangle$ is a lattice.

Recall that for each $\mb x\in P$ we defined
\[
  I_{\mb x}=\{i\in[m]\colon\langle\mb a_i,\mb x\rangle+b_i=0\}
  =\{i\in[m]\colon{\mb x}\in F_i\}
\]
(the set of facets containing~$\mb x$). A polyhedron $P$ is called
\emph{Delzant} if it has a rational presentation~\eqref{ptope}
such that for any $\mb x\in P$ the vectors $\{\mb a_i\colon i\in
I_{\mb x}\}$ constitute a part of a basis of $N=\Z\langle\mb
a_1,\ldots,\mb a_m\rangle$. Equivalently, $P$ is Delzant if it is
simple and for any vertex $\mb x\in P$ the vectors $\mb a_i$
normal to the $n$ facets meeting at~$\mb x$ form a basis of the
lattice~$N$. The term comes from the classification of Hamiltonian
toric manifolds due to Delzant~\cite{delz88}, which we shall
briefly review later.

Now let $\delta\in\mathfrak t_\varGamma$ be a value of the moment
map $\mu_\varGamma\colon\C^m\to\mathfrak t_\varGamma^*$, and
$\mu_\varGamma^{-1}(\delta)=\mathcal Z_{\varGamma,\delta}$ the
corresponding level set, which is an intersection of
quadrics~\eqref{zgamma}. We associate with $\mathcal
Z_{\varGamma,\delta}$ a presentation~\eqref{ptope} as described in
Section~\ref{intquad} (see Theorem~\ref{polquad}).

\begin{theorem}\label{propmmap}
Let $T_\varGamma\subset\T^m$ be a torus subgroup~\eqref{Tgamma},
determined by a rational configuration of vectors
$\gamma_1,\ldots,\gamma_m$.
\begin{itemize}
\item[(a)] The moment map
$\mu_\varGamma\colon\C^m\to\mathfrak t_\varGamma^*$ is proper if
and only if its level set $\mu_\varGamma^{-1}(\delta)$ is bounded
for some (and then for any) value $\delta\in\mathfrak
t_\varGamma^*$. Equivalently, the map $\mu_\varGamma$ is proper if
and only if the Gale dual configuration $\mb a_1,\ldots,\mb a_m$
satisfies $\alpha_1\mb a_1+\cdots+\alpha_m\mb a_m=\mathbf0$ for
some positive numbers~$\alpha_k$.

\item[(b)] $\delta\in\mathfrak t^*_\varGamma$ is a regular value of
$\mu_\varGamma$ if and only if the intersection of quadrics
$\mu_\varGamma^{-1}(\delta)=\mathcal Z_{\varGamma,\delta}$ is
nonempty and nondegenerate. Equivalently, $\delta$ is a regular
value if and only if the associated presentation $P=P(A,\mb b)$ is
generic.

\item[(c)] The action of $T_\varGamma$ on
$\mu_\varGamma^{-1}(\delta)=\mathcal Z_{\varGamma,\delta}$ is free
if and only if the associated polyhedron $P$ is Delzant.
\end{itemize}
\end{theorem}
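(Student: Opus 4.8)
The plan is to prove the three parts essentially independently, each time translating the analytic/group-theoretic condition into the combinatorial language of the associated polyhedron $P=P(A,\mb b)$ via the Gale duality machinery already developed (Theorems~\ref{galespan}, \ref{polquad}, Corollary~\ref{Pbound}, Proposition~\ref{zgsmooth}).

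For part~(a), I would first observe that $\mu_\varGamma$ is proper if and only if some (equivalently every) level set $\mu_\varGamma^{-1}(\delta)=\mathcal Z_{\varGamma,\delta}$ is compact: properness gives compactness of the preimage of a point, and conversely if one level set $\mathcal Z_{\varGamma,\delta}$ is bounded, then $\mathcal Z_{\varGamma,\delta'}$ is bounded for every $\delta'$ because boundedness depends only on whether the recession cone $\{\mb z\colon\sum\gamma_{jk}|z_k|^2=0\}$ is trivial, which is a condition on $\varGamma$ alone; and then the preimage of any compact $V$ is closed and contained in a bounded set (cover $V$ by finitely many shifted level sets, or argue directly that $|\mb z|^2$ is proper on the union). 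Now $\mathcal Z_{\varGamma,\delta}/\T^m$ is the associated polyhedron $P(A,\mb b)$, and $\mathcal Z_{\varGamma,\delta}$ is bounded iff $P(A,\mb b)$ is bounded, since the $\T^m$-quotient map $\C^m\to\R^m_\ge$ is proper. Corollary~\ref{Pbound} then says $P(A,\mb b)$ is bounded iff $\sum\alpha_k\mb a_k=\mathbf 0$ for some positive $\alpha_k$, which is the stated condition.

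For part~(b), the point $\delta$ is a regular value of $\mu_\varGamma$ iff at every $\mb z\in\mu_\varGamma^{-1}(\delta)$ the differential $d\mu_\varGamma\colon\mathcal T_{\mb z}\C^m\to\mathfrak t_\varGamma^*$ is surjective; writing $\mu_\varGamma$ out in coordinates, this differential is exactly the gradient matrix of the $m-n$ quadrics computed in the proof of Theorem~\ref{zpsmooth}, so surjectivity for all $\mb z$ is precisely nonemptiness and nondegeneracy of the intersection $\mathcal Z_{\varGamma,\delta}$. By Theorem~\ref{polquad} (equivalently Proposition~\ref{zgsmooth} combined with the Gale-dual reformulation), this holds iff the associated presentation $P(A,\mb b)$ is generic.

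For part~(c), assuming $\delta$ is a regular value (so $\mathcal Z_{\varGamma,\delta}$ is a smooth $(m+n)$-manifold and the presentation is generic), I would analyse the stabiliser of a point $\mb z\in\mathcal Z_{\varGamma,\delta}$ under $T_\varGamma=\R^{m-n}/L^*$. A parameter $\varphi\in\R^{m-n}$ fixes $\mb z$ iff $e^{2\pi i\langle\gamma_k,\varphi\rangle}=1$ whenever $z_k\neq 0$, i.e. $\langle\gamma_k,\varphi\rangle\in\Z$ for all $k\notin I$, where $I=\omega(\mb z)=\{k\colon z_k=0\}$; modulo $L^*$ this stabiliser is trivial for all such $\mb z$ exactly when, for every $I$ that actually occurs (i.e. every $I=I_{\mb x}$ for $\mb x\in P$, by Proposition~\ref{easyzp}), the vectors $\{\gamma_k\colon k\notin I\}$ span the lattice $L$ (not merely span $\R^{m-n}$ — that is already guaranteed by genericity plus Theorem~\ref{galespan}). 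By Gale duality (the $\Z$-coefficient version of Theorem~\ref{galespan} / Lemma~\ref{2ses} applied over $\Z$), $\{\gamma_k\colon k\notin I\}$ spans $L$ iff $\{\mb a_i\colon i\in I\}$ is part of a $\Z$-basis of $N=\Z\langle\mb a_1,\dots,\mb a_m\rangle$; requiring this for every $I=I_{\mb x}$ is exactly the Delzant condition. The main obstacle here is the lattice-level bookkeeping: one must be careful that Lemma~\ref{2ses} is invoked over $\Z$ with free abelian groups (it is stated that generally), that $L^*$ is precisely the kernel of $\R^{m-n}\to\T_\varGamma$, and that "split injective" on the $\Z$-level corresponds to "part of a basis" — this is the step where the argument could go wrong if one conflates spanning a subspace with spanning a lattice.
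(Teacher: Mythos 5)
Your proposal is correct and follows essentially the same route as the paper: part (a) via Corollary~\ref{Pbound} (boundedness of the associated polyhedron is equivalent to a vanishing positive combination of the $\mb a_k$, hence independent of $\delta$), part (b) by identifying the differential of $\mu_\varGamma$ with the gradient matrix of the quadrics and invoking Theorem~\ref{zpsmooth}, and part (c) by computing stabilisers as in Lemma~\ref{afree} and applying the $\Z$-version of Lemma~\ref{2ses} to diagram~\eqref{2Zses}, with exactly the lattice-level care (split injectivity versus spanning over $\R$) that you flag. The only soft spot is the uniform boundedness of $\mu_\varGamma^{-1}(X)$ for compact $X$ in (a): the clean argument is the paper's use of Proposition~\ref{propcf}, i.e.\ a defining quadric $\gamma_{11}|z_1|^2+\cdots+\gamma_{1m}|z_m|^2=\delta_1$ with all $\gamma_{1k}>0$, which bounds the preimage by $\max_{\delta\in X}\delta_1$ --- this is the ``argue directly'' option you gesture at, whereas ``cover $V$ by finitely many shifted level sets'' does not quite parse.
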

\begin{proof}
(a) If $\mu_\varGamma$ is proper then
$\mu_\varGamma^{-1}(\delta)\subset\mathfrak t^*_\varGamma$ is
compact, so it is bounded.

Now assume that $\mu_\varGamma^{-1}(\delta)=\mathcal
Z_{\varGamma,\delta}$ is bounded for some~$\delta$. Then the
corresponding polyhedron~$P$ is also bounded. By
Corollary~\ref{Pbound}, this is equivalent to vanishing of a
positive linear combination of $\mb a_1,\ldots,\mb a_m$. This
condition is independent of~$\delta$, and we conclude that
$\mu_\varGamma^{-1}(\delta)$ is bounded for any~$\delta$. Let
$X\subset\mathfrak t^*_\varGamma$ be a compact subset. Since
$\mu_\varGamma^{-1}(X)$ is closed, it is compact whenever it is
bounded. By Proposition~\ref{propcf} we may assume that, for any
$\delta\in X$, the first quadric defining
$\mu_\varGamma^{-1}(\delta)=\mathcal Z_{\varGamma,\delta}$ is
given by $\gamma_{11}|z_1|^2+\cdots+\gamma_{1m}|z_m|^2=\delta_1$
with $\gamma_{1k}>0$. Let $c=\max_{\delta\in X}\delta_1$. Then
$\mu_\varGamma^{-1}(X)$ is contained in the bounded set
\[
  \{\mb z\in\C^m\colon
  \gamma_{11}|z_1|^2+\cdots+\gamma_{1m}|z_m|^2\le c\}
\]
and is therefore bounded. Hence, $\mu_\varGamma^{-1}(X)$ is
compact, and $\mu_\varGamma$ is proper.

(b) The first statement is the definition of a regular value. The
equivalent statement is already proved as Theorem~\ref{zpsmooth}.

(c) We first need to identify the stabilisers of the
$T_\varGamma$-action on $\mu_\varGamma^{-1}(\delta)$. Although the
fact that these stabilisers are finite for a regular value
$\delta$ follows from the general construction of symplectic
reduction, we can prove this directly.

Given a point $\mb z=(z_1,\ldots,z_m)\in\mathcal
Z_{\varGamma,\delta}$, we define the sublattice
\[
  L_{\mb z}=\Z\langle\gamma_i\colon z_i\ne0\rangle\subset L=
  \Z\langle\gamma_1,\ldots,\gamma_m\rangle.
\]

\begin{lemma}\label{afree}
The stabiliser subgroup of $\mb z\in\mathcal Z_{\varGamma,\delta}$
under the action of $T_\varGamma$ is given by $\displaystyle
L^*_{\mb z}/L^*$. Furthermore, if $\mathcal Z_{\varGamma,\delta}$
is nondegenerate, then all these stabilisers are finite, i.e. the
action of $T_\varGamma$ on $\mathcal Z_{\varGamma,\delta}$ is
almost free.
\end{lemma}
\begin{proof}
An element $(e^{2\pi
i\langle\gamma_1,\varphi\rangle},\ldots,e^{2\pi
i\langle\gamma_m,\varphi\rangle})\in T_\varGamma$ fixes a point
$\mb z\in\mathcal Z_\varGamma$ if and only if $e^{2\pi
i\langle\gamma_k,\varphi\rangle}=1$ whenever $z_k\ne0$. In other
words, $\varphi\in T_\varGamma$ fixes $\mb z$ if and only if
$\langle\gamma_k,\varphi\rangle\in\Z$ whenever $z_k\ne0$. The
latter means that $\displaystyle\varphi\in\displaystyle L^*_{\mb
z}$. Since $\displaystyle\varphi\in L^*$ maps to $1\in
T_\varGamma$, the stabiliser of $\mb z$ is $\displaystyle L^*_{\mb
z}/L^*$.

Assume now that $\mathcal Z_{\varGamma,\delta}$ is nondegenerate.
In order to see that $\displaystyle L^*_{\mb z}/L^*$ is finite we
need to check that the sublattice $L_{\mb
z}=\Z\langle\gamma_i\colon z_i\ne0\rangle\subset L$ has full
rank~$m-n$. Indeed, $\mathop{\mathrm{rk}}\{\gamma_i\colon
z_i\ne0\}$ is the rank of the matrix of gradients of quadrics
in~\eqref{zgamma} at~$\mb z$. Since $\mathcal
Z_{\varGamma,\delta}$ is nondegenerate, this rank is $m-n$, as
needed.
\end{proof}

Now we can finish the proof of Theorem~\ref{propmmap}~(c). Assume
that $P$ is Delzant. By Lemma~\ref{afree}, the
$T_\varGamma$-action on $\mathcal Z_{\varGamma,\delta}$ is free if
and only if $L_{\mb z}=L$ for any $\mb z\in\mathcal
Z_{\varGamma,\delta}$. Let $i\colon\Z^k\rightarrow\Z^m$ be the
inclusion of the coordinate sublattice spanned by those $\mb e_i$
for which $z_i=0$, and let $p\colon\Z^m\rightarrow\Z^{m-k}$ be the
projection sending every such $\mb e_i$ to zero. We also have maps
of lattices
\[
  \varGamma^t\colon L^*\to\Z^m,\;
  \mb l\mapsto\bigl(\langle\gamma_1,\mb l\rangle,\ldots,
  \langle\gamma_m,\mb l\rangle\bigr),\quad\text{and}\quad
  A\colon\Z^m\to N,\;\mb e_k\mapsto\mb a_k.
\]
Consider the diagram
\begin{equation}\label{2Zses}
\begin{CD}
  @.@.\begin{array}{c}0\\ \raisebox{2pt}{$\downarrow$}\\
  L^*\end{array}@.@.\\
  @.@.@VV{\varGamma^t}V@.@.\\
  0@>>>\Z^{k}@>i>>\Z^m@>p>>\Z^{m-k} @>>>0\\
  @.@.@VV A V@.@.\\
  @.@.\begin{array}{c}N\\ \downarrow\\ 0\end{array}@.@.\\
\end{CD}
\end{equation}
in which the vertical and horizontal sequences are exact. Then the
Delzant condition is equivalent to that the composition $A\cdot i$
is split injective. The condition $L_{\mb z}=L$ is equivalent to
that $\varGamma\cdot p^*$ is surjective, or $p\cdot\varGamma^t$ is
split injective. These two conditions are equivalent by
Lemma~\ref{2ses}.
\end{proof}

\begin{corollary}
Let $P=P(A,\mb b)$ be a Delzant polytope,
$\varGamma=(\gamma_1,\ldots,\gamma_m)$ the Gale dual
configuration, and $\zp$ the corresponding moment-angle manifold.
Then
\begin{itemize}
\item[(a)]
$\delta=\varGamma\mb b$ is a regular value of the moment map
$\mu_\varGamma\colon\C^m\to\mathfrak t^*_\varGamma$ for the
Hamiltonian action of $T_\varGamma\subset\T^m$ on~$\C^m$;
\item[(b)] $\zp$ is the regular level set $\mu_\varGamma^{-1}(\varGamma\mb
b)$;
\item[(c)] the action of $T_\varGamma$ on $\zp$ is free.
\end{itemize}
\end{corollary}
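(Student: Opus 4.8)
The plan is to derive all three statements directly from Theorem~\ref{propmmap}, together with the identification of the level sets of $\mu_\varGamma$ with the intersections of quadrics~\eqref{zgamma} established earlier in this section. The first thing I would record is that a Delzant polytope $P=P(A,\mb b)$ is, in particular, a simple polytope whose given presentation is \emph{generic}: at every vertex exactly $n$ facets meet, and the corresponding normals $\mb a_i$ even form a lattice basis of $N$, hence are linearly independent, so the hyperplanes $\langle\mb a_i,\mb x\rangle+b_i=0$ are in general position at each point of~$P$. Note also that $\varGamma=(\gamma_1,\dots,\gamma_m)$ is rational precisely because $N=\Z\langle\mb a_1,\dots,\mb a_m\rangle$ is a lattice, so $T_\varGamma$ is an $(m-n)$-torus and the Hamiltonian framework applies.

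For part (b), I would recall from the discussion preceding Theorem~\ref{propmmap} that for any value $\delta\in\mathfrak t^*_\varGamma$ one has $\mu_\varGamma^{-1}(\delta)=\mathcal Z_{\varGamma,\delta}$, the intersection of quadrics~\eqref{zgamma}. By Theorem~\ref{polquad}, the Gale dual pair $(A,\varGamma)$ and the relation $\delta=\varGamma\mb b$ are exactly the data for which the associated presentation is the given $P(A,\mb b)$, and the corresponding intersection of quadrics is $\mathcal Z_{A,\mb b}$, which by definition is $\zp$. Hence $\zp=\mu_\varGamma^{-1}(\varGamma\mb b)$. Part (a) then follows from Theorem~\ref{propmmap}(b): $\delta=\varGamma\mb b$ is a regular value of $\mu_\varGamma$ if and only if the associated presentation $P(A,\mb b)$ is generic, which was checked above. (If one also wants to invoke symplectic reduction at this value, properness of $\mu_\varGamma$ comes from Theorem~\ref{propmmap}(a): $P$ is bounded, so $\alpha_1\mb a_1+\cdots+\alpha_m\mb a_m=\mathbf 0$ for positive~$\alpha_k$ by Corollary~\ref{Pbound}.)

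Finally, part (c) is immediate from Theorem~\ref{propmmap}(c): the action of $T_\varGamma$ on $\mu_\varGamma^{-1}(\varGamma\mb b)=\mathcal Z_{\varGamma,\varGamma\mb b}=\zp$ is free if and only if the associated polyhedron is Delzant, which is the hypothesis. I do not expect any genuine obstacle here; the only point requiring care is the bookkeeping identification of the roles of $(A,\mb b)$ and $(\varGamma,\delta)$ — one must be sure that the $\varGamma$ appearing in the corollary (the Gale dual of $A$) and $\delta=\varGamma\mb b$ are precisely the data that Theorem~\ref{polquad} feeds back to the given presentation, so that the three criteria of Theorem~\ref{propmmap} can be read off verbatim, and that every irredundant presentation of a Delzant polytope is automatically generic so that no hypothesis of those criteria is lost.
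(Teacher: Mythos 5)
Your proposal is correct and matches the paper's (implicit) argument: the paper states this corollary without proof as an immediate consequence of Theorem~\ref{propmmap}, exactly as you read it off, using the identification $\mu_\varGamma^{-1}(\delta)=\mathcal Z_{\varGamma,\delta}$ with $\delta=\varGamma\mb b$ from Theorem~\ref{polquad} and the observation that the Delzant condition implies genericity of the presentation. Nothing is missing.
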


We therefore may consider the symplectic quotient of $\C^m$
by~$T_\varGamma$. It is a compact $2n$-dimensional symplectic
manifold, which we denote $V_P=\zp/T_\varGamma$. This manifold has
a `residual' Hamiltonian action of the quotient $n$-torus
$\T^m/T_\varGamma$. It follows from the vertical exact sequence
in~\eqref{2Zses} that $\T^m/T_\varGamma$ can be identified
canonically with $N\otimes_\Z\mathbb S=\R^n/N$, and we shall
denote this torus by~$T_N$. We therefore obtain an exact sequence
of tori
\begin{equation}\label{cgrou}
  1\longrightarrow T_\varGamma\longrightarrow \T^m
  \stackrel{\exp A}\longrightarrow T_N\longrightarrow1,
\end{equation}
where $\exp A\colon\T^m\to T_N$ is the map of toric corresponding
to the map of lattices $A\colon\Z^m\to N$.

The symplectic $2n$-manifold $V_P=\zp/T_\varGamma$ with the
Hamiltonian action of the $n$-torus $T_N=\T^m/T_\varGamma$ is
called the \emph{Hamiltonian toric manifold} corresponding to a
Delzant polytope~$P$.

We denote by $\mu_V\colon V_P\to\mathfrak t_N^*$ the moment map
for the $T_N$-action on~$V_P$, where $\mathfrak t_N=N_\R$ is the
Lie algebra of~$T_N$. The dual Lie algebra $\mathfrak t_N^*$ is
naturally a subspace in~$\R^m$ (the dual Lie algebra of~$\T^m$),
with the inclusion given by $A^t\colon\mathfrak
t_N^*\cong\R^n\to\R^m$.

\begin{proposition}\label{tvmomentmap}
The image of the moment map $\mu_V\colon V_P\to\mathfrak t_N^*$ is
the polytope $P$, up to shifting by a vector in~$\mathfrak t_N^*$.
\end{proposition}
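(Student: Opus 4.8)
The plan is to trace through the symplectic reduction construction explicitly, using the identification of $\zp$ with the intersection of quadrics $\mathcal Z_{\varGamma,\delta}$ for $\delta=\varGamma\mb b$, together with the commutative diagram~\eqref{cdiz}. First I would recall that the moment map $\mu\colon\C^m\to\R^m$ for the full torus $\T^m$ sends $(z_1,\ldots,z_m)$ to $(|z_1|^2,\ldots,|z_m|^2)$, and that this descends (via diagram~\eqref{cdiz}) to an identification of the $\T^m$-quotient of $\mathcal Z_{A,\mb b}$ with $i_{A,\mb b}(P)=\R^m_\ge\cap i_{A,\mb b}(\R^n)$. The key observation is that the moment map $\mu_V$ for the residual $T_N$-action on $V_P=\zp/T_\varGamma$ fits into a compatible picture: the decomposition $\R^m$ into the image of $A^t\colon\mathfrak t_N^*\cong\R^n\to\R^m$ and the complementary directions dual to $T_\varGamma$ means that $\mu_V$ is essentially the composition $\zp\hookrightarrow\C^m\xrightarrow{\mu}\R^m$ followed by the projection onto $\mathfrak t_N^*$, restricted to (the image of) $\zp$.

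The main steps would be: (1) identify $\mathfrak t_N^*$ inside $\R^m$ via the inclusion $A^t\colon\R^n\to\R^m$, and observe that on $\mathcal Z_{\varGamma,\delta}$ the $\T^m$-moment-map image $\mu(\mb z)$ automatically lies in the affine plane $i_{A,\mb b}(\R^n)=\{\mb y\colon\varGamma\mb y=\varGamma\mb b\}$, which is precisely the affine translate $A^t(\R^n)+\mb b$ of the linear subspace $\mathfrak t_N^*\cong\Im A^t$; (2) note that the $T_\varGamma$-moment map is the complementary projection $\R^m\to\mathfrak t_\varGamma^*$, so that the splitting of $\R^m$ (dually to $0\to L^*\xrightarrow{\varGamma^t}\R^m\xrightarrow{A}W^*\to0$) matches the splitting of the torus $1\to T_\varGamma\to\T^m\to T_N\to1$ from~\eqref{cgrou}; (3) invoke the general functoriality of moment maps for the subtorus/quotient-torus situation — for the $T_N$-action obtained from the $\T^m$-action after symplectic reduction by $T_\varGamma$, the moment map $\mu_V$ on $V_P=\mu_\varGamma^{-1}(\delta)/T_\varGamma$ is induced by $\mu$ composed with the projection $\R^m\to\mathfrak t_N^*\cong\R^n$, up to the usual ambiguity of an additive constant; (4) conclude that $\mu_V(V_P)$ equals the image of $i_{A,\mb b}(P)=\R^m_\ge\cap i_{A,\mb b}(\R^n)$ under that projection, which by Construction~\ref{dist} is just $i_{A,\mb b}$ of $P$ itself transported back to $\R^n$, i.e. $P$ up to an affine shift.

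Concretely, since $i_{A,\mb b}(P)$ sits inside the affine $n$-plane $A^t(\R^n)+\mb b$ and the map $i_{A,\mb b}\colon\R^n\to\R^m$ is an affine isomorphism onto this plane (using that $\mb a_1,\ldots,\mb a_m$ span $\R^n$), the composite of $i_{A,\mb b}$ with the identification $\mathfrak t_N^*\cong\R^n$ (inverse to $A^t$ followed by the $\mb b$-shift) carries $P$ to $P$ itself up to the translation by the $\mathfrak t_N^*$-component of $\mb b$; this is exactly the asserted shift by a vector in~$\mathfrak t_N^*$. Thus $\mu_V(V_P)=P$ up to translation.

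The hard part, as usual with moment-map statements, is nailing down precisely \emph{which} moment map one means and checking that the choice of Hamiltonians (the additive constants) made for $\mu_V$ is the one compatible with the chosen affine presentation — i.e.\ verifying that the reduction procedure and the identification $V_P=\mathcal Z_{\varGamma,\delta}/T_\varGamma$ really produce $\mu_V$ as the $\mathfrak t_N^*$-component of the $\T^m$-moment map $\mu$ restricted to $\zp$, rather than some other normalization. Once this compatibility is pinned down, the identification of the image with $P$ (up to shift) is immediate from diagram~\eqref{cdiz} and Construction~\ref{dist}. I would therefore spend most of the proof carefully setting up the identifications of Lie algebras and their duals so that the phrase ``up to shifting by a vector in $\mathfrak t_N^*$'' is literally the ambiguity in the additive constants of the Hamiltonians.
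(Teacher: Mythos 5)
Your proposal is correct and follows essentially the same route as the paper: identify $\mathfrak t_N^*$ inside $\R^m$ via $A^t$, observe that $\mu(\zp)=i_{A,\mb b}(P)=A^t(P)+\mb b$, and reduce everything to the compatibility $A^t\circ\mu_V\circ p=\mu|_{\zp}$ up to an additive constant. The ``hard part'' you flag is exactly what the paper settles with the short chain $dH_{\mb e_i}(Z)=i^*\omega(X_{\mb e_i},Z)=p^*\omega'(X_{\mb e_i},Z)=\omega'(Y_{\mb a_i},p_*Z)=d(p^*H_{\mb a_i})(Z)$, giving $H_{\mb e_i}=p^*H_{\mb a_i}$ up to constant, so your outline needs only that one explicit computation to be complete.
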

\begin{proof}
Let $\omega$ be the standard symplectic form on $\C^m$ and
$\mu\colon\C^m\to\R^m$ the moment map for the standard action
of~$\T^m$ (see Example~\ref{simcm}). Let $p\colon\zp\to V_P$ be
the quotient projection by the action of~$T_\varGamma$, and let
$i\colon\zp\to\C^m$ be the inclusion, so that the symplectic form
$\omega'$ on $V_P$ satisfies $p^*\omega'=i^*\omega$. Let $H_{\mb
e_i}\colon\C^m\to\R$ be the Hamiltonian of the $\T^m$-action on
$\C^m$ corresponding to the $i$th basis vector $\mb e_i$
(explicitly, $H_{\mb e_i}(\!\mb z)=|z_i|^2$), and let $H_{\mb
a_i}\colon V_P\to\R$ be the Hamiltonian of the $T_N$-action on
$V_P$ corresponding to $\mb a_i\in\mathfrak t$. Denote by $X_{\mb
e_i}$ the vector field on $\zp$ generated by $\mb e_i$, and denote
by $Y_{\mb a_i}$ the vector field on $V_P$ generated by~$\mb a_i$.
Observe that $p_*X_{\mb e_i}=Y_{\mb a_i}$. For any vector field
$Z$ on $\zp$ we have
\begin{multline*}
  dH_{\mb e_i}(Z)=i^*\omega(X_{\mb e_i},Z)=
  p^*\omega'(X_{\mb e_i},Z)\\=\omega'(Y_{\mb a_i},p_*Z)=
  dH_{\mb a_i}(p_*Z)=d(p^*H_{\mb a_i})(Z),
\end{multline*}
hence $H_{\mb e_i}=p^*H_{\mb a_i}$ or $H_{\mb e_i}(\!\mb
z)=H_{\!\mb a_i}(p\,(\!\mb z))$ up to constant. By definition of
the moment map this implies that $\mu_V(V_P)\subset\mathfrak
t^*_N\subset\R^m$ is identified with $\mu(\zp)\subset\R^m$ up to
shift by a vector in~$\R^m$. The inclusion $\mathfrak
t^*_N\subset\R^m$ is the map~$A^t$, and $\mu(\zp)=i_{A,\mb
b}(P)=A^t(P)+\mb b$ by definition of $\zp$, see~\eqref{cdiz}. We
therefore obtain that there exists $\mb c\in\R^m$ such that
\[
  A^t(\mu_V(V_P))+\mb c=A^t(P)+\mb b,
\]
i.e. $A^t(\mu_V(V_P))$ and $A^t(P)$ differ by $\mb b-\mb c\in
A^t(\mathfrak t^*_N)$. Since $A^t$ is monomorphic, the result
follows.
\end{proof}

We have described how to construct a Hamiltonian toric manifold
from a Delzant polytope. A theorem of Delzant~\cite{delz88} says
that \emph{any}  $2n$-dimensional compact connected symplectic
manifold $W$ with an effective Hamiltonian action of an
$n$-torus~$T$ is equivariantly symplectomorphic to a Hamiltonian
toric manifold $V_P$, where $P$ is the image of the moment map
$\mu\colon W\to\mathfrak t^*$ (whence the name `Delzant
polytope').

\begin{example}\label{cp2sq}
Consider the case $m-n=1$, i.e. $T_\varGamma$ is 1-dimensional,
and $\gamma_k\in\R$. By Theorem~\ref{propmmap}~(a), the moment map
$\mu_\varGamma$ is proper whenever each of its level sets
\[
  \mu^{-1}_\varGamma(\delta)=\{\mb z\in\C^m\colon
  \gamma_1|z_1|^2+\cdots+\gamma_m|z_m|^2=\delta\}
\]
is bounded. By Theorem~\ref{propmmap}~(b), $\delta$ is a regular
value whenever the quadratic hypersurface
$\gamma_1|z_1|^2+\cdots+\gamma_m|z_m|^2=\delta$ is nonempty and
nondegenerate. These two conditions together imply that the
hypersurface is an ellipsoid, and the associated polyhedron is an
$n$-simplex (see Example~\ref{mamsimplex}). By Lemma~\ref{afree},
the $T_\varGamma$-action on $\mu^{-1}_\varGamma(\delta)$ is free
if and only if $L_{\mb z}=L$ for any $\mb
z\in\mu^{-1}_\varGamma(\delta)$. This means that each $\gamma_k$
generates the same lattice as the whole set
$\gamma_1,\ldots,\gamma_m$, which implies that
$\gamma_1=\cdots=\gamma_m$. The Gale dual configuration satisfies
$\mb a_1+\cdots+\mb a_m=\mathbf0$. Then $T_\varGamma$ is the
diagonal circle in~$\T^m$, the hypersurface
$\mu^{-1}_\varGamma(\delta)=\zp$ is a sphere, and the associated
polytope~$P$ is a standard simplex up to shift and magnification
by a positive factor~$\delta$. The Hamiltonian toric manifold
$V_P=\zp/T_\varGamma$ is the complex projective space~$\C P^n$.
\end{example}

\section{Fans and toric varieties}
A toric variety is a normal algebraic variety on which an
\emph{algebraic torus}~$(\C^\times)^n$ acts with a dense (Zariski
open) orbit. Toric varieties are described by
combinatorial-geometric objects, rational fans.

A toric variety can be defined from a rational fan using an
algebraic version of symplectic reduction, also known as the `Cox
construction'. Different versions of this construction have
appeared in the work of several authors since the early 1990s. We
mainly follow the work of Cox~\cite{cox95} (and the modernised
version~\cite[Chapter~5]{c-l-s11}) in our exposition;
relationships between toric varieties and moment-angle manifolds
will be explored further in the next sections.

\subsection{Cones and fans}\label{conesfans}
A set of vectors $\mb a_1,\ldots,\mb a_k\in\R^n$ defines a
\emph{convex polyhedral cone}, or simply~\emph{cone},
\[
  \sigma=\R_\ge\langle\mb a_1,\ldots,\mb a_m\rangle=\{\mu_1\mb a_1+\cdots+\mu_k\mb
  a_k\colon\mu_i\in\R_\ge\}.
\]
Here $\mb a_1,\ldots\mb a_k$ are referred to as \emph{generating
vectors} (or \emph{generators}) of~$\sigma$. A \emph{minimal} set
of generators of a cone is defined up to multiplication of vectors
by positive constants. A cone is \emph{rational} if its generators
can be chosen from the integer lattice $\Z^n\subset\R^n$. If
$\sigma$ is a rational cone, then its generators $\mb
a_1,\ldots\mb a_k$ are usually chosen to be \emph{primitive}, i.e.
each $\mb a_i$ is the smallest lattice vector in the ray defined
by it.

A cone is \emph{strongly convex} if it does not contain a line. A
cone is \emph{simplicial} if it is generated by a part of basis
of~$\R^n$, and is \emph{regular} if it is generated by a part of
basis of~$\Z^n$.

Any cone $\sigma$ is an (unbounded) polyhedron, and \emph{faces}
of $\sigma$ are defined as its intersections with supporting
hyperplanes. Each face of a cone is a cone. If a cone is strongly
convex, then it has a unique vertex~$\mathbf 0$; otherwise there
are no vertices. A minimal generator set of a cone consists of
nonzero vectors along its edges.

A \emph{fan} is a finite collection
$\Sigma=\{\sigma_1,\ldots,\sigma_s\}$ of strongly convex cones in
some $\R^n$ such that every face of a cone in $\Sigma$ belongs to
$\Sigma$ and the intersection of any two cones in $\Sigma$ is a
face of each. A fan $\Sigma$ is \emph{rational} (respectively,
\emph{simplicial}, \emph{regular}) if every cone in $\Sigma$ is
rational (respectively, simplicial, regular). A fan
$\Sigma=\{\sigma_1,\ldots,\sigma_s\}$ is called \emph{complete} if
$\sigma_1\cup\cdots\cup\sigma_s=\R^n$.

Cones in a fan can be separated by hyperplanes:

\begin{lemma}[Separation Lemma]\label{seplemma}
Let $\sigma$ and $\sigma'$ be two cones whose intersection $\tau$
is a face of each. Then there exists a common supporting
hyperplane $H$ for $\sigma$ and $\sigma'$ such that
\[
  \tau=\sigma\cap H=\sigma'\cap H.
\]
\end{lemma}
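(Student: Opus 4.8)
The plan is to use the duality between the cones $\sigma$ and $\sigma'$ being separated by a hyperplane and the existence of a linear functional that is nonnegative on one cone, nonpositive on the other, and vanishes exactly on the common face~$\tau$. The key tool will be standard convex-geometric separation for the single pair of closed convex cones, applied to an auxiliary cone built from $\sigma$ and $-\sigma'$, together with the hypothesis that $\tau$ is a face of each.

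First I would reduce to the dual picture. For a cone $\sigma$, a face $\tau$ is of the form $\sigma\cap\ker\ell$ for some linear functional $\ell$ that is nonnegative on $\sigma$; equivalently, $\tau$ is the set of points of $\sigma$ on which some supporting functional vanishes. So the lemma amounts to: there is a single functional $\ell$ with $\ell\ge0$ on $\sigma$, $\ell\le0$ on $\sigma'$, and $\{\mb x\in\sigma:\ell(\mb x)=0\}=\tau=\{\mb x\in\sigma':\ell(\mb x)=0\}$; then $H=\ker\ell$ works. To produce such an $\ell$, consider the cone $C=\sigma+(-\sigma')=\R_\ge\langle\text{generators of }\sigma,\ -\text{generators of }\sigma'\rangle$. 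The crucial claim is that $C$ contains no line through the origin other than those already forced, and more precisely that $C\cap(-C)$ is exactly the linear span of~$\tau$: if $\mb u-\mb v\in C$ with $\mb u\in\sigma$, $\mb v\in\sigma'$, equals $\mb v'-\mb u'$ with $\mb u'\in\sigma$, $\mb v'\in\sigma'$, then $\mb u+\mb u'=\mb v+\mb v'\in\sigma\cap\sigma'=\tau$, and since $\tau$ is a face of $\sigma$ this forces $\mb u,\mb u'\in\lin\tau$ (a face of a cone containing a sum of cone elements contains each summand), similarly for $\mb v,\mb v'$; hence $C\cap(-C)\subseteq\lin\tau$, and the reverse inclusion is clear since $\tau\subseteq\sigma$ and $\tau\subseteq\sigma'$ give $\pm\tau\subseteq C$.

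Next, having identified the lineality space of $C$ with $\lin\tau$, I would pass to the quotient $\R^n/\lin\tau$, where the image $\bar C$ of $C$ is a strongly convex (pointed) polyhedral cone. A pointed polyhedral cone is contained in an open halfspace bounded by a hyperplane through the origin, except for the origin itself; equivalently there is a linear functional $\bar\ell$ on $\R^n/\lin\tau$ that is strictly positive on $\bar C\setminus\{0\}$. Pull $\bar\ell$ back to a functional $\ell$ on $\R^n$ vanishing on $\lin\tau$. Then $\ell\ge0$ on $\sigma$ (since $\sigma\subseteq C$), $\ell\le0$ on $\sigma'$ (since $-\sigma'\subseteq C$), and $\ell(\mb x)=0$ for $\mb x\in\sigma$ forces $\mb x\in C\cap\ker\ell$, whose image in the quotient lies in $\bar C\cap\ker\bar\ell=\{0\}$, i.e.\ $\mb x\in\lin\tau$; combined with $\mb x\in\sigma$ and $\tau$ being a face this gives $\mb x\in\tau$. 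The symmetric argument handles~$\sigma'$. Thus $H=\ker\ell$ is a common supporting hyperplane with $\tau=\sigma\cap H=\sigma'\cap H$.

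The main obstacle I anticipate is the bookkeeping in the step showing $C\cap(-C)=\lin\tau$, in particular the assertion that if $\mb u+\mb u'\in\tau$ with $\mb u,\mb u'\in\sigma$ then $\mb u,\mb u'\in\lin\tau$: this is exactly the defining property of a face of a cone (a face is ``extreme'' — it contains a sum of cone elements only if it contains each), and one must be careful that $\tau$ being a face of $\sigma$ (as guaranteed by the hypothesis, since $\tau=\sigma\cap\sigma'$ is a face of each) really does deliver this. Once that structural fact is in hand, the rest is the routine separation theorem for pointed polyhedral cones applied in the quotient space, and the verification that the resulting hyperplane cuts out precisely~$\tau$ on both sides.
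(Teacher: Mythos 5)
Your argument is correct. Note that the paper does not prove this lemma at all (it refers to Fulton, \S1.2), and your proof is essentially the standard one found there: form the cone $C=\sigma+(-\sigma')$, identify its lineality space $C\cap(-C)$ with the linear span of $\tau$ using the extremality of faces (if $\mb u+\mb u'\in\tau$ with $\mb u,\mb u'\in\sigma$ then $\mb u,\mb u'\in\tau$, which follows from writing $\tau=\sigma\cap\ker m$ with $m\ge0$ on $\sigma$), and take a functional that is strictly positive on the pointed image of $C$ in the quotient. The only caveat, inherent to the statement itself rather than to your proof, is the degenerate case $\sigma=\sigma'=\tau$ full-dimensional, which is excluded by the supporting-hyperplane definition of a face used in the paper.
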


For the proof, see e.g.~\cite[\S1.2]{fult93}. Miraculously, the
convex-geometrical separation property above will translate into
topological separation (Hausdorffness) of algebraic varieties and
topological spaces constructed from fans as described below.

Given a simplicial fan $\Sigma$ with $m$ edges generated by
vectors $\mb a_1,\ldots,\mb a_m$, define its \emph{underlying
simplicial complex} $\sK_\Sigma$ on $[m]=\{1,\ldots,m\}$ as the
collection of subsets $I\subset[m]$ such that $\{\mb a_i\colon
i\in I\}$ spans a cone of~$\Sigma$.

A simplicial fan $\Sigma$ in $\R^n$ is therefore determined by two
pieces of data:
\begin{itemize}
\item[--] a simplicial complex $\sK$ on $[m]$;
\item[--] a configuration of vectors $\mb a_1,\ldots,\mb a_m$ in
$\R^n$ such that the subset $\{\mb a_i\colon i\in I\}$ is linearly
independent for any simplex $I\in\sK$.
\end{itemize}
Then for each $I\in\sK$ we can define the simplicial cone
$\sigma_I$ spanned by $\mb a_i$ with $i\in I$. The `bunch of
cones' $\{\sigma_I\colon I\in\sK\}$ patches into a fan $\Sigma$
whenever any two cones $\sigma_I$ and $\sigma_J$ intersect in a
common face (which has to be $\sigma_{I\cap J}$). Equivalently,
the relative interiors of cones $\sigma_I$ are pairwise
non-intersecting. Under this condition, we say that the data
$\{\sK;\mb a_1,\ldots,\mb a_m\}$ \emph{define a fan}~$\Sigma$.

\medskip

The next construction assigns a complete fan to every convex
polytope.

\begin{construction}[Normal fan]\label{nf}
Let $P$ be a polytope~\eqref{ptope} with $m$ facets
$F_1,\ldots,F_m$ and normal vectors $\mb a_1,\ldots,\mb a_m$.
Given a face $Q\subset P$, we say that a vector $\mb a_i$ is
\emph{normal} to~$Q$ if $Q\subset F_i$. Define the \emph{normal}
cone $\sigma_Q$ as the cone generated by those $\mb a_i$ which are
normal to~$Q$. It can be given by
\[
  \sigma_Q=\{\mb u\in\R^n\colon
  \langle\mb u,\mb x'\rangle\le\langle\mb u,\mb x\rangle
  \text{ for all $\mb x'\in Q$ and $\mb x\in P$}\}.
\]
Then
\[
  \Sigma_P=\{\sigma_Q\colon Q\text{ is a face of }P\}\cup\{\mathbf0\}
\]
is a complete fan which is referred to as the \emph{normal fan} of
the polytope~$P$. If $\mathbf 0$ is contained in the interior of
$P$ then $\Sigma_P$ may be also described as the set of cones over
the faces of the polar polytope~$P^*$.

The normal fan $\Sigma_P$ is simplicial if and only if $P$ is
simple. In this case the cones of $\Sigma_P$ are generated by
those sets $\{\mb a_{i_1},\ldots,\mb a_{i_k}\}$ for which the
intersection $F_{i_1}\cap\cdots\cap F_{i_k}$ is nonempty. The
underlying simplicial complex $\sK_{\Sigma_P}$ is geometrically
the boundary of the polar simplicial polytope~$P^*$.
\end{construction}

The normal fan $\Sigma_P$ of a polytope $P$ contains the
information about the normals to the facets (the generators $\mb
a_i$ of the edges of~$\Sigma_P$) and the combinatorial structure
of $P$ (which sets of vectors $\mb a_i$ span a cone of $\Sigma_P$
is determined by which facets intersect at a face), however the
scalars $b_i$ in~\eqref{ptope} are lost. Not any complete fan can
be obtained by `forgetting the numbers $b_i$' from a presentation
of a polytope by inequalities, i.e. not any complete fan is a
normal fan. This is fails even for regular fans in~$\R^3$,
see~\cite[\S1.5]{fult93} for an example. Furthermore, complete
simplicial fans and simplicial polytopes differ even as
combinatorial objects: there are complete simplicial fans $\Sigma$
whose underlying simplicial complex $\sK_\Sigma$ cannot be
obtained as the boundary of any simplicial polytope (although no
regular examples of this sort are known).

\subsection{Toric varieties}
An \emph{algebraic torus} is a commutative complex algebraic group
isomorphic to a product $(\C^\times)^n$ of copies of the
multiplicative group $\C^\times=\C\setminus\{0\}$. It contains a
compact torus $T^n$ as a Lie (but not algebraic) subgroup.

We shall often identify an algebraic torus with the standard
model~$(\C^\times)^n$.

A \emph{toric variety} is a normal complex algebraic variety~$V$
containing an algebraic torus $(\C^\times)^n$ as a Zariski open
subset in such a way that the natural action of $(\C^\times)^n$ on
itself extends to an action on~$V$.

It follows that $(\C^\times)^n$ acts on $V$ with a dense orbit.

Algebraic geometry of toric varieties is translated completely
into the language of combinatorial and convex geometry. Namely,
there is a bijective correspondence between rational fans in an
$n$-dimensional space and complex $n$-dimensional toric varieties.
Under this correspondence,
\begin{align*}
\text{cones }&\longleftrightarrow\text{ affine varieties}\\
\text{complete fans }&\longleftrightarrow\text{ compact (complete) varieties}\\
\text{normal fans of polytopes}&\longleftrightarrow\text{ projective varieties}\\
\text{regular fans }&\longleftrightarrow\text{ nonsingular varieties}\\
\text{simplicial fans }&\longleftrightarrow\text{ orbifolds}
\end{align*}
The details of this classical correspondence can be found in any
standard source on toric geometry,
e.g.~\cite{dani78},~\cite{fult93} or~\cite{c-l-s11}. Along with
the classical construction, there is an alternative way to define
a toric variety: as the quotient of an open subset in $\C^m$ (the
complement of a coordinate subspace arrangement) by an action of a
commutative algebraic group (a product of an algebraic torus and a
finite group).

\subsection{Quotients in algebraic geometry} Taking quotients of algebraic varieties by algebraic
group actions is tricky for both topological and algebraic
reasons. First, as algebraic groups are often not compact (as
algebraic tori), their orbits may be not closed, and the quotients
may be non-Hausdorff. Second, even if the quotient is Hausdorff as
a topological space, it may fail to be an algebraic variety. This
may be remedied to some extent by the notion of the categorical
quotient.

Let $X$ be an algebraic variety with an action of an affine
algebraic group~$G$. An algebraic variety $Y$ is said to be a
\emph{categorical quotient} of $X$ by the action of~$G$ if there
exists a morphism $\pi\colon X\to Y$ which is constant on
$G$-orbits of~$X$ and has the following universal property: for
any morphism $\varphi\colon X\to Z$ which is constant on
$G$-orbits, there is a unique morphism $\widehat\varphi\colon Y\to
Z$ such that $\widehat\varphi\circ\pi=\varphi$. This is described
by the diagram
\[
\xymatrix{
  X \ar[rr]^{\varphi} \ar[dr]^{\pi} && Z\\
  & Y \ar@{-->}[ur]^{\widehat\varphi}
}
\]
A categorical quotient $Y$ is unique up to isomorphism, and we
shall denote it by $X\dbs G$ (although sometimes this notation is
reserved for categorical quotients with extra good properties).

Assume that $X=\Spec A$ is an affine variety, where $A=\C[X]$ is
the algebra of regular functions on~$X$, and $G$ is an algebraic
torus (in fact, the construction works for any \emph{reductive}
affine algebraic group). Then the subalgebra $\C[X]^{G}$ of
$G$-invariant functions (i.e. functions $f$ satisfying
$f(gx)=f(x)$ for any $g\in G$ and $x\in X$) is finitely generated,
and the corresponding affine variety $\Spec \C[X]^{G}$ is the
categorical quotient $X\dbs G$. The quotient morphism $\pi\colon
X\to X\dbs G$ is dual to the inclusion of algebras $\C[X]^{G}\to
\C[X]$. The morphism $\pi$ is surjective and induces a one-to-one
correspondence between points of $X\dbs G$ and \emph{closed}
$G$-orbits of~$X$ (i.e. $\pi^{-1}(x)$ contains a unique closed
$G$-orbit for any $x\in X\dbs G$,
see~\cite[Proposition~5.0.7]{c-l-s11}).

Therefore, if all $G$-orbits of an affine variety $X$ are closed,
then the categorical quotient $X\dbs G$ is identified as a
topological space with the ordinary `topological' quotient $X/G$.
In algebraic geometry quotients of this type are called
\emph{geometric} and also denoted by~$X/G$.

\begin{example}
Let $\C^\times$ act on $\C=\Spec(\C[z])$ by scalar multiplication.
There are two orbits: the closed orbit $0$ and the open orbit
$\C^\times$. The topological quotient $\C/\C^\times$ consists of
two points, one of which is not closed, so the space is not
Hausdorff.

On the other hand, the categorical quotient
$\C\dbs\C^\times=\Spec(\C[z]^{\C^\times})$ is a point, since any
$\C^\times$-invariant polynomial is constant (and there is only
one closed orbit).

Similarly, if $\C^\times$ acts on $\C^n=\Spec(\C[z_1,\ldots,z_n])$
diagonally, then an invariant polynomial satisfies $f(\lambda
z_1,\ldots,\lambda z_n)=f(z_1,\ldots,z_n)$ for all
$\lambda\in\C^\times$. Such polynomial must be constant, so that
$\C^n\dbs\C^{\times}$ is again a point.
\end{example}

In good cases categorical quotients of general (non-affine)
varieties $X$ may be constructed by `gluing from pieces' as
follows. Assume that $G$ acts on $X$ and $\pi\colon X\to Y$ is a
morphism of varieties that is constant on $G$-orbits. If $Y$ has
an open affine cover $Y=\bigcup_\alpha V_\alpha$ such that
$\pi^{-1}(V_\alpha)$ is affine and $V_\alpha$ is the categorical
quotient (that is,
$\pi|_{\pi^{-1}(V_\alpha)}\colon\pi^{-1}(V_\alpha)\to V_\alpha$ is
the morphism dual to the inclusion of algebras
$\C[\pi^{-1}(V_\alpha)]^G\to\C[\pi^{-1}(V_\alpha)]$), then $Y$ is
the categorical quotient $X\dbs G$.

\begin{example}
Let $\C^\times$ act on $\C^2\setminus\{0\}$ diagonally, where
$\C^2=\Spec(\C[z_0,z_1])$. We have an open affine cover
$\C^2\setminus\{0\}=U_0\cup U_1$, where
\begin{align*}
  U_0&=\C^2\setminus\{z_0=0\}=\C^\times\times\C=\Spec(\C[z_0^{\pm1},z_1]),\\
  U_1&=\C^2\setminus\{z_1=0\}=\C\times\C^\times=\Spec(\C[z_0,z_1^{\pm1}]),\\
  U_0\cap
  U_1&=\C^2\setminus\{z_0z_1=0\}=\C^\times\times\C^\times=\Spec(\C[z_0^{\pm1},z_1^{\pm1}]).
\end{align*}
The algebras of $\C^\times$-invariant functions are
\[
  \C[z_0^{\pm1},z_1]^{\C^\times}\!\!=\C[z_1/z_0],\quad
  \C[z_0,z_1^{\pm1}]^{\C^\times}\!\!=\C[z_0/z_1],\quad
  \C[z_0^{\pm1},z_1^{\pm1}]^{\C^\times}\!\!=\C[(z_1/z_0)^{\pm1}].
\]
It follows that $V_i=U_i\dbs\C^\times=\C$ glue together along
$V_0\cap V_1=(U_0\cap U_1)\dbs\C^\times=\C^\times$ in the standard
way to produce~$\C P^1$. We have that all $\C^\times$-orbits are
closed in $\C^2\setminus\{0\}$, hence $\C
P^1=(\C^2\setminus\{0\})/\C^\times$ is the geometric quotient.

Similarly, $\C P^n=(\C^{n+1}\setminus\{0\})/\C^\times$ is the
geometric quotient for the diagonal action of~$\C^\times$.
\end{example}

\begin{example}
Now we let $\C^\times$ act on $\C^2\setminus\{0\}$ by
$\lambda\cdot(z_0,z_1)=(\lambda z_0,\lambda^{-1}z_1)$. Using the
same affine cover of $\C^2\setminus\{0\}$ as in the previous
example, we obtain the following algebras of $\C^\times$-invariant
functions:
\[
  \C[z_0^{\pm1},z_1]^{\C^\times}\!\!=\C[z_0z_1],\quad
  \C[z_0,z_1^{\pm1}]^{\C^\times}\!\!=\C[z_0z_1],\quad
  \C[z_0^{\pm1},z_1^{\pm1}]^{\C^\times}\!\!=\C[(z_0z_1)^{\pm1}].
\]
This times gluing together $V_i=U_i\dbs\C^\times=\C$ along
$V_0\cap V_1=(U_0\cap U_1)\dbs\C^\times=\C^\times$ gives the space
obtained from two copies of $\C$ by identifying all nonzero
points. This space is not Hausdorff (the two zeros do not have
nonintersecting neighbourhoods in the usual topology), and
therefore it cannot be a categorical quotient, because algebraic
varieties are Hausdorff spaces in the usual topology.
\end{example}

A toric variety $V_\Sigma$ will be described as the categorical
(or in good cases, geometric) quotient of the `total space'
$U(\Sigma)$ by an action of a commutative algebraic group~$G$. We
now proceed to describe $G$ and~$U(\Sigma)$.

\subsection{Quotient construction of toric varieties}\label{quotv}
Following the algebraic tradition, we use the coordinate-free
notation here. We fix a lattice $N$ of rank~$n$, and denote by
$N_\R$ its ambient $n$-dimensional real vector space
$N\otimes_\Z\R\cong\R^n$. We also define the algebraic torus
$\C^\times_N=N\otimes_\Z\C^\times\cong(\C^\times)^n$.

Let $\Sigma$ be a rational fan in $N_\R$ with $m$ edges generated
by primitive vectors $\mb a_1,\ldots,\mb a_m$ of~$N$. We shall
assume that the linear span of $\mb a_1,\ldots,\mb a_m$ is the
whole~$N_\R$.

We consider the map of lattices $A\colon\Z^m\to N$ sending the
$i$th basis vector of $\Z^m$ to $\mb a_i\in N$. The corresponding
map of algebraic tori,
\[
  A\otimes_\Z\C^\times\colon(\C^\times)^m\to\C^\times_N
\]
is surjective. We shall denote this map by $\exp A$.

Define the group $G=G_\Sigma$ as the kernel of the map $\exp A$.
We therefore have an exact sequence of abelian algebraic groups
\begin{equation}\label{ggrou}
  1\longrightarrow G\longrightarrow (\C^\times)^m
  \stackrel{\exp A}\longrightarrow \C^\times_N\longrightarrow1.
\end{equation}
Explicitly, $G$ is given by
\begin{equation}\label{gexpl}
  G=\bigl\{(z_1,\ldots,z_m)\in(\C^\times)^m\colon
  \prod_{i=1}^m z_i^{\langle\mb a_i,\mb u\rangle}=1
  \quad\text{for all }\mb u\in N^*\bigr\}.
\end{equation}
The group $G$ is isomorphic to a product of $(\C^\times)^{m-n}$
and a finite abelian group. If $\Sigma$ is a regular fan with at
least one $n$-dimensional cone, then $G\cong(\C^\times)^{m-n}$.

\medskip

Given a cone $\sigma\in\Sigma$, set
$g(\sigma)=\{i_1,\ldots,i_k\}\subset[m]$ if $\sigma$ is spanned by
$\mb a_{i_1}\ldots,\mb a_{i_k}$. We define the simplicial complex
$\sK_\Sigma$ generated by all subsets $g(\sigma)\subset[m]$:
\[
  \sK_\Sigma=\{I\colon I\subset g(\sigma)\quad\text{for some
  }\sigma\in\Sigma\}.
\]
If $\Sigma$ is a simplicial fan, then each $I\subset g(\sigma)$ is
$g(\tau)$ for some $\tau\in\Sigma$, and we obtain the `underlying
complex' of~$\Sigma$ defined in the beginning of this section. If
$\Sigma$ is the normal fan of a non-simple polytope~$P$ (i.e. the
fan over the faces of the polar polytope~$P^*$), then $\sK_\Sigma$
is obtained by replacing each face of $\partial P^*$ by a simplex
with the same set of vertices.

Now we define the $U(\Sigma)$ as the complement of an arrangement
of coordinate subspaces in $\C^m$ determined by~$\sK_\Sigma$:
\begin{equation}\label{Usigma}
  U(\Sigma)=\C^m\big\backslash\bigcup_{\{i_1,\ldots,i_k\}\notin\sK_\Sigma}\bigl\{\mb z
  \in\C^m\colon z_{i_1}=\cdots=z_{i_k}=0\bigr\}.
\end{equation}

We observe that the subset $U(\Sigma)\subset\C^m$ depends only on
the combinatorial structure of the fan~$\Sigma$, while the
subgroup $G\subset(\C^\times)^m$ depends on the geometric data,
namely, the primitive generators of one-dimensional cones.

Since $U(\Sigma)\subset\C^m$ is invariant under the coordinatewise
action of $(\C^\times)^m$, we obtain a $G$-action on $U(\Sigma)$
by restriction.

\begin{theorem}[{Cox~\cite[Theorem~2.1]{cox95}}]\label{coxth}
Assume that the linear span of one-dimensional cones of $\Sigma$
is the whole space~$N_\R$.\\
{\rm(a)} The toric variety $V_\Sigma$ is naturally isomorphic
to the categorical quotient $U(\Sigma)\dbs G$.\\
{\rm(b)} $V_\Sigma$ is the geometric quotient $U(\Sigma)/G$ if and
only if the fan $\Sigma$ is simplicial.
\end{theorem}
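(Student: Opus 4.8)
The plan is to follow the standard `glue from affine pieces' strategy, reducing the global statement to a local computation over each maximal cone of~$\Sigma$. First I would recall the classical construction of $V_\Sigma$ by gluing affine charts $U_\sigma=\Spec\C[\sigma^\vee\cap N^*]$ over all cones $\sigma\in\Sigma$, with the gluing along $U_\tau$ for $\tau$ a common face. For each cone $\sigma$ with $g(\sigma)=\{i_1,\ldots,i_k\}$, I would introduce the corresponding principal open subset
\[
  U(\sigma)=\bigl\{\mb z\in\C^m\colon z_i\ne0\text{ for all }i\notin g(\sigma)\bigr\}\subset U(\Sigma),
\]
which is affine, equal to $\Spec\C[z_i\;(i\in g(\sigma)),\,z_i^{\pm1}\;(i\notin g(\sigma))]$. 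The key local claim is that $U_\sigma$ is the categorical quotient $U(\sigma)\dbs G$, i.e.\ that the algebra of $G$-invariant Laurent-type polynomials $\C[U(\sigma)]^G$ is isomorphic (via $\exp A$) to the semigroup algebra $\C[\sigma^\vee\cap N^*]$. This is a direct computation: a monomial $\mb z^{\mb v}=z_1^{v_1}\cdots z_m^{v_m}$ with $v_i\ge0$ for $i\in g(\sigma)$ is $G$-invariant precisely when $\mb v$ lies in the image of $\varGamma^t\colon L^*\to\Z^m$ plus a lattice condition---equivalently, using the exact sequence~\eqref{ggrou} dualized, when $\mb v=A^t\mb u$ for some $\mb u\in N^*$; the nonnegativity constraints $v_i=\langle\mb a_i,\mb u\rangle\ge0$ for $i\in g(\sigma)$ say exactly $\mb u\in\sigma^\vee$. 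This matches generators of $\C[\sigma^\vee\cap N^*]$ and proves the local isomorphism.

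Next I would check compatibility of these local identifications: for $\tau$ a face of $\sigma$, the inclusion $U(\sigma)\subset U(\tau)$ passes to the inclusion $U_\sigma\subset U_\tau$ of affine toric charts, so the local quotients glue along the same pattern as the charts of $V_\Sigma$. Here the Separation Lemma (Lemma~\ref{seplemma}) enters: it guarantees that for two cones $\sigma,\sigma'$ meeting in a common face $\tau$, the overlap $U(\sigma)\cap U(\sigma')$ equals $U(\tau)$ (no extra identifications), so the glued space is separated---this is precisely the point where the convex-geometric separation becomes Hausdorffness of~$V_\Sigma$. Invoking the `gluing from pieces' principle for categorical quotients stated just before the theorem, I conclude that $V_\Sigma=\bigcup_\sigma U_\sigma$ is the categorical quotient $U(\Sigma)\dbs G$, giving part~(a).

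For part~(b), I would show that when $\Sigma$ is simplicial every $G$-orbit in $U(\Sigma)$ is closed, so that the categorical quotient coincides with the geometric (topological) quotient. The argument is orbit-by-orbit over the torus strata: a point $\mb z\in U(\Sigma)$ has $\omega(\mb z)=\{i\colon z_i=0\}$ a face of $\sK_\Sigma$, hence spans a cone $\sigma$; one checks that the $G$-orbit through $\mb z$ is closed in $U(\Sigma)$ because the complementary coordinates $\{\mb a_i\colon i\notin\omega(\mb z)\}$ still span $N_\R$ and, by simpliciality, the relevant map of tori restricted to the stratum is proper onto its image. Conversely, if $\Sigma$ is not simplicial there is a nonsimplicial maximal cone, and a computation analogous to the non-Hausdorff example in the text (two non-separated limit points) exhibits a non-closed orbit, so $U(\Sigma)/G$ fails to be Hausdorff and cannot be the categorical quotient; this forces the `only if' direction. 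The main obstacle I anticipate is the local algebra computation identifying $\C[U(\sigma)]^G$ with $\C[\sigma^\vee\cap N^*]$ cleanly---getting the lattice bookkeeping right (the distinction between $L$, $L^*$, $N$, $N^*$ and the finite-group factor in $G$) is where care is needed, but everything else is formal gluing plus the Separation Lemma.
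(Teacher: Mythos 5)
The paper gives no proof of this theorem---it is quoted from Cox---so I am comparing your proposal with the standard argument. For part~(a) your architecture is the right one and the local computation comes out correctly: the $G$-invariant monomials on $U(\sigma)=\Spec\C[z_i\,(i\in g(\sigma)),\,z_i^{\pm1}\,(i\notin g(\sigma))]$ are exactly the $\mb z^{A^t\mb u}$ with $\mb u\in\sigma^\vee\cap N^*$, which identifies $U(\sigma)\dbs G$ with the affine chart $U_\sigma$, and $U(\sigma)\cap U(\sigma')=U(\sigma\cap\sigma')$ lets the local quotients glue into $V_\Sigma$. Two slips, though. First, the characters of $(\C^\times)^m$ that restrict trivially to $G=\Ker(\exp A)$ form the image of $A^t\colon N^*\to\Z^m$, \emph{not} the image of $\varGamma^t$: by Gale duality $\Im\varGamma^t=\Ker A$, which is complementary to what you want, so only the second half of your ``equivalently'' is the usable condition. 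Second, for $\tau$ a face of $\sigma$ the inclusions run $U(\tau)\subset U(\sigma)$ and $U_\tau\subset U_\sigma$, opposite to what you wrote (requiring $z_i\ne0$ for all $i\notin g(\tau)$ is the \emph{stronger} condition).

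The genuine gap is in part~(b). Closedness of $G\cdot\mb z$ inside its $(\C^\times)^m$-stratum is automatic (it is the image of an algebraic subgroup in a homogeneous space); the real issue is whether the orbit closure meets lower strata, and your appeal to ``$\{\mb a_i\colon i\notin\omega(\mb z)\}$ spans $N_\R$'' plus a properness claim does not address it---indeed the relevant consequence of simpliciality is the linear \emph{independence} of $\{\mb a_i\colon i\in\omega(\mb z)\}$, i.e.\ finiteness of the stabiliser as in Proposition~\ref{freeaction}(a). The clean argument is: finite stabilisers force every orbit to have dimension exactly $\dim G$, and since the boundary of an orbit closure is a union of orbits of strictly smaller dimension, constant orbit dimension forces every orbit to be closed. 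For the converse you assert, but do not exhibit, a non-closed orbit. One way to finish: if $\sigma$ is not simplicial, the point $\mb z$ with $z_i=0$ for $i\in g(\sigma)$ and $z_i=1$ otherwise lies in $U(\Sigma)$ and has positive-dimensional stabiliser (the kernel of $\Z^{g(\sigma)}\to N$ is nontrivial because $|g(\sigma)|>\dim\sigma$), so its orbit has dimension less than $\dim G$, while the fibre of $U(\Sigma)\to V_\Sigma$ containing it has dimension at least $\dim G$ by semicontinuity; hence that fibre is not a single orbit and the quotient is not geometric. As written, both directions of (b) are gestures rather than proofs.
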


The torus acting on $V_\Sigma=U(\Sigma)\dbs G$ is the quotient
torus $\C^\times_N=(\C^\times)^m/G$.

{\samepage
\begin{proposition}\label{freeaction}\
\begin{itemize}
\item[(a)] If $\Sigma$ is a simplicial fan, then the $G$-action on $U(\Sigma)$
is almost free;

\item[(b)] If $\Sigma$ is regular, then the $G$-action on $U(\Sigma)$ is free.
\end{itemize}
\end{proposition}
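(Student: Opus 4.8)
The plan is to analyze the stabilizer of an arbitrary point $\mb z\in U(\Sigma)$ under the $G$-action. First I would recall that $G=G_\Sigma$ is the kernel of $\exp A\colon(\C^\times)^m\to\C^\times_N$, so an element $g=(g_1,\ldots,g_m)\in(\C^\times)^m$ lies in $G$ iff $\prod_i g_i^{\langle\mb a_i,\mb u\rangle}=1$ for all $\mb u\in N^*$, by~\eqref{gexpl}. Given $\mb z\in U(\Sigma)$, let $\omega(\mb z)=\{i\colon z_i=0\}$ be its set of zero coordinates. The key observation is that $\omega(\mb z)$ is a simplex of $\sK_\Sigma$: indeed, the complement $U(\Sigma)$ removes precisely the coordinate subspaces indexed by non-faces, so the set of coordinates that can simultaneously vanish at a point of $U(\Sigma)$ must span a cone $\sigma$ of $\Sigma$, i.e. $\omega(\mb z)\subset g(\sigma)$ for some $\sigma\in\Sigma$ (and when $\Sigma$ is simplicial, $\omega(\mb z)=g(\tau)$ for some face $\tau$).

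Next I would identify the stabilizer $G_{\mb z}$ explicitly. An element $g\in G$ fixes $\mb z$ iff $g_i=1$ for every $i$ with $z_i\neq0$, i.e. $g$ is supported on the coordinates in $\omega(\mb z)$. Writing $I=\omega(\mb z)=\{i_1,\ldots,i_k\}$, the stabilizer is therefore
\[
  G_{\mb z}=\bigl\{(g_1,\ldots,g_m)\in(\C^\times)^m\colon g_i=1\text{ for }i\notin I,\;
  \textstyle\prod_{i\in I}g_i^{\langle\mb a_i,\mb u\rangle}=1\text{ for all }\mb u\in N^*\bigr\}.
\]
This is the kernel of the homomorphism $(\C^\times)^I\to\C^\times_N$ induced by the submatrix $A_I$ (the columns $\mb a_i$, $i\in I$). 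When $\Sigma$ is simplicial, $I=g(\tau)$ for a simplicial cone $\tau$, so the vectors $\{\mb a_i\colon i\in I\}$ are linearly independent; then $A_I\otimes_\Z\C^\times\colon(\C^\times)^I\to\C^\times_N$ has finite kernel (the kernel is the torsion group $N_I^*/A_I^t(N^*)$ where $N_I$ is the sublattice spanned by the $\mb a_i$, $i\in I$). Hence $G_{\mb z}$ is finite, proving (a).

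For (b), when $\Sigma$ is regular the set $\{\mb a_i\colon i\in I\}$ is part of a $\Z$-basis of $N$, so $A_I\colon\Z^I\to N$ is a split injection of lattices; the induced map $A_I\otimes_\Z\C^\times$ is then injective, forcing $G_{\mb z}=\{1\}$, so the action is free. The main obstacle — really the only substantive point — is establishing that $\omega(\mb z)\in\sK_\Sigma$ for $\mb z\in U(\Sigma)$, which is immediate from the definition~\eqref{Usigma} of $U(\Sigma)$ as the coordinate-subspace-arrangement complement, together with the fact that in a simplicial fan every subset of $g(\sigma)$ is itself of the form $g(\tau)$. Everything else is the routine translation between short exact sequences of lattices and the corresponding sequences of algebraic tori, exactly as in the diagram~\eqref{2Zses} used for Theorem~\ref{propmmap}(c).
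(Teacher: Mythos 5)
Your proposal is correct and follows essentially the same route as the paper: identify the stabiliser $G_{\mb z}$ as the kernel of the torus homomorphism induced by the lattice map $\Z^{\omega(\mb z)}\to N$, then use linear independence of $\{\mb a_i\colon i\in\omega(\mb z)\}$ in the simplicial case to get a finite kernel, and the part-of-basis condition in the regular case to get a trivial one. The only cosmetic difference is your explicit description of the finite kernel as a torsion quotient, which the paper leaves implicit.
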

}
\begin{proof}
The stabiliser of a point $\mb z\in\C^m$ under the action of
$(\C^\times)^m$ is
\[
  (\C^\times)^{\omega(\mb z)}=\{(t_1,\ldots,t_m)\in(\C^\times)^m\colon
  t_i=1\text{ if }z_i\ne0\},
\]
where $\omega(\mb z)$ be the set of zero coordinates of~$\mb z$.
The stabiliser of $\mb z$ under the $G$-action is $G_{\mb
z}=(\C^\times)^{\omega(\mb z)}\cap G$. Since $G$ is the kernel of
the map $\exp A\colon(\C^\times)^m\to \C^\times_N$ induced by the
map of lattices $\Z^m\to N$, the subgroup $G_{\mb z}$ is the
kernel of the composite map
\begin{equation}\label{finitegroup}
  (\C^\times)^{\omega(\mb z)}\hookrightarrow(\C^\times)^m
  \stackrel{\exp A}\longrightarrow\C^\times_N.
\end{equation}
This homomorphism of tori is induced by the map of lattices
$\Z^{\omega(\mb z)}\to\Z^m\to N$, where $\Z^{\omega(\mb
z)}\to\Z^m$ is the inclusion of a coordinate sublattice.

Now let $\Sigma$ be a simplicial fan and $\mb z\in U(\Sigma)$.
Then $\omega(\mb z)=g(\sigma)$ for a cone $\sigma\in\Sigma$.
Therefore, the set of primitive generators $\{\mb a_i\colon
i\in\omega(\mb z)\}$ is linearly independent. Hence, the map
$\Z^{\omega(\mb z)}\to\Z^m\to N$ taking $\mb e_i$ to $\mb a_i$ is
a monomorphism, which implies that the kernel
of~\eqref{finitegroup} is a finite group.

If the fan $\Sigma$ is regular, then $\{\mb a_i\colon
i\in\omega(\mb z)\}$ is a part of basis of~$N$. In this
case~\eqref{finitegroup} is a monomorphism and $G_{\mb z}=\{1\}$.
\end{proof}

The relationship between the algebraic quotient construction of
$V_\Sigma$ and the symplectic reduction construction of~$V_P$
(described in the previous section), is as follows. Let $P$ be a
Delzant polytope given by~\eqref{ptope}. Then the Delzant
condition means exactly that the normal fan $\Sigma_P$ is regular.
The tori in the exact sequence~\eqref{cgrou} are maximal compact
subgroups in the algebraic tori of~\eqref{ggrou}. Also, it follows
from Proposition~\ref{easyzp} that the level set
$\mu_\varGamma^{-1}(\varGamma\mb b)$ (the moment-angle
manifold~$\zp$) is contained in $U(\Sigma_P)$.

\begin{theorem}\label{symplectictoric}
Let $P$ be a Delzant polytope with the normal fan $\Sigma_P$. Let
$V_P$ be the corresponding Hamiltonian toric manifold, and
$V_{\Sigma_P}$ the corresponding nonsingular projective toric
variety. The inclusion $\zp\subset U(\Sigma_P)$ induces a
diffeomorphism
\[
  V_P=\zp/T_\varGamma
  \stackrel{\cong}\longrightarrow U(\Sigma_P)/G=V_{\Sigma_P}.
\]
Therefore, any nonsingular projective toric variety can be
obtained as the symplectic quotient of~$\C^m$ by an action of an
$(m-n)$-torus.
\end{theorem}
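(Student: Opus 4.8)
The plan is to compare $V_P$ and $V_{\Sigma_P}$ by presenting both as quotients of the single total space $U(\Sigma_P)$. Everything needed on the quotient side is already available: since $P$ is Delzant the normal fan $\Sigma_P$ is regular, hence simplicial, so by Theorem~\ref{coxth}(b) the variety $V_{\Sigma_P}$ is the \emph{geometric} quotient $U(\Sigma_P)/G$, and by Proposition~\ref{freeaction}(b) the group $G\subset(\C^\times)^m$ acts on $U(\Sigma_P)$ freely; on the other hand $V_P=\zp/T_\varGamma$ with $T_\varGamma$ acting freely, and $\zp\subset U(\Sigma_P)$. The first step is to split $G$. Using the polar decomposition $(\C^\times)^m=\T^m\times\R^m_>$ and the fact that $\exp A$ carries $\T^m$ into the compact torus $T_N\subset\C^\times_N$ and $\R^m_>$ into its complementary positive subgroup (the two meeting only in the identity), one gets $G=T_\varGamma\times R$, where
\[
  R=G\cap\R^m_>=\bigl\{(e^{\langle\gamma_1,\varphi\rangle},\ldots,e^{\langle\gamma_m,\varphi\rangle})\colon\varphi\in\R^{m-n}\bigr\}\cong\R^{m-n}_>
\]
is the noncompact group from the Introduction. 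Consequently $U(\Sigma_P)/G=(U(\Sigma_P)/R)/T_\varGamma$, so it suffices to show that the inclusion $\zp\hookrightarrow U(\Sigma_P)$ induces a diffeomorphism $\zp\stackrel{\cong}{\longrightarrow}U(\Sigma_P)/R$; equivalently, that $\zp$ is a smooth global cross-section for the (free) $R$-action.

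The core is a convexity argument. Recall $\zp$ is cut out in $\C^m$ by $\sum_k\gamma_k|z_k|^2=\delta$, $\delta=\varGamma\mb b$, and that $\varphi\in\R^{m-n}$ acts through $R$ by $|z_k|^2\mapsto e^{2\langle\gamma_k,\varphi\rangle}|z_k|^2$. For a fixed $\mb z\in U(\Sigma_P)$ put
\[
  \Phi_{\mb z}(\varphi)=\tfrac12\sum_{k\,:\,z_k\ne0}|z_k|^2e^{2\langle\gamma_k,\varphi\rangle},
\]
so that $\nabla\Phi_{\mb z}(\varphi)$ is precisely the left-hand side of the defining quadrics of $\zp$ evaluated at the translate $\varphi\cdot\mb z$; thus $\varphi\cdot\mb z\in\zp$ if and only if $\nabla\Phi_{\mb z}(\varphi)=\delta$. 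Since $\mb z\in U(\Sigma_P)$, the set $\omega(\mb z)=\{k\colon z_k=0\}$ is a simplex of $\sK_{\Sigma_P}$, so $\{\mb a_i\colon i\in\omega(\mb z)\}$ is linearly independent, and Gale duality (Theorem~\ref{galespan}) shows that $\{\gamma_k\colon z_k\ne0\}$ spans $\R^{m-n}$; hence the Hessian of $\Phi_{\mb z}$ is positive definite everywhere. Uniqueness of $\varphi$ follows at once from strict convexity. For existence I would minimise the strictly convex function $\varphi\mapsto\Phi_{\mb z}(\varphi)-\langle\delta,\varphi\rangle$: choosing a point $\mb x$ in the relative interior of the face $\bigcap_{i\in\omega(\mb z)}F_i$ of $P$ (nonempty because $\omega(\mb z)\in\sK_{\Sigma_P}$) and setting $y_i=\langle\mb a_i,\mb x\rangle+b_i$ gives $\varGamma\mb y=\delta$ with $y_i=0$ for $i\in\omega(\mb z)$ and $y_i>0$ otherwise, so that $\langle\delta,\varphi\rangle=\sum_{z_i\ne0}y_i\langle\gamma_i,\varphi\rangle$; term by term this function is then bounded below, and it is coercive because the linear forms $\langle\gamma_k,\varphi\rangle$ with $z_k\ne0$ span, so a minimum exists and at it $\nabla\Phi_{\mb z}=\delta$.

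Putting the pieces together, the action map $\zp\times R\to U(\Sigma_P)$ is a bijection; positive-definiteness of $\mathrm{Hess}\,\Phi_{\mb z}$ makes it a local diffeomorphism (at $(\mb z,1)$ the tangent image of $R$ is complementary to $T_{\mb z}\zp$ for exactly this reason), hence a diffeomorphism. Quotienting by the residual free $T_\varGamma$-action gives
\[
  V_P=\zp/T_\varGamma\;\stackrel{\cong}{\longrightarrow}\;(\zp\times R)/(T_\varGamma\times R)=U(\Sigma_P)/G=V_{\Sigma_P},
\]
induced by $\zp\subset U(\Sigma_P)$, as claimed. Finally, every nonsingular projective toric variety is $V_{\Sigma_P}$ for some Delzant polytope $P$ (projectivity makes its fan a normal fan, nonsingularity makes it regular, which for a normal fan is the Delzant condition), and $V_P=\zp/T_\varGamma=\mu_\varGamma^{-1}(\varGamma\mb b)/T_\varGamma$ is by construction a symplectic quotient of $\C^m$ by the $(m-n)$-torus $T_\varGamma$. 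I expect the two genuinely non-formal points to be the existence half of the cross-section property --- making sure $\delta$ is reachable by $\nabla\Phi_{\mb z}$ for \emph{every} $\mb z\in U(\Sigma_P)$, which is where the relative-interior point of the face enters --- and the check that the cross-section map is smooth with smooth inverse rather than merely a continuous bijection.
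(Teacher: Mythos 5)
Your argument is correct, and it takes a genuinely different route from the paper. The paper does not prove Theorem~\ref{symplectictoric} directly: it refers to Audin and Guillemin and to the more general machinery of Sections~\ref{mamsf}--\ref{camam}, where the key step (Theorem~\ref{zksmooth}(c)) shows that the \emph{polydisc-model} moment-angle complex $\zk=(\D,\mathbb S)^{\sK}$ is a slice for the $R$-action on $U(\sK)$; this is done combinatorially, by passing to the $\T^m$-quotient, using the cubical complex $\cc(\sK)$ and the fact that $A$ is one-to-one on $(\R_\le,0)^\sK$ because the fan is complete, and then $V_\Sigma\cong U(\sK)/G$ is compared with $\zk/T_\varGamma$ (cf.\ Proposition~\ref{toricfib}). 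You instead work directly with the quadric model $\zp=\mu_\varGamma^{-1}(\varGamma\mb b)$ sitting inside $U(\Sigma_P)$ and prove it is a global slice for $R=G\cap\R^m_>$ by a Kempf--Ness-type convexity argument: strict convexity of $\Phi_{\mb z}$ (via Gale duality, Theorem~\ref{galespan}) gives uniqueness, and coercivity of $\Phi_{\mb z}(\varphi)-\langle\delta,\varphi\rangle$, obtained from a point in the relative interior of the face $\bigcap_{i\in\omega(\mb z)}F_i$, gives existence; the same Hessian computation yields transversality and hence smoothness of the slice identification. This is essentially the classical symplectic-versus-GIT comparison in the cited references, and it has the advantage of producing the diffeomorphism for the quadric model directly (no transfer between the quadric and polydisc models is needed), while the paper's route is more general in that it covers all complete simplicial fans, not only normal fans of Delzant polytopes. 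Two small points worth making explicit: the splitting $G=T_\varGamma\times R$ uses that $T_\varGamma$ is precisely the compact kernel in~\eqref{cgrou}, which relies on $A\colon\Z^m\to N$ being surjective (guaranteed by the Delzant condition); and the positivity $y_k>0$ for $k\notin\omega(\mb z)$ uses that the presentation of a Delzant polytope by its facet normals is irredundant, so every $F_i$ is a genuine facet. Both are immediate in the theorem's setting, so your proof stands.
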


A proof can be found in~\cite[Proposition~VI.3.1.1]{audi91} or
in~\cite[Appendix~2]{guil94}; we shall also give a proof of a more
general statement in Section~\ref{camam}.

\begin{remark}
Projective embeddings of $V_{\Sigma_P}$ correspond to
\emph{lattice} Delzant polytopes~$P$, i.e. Delzant polytopes with
vertices in the lattice~$N$. Any such embedding defines a
symplectic structure on $V_{\Sigma_P}$ by inducing the symplectic
form from the projective space. It can be
shown~\cite[Appendix~2]{guil94} that the diffeomorphism of
Theorem~\ref{symplectictoric} above preserves the cohomology class
of the symplectic form, or equivalently, the two symplectic
structures are $T_N$-equivariantly symplectomorphic.
\end{remark}

\begin{example}
Let $V_\sigma$ be the affine toric variety corresponding to an
$n$-dimensional simplicial cone~$\sigma$. We may write
$V_\sigma=V_\Sigma$ where $\Sigma$ is the simplicial fan
consisting of all faces of~$\sigma$. Then $m=n$, $U(\Sigma)=\C^n$,
and $A\colon\Z^n\to N$ is the monomorphism onto the full rank
sublattice generated by $\mb a_1,\ldots,\mb a_n$. Therefore, $G$
is a finite group and $V_\sigma=\C^n/G=\Spec\C[z_1,\ldots,z_n]^G$.

In particular, if we consider the cone $\sigma$ generated by $2\mb
e_1-\mb e_2$ and $\mb e_2$ in $\R^2$, then $G$ is $\Z_2$ embedded
as $\{(1,1),(-1,-1)\}$ in~$(\C^\times)^2$. The quotient
construction realises the quadratic cone
\[
  V_\sigma=\Spec\C[z_1,z_2]^G=\Spec\C[z_1^2,z_1z_2,z_2^2]=
  \{(u,v,w)\in\C^3\colon v^2=uw\}
\]
as a quotient of $\C^2$ by~$\Z_2$.
\end{example}

\begin{example}
Let $\Sigma$ be the complete fan in $\R^2$ with the three maximal
cones: $\sigma_0=\R_\ge(\mb e_1,\mb e_2)$, $\sigma_1=\R_\ge(\mb
e_2,-\mb e_1-\mb e_2)$, and $\sigma_2=\R_\ge(-\mb e_1-\mb e_2,\mb
e_1)$. Then $\sK_\Sigma$ is the boundary of a triangle, so the
only non-simplex is~$\{1,2,3\}$. Hence,
\[
  U(\Sigma)=\C^3\setminus\{z_1=z_2=z_3=0\}=\C^3\setminus\{\mathbf0\}
\]
The subgroup $G$ defined by~\eqref{gexpl} is the diagonal
$\C^\times$ in $(\C^\times)^3$. We therefore obtain
$V_\Sigma=U(\Sigma)/G=\C P^2$. Since $\Sigma$ is the normal fan of
the standard $2$-simplex, this agrees with the symplectic quotient
$V_P=\zp/T_\varGamma$ of Example~\ref{cp2sq}.
\end{example}

\begin{example}\label{C3minus3lines1}
Consider the fan $\Sigma$ in~$\R^2$ with three one-dimensional
cones generated by the vectors $\mb e_1$, $\mb e_2$ and $-\mb
e_1-\mb e_2$. This fan is not complete, but its one-dimensional
cones span $\R^2$, so we may apply Theorem~\ref{coxth}. The
simplicial complex $\mathcal K_\Sigma$ consists of 3 disjoint
points. The space $U(\Sigma)$ is the complement to 3 coordinate
lines in~$\C^3$:
\[
  U(\Sigma)=\C^3\big\backslash\bigl(\{z_1=z_2=0\}\cup
  \{z_1=z_3=0\}\cup\{z_2=z_3=0\}\bigr)
\]
The group $G$ is the diagonal $\C^\times$ in $(\C^\times)^3$.
Hence $V_\Sigma=U(\Sigma)/G$ is a quasiprojective variety obtained
by removing three points from~$\C P^2$.
\end{example}

\section{Moment-angle complexes and polyhedral
products}\label{macpp} For any simple polytope $P=P(A,\mb b)$
given by~\eqref{ptope}, we defined the moment-angle manifold
$\zp=\mathcal Z_{A,\mb b}$ from diagram~\eqref{cdiz}, or,
equivalently, as the intersection of quadrics given
by~\eqref{zpqua}. Here, using a combinatorial decomposition of $P$
into cubes, we represent $\zp$ as a union of products
$(D^2)^I\times(S^1)^{[m]\setminus I}$ of discs and circles
parametrised by simplices $I$ in the associated simplicial complex
$\sK_P=\partial P^*$. This construction may be generalised to
arbitrary simplicial complexes~$\sK$, leading to the notion of a
\emph{moment-angle complex}~$\zk$. We follow~\cite{bu-pa00} (and
more detailed treatment given in~\cite{bu-pa02}) in our
description of moment-angle complexes.

The basic building block in the `moment-angle' decomposition of
$\zk$ is the pair $(D^2,S^1)$ of a unit disc and circle, and the
whole construction can be extended naturally to arbitrary pairs of
spaces $(X,A)$. The resulting complex $(X,A)^\sK$ is now known as
the `polyhedral product space' over a simplicial complex~$\sK$;
this terminology was suggested by William Browder,
cf.~\cite{b-b-c-g10}. Many spaces important for toric topology
admit polyhedral product decompositions.

The construction of the moment-angle complex $\zk$ and its
generalisation $(X,A)^\sK$ is of truly universal nature, and has
remarkable functorial properties. The most basic of these is that
the construction of $\zk$ establishes a functor from simplicial
complexes and simplicial maps to spaces with torus actions and
equivariant maps. If $\sK$ is a triangulated sphere, then $\zk$ is
a manifold, and most important geometric examples of $\zk$ arise
in this way.

Another important aspect of the theory of moment-angle complexes
is their connection to coordinate subspace arrangements and their
complements. These have appeared as the `total spaces' $U(\Sigma)$
in the algebraic quotient construction of toric varieties,
reviewed in the previous section. Subspace arrangements and their
complements have also played an important role in singularity
theory, and, more recently, in the theory of linkages and robotic
motion planning. Arrangements of coordinate subspaces in $\C^m$
correspond bijectively to simplicial complexes $\mathcal K$ on the
set~$[m]$, and the complement of such an arrangement is homotopy
equivalent to the corresponding moment-angle complex~$\zk$
(see~\cite[Theorem~5.2.5]{bu-pa00} and Theorem~\ref{deret} below).

\subsection{Cubical decompositions}
\begin{construction}[cubical subdivision of a simple polytope]\label{cubpol}
Let $P$ be a simple $n$-polytope with $m$ facets $F_1,\ldots,F_m$.
We shall construct of a piecewise linear embedding of $P$ into the
standard unit cube~$\I^m\subset\R^m_\ge$, thereby inducing a
cubical subdivision $\mathcal C(P)$ of $P$ by the preimages of
faces of~$\I^m$.

Denote by $\mathcal S$ the set of barycentres of all faces of~$P$,
including the vertices and the barycentre of the whole polytope.
This will be the vertex set of~$\mathcal C(P)$. Every $(n-k)$-face
$G$ of $P$ is an intersection of $k$ facets:
$G=F_{i_1}\cap\cdots\cap F_{i_k}$. We map the barycentre of $G$ to
the vertex $(\varepsilon_1,\ldots,\varepsilon_m)\in \I^m$, where
$\varepsilon_i=0$ if $i\in\{i_1,\ldots,i_k\}$ and
$\varepsilon_i=1$ otherwise. The resulting map $\mathcal S\to\I^m$
can be extended linearly on the simplices of the barycentric
subdivision of $P$ to an embedding $c_P\colon P\to\I^m$. The case
$n=2$, $m=3$ is shown in Fig.~\ref{ip}.
\begin{figure}[h]
\begin{picture}(120,60)
  \put(10,10){\line(1,2){20}}
  \put(30,50){\line(1,-2){20}}
  \put(10,10){\line(1,0){40}}
  \put(20,30){\line(2,-1){10}}
  \put(40,30){\line(-2,-1){10}}
  \put(30,10){\line(0,1){15}}
  \put(50,30){\vector(1,0){12.5}}
  \put(70,10){\line(1,0){30}}
  \put(70,10){\line(0,1){30}}
  \put(70,40){\line(1,0){30}}
  \put(100,10){\line(0,1){30}}
  \put(70,40){\line(1,1){10}}
  \put(100,40){\line(1,1){10}}
  \put(100,10){\line(1,1){10}}
  \put(80,50){\line(1,0){30}}
  \put(110,20){\line(0,1){30}}
  \put(80,20){\line(-1,-1){3.6}}
  \put(75,15){\line(-1,-1){3.6}}
  \multiput(80,20)(3,0){10}{\line(1,0){2}}
  \multiput(80,20)(0,3){10}{\line(0,1){2}}
  \put(81,21){0}
  \put(7,5){$A$}
  \put(28,5){$F$}
  \put(50,5){$E$}
  \put(32,22){$G$}
  \put(15,29){$B$}
  \put(42,29){$D$}
  \put(29,52){$C$}
  \put(15,52){\Large $P$}
  \put(54,32){$c_P$}
  \put(67,5){$A$}
  \put(98,5){$F$}
  \put(67,41){$B$}
  \put(101,37){$G$}
  \put(80,52){$C$}
  \put(111,52){$D$}
  \put(111,18){$E$}
  \put(66,52){\Large $\I^m$}
\end{picture}
\caption{Embedding $c_P\colon P\to \I^m$ for $n=2$, $m=3$.}
\label{ip}
\end{figure}

Any face of $\I^m$ has the form
\[
  C_{J\subset I}=\{(y_1,\ldots,y_m)\in \I^m\colon y_j=0
  \text{ for }j\in
  J,\quad y_j=1\text{ for }j\notin I\}
\]
where $J\subset I$ is a pair of of embedded (possibly empty)
subsets of~$[m]$. We also set
\[
  C_I=C_{\varnothing\subset I}=
  \{(y_1,\ldots,y_m)\in \I^m\colon y_j=1\text{ for }j\notin I\}
\]
to simplify the notation.

The image $c_P(P)\subset\I^m$ is the union of all faces
$C_{J\subset I}$ such that $\bigcap_{i\in I}F_i\ne\varnothing$.
For such $C_{J\subset I}$, the preimage $c_P^{-1}(C_{J\subset I})$
is a face of the cubical complex~$\mathcal C(P)$. The vertex set
of $c_P^{-1}(C_{J\subset I})$ is the subset of $\mathcal S$
consisting of barycentres of all faces between the faces $G$ and
$H$ of~$P$, where $G=\bigcap_{j\in J}F_j$ and $H=\bigcap_{i\in
I}F_i$. Therefore, faces of $\mathcal C(P)$ correspond to pairs of
embedded faces $G\supset H$ of~$P$, and we denote them by
$C_{G\supset H}$. In particular, maximal ($n$-dimensional) faces
of $\mathcal C(P)$ correspond to pairs $G=P$, $H=v$, where $v$ is
a vertex of~$P$. For these maximal faces we use the abbreviated
notation $C_v=C_{P\supset v}$.

For every vertex $v=F_{i_1}\cap\cdots\cap F_{i_n}\in P$ with
$I_v=\{i_1,\ldots,i_n\}$ we have
\begin{equation}
\label{cubpolmap}
  c_P(C_v)=C_{I_v}=\bigl\{(y_1,\ldots,y_m)\in \I^m\colon y_j=1\text{ whenever }
  v\notin F_j\bigr\}.
\end{equation}
\end{construction}

We therefore obtain:

\begin{proposition}
\label{thcubpol} A simple polytope $P$ with $m$ facets admits a
cubical decomposition whose maximal faces $C_v$ correspond to the
vertices $v\in P$. The resulting cubical complex $\mathcal C(P)$
embeds canonically into~$\I^m$, as described
by~{\rm(\ref{cubpolmap})}.
\end{proposition}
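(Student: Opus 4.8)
The plan is to show that the map $c_P\colon P\to\I^m$ of Construction~\ref{cubpol} is a PL embedding whose image is a cubical subcomplex of $\I^m$; pulling the face structure of $\I^m$ back along $c_P$ then produces the cubical decomposition $\mathcal C(P)$, and inspecting the top-dimensional cube faces in the image yields the description of the maximal cells $C_v$. Recall that $c_P$ is the affine extension over the barycentric subdivision $P'$ of the assignment taking the barycentre of a face $K$ to the vertex of $\I^m$ whose $i$th coordinate is $0$ exactly for $i\in I_K:=\{i\colon K\subset F_i\}$.

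The heart of the matter is a local model. Since the simplices of $P'$ are the chains in the face poset $\mathcal F$ of $P$ (with $P$ as the top element), it is enough to understand $c_P$ on the subcomplex $P'_{[H,G]}\subset P'$ spanned by the barycentres of the faces $K$ with $H\subset K\subset G$, for an interval $[H,G]$ of $\mathcal F$. As $P$ is simple, $G$ is a simple polytope and the faces of $G$ containing $H$ form a Boolean lattice $B_d$, where $d=\dim G-\dim H=|I_H\setminus I_G|$ (this is dual, via Theorem~\ref{dualpol} and the simpliciality of $G^*$, to the statement that the faces of a simplex form a Boolean lattice). Hence $P'_{[H,G]}$ is the order complex of $B_d$, that is, the standard simplicial subdivision of the cube $[0,1]^d$ into $d!$ order simplices indexed by the maximal chains of $B_d$. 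A direct inspection of the vertex assignment shows that, after identifying $C_{I_G\subset I_H}$ with $[0,1]^d$ via the coordinates $I_H\setminus I_G$, the map $c_P$ realises $P'_{[H,G]}$ as exactly this standard subdivision of $C_{I_G\subset I_H}$ (up to a relabelling of vertices by complementation). Consequently $c_P|_{P'_{[H,G]}}$ is a homeomorphism onto the closed cube face $C_{I_G\subset I_H}$; applied to the intervals $[K_0,K_r]$ this also shows that $c_P$ maps the relative interior of each simplex $K_0\subsetneq\dots\subsetneq K_r$ of $P'$ into the relative interior of the cube face $C_{I_{K_r}\subset I_{K_0}}$.

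From this I would conclude that $c_P$ is a PL embedding with image $Y:=\bigcup\bigl\{C_{J\subset I}\colon\bigcap_{i\in I}F_i\ne\varnothing\bigr\}$. For injectivity: if $c_P(p)=c_P(q)=y$, let $C_{J\subset I}$ be the unique cube face whose relative interior contains $y$; the open simplex of $P'$ containing $p$, a chain $K_0\subsetneq\dots\subsetneq K_r$, maps into the relative interior of $C_{I_{K_r}\subset I_{K_0}}$ by the previous paragraph, and since relative interiors of distinct faces of $\I^m$ are disjoint we get $C_{I_{K_r}\subset I_{K_0}}=C_{J\subset I}$, so $I_{K_0}=I$, $I_{K_r}=J$, whence $K_0=H:=\bigcap_{i\in I}F_i$ and $K_r=G:=\bigcap_{j\in J}F_j$ (a face of a simple polytope is the intersection of the facets containing it); the same holds for $q$, so $p,q\in P'_{[H,G]}$, where $c_P$ is injective, giving $p=q$. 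For the image: applying the local model to the intervals $[v,P]$ gives $c_P(C_v)=C_{I_v}$, which is formula~\eqref{cubpolmap}; every $C_{J\subset I}$ with $\bigcap_{i\in I}F_i\ne\varnothing$ is a face of $C_{I_v}$ for any vertex $v$ of $\bigcap_{i\in I}F_i$ (then $J\subset I\subset I_v$), so $Y=\bigcup_v C_{I_v}=\bigcup_v c_P(C_v)\subset c_P(P)$, while $c_P(P)\subset Y$ by the previous paragraph; hence $c_P(P)=Y$. Finally $Y$ is closed under passing to faces of $\I^m$, since $\bigcap_{i\in I}F_i\ne\varnothing$ and $I'\subset I$ force $\bigcap_{i\in I'}F_i\ne\varnothing$; so $Y$ is a cubical subcomplex of $\I^m$, and $c_P\colon P\to Y$ is a homeomorphism, being a continuous injection from a compact space to a Hausdorff space.

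Transporting the cube structure of $Y$ back through $c_P$ yields the cubical decomposition $\mathcal C(P)$, whose cells are the $c_P^{-1}(C_{J\subset I})=P'_{[H,G]}\cong[0,1]^d$. Since $\dim C_{J\subset I}=|I\setminus J|$ is maximal, equal to $n$, precisely when $J=\varnothing$ and $|I|=n$, i.e. when $I=I_v$ for a vertex $v\in P$, the maximal cells are exactly the $C_v=c_P^{-1}(C_{I_v})$, in accordance with~\eqref{cubpolmap}. The step I expect to be the main obstacle is the local model of the second paragraph: verifying carefully that $c_P$ realises the order complex of the Boolean interval $[H,G]$ as the standard subdivision of the cube $C_{I_G\subset I_H}$ — in particular that it is an honest homeomorphism onto the closed face taking open simplices into the open face — and then arguing that these local homeomorphisms patch together into the global embedding.
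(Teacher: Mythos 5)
Your proposal is correct and follows the same route as the paper, which presents the proposition as an immediate consequence of Construction~\ref{cubpol} without further argument; you simply supply the verification the paper omits, namely the local model identifying $c_P$ on each poset interval $[H,G]$ with the standard order-complex triangulation of the cube face $C_{I_G\subset I_H}$, and the resulting injectivity and image computations. All the steps you flag as potential obstacles go through as you describe (the interval $[H,G]$ is Boolean because $P$ is simple, and relative interiors of chain simplices do land in relative interiors of the corresponding cube faces), so this is a complete proof in the spirit of the paper's construction.
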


\subsection{Moment-angle complexes}
The map $\mu\colon\C^m\to\R^m_\ge$ (see Example~\ref{simcm})
identifies the unit cube~$\I^m\subset\R^m_\ge$ with the quotient
of the unit \emph{polydisc}
\[
  \D^m=\bigl\{ (z_1,\ldots,z_m)\in\C^m\colon |z_i|\le1\bigr\}
\]
by the coordinatewise action of~$\T^m$.

Now we define the space $\widetilde{\mathcal Z}_P$ from a diagram
similar to~\eqref{cdiz} (which was used to define $\zp=\mathcal
Z_{A,\mb b}$), in which the bottom map is replaced by $c_P\colon
P\to\I^m$:
\begin{equation}\label{cdwiz}
\begin{CD}
  \widetilde{\mathcal Z}_P @>\widetilde i_Z>>\D^m\\
  @VVV\hspace{-0.2em} @VV\mu V @.\\
  P @>c_P>> \I^m
\end{CD}
\end{equation}

\begin{proposition}
The space $\widetilde{\mathcal Z}_P$ is $\T^m$-equivariantly
homeomorphic to the moment-angle manifold~$\zp$.
\end{proposition}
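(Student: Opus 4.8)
The plan is to identify both $\widetilde{\mathcal Z}_P$ and $\zp$ with the same ``polyhedral'' gluing $P\times\T^m/{\sim}$, running the argument of Proposition~\ref{zpids} with $i_{A,\mb b}$ replaced by $c_P$. First I would unwind the pullback~\eqref{cdwiz}: since $c_P\colon P\to\I^m$ is an embedding and the coordinatewise $\T^m$-action on $\D^m$ preserves the fibres of $\mu$, the equivariant map $\widetilde i_Z$ identifies $\widetilde{\mathcal Z}_P$, together with its $\T^m$-action, with the $\T^m$-invariant subspace $\mu^{-1}(c_P(P))\subset\D^m$. Restricting the identification $\C^m\cong\R^m_\ge\times\T^m/{\sim}$ from~\eqref{Cmids} to the polydisc gives $\D^m\cong\I^m\times\T^m/{\sim}$, where $(\mb y,\mb t_1)\sim(\mb y,\mb t_2)$ iff $\mb t_1^{-1}\mb t_2\in\T^{\omega(\mb y)}$ and $\omega(\mb y)=\{i\colon y_i=0\}$. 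Pulling this back along the homeomorphism $P\stackrel{c_P}\longrightarrow c_P(P)$ then yields a $\T^m$-equivariant homeomorphism $\widetilde{\mathcal Z}_P\cong P\times\T^m/{\sim}$ in which the identification over $\mb x\in P$ is by the subtorus $\T^{\omega(c_P(\mb x))}$.

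The crux is the combinatorial identity $\omega(c_P(\mb x))=I_{\mb x}$ for every $\mb x\in P$ — equivalently, that the $j$th coordinate of $c_P(\mb x)$ vanishes exactly when $\mb x\in F_j$ — which is the cubical analogue of Proposition~\ref{easyzp}. Once it is in hand, the quotient description above coincides with that of $\zp$ in Proposition~\ref{zpids}, giving the asserted $\T^m$-equivariant homeomorphism $\widetilde{\mathcal Z}_P\cong\zp$. To prove the identity I would use the explicit form of $c_P$ from Construction~\ref{cubpol}: let $Q$ be the carrier face of $\mb x$ (so $I_{\mb x}=\{i\colon Q\subset F_i\}$ and $Q=\bigcap_{i\in I_{\mb x}}F_i$), and let $\mb x$ lie in the relative interior of the barycentric simplex on a flag $Q_0\subsetneq\cdots\subsetneq Q_r=Q$ of faces of $P$. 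Since $c_P$ sends the barycentre of $Q_\ell$ to the vertex of $\I^m$ whose $j$th coordinate is $0$ precisely when $Q_\ell\subset F_j$, and $c_P(\mb x)$ is the convex combination of these vertices with all barycentric coefficients strictly positive, the $j$th coordinate of $c_P(\mb x)$ vanishes iff $Q_\ell\subset F_j$ for every $\ell$, i.e.\ iff $Q_r=Q\subset F_j$ because the flag is increasing. Hence $\omega(c_P(\mb x))=\{j\colon Q\subset F_j\}=I_{\mb x}$; the case where $Q$ is a vertex is precisely~\eqref{cubpolmap}.

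The only place demanding care is this last point: one must check that the carrier of $\mb x$ in $P$ and the smallest face of $\I^m$ whose relative interior contains $c_P(\mb x)$ correspond under $c_P$, and that a strictly positive convex combination of the listed cube vertices indeed lies in the relative interior of that face, so that no extra coordinate accidentally becomes $0$ or $1$. Everything else is routine: all the spaces in sight are compact Hausdorff, so the set-theoretic bijection $P\times\T^m/{\sim}\to\mu^{-1}(c_P(P))$ obtained by comparing the two quotients is automatically a homeomorphism, and its $\T^m$-equivariance is immediate from the construction.
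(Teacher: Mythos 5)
Your proposal is correct and follows essentially the same route as the paper: both identify $\widetilde{\mathcal Z}_P$ and $\zp$ with the quotient $P\times\T^m/{\sim}$ via the key identity $\omega(c_P(\mb x))=I_{\mb x}$, which the paper simply asserts and you verify carefully from the barycentric definition of $c_P$. No gaps.
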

\begin{proof}
As we have seen in Proposition~\ref{zpids}, $\zp$ is
$\T^m$-homeomorphic to the identification space
\[
  P\times\T^m/{\sim\:}\quad\text{where }
  (\mb x,\mb t_1)\sim(\mb x,\mb t_2)\:\text{ if }\:\mb t_1^{-1}\mb t_2\in
  \T^{I_{\mb x}}.
\]
By restricting~\eqref{Cmids} to $\D^m\subset\C^m$ we obtain that
\[
  \D^m\cong   \I^m\times\T^m/{\sim\:}\quad\text{where }(\mb y,\mb t_1)\sim(\mb y,\mb
  t_2)\text{ if }\mb t_1^{-1}\mb t_2\in\T^{\omega(\mb y)}.
\]
As in the proof of Proposition~\ref{zpids}, $\widetilde{\mathcal
Z}_P$ is identified with $c_P(P)\times\T^m/{\sim\:}$. A point $\mb
x\in P$ is mapped by $c_P$ to $\mb y\in\I^m$ with $I_{\mb
x}=\omega(\mb y)=\{i\in[m]\colon\mb x\in F_i\}$. We therefore
obtain that both $\zp$ and $\widetilde{\mathcal Z}_P$ are
$\T^m$-homeomorphic to~$P\times\T^m/{\sim\:}$.
\end{proof}

We shall therefore not distinguish between the spaces $\zp$ and
$\widetilde{\mathcal Z}_P$; and think of the maps $i_Z$ and
$\widetilde i_Z$ of diagrams~\eqref{cdiz} and~\eqref{cdwiz} as
different embedding of the same manifold $\zp$ in~$\C^m$ (the
first one is smooth, and the second one is not).

Given a vertex $v=F_{i_1}\cap\cdots\cap F_{i_n}\in P$, we consider
the restriction of the map $\widetilde i_Z\colon\zp\to\D^m$ to the
subset $C_v\times\T^m/{\sim\:}\subset P\times\T^m/{\sim\:}=\zp$:
\begin{multline*}
  \widetilde i_Z(C_v\times\T^m/{\sim\:})=
  c_P(C_v)\times\T^m/{\sim\:}=C_{I_v}\times\T^m/{\sim\:}=\mu^{-1}(C_{I_v})\\
  =\bigl\{(z_1,\ldots,z_m)\in
  \D^m\colon |z_j|^2=1\text{ for }v\notin F_j\bigl\}.
\end{multline*}
Since $P=\bigcup_v C_v$, we obtain that
\[
  \widetilde i_Z(\zp)=\bigcup_v\;\mu^{-1}(C_{I_v}).
\]
Note that $\mu^{-1}(C_{I_v})$ is a product of $|I_v|=n$ discs and
$m-n$ circles. Since
$\mu^{-1}(C_I)\cap\mu^{-1}(C_J)=\mu^{-1}(C_{I\cap J})$ for any
$I,J\subset[m]$, we can rewrite the union above as
\begin{equation}\label{zpunion}
  \widetilde i_Z(\zp)=\bigcup_{I\in\sK_P}\mu^{-1}(C_I),
\end{equation}
where
\[
  \sK_P=\{I=\{i_1,\ldots,i_k\}\subset[m]\colon
  F_{i_1}\cap\cdots\cap F_{i_k}\ne\varnothing\}
\]
is the boundary simplicial complex of the polar polytope~$P^*$.

The decomposition~\eqref{zpunion} of $\zp$ into a union of
products of discs and circles can now be generalised to an
arbitrary simplicial complex:

\begin{definition}
Let $\sK$ be a simplicial complex on the set~$[m]$. We always
assume that $\varnothing\in\sK$. The \emph{moment-angle complex}
corresponding to~$\sK$ is defined as
\begin{equation}\label{zkbj}
  \zk=\bigcup_{I\in\sK}B_I,
\end{equation}
where
\[
  B_I=\mu^{-1}(C_I)=\bigl\{(z_1,\ldots,z_m)\in
  \D^m\colon |z_j|^2=1\text{ for }j\notin I\bigl\},
\]
and the union in~\eqref{zkbj} is understood as the union of
subsets inside the polydisc~$\D^m$. Topologically, each $B_I$ is a
product of $|I|$ discs~$D^2$ and $m-|I|$ circles~$S^1$. We
therefore may rewrite~\eqref{zkbj} as the following decomposition
of $\zk$ into a union of products of discs and circles:
\begin{equation}\label{zkd2s1}
  \zk=\bigcup_{I\in\mathcal K}
  \Bigl(\prod_{i\in I}D^2\times\prod_{i\notin I}S^1 \Bigl),
\end{equation}
From now on we shall denote the space $B_I$ by~$(D^2,S^1)^I$.
\end{definition}

We may rephrase~\eqref{zpunion} by saying that the map $\widetilde
i_Z\colon\zp\to\D^m$ identifies the moment-angle manifold $\zp$
with the moment-angle complex $\mathcal Z_{\sK_P}$ corresponding
to~$\sK_P=\partial P^*$.

A \emph{ghost vertex} of $\sK$ is a one-element subset
$\{i\}\in[m]$ which is not in~$\sK$ (i.e. is not a vertex). Since
facets of a simple polytope~$P$ correspond to vertices of $\sK_P$,
it is natural to add a ghost vertex to $\sK_P$ for each redundant
inequality in a generic presentation~\eqref{ptope}.

\begin{example}\

1. Let $\sK=\Delta^{m-1}$ be the full simplex (a simplicial
complex consisting of all subsets of~$[m]$). Then $\zk=\D^m$.

2. Let $\sK$ be a simplicial complex on $[m]$, and let $\sK^\circ$
be the complex on $[m+1]$ obtained by adding one ghost vertex
$\circ=\{m+1\}$ to~$\sK$. Then in the decomposition~\eqref{zkbj}
for $\mathcal Z_{\sK^\circ}$ each $B_I$ has factor $S^1$ in the
last coordinate, and
\[
  \mathcal Z_{\sK^\circ}=\zk\times S^1.
\]
In the case $\sK=\sK_P$ this agrees with
Proposition~\ref{zpcomb}~(b).

In particular, if $\sK$ is the `empty' simplicial complex on
$[m]$, consisting of the empty simplex $\varnothing$ only, then
$\zk=\mu^{-1}(1,\ldots,1)=\T^m$ is the standard $m$-torus.

For an arbitrary $\sK$ on $[m]$, the moment-angle complex $\zk$
contains the $m$-torus $\T^m$ (corresponding to $\sK=\varnothing$)
and is contained in the polydisc $\D^m$ (corresponding to
$\sK=\Delta^{m-1}$).

3. Let $\sK$ be the complex consisting of two disjoint points.
Then
\[
  \zk=(D^2\times S^1)\cup(S^1\times D^2)=\partial(D^2\times
  D^2)\cong S^3,
\]
the standard decomposition of a 3-sphere into the union of two
solid tori.

4. More generally, if $\sK=\partial\Delta^{m-1}$ (the boundary of
a simplex), then
\begin{align*}
\zk&=(D^2\times\cdots\times D^2\times S^1)\cup
     (D^2\times\cdots\times S^1\times D^2)\cup\cdots\cup
     (S^1\times\cdots\times D^2\times
     D^2)\\
   &=\partial\bigl((D^2)^m\bigr)\cong S^{2m-1}.
\end{align*}

5. Let $\mathcal K=\quad\begin{picture}(5,5)
\put(0,0){\circle*{1}} \put(0,5){\circle*{1}}
\put(5,0){\circle*{1}} \put(5,5){\circle*{1}}
\put(0,0){\line(1,0){5}} \put(0,0){\line(0,1){5}}
\put(5,0){\line(0,1){5}} \put(0,5){\line(1,0){5}}
\put(-2.5,-1){\scriptsize 1} \put(6.3,-1){\scriptsize 3}
\put(-2.5,4){\scriptsize 4} \put(6.3,4){\scriptsize 2}
\end{picture}\quad$,
the boundary of a 4-gon. Then we have four maximal simplices
$\{1,3\}$, $\{2,3\}$, $\{1,4\}$ and $\{2,4\}$, and
\begin{align*}
  \zk&=(D^2\times S^1\times D^2\times S^1)\cup
  (S^1\times D^2\times D^2\times S^1)\\
  &\qquad\cup(D^2\times S^1\times S^1\times D^2)\cup
  (S^1\times D^2\times S^1\times D^2)\\
  &=\bigl((D^2\times S^1)\cup(S^1\times D^2)\bigr)
  \times D^2\times S^1
  \cup\bigl((D^2\times S^1)\cup(S^1\times D^2)\bigr)\times
  S^1\times D^2\\
  &=\bigl((D^2\times S^1)\cup(S^1\times D^2)\bigr)\times
  \bigl((D^2\times S^1)\cup(S^1\times D^2)\bigr)
   \cong S^3\times S^3.
\end{align*}
\end{example}

The last example can be generalised as follows. Recall that the
\emph{join} of simplicial complexes $\sK_1$ and $\sK_2$ be on sets
$\mathcal V_1$ and $\mathcal V_2$ respectively is the simplicial
complex
\[
  \sK_1*\sK_2=\bigl\{  I\subset\mathcal V_1\sqcup\mathcal V_2\colon
  I= I_1\cup I_2,\; I_1\in\sK_1,  I_2\in\sK_2\bigr\}
\]
on the set $\mathcal V_1\sqcup\mathcal V_2$.

\begin{proposition}\label{zkjoin}
We have $\mathcal Z_{\mathcal K_1*\,\mathcal K_2}=\mathcal
Z_{\sK_1}\times\mathcal Z_{\sK_2}$.
\end{proposition}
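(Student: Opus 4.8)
The plan is to match the product decomposition of $\sK_1*\sK_2$ at the level of simplices with a product decomposition of the building blocks $(D^2,S^1)^I$. Write $\sK_1$ on the vertex set $[m_1]$ and $\sK_2$ on $[m_2]$, so that $\sK_1*\sK_2$ is a complex on $[m_1]\sqcup[m_2]$, which I identify with $[m]$ for $m=m_1+m_2$. Splitting the coordinates of $\C^m$ according to this partition gives a $\T^m$-equivariant homeomorphism $\D^m\cong\D^{m_1}\times\D^{m_2}$ (with $\T^m\cong\T^{m_1}\times\T^{m_2}$), and I will work inside this product throughout.

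The first step is the purely combinatorial observation that a subset $I\subset[m]$ is a simplex of $\sK_1*\sK_2$ if and only if $I_1:=I\cap[m_1]\in\sK_1$ and $I_2:=I\cap[m_2]\in\sK_2$; this is just the definition of the join, and it sets up a bijection between simplices of $\sK_1*\sK_2$ and pairs $(I_1,I_2)\in\sK_1\times\sK_2$. In particular the empty simplex corresponds to $(\varnothing,\varnothing)$, so the standing convention $\varnothing\in\sK_i$ is exactly what makes the bookkeeping go through.

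The second step is to check that, under the identification $\D^m\cong\D^{m_1}\times\D^{m_2}$, the block $B_I=(D^2,S^1)^I=\{\mb z\in\D^m\colon |z_j|^2=1\text{ for }j\notin I\}$ splits as $(D^2,S^1)^{I_1}\times(D^2,S^1)^{I_2}$: a coordinate in $[m_1]$ contributes a $D^2$-factor precisely when it lies in $I_1$ and an $S^1$-factor otherwise, and likewise for $[m_2]$, which is immediate from this description of $B_I$.

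Finally, taking the union over all simplices and applying the elementary distributivity of $\times$ over $\bigcup$, namely $\bigcup_{\alpha,\beta}(X_\alpha\times Y_\beta)=\bigl(\bigcup_\alpha X_\alpha\bigr)\times\bigl(\bigcup_\beta Y_\beta\bigr)$, turns $\mathcal Z_{\sK_1*\sK_2}=\bigcup_{I\in\sK_1*\sK_2}B_I$ into $\bigl(\bigcup_{I_1\in\sK_1}(D^2,S^1)^{I_1}\bigr)\times\bigl(\bigcup_{I_2\in\sK_2}(D^2,S^1)^{I_2}\bigr)=\mathcal Z_{\sK_1}\times\mathcal Z_{\sK_2}$; since every identification used is $\T^m$-equivariant, the resulting homeomorphism respects the torus actions as well. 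There is no genuine obstacle here; the only point requiring a moment's care is that the index set of the union really is the \emph{full} product $\sK_1\times\sK_2$ rather than some proper subset of it — which is exactly what the definition of the join guarantees — together with the observation that this full-product indexing is precisely what licenses pulling the Cartesian product outside the union.
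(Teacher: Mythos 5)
Your proposal is correct and is essentially the paper's own argument: the paper likewise indexes the union by pairs $(I_1,I_2)\in\sK_1\times\sK_2$, splits $(D^2,S^1)^{I_1\sqcup I_2}$ as $(D^2,S^1)^{I_1}\times(D^2,S^1)^{I_2}$, and pulls the product outside the union. You have merely spelled out the combinatorial bijection and the $\T^m$-equivariance more explicitly than the paper's one-line computation.
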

\begin{proof}
Indeed,
\begin{multline*}
  \mathcal Z_{\sK_1*\,\sK_2}=\bigcup_{I_1\in\sK_1,\,I_2\in\sK_2}
  (D^2,S^1)^{I_1\sqcup I_2}=\bigcup_{I_1\in\sK_1,\,I_2\in\sK_2}
  (D^2,S^1)^{I_1}\times(D^2,S^1)^{I_2}\\
  =\Bigl(\bigcup_{I_1\in\sK_1}(D^2,S^1)^{I_1}\Bigr)\times
  \Bigl(\bigcup_{I_2\in\sK_2}(D^2,S^1)^{I_2}\Bigr)=
  \mathcal Z_{\sK_1}\times\mathcal Z_{\sK_2}.\qedhere
\end{multline*}
\end{proof}

\begin{corollary}
Let $P$ and $Q$ be two simple polytopes. Then $\mathcal Z_{P\times
Q}\cong\zp\times\mathcal Z_{Q}$.
\end{corollary}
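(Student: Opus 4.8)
The plan is to reduce the statement to Proposition~\ref{zkjoin} by identifying the simplicial complex associated with a product of simple polytopes as the join of the two factor complexes. First I would recall that for a simple polytope $P$ with $m$ facets the moment-angle manifold $\zp$ is $\T^m$-homeomorphic to the moment-angle complex $\mathcal Z_{\sK_P}$, where $\sK_P=\partial P^*$ is the boundary complex of the polar polytope; this identification was made via~\eqref{zpunion} just above. Hence it suffices to show $\sK_{P\times Q}=\sK_P*\sK_Q$ as simplicial complexes on the disjoint union of the two facet index sets.

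Next I would verify this combinatorial identity directly. The facets of $P\times Q$ are exactly the products $F_i\times Q$, where $F_i$ runs over the facets of $P$, together with $P\times F'_j$, where $F'_j$ runs over the facets of $Q$; moreover $P\times Q$ is again a simple polytope. For subsets $A_1\subset P$ and $A_2\subset Q$ one has $(A_1\times Q)\cap(P\times A_2)=A_1\times A_2$, so a family of facets of $P\times Q$ has nonempty common intersection if and only if the corresponding facets of $P$ intersect in $P$ and the corresponding facets of $Q$ intersect in $Q$. Writing a subset of the facet index set of $P\times Q$ as $I_1\sqcup I_2$ with $I_1$ indexing facets of $P$ and $I_2$ indexing facets of $Q$, this says $I_1\sqcup I_2\in\sK_{P\times Q}$ if and only if $I_1\in\sK_P$ and $I_2\in\sK_Q$, which is precisely the definition of the join $\sK_P*\sK_Q$.

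Finally, combining with Proposition~\ref{zkjoin} I would conclude
\[
  \mathcal Z_{P\times Q}=\mathcal Z_{\sK_{P\times Q}}
  =\mathcal Z_{\sK_P*\sK_Q}=\mathcal Z_{\sK_P}\times\mathcal Z_{\sK_Q}
  =\zp\times\mathcal Z_Q,
\]
which is the asserted homeomorphism. The only point requiring any care is the combinatorial identity $\sK_{P\times Q}=\sK_P*\sK_Q$; everything else is an immediate appeal to Proposition~\ref{zkjoin}. I expect no serious obstacle: if one wants the homeomorphism to be $\T^{m+m'}$-equivariant, one simply observes that all the identifications above respect the torus coordinates coming from the polydisc factors.
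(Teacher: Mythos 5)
Your proof is correct and follows exactly the paper's route: the paper's own argument is the one-line observation $\sK_{P\times Q}=\sK_P*\sK_Q$ followed by an appeal to Proposition~\ref{zkjoin}, and you have merely filled in the (correct) combinatorial verification of that identity.
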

\begin{proof}
Indeed, $\sK_{P\times Q}=\sK_P*\,\sK_Q$.
\end{proof}

Since $\mathcal Z_{\sK_P}\cong\zp$, the moment-angle complex
corresponding to the boundary of a simplicial polytope is a
manifold. This is also true for the moment-angle manifold complex
corresponding to any triangulated sphere (although not any
triangulation of a sphere is a boundary of a simplicial polytope,
see e.g.~\cite[\S2.3]{bu-pa02}):

\begin{theorem}[{\cite[Lemma~6.13]{bu-pa02}}]
Let $\mathcal K$ be a triangulation of $S^{n-1}$ with $m$
vertices. Then $\zk$ is a (closed) topological manifold of
dimension~$m+n$.
\end{theorem}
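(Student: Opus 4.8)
The plan is to prove that $\zk$ is locally Euclidean of dimension $m+n$ at every point; since $\zk=\bigcup_{I\in\mathcal K}B_I$ is a finite union of the closed subsets $B_I\subset\D^m$, it is compact, Hausdorff and second countable as a subspace of $\D^m$, so this is all that needs checking for a closed topological manifold.

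First I would fix $\mb z\in\zk$ and extract its combinatorial type. Split $[m]=I\sqcup B\sqcup A$ according to whether $z_k=0$, $0<|z_k|<1$, or $|z_k|=1$, and set $I'=I\cup B$. Since $\mb z\in B_J$ for some $J\in\mathcal K$ and membership in $B_J$ forces $|z_k|=1$ for $k\notin J$, we get $I'\subseteq J$, so $I'\in\mathcal K$. The one computational point is the following local description: if $V=\prod_k V_k\subset\D^m$ is a sufficiently small product of disc neighbourhoods of the $z_k$ — small enough that $|w_k|<1$ throughout $V_k$ for $k\in I'$ — then for $\mb w\in V$ one has $\mb w\in\zk$ if and only if $\{k\colon|w_k|<1\}\in\mathcal K$, equivalently $\{j\in A\colon|w_j|<1\}\in\lk_{\mathcal K}I'$. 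This is immediate from $\zk=\bigcup_{J\in\mathcal K}B_J$, the fact that a subset of a simplex of $\mathcal K$ is again one, and the definition of the link.

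Next I would read off the chart. The coordinates indexed by $I$ and by $B$ vary freely in an open subset of $\C^{I}$, resp. of $(\C\setminus\{0\})^{B}$, contributing factors $\R^{2|I|}$ and $\R^{2|B|}$. For $j\in A$, writing $w_j=(1-r_j)e^{i\theta_j}$ with $r_j\in[0,\varepsilon)$ gives a half-plane chart of $\D^2$ at the boundary point $z_j$, the angles $\theta_j$ are unconstrained, and the constraint above only restricts the supports of the $r_j$. Hence a neighbourhood of $\mb z$ in $\zk$ is homeomorphic to
\[
  \R^{2|I|}\times\R^{2|B|}\times\R^{|A|}\times
  \Bigl(\,\bigcup_{L\in\lk_{\mathcal K}I'}[0,\varepsilon)^{L}\,\Bigr),
\]
where in the last factor $[0,\varepsilon)^{L}$ sits inside $[0,\varepsilon)^{A}$ with the remaining coordinates zero. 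That last factor is a union of coordinate sub-orthants, so it is a neighbourhood of the cone point in the open cone $\cone^{\circ}\!\bigl(|\lk_{\mathcal K}I'|\bigr)$ on the geometric realisation of the link, hence homeomorphic to the whole open cone. Because $|\mathcal K|\cong S^{n-1}$ is a closed $(n-1)$-manifold, $\lk_{\mathcal K}I'$ is $(n-|I'|-1)$-dimensional, so the displayed product has dimension $2|I|+2|B|+|A|+(n-|I'|)=m+n$, as it must.

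The hard part is the last step: showing $\R^{j}\times\cone^{\circ}\!\bigl(|\lk_{\mathcal K}I'|\bigr)\cong\R^{j+n-|I'|}$ with $j=2|I|+2|B|+|A|\ge2$ (the case $m=1$ being trivial). This is exactly the place where the hypothesis that $\mathcal K$ triangulates a genuine sphere, not merely an arbitrary complex, is used: from the local homology of $S^{n-1}$ at an interior point of the simplex $I'$ one reads off that $|\lk_{\mathcal K}I'|$ is a homology $(n-|I'|-1)$-sphere, though in general it need not be homeomorphic to a sphere. One then invokes the double suspension theorem of Edwards and Cannon — the double suspension of a homology $d$-sphere $H$ is $S^{d+2}$ — to get $\R^{2}\times\cone^{\circ}(H)\cong\cone^{\circ}(\Sigma^{2}H)\cong\cone^{\circ}(S^{d+2})=\R^{d+3}$, and absorbing the remaining $\R^{j-2}$ finishes it. (If $\mathcal K$ happens to be a PL sphere, the link is a PL $(n-|I'|-1)$-sphere and this step uses only $\cone^{\circ}(S^{d})\cong\R^{d+1}$.) Assembling the local homeomorphisms over all $\mb z\in\zk$ yields the $(m+n)$-dimensional manifold structure.
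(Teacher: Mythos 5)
The paper itself offers no proof of this statement---it is quoted directly from \cite[Lemma~6.13]{bu-pa02}---so I am assessing your argument on its own merits. Your local analysis is correct and is the standard route: the decomposition $[m]=I\sqcup B\sqcup A$, the identification of $V\cap\zk$ with $\R^{2|I|}\times\R^{2|B|}\times\R^{|A|}\times\bigcup_{L\in\lk_{\sK}I'}[0,\varepsilon)^L$, the recognition of the last factor as a cone neighbourhood in (hence a copy of) $\cone^{\circ}\bigl(|\lk_{\sK}I'|\bigr)$, and the dimension count are all fine.

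The one step that does not hold as written is the appeal to the double suspension theorem. That theorem is about homology spheres which are \emph{closed topological manifolds} with the homology of a sphere, whereas the link $|\lk_{\sK}I'|$ in a non-PL triangulation of $S^{n-1}$ is only a polyhedral \emph{generalized} homology sphere (a homology manifold with the homology of $S^{n-|I'|-1}$) and can fail to be a manifold: in the triangulation of $S^{5}$ as the double suspension of the Poincar\'e sphere $P$, the link of a suspension vertex is $\Sigma P$, which is not a manifold. So the theorem you cite does not literally apply; what is needed is Edwards' stronger result (via the cell-like approximation theorem) that $S^{1}*H\cong S^{d+2}$ for a compact polyhedral generalized homology $d$-sphere~$H$. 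That statement is true, so your proof can be completed this way, but the attribution must be corrected and the homology-manifold property of the link (not just its homology) must be recorded. Alternatively, you can avoid deep geometric topology entirely by weakening the goal: you do not need $V\cap\zk\cong\R^{m+n}$, only that it is an open subset of a manifold. For $I'\ne\varnothing$ one has $\R^{|I'|-1}\times\cone^{\circ}\bigl(|\lk_{\sK}I'|\bigr)\cong\cone^{\circ}\bigl(|\partial I'|*|\lk_{\sK}I'|\bigr)$, which is the open star of the barycentre of $I'$ in the barycentric subdivision of $\sK$ and hence an open subset of $|\sK|\cong S^{n-1}$; since $2|I'|+|A|\ge|I'|-1$, your local model becomes an open subset of $\R^{|I'|+|A|+1}\times S^{n-1}$, an $(m+n)$-manifold. (If $I'=\varnothing$ the link is $\sK$ itself and $\cone^{\circ}(S^{n-1})=\R^{n}$ directly.) This closes the gap elementarily.
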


As we shall see in the next section, moment-angle complexes
corresponding to complete simplicial fans are smooth manifolds. In
general, it is not known whether a smooth structure exists on
moment-angle manifolds corresponding to arbitrary triangulated
spheres.

The topological structure of moment-angle complexes~$\zk$ is quite
complicated in general. The cohomology ring of~$\zk$ was described
in~\cite[\S4.2]{bu-pa00} (with field coefficients) and
in~\cite{b-b-p04} and~\cite{fran06} (with integer coefficients).
It is known~\cite{gr-th07} that if $\sK$ is the $k$-dimensional
skeleton of the simplex~$\Delta^{m-1}$ (for any~$k,m$), then the
corresponding moment-angle complex~$\zk$ is homotopy equivalent to
a wedge of spheres. Also, it is known that if $P$ is obtained from
a simplex by iteratively truncating vertices by hyperplanes (so
that the polar polytope $P^*$ is \emph{stacked}), then $\zp$ is
diffeomorphic to a connected sum of sphere products, with two
spheres in each product (this result is due to McGavran,
cf.~\cite[Theorem~6.3]{bo-me06}, see also~\cite{gi-lo09}). Finding
more series of polytopes or simplicial complexes for which the
topology of $\zk$ can be described explicitly is a challenging
task. Lots of nontrivial topological phenomena occur already in
the cohomology of~$\zk$. For instance, moment-angle manifolds are
generally not \emph{formal} (in the sense of rational homotopy
theory); first examples of $\zp$ with nontrivial Massey products
in cohomology appear already for 3-dimensional polytopes $P$,
see~\cite{bask03}.

\subsection{Polyhedral products}
Decomposition~\eqref{zkd2s1} of $\zk$ which uses the disc and
circle $(D^2,S^1)$ is readily generalised to arbitrary pairs of
spaces:

\begin{construction}[polyhedral product]\label{nsc}
Let $\sK$ be a simplicial complex on~$[m]$ and let
\[
  (\mb X,\mb A)=\{(X_1,A_1),\ldots,(X_m,A_m)\}
\]
be a set of $m$ pairs of spaces, $A_i\subset X_i$. For each
simplex $I\in\sK$ we set
\begin{equation}\label{XAI}
  (\mb X,\mb A)^I=\bigl\{(x_1,\ldots,x_m)\in
  \prod_{i=1}^m X_i\colon\; x_i\in A_i\quad\text{for }i\notin I\bigl\}
\end{equation}
and define the \emph{polyhedral product} of $(\mb X,\mb A)$
corresponding to $\sK$ by
\[
  (\mb X,\mb A)^\sK=\bigcup_{I\in\mathcal K}(\mb X,\mb A)^I=
  \bigcup_{I\in\mathcal K}
  \Bigl(\prod_{i\in I}X_i\times\prod_{i\notin I}A_i\Bigl).
\]

In the case when all the pairs $(X_i,A_i)$ are the same, i.e.
$X_i=X$ and $A_i=A$ for $i=1,\ldots,m$, we use the notation
$(X,A)^\sK$ for $(\mb X,\mb A)^\sK$.
\end{construction}

\begin{example}\label{exkpo}\

1. The moment-angle complex $\zk$ is the polyhedral product
$(D^2,S^1)^\sK$ (when considered abstractly) or $(\D,\mathbb
S)^\sK$ (when viewed as a subcomplex in~$\D^m$).

2. The cubical subcomplex $c_P(P)\subset\I^m$ of
Construction~\ref{cubpol} is given by
\[
  c_P(P)=(\I,1)^{\sK_P},
\]
where $\I=[0,1]$ is the unit interval and $1$ is its endpoint. For
general~$\sK$, the polyhedral product $(\I,1)^\sK$ is a cubical
subcomplex in~$\I^m$, which can be identified with the quotient of
$\zk$ by the action of~$\T^m$. It is homeomorphic to the cone
over~$\sK$, see~\cite[Proposition~4.10]{bu-pa02}. We denote
$\cc(\sK)=(\I,1)^\sK$.

3. If $\sK$ consists of $m$ disjoint points and
$A_i={\mbox{\textit{pt}}}$ (a point), then
\[
  (\mb X,pt)^\sK=X_1\vee X_2\vee\cdots\vee X_m
\]
is the \emph{wedge} (or \emph{bouquet}) of the $X_i$'s.
\end{example}

\begin{remark}
The decomposition of $\zk$ into a union of products of discs and
circles appeared in~\cite{bu-pa00}, were the term `moment-angle
complex' for $\zk=(D^2,S^1)^\sK$ was also introduced. Several
other examples of polyhedral products $(X,A)^\sK$ (including those
from Example~\ref{exkpo}) were also considered in~\cite{bu-pa00}.
The definition of $(X,A)^\sK$ for an arbitrary pair of spaces
$(X,A)$ was suggested to the authors by N.~Strickland (in a
private communication, and also in an unpublished note) as a
general framework for the constructions of~\cite{bu-pa00}; it was
also included in the final version of~\cite{bu-pa00} and
in~\cite{bu-pa02}. Further generalisations of $(X,A)^\sK$ to a set
of pairs of spaces $(\mb X,\mb A)$ were studied in the work of
Grbi\'c and Theriault~\cite{gr-th07}, as well as Bahri, Bendersky,
Cohen and Gitler~\cite{b-b-c-g10}, where the term `polyhedral
product' was introduced (following a suggestion of W.~Browder).
Since 2000, the terms `generalised moment-angle complex',
`$\sK$-product' and `partial product space' have been also used to
refer to the spaces~$(X,A)^\sK$.
\end{remark}

\subsection{Complements of coordinate subspace arrangements, revisited}
These provide another important class of examples of polyhedral
products. We can define the complement to a set of coordinate
subspaces similar to~\eqref{Usigma} for an arbitrary simplicial
complex~$\sK$:
\begin{equation}\label{uk}
  U(\sK)=\C^m\big\backslash\bigcup_{\{i_1,\ldots,i_k\}\notin\sK}
  \bigl\{\mb z \in\C^m\colon z_{i_1}=\cdots=z_{i_k}=0\bigr\}.
\end{equation}
It is easy to see that the complement to any set of coordinate
subspaces in~$\C^m$ has the form~$U(\sK)$ for some simplicial
complex~$\sK$ on~$[m]$. If the arrangement of coordinate planes
contains a hyperplane $\mb z_i=0$, then $\{i\}$ is a ghost vertex
of the corresponding simplicial complex~$\sK$.

\begin{proposition}
$U(\sK)=(\C,\C^\times)^\sK$.
\end{proposition}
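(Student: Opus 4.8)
The plan is to show that both sides consist of exactly those points $\mb z=(z_1,\ldots,z_m)\in\C^m$ whose set of zero coordinates $\omega(\mb z)=\{i\colon z_i=0\}$ is a simplex of~$\sK$.

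First I would unwind the definition of the polyhedral product from Construction~\ref{nsc} applied to the constant family of pairs $(X_i,A_i)=(\C,\C^\times)$. By~\eqref{XAI}, a point $\mb z$ lies in $(\C,\C^\times)^I$ precisely when $z_i\in\C^\times$ for every $i\notin I$, i.e. when $\omega(\mb z)\subseteq I$. Taking the union over $I\in\sK$ and using that $\sK$ is closed under passing to subsets, we get
\[
  (\C,\C^\times)^\sK=\bigl\{\mb z\in\C^m\colon\omega(\mb z)\in\sK\bigr\}.
\]

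Next I would analyse $U(\sK)$ as defined by~\eqref{uk}. A point $\mb z$ fails to lie in $U(\sK)$ if and only if it lies in some coordinate subspace $\{z_{i_1}=\cdots=z_{i_k}=0\}$ with $\{i_1,\ldots,i_k\}\notin\sK$, which happens if and only if some non-simplex of~$\sK$ is contained in $\omega(\mb z)$. Hence $\mb z\in U(\sK)$ if and only if every subset of $\omega(\mb z)$ is a simplex of~$\sK$, and since $\sK$ is a simplicial complex this is equivalent to $\omega(\mb z)\in\sK$. Comparing the two descriptions yields $U(\sK)=(\C,\C^\times)^\sK$.

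There is essentially no obstacle here: the statement is a direct translation between two descriptions of the same subset of~$\C^m$. The only point that deserves a word of care is the repeated use of the hereditary (downward-closed) property of a simplicial complex in both implications, together with the standing convention $\varnothing\in\sK$, which ensures that the points of the open torus $(\C^\times)^m$ (those with $\omega(\mb z)=\varnothing$) lie on both sides.
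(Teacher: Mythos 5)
Your argument is correct and coincides with the paper's own proof: both identify each side with $\{\mb z\in\C^m\colon\omega(\mb z)\in\sK\}$, using the downward-closedness of $\sK$ in exactly the same two places. Nothing to add.
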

\begin{proof}
Given $I=\{i_1,\ldots,i_k\}$, denote $L_I=\bigl\{\mb z
\in\C^m\colon z_{i_1}=\cdots=z_{i_k}=0\bigr\}$. For $\mb
z=(z_1,\ldots,z_m)\in\C^m$, we denoted $\omega(\mb
z)=\{i\in[m]\colon z_i=0\}\subset[m]$. We have
\begin{multline*}
U(\sK)=\C^m\setminus\bigcup_{I\notin\sK}L_I=
\C^m\setminus\bigcup_{I\notin\sK}\{\mb z\colon\omega(\mb z)\supset
I\}=\C^m\setminus\bigcup_{I\notin\sK}\{\mb z\colon\omega(\mb
z)=I\}\\=\bigcup_{I\in\sK}\{\mb z\colon\omega(\mb z)=I\}=
\bigcup_{I\in\sK}\{\mb z\colon\omega(\mb z)\subset I\}=
\bigcup_{I\in\sK}(\C,\C^\times)^I=(\C,\C^\times)^\sK.\qedhere
\end{multline*}
\end{proof}

Since each coordinate subspace is invariant under the standard
action of $\T^m$ on $\C^m$, the complement $U(\sK)$ is also a
$\T^m$-invariant subset in~$\C^m$.

Recall that a \emph{deformation retraction} of a space $X$ onto a
subspace $A$ is a continuous family of maps (a homotopy)
$F_t\colon X\to X$, $t\in\I$, such that $F_0=\id$ (the identity
map), $F_1(X)=A$ and $F_t|_A=\id$ for all~$t$. Often the term
`deformation retraction' refers only to the last map $f=F_1\colon
X\to A$ in the family. This map is a homotopy equivalence.

\begin{theorem}[{\cite{bu-pa02}}]\label{deret}
The moment-angle complex $\zk$ is a $\T^m$-invariant subspace
of~$U(\sK)$, and there is a $\T^m$-equivariant deformation
retraction
\[
  \zk\hookrightarrow U(\sK)\longrightarrow\zk.
\]
\end{theorem}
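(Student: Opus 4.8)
The plan is to treat the two assertions in turn, the inclusion being immediate and the deformation retraction carrying all the content. For the inclusion, note that $U(\sK)=(\C,\C^\times)^\sK$ by the preceding Proposition, while $\zk=(\D,\mathbb S)^\sK$ once $\zk$ is regarded as a subcomplex of $\D^m\subset\C^m$. Since $\D\subseteq\C$ and $\mathbb S\subseteq\C^\times$ as pairs, one has $(\D,\mathbb S)^I\subseteq(\C,\C^\times)^I$ for every $I\subseteq[m]$, whence $\zk\subseteq U(\sK)$; and each of $\D$, $\mathbb S$, $\C$, $\C^\times$ is invariant under the coordinatewise circle action, so both $\zk$ and $U(\sK)$ are $\T^m$-invariant in $\C^m$. (Alternatively, $\zk\subseteq U(\sK)$ is visible from the description of $\zk$ as $\bigcup_{I\in\sK}\{\mb z\in\D^m:|z_j|=1\text{ for }j\notin I\}$.)

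For the retraction I would concatenate several $\T^m$-equivariant homotopies. The first is the coordinatewise radial homotopy $z_k\mapsto z_k/\max\bigl(1,(1-t)+t|z_k|\bigr)$: it fixes $\D^m$, carries $\C$ onto $\D$, never sends a nonzero coordinate to $0$, and leaves the vanishing set $\{k:z_k=0\}$ unchanged, so it restricts to a $\T^m$-equivariant deformation retraction of $U(\sK)$ onto $U(\sK)\cap\D^m$ that is the identity on $\zk$. What remains is to retract $U(\sK)\cap\D^m=\{\mb z\in\D^m:\{k:z_k=0\}\in\sK\}$ onto $\zk=\{\mb z\in\D^m:\{k:|z_k|<1\}\in\sK\}$; that is, one must enlarge enough interior coordinate moduli up to $1$ so that the set $\{k:|z_k|<1\}$ becomes a simplex of $\sK$.

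I would do this one minimal non-face at a time. Enumerate the minimal non-faces $\tau_1,\dots,\tau_r$ of $\sK$; for each $\tau=\tau_j$ in turn, apply to the output of the previous steps the homotopy that replaces $z_k$, for $k\in\tau$ with $z_k\ne0$, by $\min\bigl(1,|z_k|\,M_\tau(\mb z)^{-s}\bigr)\cdot z_k/|z_k|$, where $M_\tau(\mb z)=\max_{k\in\tau}|z_k|$, leaving fixed all coordinates with $k\notin\tau$ as well as all zero coordinates. Because $\tau\notin\sK$ while the vanishing set remains a simplex of $\sK$ throughout, $M_\tau(\mb z)>0$, so the homotopy is well defined and continuous; it is $\T^m$-equivariant, preserves the vanishing set (hence stays inside $U(\sK)$), is stationary on $\zk$ (there $M_\tau=1$ already), and at $s=1$ raises the coordinate of largest modulus in $\tau$ to modulus $1$. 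The decisive feature is that each of these homotopies is \emph{monotone} --- it only increases moduli --- so once a coordinate of some $\tau_i$ has reached modulus $1$ no subsequent step pushes it back below $1$; therefore after all $r$ steps every minimal non-face carries a coordinate of modulus $1$, hence $\{k:|z_k|<1\}$ contains no non-face and so lies in $\sK$, i.e.\ the composite lands in $\zk$. Concatenating these with the stage-one homotopy yields the desired $\T^m$-equivariant deformation retraction.

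The step I expect to be the real obstacle is continuity of the second stage. The naive greedy recipe --- push the largest interior moduli up to $1$ and stop the instant $\{k:|z_k|<1\}$ becomes a simplex --- is genuinely discontinuous at configurations where several coordinates are tied and sit between two distinct maximal simplices of $\sK$ (for instance $\sK$ the boundary complex of two disjoint edges on $\{1,2,3,4\}$, evaluated where all four moduli are equal). Handling one minimal non-face at a time with a monotone rescaling removes this ambiguity, and that same monotonicity --- the fact that a coordinate is never pushed back below modulus $1$ --- is precisely what is needed for the homotopy to fix $\zk$ pointwise. For bookkeeping it is worth noting that, since every homotopy above preserves arguments of nonzero coordinates and the vanishing set, each is the $\mu$-lift of a homotopy of the orbit space $\mu\bigl(U(\sK)\bigr)=\{\mb y\in\R^m_\ge:\{k:y_k=0\}\in\sK\}$ that preserves the coordinate faces $\{y_k=0\}$, so one may equally run the whole argument downstairs in $\R^m_\ge$, where the $\T^m$-equivariance is automatic.
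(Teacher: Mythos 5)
Your proof is correct, and it reorganizes the argument around a different decomposition than the paper's. The paper proceeds by induction on a chain of simplicial complexes $\Delta^{m-1}=\sK_0\supset\sK_1\supset\cdots\supset\sK$, deleting one maximal simplex $J$ at a time; at each step it retracts $U(\sK_{j})_\ge=U(\sK_{j-1})_\ge\setminus\{y_{j_1}=\cdots=y_{j_k}=0\}$ onto $\cc(\sK_j)$ by pushing radially away from the deleted cube vertex $\mb y_J$ (the formula divides the $J$-coordinates by $\max_{j\in J}y_j$ — the same rescaling as yours, up to the substitution $y_k=|z_k|^2$ and a linear rather than exponential interpolation), works entirely on the orbit space $(\R_\ge,\R_>)^\sK$, and lifts to a $\T^m$-equivariant retraction at the end. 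You instead stay inside $U(\sK)$ throughout and iterate only over the \emph{minimal non-faces} of $\sK$, which typically gives far fewer stages and avoids introducing the intermediate complexes and their complements. The price is that the conclusion "the composite lands in $\zk$" is no longer automatic from an inductive hypothesis, so you need the monotonicity observation (moduli never decrease, so a coordinate pushed to modulus $1$ at stage $j$ is still at modulus $1$ at stage $r$) to see that at the end every minimal non-face meets $\{k:|z_k|=1\}$; the paper's induction sidesteps this but pays with the bookkeeping of the chain $\sK_j$ and the condition $\omega(r'(\mb y))=\omega(\mb y)$ carried through the induction. Your explicit remark that the naive "stop as soon as $\{k:|z_k|<1\}\in\sK$" recipe is discontinuous, and that per-non-face processing cures this, is exactly the right point to flag; the paper's per-simplex retraction $r_J$ addresses the same issue implicitly. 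Both arguments descend to (equivalently, lift from) the orbit space via $\mu$, as you note at the end.
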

\begin{proof}
Since $\D\subset\C$ and $\mathbb S\subset\C^\times$, we have
$\zk=(\D,\mathbb S)^\sK\subset(\C,\C^\times)^\sK=U(\sK)$, and the
subset $\zk\subset U(\sK)$ is obviously $\T^m$-invariant.

Any simplicial complex $\sK$ can be obtained from $\Delta^{m-1}$
by subsequent removal of maximal simplices (so that we get a
simplicial complex at each intermediate step), and we shall
construct the deformation retraction $U(\sK)\to\zk$ by induction.

The base of induction is clear: if $\sK=\Delta^{m-1}$, then
$U(\sK)=\C^m$, $\zk=\D^m$, and the retraction $\C^m\to\D^m$ is
evident.

The orbit space $\zk/\T^m$ is the cubical complex
$\cc(\sK)=(\mathbb I,1)^\sK$ (see Example~\ref{exkpo}.2). The
orbit space $U(\sK)/\T^m$ can be identified with
\[
  U(\sK)_\ge=U(\sK)\cap\R^m_\ge=(\R_\ge,\R_>)^\sK
\]
where $\R^m_\ge$ is viewed as a subset in~$\C^m$.

We shall first construct a deformation retraction $r\colon
U(\sK)_\ge\to\cc(\sK)$ of orbit spaces, and then cover it by a
deformation retraction $\widetilde r\colon U(\sK)\to\zk$.

Now assume that $\sK$ is obtained from a simplicial complex $\sK'$
by removing one maximal simplex $J=\{j_1,\ldots,j_k\}$, i.e.
$\sK\cup J=\sK'$. Then the cubical complex $\cc(\sK')$ is obtained
from $\cc(\sK)$ by adding a single $k$-dimensional face
$C_J=(\mathbb I ,1)^J$. We also have $U(\sK)=U(\sK')\setminus
L_J$, so that
\[
  U(\sK)_\ge=U(\sK')_\ge\setminus\{\mb y\colon y_{j_1}=\cdots=y_{j_k}=0\}.
\]
We may assume by induction that there is a deformation retraction
$r'\colon U(\sK')_\ge\to\cc(\sK')$ such that $\omega(r'(\mb
y))=\omega(\mb y)$, where $\omega(\mb y)$ is the set of zero
coordinates of~$\mb y$. In particular, $r'$ restricts to a
deformation retraction
\[
  r'\colon U(\sK')_\ge\setminus\{\mb y\colon y_{j_1}=\cdots=y_{j_k}=0\}
  \longrightarrow\cc(\sK')\backslash\,\mb y_J
\]
where $\mb y_J$ is the point with coordinates
$y_{j_1}=\cdots=y_{j_k}=0$ and $y_j=1$ for $j\notin J$.

Since $J\notin\sK$, we have $\mb y_J\notin\cc(\sK)$. On the other
hand, $\mb y_J$ belongs to the extra face $C_J=(\mathbb I ,1)^J$
of $\cc(\sK')$. We therefore may apply the deformation retraction
$r_J$ shown in Fig.~\ref{retr} on the face $C_J$, with centre
at~$\mb y_J$.
\begin{figure}[h]
  \begin{picture}(120,45)
  \put(45,5){\circle{2}}
  \put(80,5){\circle*{2}}
  \put(45,40){\circle*{2}}
  \put(80,40){\circle*{2}}
  \put(46,5){\vector(1,0){24}}
  \put(46,5){\line(1,0){34}}
  \put(45.8,5.2){\vector(4,1){24}}
  \put(45.8,5.2){\line(4,1){34}}
  \put(45.5,5.5){\vector(2,1){24}}
  \put(45.5,5.5){\line(2,1){34}}
  \put(45.8,5.8){\vector(4,3){24}}
  \put(45.8,5.8){\line(4,3){34}}
  \put(46,6){\vector(1,1){24}}
  \put(46,6){\line(1,1){34}}
  \put(45,6){\vector(0,1){24}}
  \put(45,6){\line(0,1){34}}
  \put(45.2,5.8){\vector(1,4){6}}
  \put(45.2,5.8){\line(1,4){8.5}}
  \put(45.5,5.5){\vector(1,2){12}}
  \put(45.5,5.5){\line(1,2){17}}
  \put(45.5,5.5){\vector(3,4){18}}
  \put(45.5,5.5){\line(3,4){26}}
  \linethickness{1mm}
  \put(80,5){\line(0,1){35}}
  \put(45,40){\line(1,0){35}}
  \put(39,4){$\mb y_J$}
  \end{picture}
  \caption{Retraction $r_J\colon \cc(\sK')\backslash\,\mb y_J
  \to\cc(\sK)$.}
  \label{retr}
\end{figure}
In coordinates, a homotopy $F_t$ between the identity map
$\cc(\sK')\backslash\,\mb y_J\to\cc(\sK')\backslash\,\mb y_J$ (for
$t=0$) and the retraction $r_J\colon\cc(\sK')\backslash\,\mb
y_J\to\cc(\sK)$ (for $t=1$) is given by
\begin{align*}
  F_t\colon\cc(\sK')\backslash\,\mb y_J&\longrightarrow
  \cc(\sK')\backslash\,\mb y_J,\\
  (y_1,\ldots,y_m,t)&\longmapsto(y_1+t\alpha_1y_1,\ldots,
  y_m+t\alpha_my_m)
\end{align*}
where
\[
  \alpha_i=\begin{cases}
  \frac{1-\max_{j\in J}y_j}{\max_{j\in J}y_j},&
  \text{if $i\in J$,}\\
  0,&\text{if $i\notin J$,}\end{cases}\qquad\text{for }1\le i\le m.
\]
We observe that $\omega(F_t(\mb y))=\omega(\mb y)$ for any $t$ and
$\mb y\in\cc(\sK')$. Now, the composition
\begin{equation}\label{indretr}
  r\colon U(\sK)_\ge=U(\sK')_\ge\!\backslash\,
  \{\mb y\colon y_{j_1}=\cdots=y_{j_k}=0\}
  \stackrel{r'}\longrightarrow
  \cc(\sK')\backslash\,\mb y_J
  \stackrel{r_J}\longrightarrow\cc(\sK)
\end{equation}
is a deformation retraction, and it satisfies $\omega(r(\mb
y))=\omega(\mb y)$ as this is true for $r_J$ and~$r'$. The
inductive step is now complete. The required retraction
$\widetilde r\colon U(\sK)\to\zk$ covers $r$ as shown in the
following commutative diagram:
\[
\xymatrix{
  \zk \ar@{^{(}->}[r]\ar[d]^{\mu} &
  U(\sK) \ar[r]^{\widetilde r}\ar[d]^{\mu} & \zk \ar[d]^{\mu}\\
  \cc(\sK) \ar@{^{(}->}[r] & U_\ge(\sK) \ar[r]^r & \cc(\sK)
}
\]
Explicitly, $\widetilde r$ is decomposed inductively in a way
similar to~\eqref{indretr},
\[
  \widetilde r\colon U(\sK)=
  U(\sK')\backslash\,L_J
  \stackrel{\widetilde r'}\longrightarrow
  \mathcal Z_{\sK'}\backslash\,\mu^{-1}(\mb y_J)
  \stackrel{\widetilde r_J}\longrightarrow\zk,
\]
where $\mu^{-1}(\mb y_J)=\prod_{j\in J}\{0\}\times\prod_{j\notin
J}\mathbb S$, and $\widetilde r_J$ is given in coordinates
$(z_1,\ldots,z_m)=\bigl(\sqrt{y_1}e^{i\varphi_1},\ldots,\sqrt{y_m}e^{i\varphi_m}\bigr)$
by
\[
  \bigl(\sqrt{y_1}e^{i\varphi_1},\ldots,
  \sqrt{y_m}e^{i\varphi_m}\bigr)\mapsto
  \bigl(\sqrt{y_1+\alpha_1y_1}e^{i\varphi_1},\ldots,
  \sqrt{y_m+\alpha_my_m}e^{i\varphi_m}\bigr)
\]
with $\alpha_i$ as above.
\end{proof}

As we shall see in Section~\ref{mamsf}, in the case when
$\sK=\sK_\Sigma$ is the underlying complex of a complete
simplicial fan~$\Sigma$, the deformation retraction $U(\sK)\to\zk$
can be realised as the quotient map for an action of $\R^{m-n}$
on~$U(\sK)$.

In the remaining sections we shall concentrate on the geometric
aspects of the theory of moment-angle complexes, and moment-angle
manifolds corresponding to polytopes and complete simplicial fans
will be our main objects of interest. Nevertheless, the homotopy
theory of general moment-angle complexes has now gained its own
momentum, and we refer to~\cite[Ch.~6]{bu-pa02}, \cite{de-su07},
\cite{gr-th07}, \cite{pa-ra08} and~\cite{b-b-c-g10} for the main
stages of its development.

\section{LVM-manifolds}\label{lvmma}
Bosio and Meersseman~\cite{bo-me06} identified polytopal
moment-angle manifolds $\zp$ with a class of non-K\"ahler
complex-analytic manifolds introduced in the works of Lopez de
Medrano, Verjovsky and Meersseman (LVM-manifolds). This was the
starting point in the subsequent study of the complex geometry of
moment-angle manifolds. We review the construction of
LVM-manifolds and its connection to polytopal moment-angle
manifolds here.

The initial data of the construction of an LVM-manifold is a link
of a homogeneous system of quadrics similar to~\eqref{link}, but
with \emph{complex} coefficients:
\begin{equation}\label{clink}
  \mathcal L=\left\{\begin{array}{lrcl}
  \mb z\in\C^m\colon&\sum_{k=1}^m|z_k|^2&=&1,\\[1mm]
  &\sum_{k=1}^m \zeta_k|z_k|^2&=&\mathbf0
  \end{array}\right\},
\end{equation}
where $\zeta_k\in\C^s$. We can obviously turn this link into the
form~\eqref{link} by identifying $\C^s$ with $\R^{2s}$ in the
standard way, so that each $\zeta_k$ turns to $\mb
g_k\in\R^{m-n-1}$ with $n=m-2s-1$. We assume that the link is
nondegenerate, i.e. the system of complex vectors
$(\zeta_1,\ldots,\zeta_m)$ (or the corresponding system of real
vectors $(\mb g_1,\ldots,\mb g_m)$) satisfies the conditions (a)
and~(b) of Proposition~\ref{nondeglink}.

Now define the manifold $\mathcal N$ as the projectivisation of
the intersection of homogeneous quadrics in~\eqref{clink}:
\begin{equation}\label{ndef}
\mathcal N=\bigl\{\mb z\in\C
P^{m-1}\colon\zeta_1|z_1|^2+\cdots+\zeta_m|z_m|^2=\textbf
0\bigr\},\quad\zeta_k\in\C^s.
\end{equation}

We therefore have a principal $S^1$-bundle $\mathcal L\to\mathcal
N$.

\begin{theorem}[Meersseman~\cite{meer00}]\label{thlvm}
The manifold $\mathcal N$ has a holomorphic atlas describing it as
a compact complex manifold of complex dimension $m-1-s$.
\end{theorem}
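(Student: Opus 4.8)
The plan is to exhibit $\mathcal N$ as the quotient of an open subset of $\C P^{m-1}$ by a free and proper \emph{holomorphic} action of the additive group $\C^s$, and to let the complex structure descend. First I would introduce the $\C^s$-action on $\C^m$ defined by $w\cdot(z_1,\dots,z_m)=\bigl(e^{\langle\zeta_1,w\rangle}z_1,\dots,e^{\langle\zeta_m,w\rangle}z_m\bigr)$, where $\langle\zeta_k,w\rangle=\sum_j\zeta_{kj}w_j$. Since the multipliers are nowhere-vanishing entire functions of $w$, this is a well-defined holomorphic action, and (being by coordinatewise multiplication) it descends to a holomorphic $\C^s$-action on $\C P^{m-1}$. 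Under the $\R$-linear identification $\C^s\cong\R^{2s}$ used to pass from~\eqref{clink} to~\eqref{link} one has $|z_k\cdot w|=|z_k|\,e^{\langle\mb g_k,W\rangle}$, where $W\in\R^{2s}$ corresponds to $w$ and $\mb g_k\in\R^{2s}=\R^{m-n-1}$ is the real vector attached to $\zeta_k$. In particular the equation $\sum_k\mb g_k|z_k|^2=\mathbf 0$ defining $\mathcal N$ in~\eqref{ndef} is invariant under positive rescalings of $z$, so $\mathcal N$ is a well-defined closed — hence compact — subset of $\C P^{m-1}$; this already settles the compactness assertion.

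Next I would introduce the open set $\mathcal U\subset\C P^{m-1}$ of those $[z]$ for which $\mathbf 0\in\mathop{\mathrm{int}}\conv(\mb g_k\colon z_k\ne0)$ — the complement of a finite union of coordinate subspaces, hence open — and carry out three checks, all based on conditions~(a)--(b) of Proposition~\ref{nondeglink}. (i) $\mathcal N\subset\mathcal U$: if $\sum_k\mb g_k|z_k|^2=\mathbf 0$ then $\mathbf 0\in\conv(\mb g_k\colon z_k\ne0)$; were $\mathbf 0$ on the boundary, it would lie in a proper face, and Carath\'eodory inside that face's affine span would place $\mathbf 0$ in the convex hull of at most $m-n-1=2s$ of the $\mb g_k$, contradicting~(b) — so $\mathbf 0$ is interior, and in particular $\{\mb g_k\colon z_k\ne0\}$ spans $\R^{2s}$. (ii) The $\C^s$-action on $\mathcal U$ is free: if $w$ fixes $[z]$, then $\langle\mb g_k,W\rangle$ is independent of $k$ over $\{k\colon z_k\ne0\}$; pairing $W$ with a convex combination of those $\mb g_k$ that represents $\mathbf 0$ shows the common value is $0$, hence $W\perp\R^{2s}$, i.e.\ $w=0$. (iii) $\mathbf 0$ is a regular value of the smooth map $\mathcal U\to\R^{2s}$, $[z]\mapsto\bigl(\sum_k\mb g_k|z_k|^2\bigr)\big/\bigl(\sum_k|z_k|^2\bigr)$: at a point of $\mathcal N$ its derivative along the orbit direction $W$ equals, up to a positive scalar, $\bigl(\sum_k|z_k|^2\,\mb g_k\mb g_k^t\bigr)W$, and the matrix $\sum_k|z_k|^2\,\mb g_k\mb g_k^t$ is positive definite on $\mathcal U$. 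Hence $\mathcal N$ is a smooth submanifold of $\mathcal U$ of real codimension $2s$ and the $\C^s$-orbits are transverse to it.

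The heart of the argument — and the step I expect to be the main obstacle — is to show that \emph{every} $\C^s$-orbit in $\mathcal U$ meets $\mathcal N$ in \emph{exactly one} point. For a fixed $[z]\in\mathcal U$ and a representative $z$, I would study the real function $F(W)=\sum_k|z_k|^2e^{2\langle\mb g_k,W\rangle}$ on $\R^{2s}$. Its Hessian is $\sum_k|z_k|^2e^{2\langle\mb g_k,W\rangle}(2\mb g_k)(2\mb g_k)^t$, which is positive definite because $\{\mb g_k\colon z_k\ne0\}$ spans $\R^{2s}$; so $F$ is strictly convex. It is also coercive: $\mathbf 0\in\mathop{\mathrm{int}}\conv(\mb g_k\colon z_k\ne0)$ means that no half-space $\{x\colon\langle x,w\rangle\le0\}$ contains all these $\mb g_k$, so along any ray the exponents cannot all stay bounded above. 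Hence $F$ has a unique critical point $W_0$, a global minimum, and $\nabla F(W_0)=2\sum_k\mb g_k|z_k\cdot w_0|^2=\mathbf 0$ says precisely that $[z]\cdot w_0\in\mathcal N$; strict convexity makes $w_0$ the only such element of $\C^s$.

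Putting the pieces together, the smooth map $\mathcal N\times\C^s\to\mathcal U$, $(x,w)\mapsto x\cdot w$, is a local diffeomorphism — transversality and freeness, with matching real dimensions $\dim\mathcal N+2s=2(m-1)=\dim\mathcal U$ — and a bijection, by the ``exactly one intersection'' property together with freeness; hence it is a $\C^s$-equivariant diffeomorphism. Therefore the $\C^s$-action on $\mathcal U$ is proper, and — being also free and holomorphic — the quotient $\mathcal U/\C^s$ is a complex manifold of complex dimension $(m-1)-s=m-1-s$, with $\mathcal U\to\mathcal U/\C^s$ holomorphic. Transporting this complex structure along the induced diffeomorphism $\mathcal N\stackrel{\cong}\longrightarrow\mathcal U/\C^s$ (the composite $\mathcal N\hookrightarrow\mathcal U\to\mathcal U/\C^s$) equips $\mathcal N$ with the required holomorphic atlas, and $\mathcal N$ is compact as observed above.
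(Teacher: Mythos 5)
Your argument is correct and follows essentially the same route as the paper's (sketched) proof: restrict the holomorphic $\C^s$-action to the arrangement complement, verify freeness, show via a strict-convexity/coercivity argument that each orbit meets the quadric locus exactly once, deduce properness, and transport the complex structure from the quotient. The only cosmetic difference is that you work directly in $\C P^{m-1}$ rather than first producing $\mathcal T\subset\C^m$ and then projectivising, and you supply details (the convexity of $F$, the properness via the product decomposition $\mathcal N\times\C^s\cong\mathcal U$) that the paper defers or omits.
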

\begin{proof}[Sketch of proof]
Consider a holomorphic action of $\C^s$ on $\C^m$ given by
\begin{equation}\label{csact}
\begin{aligned}
  \C^s\times\C^m&\longrightarrow\C^m\\
  (\mb w,\mb z)&\mapsto \bigl(z_1e^{\langle\zeta_1,\mb
  w\rangle},\ldots,z_me^{\langle\zeta_m,\mb w\rangle}\bigr),
\end{aligned}
\end{equation}
where $\mb w=(w_1,\ldots,w_s)\in\C^s$, and $\langle\zeta_k,\mb
w\rangle=\zeta_{1k}w_1+\cdots+\zeta_{sk}w_s$.

Let $\sK$ be the simplicial complex consisting of zero-sets of
points of the link~$\mathcal L$:
\[
  \sK=\{\omega(\mb z)\colon\mb z\in \mathcal L\}.
\]
Observe that $\sK=\sK_P$, where $P$ is the simple polytope
associated with the link~$\mathcal L$. Let $U=U(\sK)$ be the
corresponding subspace arrangement complement given by~\eqref{uk}.
Note that Proposition~\ref{galecomb} implies that $U$ can be also
defined as
\[
  U=\bigl\{(z_1,\ldots,z_m)\in\C^m\colon\mathbf 0\in\conv(\zeta_j\colon z_j\ne0)\bigr\}.
\]

An argument similar to that of the proof of Lemma~\ref{afree}
shows that the restriction of the action~\eqref{csact} to
$U\subset\C^m$ is free. Also, this restricted action is proper (we
shall prove this in more general context in
Theorem~\ref{zkcomplex} below), so the quotient $U/\C^s$ is
Hausdorff. Using a holomorphic atlas transverse to the orbits of
the free action of $\C^s$ on the complex manifold $U$ we obtain
that the quotient $U/\C^s$ has a structure of a complex manifold.

On the other hand, it can be shown that the function
$|z_1|^2+\cdots+|z_m|^2$ on $\C^m$ has a unique minimum when
restricted to an orbit of the free action of $\C^s$ on $U$. The
set of these minima can be described as
\[
  \mathcal T=\bigl\{\mb z\in\C^m\setminus\!\{\mathbf 0\}
  \colon\;\zeta_1|z_1|^2+\cdots+\zeta_m|z_m|^2=\textbf 0\bigr\}.
\]
It follows that the quotient $U/\C^s$ can be identified
with~$\mathcal T$, and therefore $\mathcal T$ acquires a structure
of a complex manifold of dimension $m-s$.

By projectivising the construction we identify $\mathcal N$ with
the quotient of a complement of coordinate subspace arrangement in
$\C P^{m-1}$ (the projectivisation of~$U$) by a holomorphic action
of~$\C^s$. In this way $\mathcal N$ becomes a compact complex
manifold.
\end{proof}

The manifold $\mathcal N$ with the complex structure of
Theorem~\ref{thlvm} is referred to as an \emph{LVM-manifold}.
These manifolds were described by Meersseman~\cite{meer00} as a
generalisation of the construction of Lopez de Medrano and
Verjovsky~\cite{lo-ve97}.

\begin{remark}
The embedding of $\mathcal T$ in $\C^m$ and of $\mathcal N$ in $\C
P^{m-1}$ given by~\eqref{ndef} is not holomorphic.
\end{remark}

A polytopal moment-angle manifold $\zp$ is diffeomorphic to a
link~\eqref{link}, which can be turned into a complex
link~\eqref{clink} whenever $m+n$ is odd. It follows that the
quotient $\zp/S^1$ of an odd-dimensional moment-angle manifold has
a complex-analytic structure as an LVM-manifold. By adding
redundant inequalities and using the $S^1$-bundle $\mathcal
L\to\mathcal N$, Bosio--Meersseman observed that $\zp$ or
$\zp\times S^1$ has a structure of an LVM-manifold, depending on
whether $m+n$ is even or odd.

We first summarise the effects that a redundant inequality
in~\eqref{ptope} has on different spaces appeared above:

\begin{proposition}
Assume that~\eqref{ptope} is a generic presentation. The following
conditions are equivalent:
\begin{itemize}
\item[(a)] $\langle\mb a_i,\mb x\:\rangle+b_i\ge0$ is a redundant
inequality in~\eqref{ptope} (i.e. $F_i=\varnothing$);
\item[(b)] $\zp\subset\{\mb z\in\C^m\colon z_i\ne0\}$;
\item[(c)] $\{i\}$ is a ghost vertex of~$\sK_P$;
\item[(d)] $U(\sK_P)$ has a factor $\C^\times$ on the $i$th
coordinate;
\item[(e)] $\mathbf 0\notin\conv(\mb g_k\colon k\ne i)$.
\end{itemize}
\end{proposition}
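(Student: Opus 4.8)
The plan is to prove this by establishing a cycle of implications, using the machinery already developed in the excerpt. The key observation is that all five conditions are essentially restatements of the same fact --- that the $i$th coordinate function never vanishes on the relevant geometric object --- translated through the various dictionaries (polyhedron $\leftrightarrow$ intersection of quadrics $\leftrightarrow$ moment-angle complex $\leftrightarrow$ subspace arrangement complement $\leftrightarrow$ Gale diagram) set up in Sections~\ref{galediag} and~\ref{intquad}.

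First I would prove (a)$\Leftrightarrow$(b). By Proposition~\ref{easyzp}, the $i$th coordinate of $i_Z(z)$ vanishes if and only if $z$ projects to a point $\mb x\in F_i$. Since $\T^m$ acts on $\mathcal Z_{A,\mb b}=\zp$ with quotient $P$, there exists such a $z$ with $z_i=0$ if and only if $F_i\neq\varnothing$. Hence $\zp\subset\{z_i\neq0\}$ exactly when $F_i=\varnothing$. Next, (b)$\Leftrightarrow$(c): by the identification $\zp=\mathcal Z_{\sK_P}=\bigcup_{I\in\sK_P}(D^2,S^1)^I$, the coordinate $z_i$ can vanish on $\zp$ precisely when $i\in I$ for some $I\in\sK_P$, i.e. when $\{i\}$ is a vertex of $\sK_P$; so $z_i$ never vanishes iff $\{i\}$ is a ghost vertex. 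For (c)$\Leftrightarrow$(d): from the polyhedral product description $U(\sK_P)=(\C,\C^\times)^{\sK_P}$, the $i$th coordinate of a point of $U(\sK_P)$ is constrained to lie in $\C^\times$ unless $\{i\}\in\sK_P$; thus $\{i\}$ being a ghost vertex is equivalent to $U(\sK_P)$ splitting off a factor $\C^\times$ in the $i$th coordinate. Finally, (a)$\Leftrightarrow$(e) is precisely the content of the Remark following Proposition~\ref{galecomb}, which states that for a generic presentation $F_i=\varnothing \Leftrightarrow \mb a_i\in\conv(\mb a_j\colon j\neq i)\Leftrightarrow \mathbf 0\notin\conv(\mb g_k\colon k\neq i)$; alternatively one invokes Proposition~\ref{galecomb}(a),(c) with $\{i_1,\ldots,i_k\}=\{i\}$, noting $F_i\neq\varnothing$ iff $\mb 0\in\conv(\mb g_j\colon j\neq i)$.

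I expect none of these steps to be a serious obstacle --- each is a short translation through a result already proved --- but the one requiring the most care is (a)$\Leftrightarrow$(e), since one must be slightly attentive to the role of the genericity hypothesis and to the fact that $\mb g_k$ here denotes the truncated Gale diagram vector from Proposition~\ref{propcf} rather than the full column $\gamma_k$. It is cleanest to cite the Remark after Proposition~\ref{galecomb} directly. To keep the argument tidy I would organize it as the chain (a)$\Rightarrow$(b)$\Rightarrow$(c)$\Rightarrow$(d)$\Rightarrow$(a), closing the loop, and then separately record (a)$\Leftrightarrow$(e). One mild subtlety worth a sentence: in (c) and (d) we are using $\sK_P$ \emph{with} the ghost vertices included for redundant inequalities, as stipulated in the paragraph defining ghost vertices, so that there is exactly one element $\{1\},\ldots,\{m\}$ of $[m]$ accounted for by each inequality of~\eqref{ptope}; without this convention the equivalence (a)$\Leftrightarrow$(c) would be vacuous.
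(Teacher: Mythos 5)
Your argument is correct and matches the paper's own proof, which simply notes that the equivalence of (a)--(d) follows directly from the definitions (via Proposition~\ref{easyzp} and the polyhedral product descriptions, exactly as you spell out) and that (a)$\Leftrightarrow$(e) is Proposition~\ref{galecomb} applied to the singleton $\{i\}$, i.e.\ the Remark following it. Your extra care about the truncated Gale vectors $\mb g_k$ and the ghost-vertex convention is sound but does not change the route.
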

\begin{proof}
The equivalence of the first four conditions follows directly from
the definitions. The equivalence (a)$\Leftrightarrow$(e) follows
from Proposition~\ref{galecomb}.
\end{proof}

\begin{theorem}[\cite{bo-me06}]\label{bometh}
Let $\zp$ be the moment angle manifold corresponding to an
$n$-dimensional simple polytope~\eqref{ptope} defined by $m$
inequalities.
\begin{itemize}
\item[(a)] If $m+n$ is even then $\zp$ has a complex structure as
an LVM-manifold.
\item[(b)] If $m+n$ is odd then $\zp\times S^1$ has a complex structure as
an LVM-manifold.
\end{itemize}
\end{theorem}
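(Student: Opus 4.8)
The plan is to reduce Theorem~\ref{bometh} to Theorem~\ref{thlvm} by manufacturing a nondegenerate \emph{complex} homogeneous link~\eqref{clink} whose associated manifold is the right one. Recall that by Proposition~\ref{malink}, $\zp$ is $\T^m$-equivariantly diffeomorphic to a link~\eqref{link} with real coefficient vectors $\mb g_1,\ldots,\mb g_m\in\R^{m-n-1}$ forming a combinatorial Gale diagram of~$P^*$, and the nondegeneracy of this link is guaranteed by Theorem~\ref{zpsmooth} together with Proposition~\ref{nondeglink}. So the essential content is purely a matter of parity bookkeeping: comparing the real dimension $m-n-1$ of the ambient space of the $\mb g_k$ with an even number $2s$.

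First I would treat case~(a), when $m+n$ is even. Then $m-n$ is even, say $m-n=2(s+1)$, so $m-n-1=2s+1$ is odd. The trick, following Bosio--Meersseman, is to \emph{add one redundant inequality} to the presentation~\eqref{ptope}: by Proposition~\ref{zpcomb}(b) this replaces $\zp$ by $\zp\times S^1$ as a topological space, but more importantly it increases $m$ by one while leaving $n$ fixed, so the new Gale-diagram dimension is $(m+1)-n-1=2s+2=2(s+1)$, which is even. Wait---this gives $\zp\times S^1$, not $\zp$. To land on $\zp$ itself I proceed the other way: in case~(a), $m-n-1$ is odd, so I instead exploit the principal $S^1$-bundle $\mathcal L\to\mathcal N$ of~\eqref{clink}. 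Concretely, since $m-n-1=2s+1$, I cannot directly identify $\R^{m-n-1}$ with $\C^s$; but after adding \emph{one} redundant inequality I get a link in $\C^{m+1}$ with even Gale dimension $2(s+1)$, realise its $\mathcal N$ as an LVM-manifold $\mathcal N$ of complex dimension $(m+1)-1-(s+1)=m-s-1$ by Theorem~\ref{thlvm}, and then $\mathcal L=\zp\times S^1$ fibres over $\mathcal N$ with fibre $S^1$. Hmm, this still produces $\zp\times S^1$. The resolution is that one must be more careful about \emph{which} of $\zp$ or $\mathcal L$ carries the complex structure: $\mathcal N$ is complex of (complex) dimension $m-1-s$, hence real dimension $2m-2-2s$; for this to equal $\dim\zp=m+n$ we need $m+n=2m-2-2s$, i.e. $m-n=2s+2$, i.e.\ exactly case~(a). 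So in case~(a), \emph{$\zp$ itself is $\mathcal N$}: take $s$ with $m-n=2s+2$, identify the real Gale diagram $(\mb g_k)\subset\R^{m-n-2}\oplus\R=\R^{2s}$... no: $m-n-1=2s+1$. I must add one redundant inequality to pass from $\zp$ (odd Gale dimension $2s+1$) to $\zp\times S^1$ (Gale dimension $2s+2$), realise the latter's $\mathcal N$; but then $\mathcal N$ has real dimension $2(m+1)-2-2(s+1)=2m-2s-2=m+n$ again. So $\mathcal N\cong\zp$ as smooth manifolds with $\mathcal N$ complex---giving the complex structure on $\zp$. This is case~(a).

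For case~(b), $m+n$ odd, $m-n$ is odd, so $m-n-1$ is even, $m-n-1=2s$; identify $\R^{m-n-1}\cong\C^s$ directly, take $\zeta_k=\mb g_k$ under this identification, and form $\mathcal L$ as in~\eqref{clink}: this $\mathcal L$ is exactly the link~\eqref{link}, hence diffeomorphic to $\zp$, and nondegeneracy is inherited from Proposition~\ref{nondeglink}. Then Theorem~\ref{thlvm} gives a complex structure on $\mathcal N=\mathcal L/S^1=\zp/S^1$, of complex dimension $m-1-s$; and $\zp=\mathcal L$ is the total space of the principal $S^1$-bundle $\mathcal L\to\mathcal N$. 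A principal circle bundle over a complex manifold does not automatically inherit a complex structure, so here instead one passes to $\zp\times S^1$: the product $\mathcal L\times S^1$ of a link with a circle is again a link (of the same quadrics in one more variable, by adding a redundant inequality as in Example~2 of the moment-angle complex section, or equivalently $\mathcal Z_{\sK_P^\circ}=\zp\times S^1$), now of \emph{odd} total Gale parity, bringing us back to case~(a) applied to $\zp\times S^1$, which then receives a complex structure as an LVM-manifold. The main obstacle, and the only place genuine care is needed, is keeping the parity of $m-n$, the dimension $m-n-1$ of the Gale diagram, the complex dimension $s$, and whether one lands on $\zp$ versus $\zp\times S^1$ all consistent; once the identification $\R^{2s}\cong\C^s$ and the redundant-inequality trick (Proposition~\ref{zpcomb}(b)) are set up, the rest is a direct appeal to Theorem~\ref{thlvm} and the diffeomorphism of Proposition~\ref{malink}. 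I would also remark, as the paper does, that the resulting complex structure is non-K\"ahler in general and that the embedding~\eqref{ndef} is not holomorphic.
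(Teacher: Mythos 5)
Your proposal is correct and follows essentially the same route as the paper: add one (resp.\ two) redundant inequalities to make the number of homogeneous quadrics even, read the columns of the augmented coefficient matrix as complex vectors, apply Theorem~\ref{thlvm} to the projectivised link $\mathcal N'$, and identify $\mathcal N'\cong\zp'/S^1=(\zp\times S^1)/S^1\cong\zp$ in case~(a) (resp.\ $\cong\zp\times S^1$ in case~(b)). The only point to tighten is that the identification $\mathcal N'\cong\zp$ must be justified by the diffeomorphism $(\zp\times S^1)/S^1\cong\zp$ for the diagonal circle action (which is absorbed by the free last circle factor), not merely by the dimension count you use to locate it.
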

\begin{proof}
(a) We add one redundant inequality of the form $1\ge0$
to~\eqref{ptope}, and denote the resulting manifold
of~\eqref{cdiz} by $\zp'$. We have $\zp'\cong\zp\times S^1$. By
Proposition~\ref{malink}, $\zp$ is diffeomorphic to a link given
by~\eqref{link}. Then $\zp'$ is given by the intersection of
quadrics
\[
  \left\{\begin{array}{lrcccrcr}
  \mb z\in\C^{m+1}\colon&|z_1|^2&+&\cdots&+&|z_m|^2&&=1,\\[1mm]
                 &\mb g_1|z_1|^2&+&\cdots&+&\mb g_m|z_m|^2&&=\mathbf 0,\\[1mm]
  &&&&&&&|z_{m+1}|^2=1,
  \end{array}\right\}
\]
which is diffeomorphic to the link given by
\[
  \left\{\begin{array}{lrcccrcr}
  \mb z\in\C^{m+1}\colon  &|z_1|^2&+&\cdots&+&|z_m|^2&+&|z_{m+1}|^2=1,\\[1mm]
  &\mb g_1|z_1|^2&+&\cdots&+&\mb g_m|z_m|^2&&=\mathbf 0,\\[1mm]
  &|z_1|^2&+&\cdots&+&|z_m|^2&-&|z_{m+1}|^2=0.
  \end{array}\right\}
\]
If we denote by $\varGamma^\star=(\mb g_1\ldots\:\mb g_m)$ the
$(m-n-1)\times m$-matrix of coefficients of the homogeneous
quadrics for~$\zp$, then the corresponding matrix for $\zp'$ is
\[
  {\varGamma^\star}^\prime=\begin{pmatrix}\mb g_1&\cdots&\mb
  g_m&0\\1&\cdots&1&-1
  \end{pmatrix}.
\]
Its height $m-n$ is even, so that we may think of its $i$th column
as a complex vector $\zeta_i$ (by identifying $\R^{m-n}$ with
$\C^{\frac{m-n}2}$), for $i=1,\ldots,m+1$. Now define
\begin{equation}\label{nprimeeven}
  \mathcal N'=\bigl\{\mb z\in\C P^m\colon
  \zeta_1|z_1|^2+\cdots+\zeta_{m+1}|z_{m+1}|^2=\mathbf
  0\bigr\}.
\end{equation}
Then $\mathcal N'$ has a complex structure as an LVM-manifold by
Theorem~\ref{thlvm}. On the other hand,
\[
  \mathcal N'\cong\zp'/S^1=(\zp\times S^1)/S^1\cong\zp,
\]
so that $\zp$ also acquires a complex structure.

\smallskip

(b) The proof here is similar, but we have to add two redundant
inequalities $1\ge0$ to~\eqref{ptope}. Then $\zp'\cong\zp\times
S^1\times S^1$ is given by
\[
  \left\{\begin{array}{lrcrclcr}
  \mb
  z\in\C^{m+2}\colon&|z_1|^2&+\;\cdots\;+&|z_m|^2&+&|z_{m+1}|^2&+&|z_{m+2}|^2=1,\\[1mm]
  &\mb g_1|z_1|^2&+\;\cdots\;+&\mb g_m|z_m|^2&&&&=\mathbf 0,\\[1mm]
  &|z_1|^2&+\;\cdots\;+&|z_m|^2&-&|z_{m+1}|^2&&=0,\\[1mm]
  &|z_1|^2&+\;\cdots\;+&|z_m|^2&&&-&|z_{m+2}|^2=0.
  \end{array}\right\}
\]
The matrix of coefficients of the homogeneous quadrics is
therefore
\[
  {\varGamma^\star}^\prime=\begin{pmatrix}\mb g_1&\cdots&\mb
  g_m&0&0\\1&\cdots&1&-1&0\\1&\cdots&1&0&-1
  \end{pmatrix}.
\]
We think of its columns as a set of $m+2$ complex vectors
$\zeta_1,\ldots,\zeta_{m+2}$, and define
\begin{equation}\label{nprimeodd}
  \mathcal N'=\bigl\{\mb z\in\C P^{m+1}\colon
  \zeta_1|z_1|^2+\cdots+\zeta_{m+2}|z_{m+2}|^2=\mathbf
  0\bigr\}.
\end{equation}
Then $\mathcal N'$ has a complex structure as an LVM-manifold. On
the other hand,
\[
  \mathcal N'\cong\zp'/S^1=(\zp\times S^1\times S^1)/S^1\cong\zp\times S^1,
\]
and therefore $\zp\times S^1$ has a complex structure.
\end{proof}

In the next two sections we describe a more direct method of
endowing $\zp$ with a complex structure, without referring to
projectivised quadrics and LVM-manifolds. This approach, developed
in~\cite{pa-us12}, works not only in the polytopal case, but also
for the moment-angle manifolds $\zk$ corresponding to underlying
complexes $\sK$ of complete simplicial fans.

\section{Moment-angle manifolds from simplicial
fans}\label{mamsf} Let $\sK=\sK_\Sigma$ be the underlying complex
of a complete simplicial fan~$\Sigma$, and $U(\sK)$ the complement
of the coordinate subspace arrangement~\eqref{uk} defined
by~$\sK$. Here we shall identify the moment-angle manifold $\zk$
with the quotient of $U(\sK)$ by a smooth action of non-compact
group isomorphic to~$\R^{m-n}$, thereby defining a smooth
structure on~$\zk$. A modification of this construction will be
used in the next section to endow $\zk$ with a complex structure.
These results were obtained in the work~\cite{pa-us12} of
Ustinovsky and the author.

We recall from Subsection~\ref{conesfans} that a simplicial fan
$\Sigma$ can be defined by the data $\{\sK;\mb a_1,\ldots,\mb
a_m\}$, where
\begin{itemize}
\item $\sK$ is a simplicial complex on $[m]$;
\item $\mb a_1,\ldots,\mb a_m$ is a configuration of vectors in
$N_\R\cong\R^n$ such that the subset $\{\mb a_i\colon i\in I\}$ is
linearly independent for any simplex $I\in\sK$.
\end{itemize}

Here is an important point in which our approach to fans differs
from the standard one adopted in toric geometry: since we allow
ghost vertices in~$\sK$, we do not require that each vector $\mb
a_i$ spans a one-dimensional cone of~$\Sigma$. The vector $\mb
a_i$ corresponding to a ghost vertex $\{i\}\in[m]$ may be zero.
This formalism was also used in~\cite{ba-za} under the name
\emph{triangulated vector configurations}.

\begin{construction}
For a set of vectors $\mb a_1,\ldots,\mb a_m$, consider the linear
map
\begin{equation}\label{lambdar}
  A\colon\R^m\to N_\R,\quad\mb e_i\mapsto\mb a_i,
\end{equation}
where $\mb e_1,\ldots,\mb e_m$ is the standard basis of~$\R^m$.
Let
\[
  \R^m_>=\{(y_1,\ldots,y_m)\in\R^m\colon y_i>0\}
\]
be the multiplicative group of $m$-tuples of positive real
numbers, and define
\begin{equation}\label{rsigma}
\begin{aligned}
  R&=\exp(\Ker A)=\bigl\{\bigl(e^{y_1},\ldots,e^{y_m}\bigr)
  \colon(y_1,\ldots,y_m)\in\Ker A\bigr\}\\
  &=\bigl\{(t_1,\ldots,t_m)\in\R^m_>\colon
  \prod_{i=1}^mt_i^{\langle\mb a_i,\mb u\rangle}=1
  \text{ for all }\mb u\in N^*_\R\bigr\}.
\end{aligned}
\end{equation}
\end{construction}

We let $\R^m_>$ act on the complement $U(\sK)\subset\C^m$ by
coordinatewise multiplications and consider the restricted action
of the subgroup $R\subset\R^m_>$. Recall that an action of a
topological group $G$ on a space $X$ is \emph{proper} if the
\emph{group action map} $h\colon G\times X\to X\times X$, \
$(g,x)\mapsto (gx,x)$ is proper (the preimage of a compact subset
is compact).

\begin{theorem}[\cite{pa-us12}]\label{zksmooth}
Assume given data $\{\sK;\mb a_1,\ldots,\mb a_m\}$ satisfying the
conditions above. Then
\begin{itemize}
\item[(a)]
the group $R$ given by \eqref{rsigma} acts on $U(\sK)$ freely;

\item[(b)] if the data $\{\sK;\mb a_1,\ldots,\mb a_m\}$ defines a simplicial fan~$\Sigma$,
then $R$ acts on $U(\sK)$ properly, so the quotient $U(\sK)/R$ is
a smooth Hausdorff $(m+n)$-dimensional manifold;

\item[(c)] if the fan $\Sigma$ is complete, then $U(\sK)/R$ is
homeomorphic to the moment-angle manifold~$\zk$.
\end{itemize}
Therefore, $\zk$ can be smoothed whenever $\sK=\sK_\Sigma$ for a
complete simplicial fan~$\Sigma$.
\end{theorem}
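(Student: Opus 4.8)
The plan is to prove the three statements (a)--(c); the final assertion then follows at once, since (b) makes $U(\sK)/R$ a smooth $(m+n)$-manifold and (c) identifies it homeomorphically with $\zk$, which thereby acquires a smooth structure. For (a) I would compute stabilisers exactly as in the proof of Lemma~\ref{afree}: the isotropy subgroup of $\mb z\in\C^m$ under the coordinatewise action of $\R^m_>$ consists of those $(t_1,\dots,t_m)$ with $t_i=1$ whenever $z_i\ne0$, so the $R$-isotropy of $\mb z$ is $\exp\bigl(\Ker A\cap\R^{\omega(\mb z)}\bigr)$, where $\omega(\mb z)=\{i:z_i=0\}$ and $\R^{\omega(\mb z)}\subset\R^m$ is the coordinate subspace spanned by the $\mb e_i$ with $i\in\omega(\mb z)$. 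For $\mb z\in U(\sK)$ one has $\omega(\mb z)\in\sK$, hence $\{\mb a_i:i\in\omega(\mb z)\}$ is linearly independent and $A$ is injective on $\R^{\omega(\mb z)}$; thus the isotropy is trivial.

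The main difficulty is properness in (b), and this is the step where the fan hypothesis is indispensable. I would use the standard criterion that the $R$-action is proper if and only if for every compact $K\subset U(\sK)$ the set $\{\mb t\in R:\mb tK\cap K\ne\varnothing\}$ is relatively compact. If this fails, choose $\mb z_k,\mb z'_k\in K$ and $\mb t_k=\exp\xi_k\in R$ with $\mb t_k\mb z_k=\mb z'_k$ and $|\xi_k|\to\infty$; after passing to subsequences, $\mb z_k\to\mb z$, $\mb z'_k\to\mb z'$ in $K$ and $\xi_k/|\xi_k|\to\eta\in\Ker A$ with $|\eta|=1$. Comparing moduli coordinatewise in $\mb t_k\mb z_k=\mb z'_k$, and writing $S=\omega(\mb z)$, $S'=\omega(\mb z')$ (both in $\sK$), one reads off $\eta_i=0$ for $i\notin S\cup S'$, $\eta_i\ge0$ for $i\in S\setminus S'$, and $\eta_i\le0$ for $i\in S'\setminus S$ (with no constraint for $i\in S\cap S'$). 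Now the data defines a fan, so $\sigma_S$ and $\sigma_{S'}$ are cones of $\Sigma$ meeting in the common face $\sigma_{S\cap S'}$; by the Separation Lemma (Lemma~\ref{seplemma}) there is $\ell\in N^*_\R$ with $\langle\ell,\mb a_i\rangle=0$ for $i\in S\cap S'$ and $\langle\ell,\mb a_i\rangle>0$ for the remaining $i\in S\cup S'$. Pairing $\ell$ with $\sum_i\eta_i\mb a_i=0$ (valid as $\eta\in\Ker A$) gives $\sum_{i\in S\setminus S'}\eta_i\langle\ell,\mb a_i\rangle+\sum_{i\in S'\setminus S}\eta_i\langle\ell,\mb a_i\rangle=0$, a nonnegative summand plus a nonpositive one; hence both vanish, and term by term $\eta_i=0$ for $i\in(S\setminus S')\cup(S'\setminus S)$. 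The relation then reduces to $\sum_{i\in S\cap S'}\eta_i\mb a_i=0$ with $\{\mb a_i:i\in S\cap S'\}$ linearly independent, so $\eta=0$, a contradiction. Therefore the action is proper; being free by (a), $U(\sK)/R$ is a smooth Hausdorff manifold, the orbit map $U(\sK)\to U(\sK)/R$ is a principal $R\cong\R^{m-n}$-bundle, and $\dim U(\sK)/R=2m-(m-n)=m+n$.

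For (c) I would show that, when $\Sigma$ is complete, $\zk=(\D,\mathbb S)^\sK\subset(\C,\C^\times)^\sK=U(\sK)$ is a global cross-section of the $R$-action, i.e. meets each orbit in exactly one point. Given $\mb z\in U(\sK)$ with $\omega=\omega(\mb z)\in\sK$, finding $\mb t\in R$ with $\mb t\mb z\in\zk$ reduces (after passing to the moduli $|z_i|$ and taking logarithms) to finding reals $q_i\ge0$ for $i\notin\omega$ with $\sum_{i\notin\omega}q_i\mb a_i\equiv\mb w$ modulo $\mathrm{span}\{\mb a_j:j\in\omega\}$ and $\{i:q_i>0\}\cup\omega\in\sK$, for a prescribed $\mb w\in N_\R$; uniqueness of the resulting $\mb t$ as a group element is then forced by freeness. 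The cones of $\Sigma$ containing $\sigma_\omega$, reduced modulo $\mathrm{span}\,\sigma_\omega$, form a complete simplicial fan $\overline\Sigma$ in $\overline N=N_\R/\mathrm{span}\,\sigma_\omega$ (this is the only place where completeness is used), so the image of $\mb w$ in $\overline N$ lies in the relative interior of a unique cone $\overline\sigma_J$ with $J\in\sK$, $J\supseteq\omega$, and simpliciality writes it uniquely as a positive combination of the generators indexed by $J\setminus\omega$; taking the remaining $q_i=0$ yields both existence and uniqueness. Hence $\zk\hookrightarrow U(\sK)\to U(\sK)/R$ is a continuous bijection, and since $\zk$ is compact (being closed in $\D^m$) while $U(\sK)/R$ is Hausdorff by (b), it is a homeomorphism. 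As a consistency check, the retraction $U(\sK)\to\zk$ thus obtained agrees with the deformation retraction of Theorem~\ref{deret}, now realised as an orbit map, exactly as anticipated in the remark following that theorem.
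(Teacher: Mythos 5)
Your proposal follows essentially the same route as the paper for all three parts: stabilisers via linear independence over simplices for (a), the Separation Lemma applied to the cones of vanishing coordinates for properness in (b), and the ``each orbit meets $\zk$ exactly once'' cross-section argument for (c). (The paper verifies the latter on the $\T^m$-quotient using the cubical complex $\cc(\sK)$, the bijectivity of $A$ on $(\R_\le,0)^\sK$ and the star fan; this is the same computation as your reduction to the complete quotient fan $\overline\Sigma$, and your finish --- continuous bijection from the compact $\zk$ to the Hausdorff quotient --- is a clean alternative to the paper's two-step orbit-intersection check.) One step in (b) needs repair as written: the Separation Lemma does not produce $\ell$ with $\langle\ell,\mb a_i\rangle>0$ for \emph{all} $i\in(S\setminus S')\cup(S'\setminus S)$ --- such a functional need not exist (consider two opposite rays meeting at the origin) --- and, taking your stated signs at face value, the deduction ``a nonnegative summand plus a nonpositive one, hence both vanish'' is invalid (a nonnegative and a nonpositive number can cancel). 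What the lemma actually gives is $\ell$ vanishing on the common face, strictly positive on the generators in $S\setminus S'$ and strictly \emph{negative} on those in $S'\setminus S$; combined with $\eta_i\ge0$ on $S\setminus S'$ and $\eta_i\le0$ on $S'\setminus S$, \emph{both} summands in $\langle\ell,\sum_i\eta_i\mb a_i\rangle=0$ become sums of nonnegative terms, so every term vanishes and the argument closes exactly as you intend. With that sign correction the proof is complete.
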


\begin{proof}
Statement~(a) is proved in the same way as
Proposition~\ref{freeaction}. Indeed, a point $\mb z\in U(\sK)$
has a nontrivial isotropy subgroup with respect to the action of
$\R^m_>$ only if some of its coordinates vanish. These
$\R^m_>$-isotropy subgroups are of the form $(\R_>,1)^I$,
see~\eqref{XAI}, for some $I\in\sK$. The restriction of $\exp A$
to any such $(\R_>,1)^I$ is an injection. Therefore, $R=\exp(\Ker
A)$ intersects any $\R^m_>$-isotropy subgroup only at the unit,
which implies that the $R$-action on $U(\sK)$ is free.

Let us prove~(b). Consider the map
\[
  h\colon R\times U(\sK)\to U(\sK)\times U(\sK), \quad (\mb
  g,\mb z)\mapsto (\mb g\mb z,\mb z),
\]
for $\mb g\in R$, $\mb z\in U(\sK)$. Let $V\subset U(\sK)\times
U(\sK)$ be a compact subset; we need to show that $h^{-1}(V)$ is
compact. Since $R\times U(\sK)$ is metrisable, it suffices to
check that any infinite sequence $\{(\mb g^{(k)},\mb
z^{(k)})\colon k=1,2,\ldots\}$ of points in $h^{-1}(V)$ contains a
converging subsequence. Since $V\subset U(\sK)\times U(\sK)$ is
compact, by passing to a subsequence we may assume that the
sequence
\[
  \{h(\mb g^{(k)},\mb z^{(k)})\}=\{(\mb g^{(k)}\mb z^{(k)},\mb z^{(k)})\}
\]
has a limit in $U(\sK)\times U(\sK)$. We set $\mb w^{(k)}=\mb
g^{(k)}\mb z^{(k)}$, and assume that
\[
  \{\mb w^{(k)}\}\to {\mb w}=(w_1,\dots,w_m),\quad
  \{\mb z^{(k)}\}\to {\mb z}=(z_1,\dots,z_m)
\]
for some $\mb w,\mb z\in U(\sK)$. We need to show that a
subsequence of $\{\mb g^{(k)}\}$ has limit in~$R$. We write
\[
  \mb g^{(k)}=\bigl(g_1^{(k)},\ldots,g_m^{(k)}\bigr)=
  \bigl(e^{\alpha^{(k)}_1},\ldots,
  e^{\alpha^{(k)}_m}\bigr)\in R\subset \R^m_>,
\]
$\alpha_j^{(k)}\in\R$. By passing to a subsequence we may assume
that each sequence $\{\alpha^{(k)}_j\}$, \ $j=1,\ldots,m$, has a
finite or infinite limit (including $\pm\infty$). Let
\[
  I_+=\{j\colon\alpha^{(k)}_j\to +\infty\}\subset[m],\quad
  I_-=\{j\colon\alpha^{(k)}_j\to -\infty\}\subset[m].
\]
Since the sequences $\{\mb z^{(k)}\}$, $\{\mb w^{(k)}=\mb
g^{(k)}\mb z^{(k)}\}$ are converging to $\mb z,\mb w\in U(\sK)$
respectively, we have $z_j=0$ for $j\in I_+$ and $w_j=0$ for $j\in
I_-$. Then it follows from the decomposition
$U(\sK)=\bigcup_{I\in\sK}(\C,\C^\times)^I$ that $I_+$ and $I_-$
are simplices of~$\sK$. Let $\sigma_+,\sigma_-$ be the
corresponding cones of the simplicial fan~$\Sigma$. Then
$\sigma_+\cap\sigma_-=\{\mb 0\}$ by definition of a fan. By
Lemma~\ref{seplemma}, there exists a linear function $\mb u\in
N^*_\R$ such that $\langle\mb u,\mb a\rangle>0$ for any nonzero
$\mb a\in \sigma_+$, and $\langle\mb u,\mb a\rangle<0$ for any
nonzero $\mb a\in \sigma_-$. Since $\mb g^{(k)}\in R$, it follows
from~\eqref{rsigma} that
\begin{equation}\label{alphak1}
  \sum_{j=1}^m \alpha^{(k)}_j\langle\mb u,\mb a_j\rangle=0.
\end{equation}
This implies that both $I_+$ and $I_-$ are empty, as otherwise the
latter sum tends to infinity. Thus, each sequence
$\{\alpha^{(k)}_j\}$ has a finite limit $\alpha_j$, and a
subsequence of $\{\mb g^{(k)}\}$ converges to
$(e^{\alpha_1},\ldots,e^{\alpha_m})$. Passing to the limit
in~\eqref{alphak1} we obtain that
$(e^{\alpha_1},\ldots,e^{\alpha_m})\in R$. This proves the
properness of the action. Since the Lie group $R(\Sigma)$ acts
smoothly, freely and properly on the smooth manifold $U(\sK)$, the
orbit space $U(\sK)/R$ is Hausdorff and smooth by the standard
result~\cite[Theorem~9.16]{lee00}.

In the case of complete fan it is possible to construct a smooth
atlas on $U(\sK)/R$ explicitly. To do this, it is convenient to
pre-factorise everything by the action of $\T^m$, as in the proof
of Theorem~\ref{deret}. We  have
\[
  U(\sK)/\T^m=(\R_\ge,\R_>)^\sK=\bigcup_{I\in\sK}(\R_\ge,\R_>)^I.
\]
Since the fan $\Sigma$ is complete, we may take the union above
only over $n$-element simplices $I=\{i_1,\ldots,i_n\}\in\sK$.
Consider one such simplex~$I$; the generators of the corresponding
$n$-dimensional cone $\sigma\in\Sigma$ are $\mb a_{i_1},\ldots,\mb
a_{i_n}$. Let $\mb u_1,\ldots,\mb u_n$ denote the dual basis of
$N_\R^*$, that is, $\langle\mb a_{i_k},\mb
u_j\rangle=\delta_{kj}$. Now consider the map
\begin{align*}
  p_I\colon(\R_\ge,\R_>)^I&\to\R^n_\ge\\
  (y_1,\ldots,y_m)&\mapsto
  \Bigr(\prod_{i=1}^my_i^{\langle\mb a_i,\mb u_1\rangle},\ldots,\:
  \prod_{i=1}^my_i^{\langle\mb a_i,\mb u_n\rangle}\Bigl),
\end{align*}
where we set $0^0=1$. Note that zero cannot occur with a negative
exponent in the right hand side, hence $p_I$ is well defined as a
continuous map. Each $(\R_\ge,\R_>)^I$ is $R$-invariant, and it
follows from~\eqref{rsigma} that $p_I$ induces an injective map
\[
  q_I\colon(\R_\ge,\R_>)^I/R\to\R^n_\ge.
\]
This map is also surjective since every
$(x_1,\ldots,x_n)\in\R^n_\ge$ is covered by $(y_1,\ldots,y_m)$
where $y_{i_j}=x_j$ for $1\le j\le n$ and $y_k=1$ for
$k\notin\{i_1,\ldots,i_n\}$. Hence, $q_I$ is a homeomorphism. It
is covered by a $\T^m$-equivariant homeomorphism
\[
  \overline q_I\colon
  (\C,\C^\times)^I/R\to\C^n\times\T^{m-n},
\]
where $\C^n$ is identified with the quotient
$\R_\ge^n\times\T^n/\!\sim\,$, see~\eqref{Cmids}. Since $U(\sK)/R$
is covered by open subsets $(\C,\C^\times)^I/R$, and
$\C^n\times\T^{m-n}$ embeds as an open subset in $\R^{m+n}$, the
set of homeomorphisms $\{\overline q_I\colon I\in\sK\}$ provides
an atlas for $U(\sK)/R$. The change of coordinates transformations
$\overline q_J\overline
q_I^{-1}\colon\C^n\times\T^{m-n}\to\C^n\times\T^{m-n}$ are smooth
by inspection; thus $U(\sK)/R$ is a smooth manifold.

\begin{remark}
The set of homeomorphisms
$\{q_I\colon(\R_\ge,\R_>)^I/R\to\R^n_\ge\}$ defines an atlas for
the smooth manifold with corners $\zk/\T^m$. If $\sK=\sK_P$ for a
simple polytope $P$, then this smooth structure with corners
coincides with that of~$P$.
\end{remark}

It remains to prove statement~(c), that is, identify $U(\sK)/R$
with $\zk$. If $X$ is a Hausdorff locally compact space with a
proper $G$-action, and $Y\subset X$ a compact subspace which
intersects every $G$-orbit at a single point, then $Y$ is
homeomorphic to the orbit space $X/G$. Therefore, we need to
verify that each $R$-orbit intersects $\zk\subset U(\sK)$ at a
single point. We first prove that the $R$-orbit of any $\mb y\in
U(\sK)/\T^m=(\R_\ge,\R_>)^\sK$ intersects $\zk/\T^m$ at a single
point. For this we use the cubical decomposition
$\cc(\sK)=(\I,1)^\sK$ of $\zk/\T^m$, see Example~\ref{exkpo}.2.

Assume first that $\mb y\in\R^m_>$. The $R$-action on $\R^m_>$ is
obtained by exponentiating the linear action of $\Ker A$ on
$\R^m$. Consider the subset $(\R_\le,0)^\sK\subset\R^m$, where
$\R_\le$ denotes the set of nonpositive reals. It is taken by the
exponential map $\exp\colon\R^m\to\R^m_>$ homeomorphically onto
$\cc^\circ(\sK)=((0,1],1)^\sK\subset\R^m_>$, where $(0,1]$ is
denotes the semi-interval $\{y\in\R\colon 0<y\le1\}$. The map
\begin{equation}\label{oto}
  A\colon(\R_\le,0)^\sK\to N_\R
\end{equation}
takes every $(\R_\le,0)^I$ to $-\sigma$, where $\sigma\in\Sigma$
is the cone corresponding to $I\in\sK$. Since $\Sigma$ is
complete, map~\eqref{oto} is one-to-one.

The orbit of $\mb y$ under the action of $R$ consists of points
$\mb w\in\R^m_>$ such that $\exp A\mb w=\exp A\mb y$. Since $A\mb
y\in N_\R$ and map~\eqref{oto} is one-to-one, there is a unique
point $\mb y'\in(\R_\le,0)^\sK$ such that $A\mb y'=A\mb y$. Since
$\exp A\mb y'\subset\cc^\circ(\sK)$, the $R$-orbit of $\mb y$
intersects the interior $\cc^\circ(\sK)$ and therefore $\cc(\sK)$
at a unique point.

Now let $\mb y\in(\R_\ge,\R_>)^\sK$ be an arbitrary point. Let
$\omega(\mb y)\in\sK$ be the set of zero coordinates of~$\mb y$,
and let $\sigma\in\Sigma$ be the cone corresponding to $\omega(\mb
y)$. The cones containing $\sigma$ constitute a fan $\st\sigma$
(called the \emph{star} of~$\sigma$) in the quotient space
$N_\R/\R\langle\mb a_i\colon i\in\omega(\mb y)\rangle$. The
underlying simplicial complex of $\st\sigma$ is the \emph{link}
$\lk\omega(\mb y)$ of $\omega(\mb y)$ in~$\sK$. Now observe that
the action of $R$ on the set
\[
  \{(y_1,\ldots,y_m)\in(\R_\ge,\R_>)^\sK\colon y_i=0\text{ for }i\in \omega(\mb
  y)\}\cong(\R_\ge,\R_>)^{\lk\omega(\mb y)}
\]
coincides with the action of the group $R_{\st\sigma}$ (defined by
the fan~$\st\sigma$). Now we can repeat the above arguments for
the complete fan $\st\sigma$ and the action of $R_{\st\sigma}$ on
$(\R_\ge,\R_>)^{\lk\omega(\mb y)}$. As a result, we obtain that
every $R$-orbit intersects $\cc(\sK)$ at a unique point.

To finish the proof of~(c) we consider the commutative diagram
\[
\begin{CD}
\zk @>>> U(\sK)\\
@VVV @VV\pi V\\
\cc(\sK) @>>> (\R_\ge,\R_>)^\sK
\end{CD}
\]
where the horizontal arrows are embeddings and the vertical ones
are projections onto the quotients of $\T^m$-actions. Note that
the projection $\pi$ commutes with the $R$-actions on $U(\sK)$ and
$(\R_\ge,\R_>)^\sK$, and the subgroups $R$ and $\T^m$ of
$(\C^\times)^m$ intersect trivially. It follows that every
$R$-orbit intersects the full preimage $\pi^{-1}(\cc(\sK))=\zk$ at
a unique point. Indeed, assume that $\mb z$ and $r\mb z$ are in
$\zk$ for some $\mb z\in U(\sK)$ and $r\in R$. Then $\pi(\mb z)$
and $\pi(r\mb z)=r\pi(\mb z)$ are in $\cc(\sK)$, which implies
that $\pi(\mb z)=\pi(r\mb z)$. Hence, $\mb z=\mb t r\mb z$ for
some $\mb t\in\T^m$. We may assume that $\mb z\in(\C^\times)^m$,
so that the action of both $R$ and $\T^m$ is free (otherwise
consider the action on $U(\lk\omega(\mb z))$). It follows that
$\mb t r=\mathbf 1$, which implies that $r=\mathbf 1$, since $R$
and $\T^m$ intersect trivially.
\end{proof}

We do not know if Theorem~\ref{zksmooth} generalises to other
sphere triangulations:

\begin{question}
Describe the class of sphere triangulations $\sK$ for which the
moment-angle manifold $\zk$ admits a smooth structure.
\end{question}

\begin{remark}
Even if $\zk$ admits a smooth structure for some simplicial
complexes $\sK$ not arising from fans, such a structure does not
come from a quotient $U(\sK)/R$ determined by data $\{\sK;\mb
a_1,\ldots,\mb a_m\}$. In fact, the $R$-action on $U(\sK)$ is
proper and the quotient $U(\sK)/R$ is Hausdorff \emph{precisely
when} $\{\sK;\mb a_1,\ldots,\mb a_m\}$ defines a fan, i.e. the
simplicial cones generated by any two subsets $\{\mb a_i\colon
i\in I\}$ and $\{\mb a_j\colon j\in J\}$ with $I,J\in\sK$ can be
separated by a hyperplane. This observation is originally due to
Bosio~\cite{bosi01}, see also~\cite[\S II.3]{a-d-h-l}
and~\cite{ba-za}.
\end{remark}

\section{Complex geometry of moment-angle manifolds}\label{camam}
Here we show that the even-dimensional moment-angle manifold $\zk$
corresponding to a complete simplicial fan $\Sigma$ admits a
structure of a complex manifold. The idea is to replace the action
of $R\cong\R^{m-n}_>$ on $U(\sK)$ (whose quotient is $\zk$) by a
holomorphic action of $\C^{\frac{m-n}2}$ on the same space.

In this section we assume that $m-n$ is even. We can always
achieve this by adding a ghost vertex with any corresponding
vector to our data $\{\sK;\mb a_1,\ldots,\mb a_m\}$; topologically
this results in multiplying $\zk$ by a circle. We set
$\ell=\frac{m-n}2$.

We identify $\C^m$ (as a real vector space) with $\R^{2m}$ using
the map
\[
  (z_1,\ldots,z_m)\mapsto(x_1,y_1,\ldots,x_m,y_m),
\]
where $z_k=x_k+iy_k$, and consider the $\R$-linear map
\[
  \Re\colon\C^m\to\R^m,\qquad (z_1,\ldots,z_m)\mapsto(x_1,\ldots,x_m).
\]

In order to obtain a complex structure on the quotient $\zk\cong
U(\sK)/R$ we replace the action of $R$ by the action of a
holomorphic subgroup $C\subset(\C^\times)^m$ by means of the
following construction.

\begin{construction}\label{psi}
Let $\mb a_1,\ldots,\mb a_m$ be a configuration of vectors that
span $N_\R\cong\R^n$. Assume further that $m-n=2\ell$ is even.
Some of the $\mb a_i$'s may be zero. Recall the map
$A\colon\R^m\to N_\R$, \ $\mb e_i\mapsto\mb a_i$.

We choose a complex $\ell$-dimensional subspace in $\C^m$ which
projects isomorphically onto the real $(m-n)$-dimensional subspace
$\Ker A\subset\R^m$. More precisely, let $\mathfrak
c\cong\C^\ell$, and choose a linear map $\varPsi\colon \mathfrak
c\to\C^m$ satisfying the two conditions:
\begin{itemize}
\item[(a)] the composite map
$\mathfrak c\stackrel{\varPsi}\longrightarrow\C^m
\stackrel{\Re}\longrightarrow\R^m$ is a monomorphism;

\item[(b)] the composite map
$\mathfrak c\stackrel{\varPsi}\longrightarrow\C^m
\stackrel{\Re}\longrightarrow\R^m \stackrel{A}\longrightarrow
N_\R$ is zero.
\end{itemize}
These two conditions are equivalent to the following:
\begin{itemize}
\item[(a')] $\varPsi(\mathfrak c)\cap\overline{\varPsi(\mathfrak c)\!}=\{\mathbf0\}$;
\item[(b')] $\varPsi(\mathfrak c)\subset\Ker(A_\C\colon\C^m\to N_\C)$,
\end{itemize}
where $\overline{\varPsi(\mathfrak c)\!}$ is the complex conjugate
space and $A_\C\colon\C^m\to N_\C$ is the complexification of the
real map $A\colon\R^m\to N_\R$. Consider the following commutative
diagram:
\begin{equation}\label{cdiag}
\begin{CD}
  \mathfrak c @>\varPsi>> \C^m @>\Re>> \R^m @>A>> N_\R\\
  @. @VV\exp V @VV\exp V\\
  \ @. (\C^\times )^m@>|\cdot|>> \R^m_>
\end{CD}
\end{equation}
where the vertical arrows are the componentwise exponential maps,
and $|\cdot|$ denotes the map
$(z_1,\ldots,z_m)\mapsto(|z_1|,\ldots,|z_m|)$. Now set
\begin{equation}\label{csigma}
  C_\varPsi=\exp\varPsi(\mathfrak c)
  =\bigl\{\bigl(e^{\langle\psi_1,\mb w\rangle},\ldots,
  e^{\langle\psi_m,\mb w\rangle}\bigr)\in(\C^\times )^m\bigr\}
\end{equation}
where $\mb w\in \mathfrak c$ and $\psi_i\in \mathfrak c^*$ is
given by the $i$th coordinate projection $\mathfrak
c\stackrel\varPsi\longrightarrow\C^m\to\C$. Then
$C_\varPsi\cong\C^\ell$ is a complex-analytic (but not algebraic)
subgroup in~$(\C^\times)^m$, and therefore there is a holomorphic
action of $C_\varPsi$ on $\C^m$ and $U(\sK)$ by restriction.
\end{construction}

\begin{example}\label{2torus}
Let $\mb a_1,\ldots,\mb a_m$ be the configuration of $m=2\ell$
zero vectors. We supplement it by the empty simplicial complex
$\sK$ on $[m]$ (with $m$ ghost vertices), so that the data
$\{\sK;\mb a_1,\ldots,\mb a_m\}$ defines a complete fan in
0-dimensional space. Then $A\colon\R^m\to\R^0$ is a zero map, and
condition~(b) of Construction~\ref{psi} is void. Condition~(a)
means that $\mathfrak c\stackrel{\varPsi}\longrightarrow\C^{2\ell}
\stackrel{\mathrm{Re}}\longrightarrow\R^{2\ell}$ is an isomorphism
of real spaces.

Consider the quotient $(\C^\times)^m/C_\varPsi$ (note that
$U(\sK)=(\C^\times)^m$ in our case). The exponential map
$\C^m\to(\C^\times)^m$ identifies $(\C^\times)^m$ with the
quotient of $\C^m$ by the imaginary lattice $\Gamma=\Z\langle2\pi
i\mb e_1,\ldots,2\pi i\mb e_m\rangle$. Condition~(a) implies that
the projection $p\colon\C^m\to\C^m/\varPsi(\mathfrak c)$ is
nondegenerate on the imaginary subspace of~$\C^m$. In particular,
$p\,(\Gamma)$ is a lattice of rank $m=2\ell$ in
$\C^m/\varPsi(\mathfrak c)\cong\C^\ell$. Therefore,
\[
  (\C^\times)^m/C_\varPsi\cong\bigl(\C^m/\Gamma\bigr)/\varPsi(\mathfrak c)
  =\bigl(\C^m/\varPsi(\mathfrak c)\bigr)
  \big/p\,(\Gamma)\cong\C^\ell/\Z^{2\ell}
\]
is a complex compact $\ell$-dimensional torus.

Any complex torus can be obtained in this way. Indeed, let
$\varPsi\colon\mathfrak c\to\C^m$ be given by an
$2\ell\times\ell$-matrix $\begin{pmatrix}-B\\I\end{pmatrix}$ where
$I$ is a unit matrix and $B$ is a square matrix of size~$\ell$.
Then $p\colon\C^m\to\C^m/\varPsi(\mathfrak c)$ is given by the
matrix $(I\,B)$ in appropriate bases, and
$(\C^\times)^m/C_\varPsi$ is isomorphic to the quotient of
$\C^\ell$ by the lattice $\Z\langle\mb e_1,\ldots,\mb e_\ell,\mb
b_1,\ldots,\mb b_\ell\rangle$, where $\mb b_k$ is the $k$th column
of~$B$. (Condition~(b) implies that the imaginary part of $B$ is
nondegenerate.)

For example, if $\ell=1$, then $\varPsi\colon\C\to\C^2$ is given
by $w\mapsto(\beta w,w)$ for some $\beta\in\C$, so that
subgroup~\eqref{csigma} is
\[
  C_\varPsi=\{(e^{\beta w},e^w)\}\subset(\C^\times )^2.
\]
Condition~(a) implies that $\beta\notin\R$. Then
$\exp\varPsi\colon\C\to(\C^\times )^2$ is an embedding, and
\[
  (\C^\times )^2/C_\varPsi\cong\C/(\Z\oplus\beta\Z)=T^1_\C(\beta)
\]
is a complex 1-dimensional torus with lattice parameter
$\beta\in\C$.
\end{example}

\begin{theorem}[\cite{pa-us12}]\label{zkcomplex}
Assume that the data $\{\sK;\mb a_1,\ldots,\mb a_m\}$ define a
complete fan~$\Sigma$ in~$N_\R\cong\R^n$, and $m-n=2\ell$. Let
$C_\varPsi\cong\C^\ell$ be given by~\eqref{csigma}. Then
\begin{itemize}
\item[(a)]
the holomorphic action of $C_\varPsi$ on $U(\sK)$ is free and
proper, and the quotient $U(\sK)/C_\varPsi$ has a structure of a
compact complex manifold;

\item[(b)] $U(\sK)/C_\varPsi$ is diffeomorphic to the moment-angle manifold~$\zk$.
\end{itemize}
Therefore, $\zk$ has a complex structure, in which each element of
$\T^m$ acts by a holomorphic transformation.
\end{theorem}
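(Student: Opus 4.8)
The plan is to follow the proof of Theorem~\ref{zksmooth} almost verbatim, replacing the real group $R=\exp(\Ker A)$ by $C_\varPsi$ and exploiting the fact that $C_\varPsi$ sits between $R$ and $\T^m$ inside $(\C^\times)^m$.

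First I would record the structure of $C_\varPsi$. Since $\Re\circ\varPsi\colon\mathfrak c\to\R^m$ is a monomorphism whose image lies in $\Ker A$ and has real dimension $2\ell=m-n=\dim_\R\Ker A$, it is an isomorphism $\mathfrak c\cong\Ker A$; moreover $\varPsi(\mathfrak c)$ contains no nonzero purely imaginary vector, since such a vector would lie in $\varPsi(\mathfrak c)\cap\overline{\varPsi(\mathfrak c)}$, contradicting condition~(a$'$). Hence $\exp$ is injective on $\varPsi(\mathfrak c)$, so $C_\varPsi\cong\C^\ell$, and by the commutativity of~\eqref{cdiag} the absolute-value map restricts to a bijective homomorphism $|\cdot|\colon C_\varPsi\to R$; composing its inverse with $z\mapsto z/|z|$ gives a continuous homomorphism $\phi\colon R\to\T^m$ whose graph is exactly $C_\varPsi\subset R\times\T^m\subset(\C^\times)^m$. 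In particular $C_\varPsi$ is a closed subgroup of $(\C^\times)^m$ and $C_\varPsi\cap\T^m=\{\mathbf1\}$. Freeness of the $C_\varPsi$-action on $U(\sK)$ then follows as in Theorem~\ref{zksmooth}(a): the $(\C^\times)^m$-isotropy group of $\mb z\in U(\sK)$ is $(\C^\times)^{\omega(\mb z)}$ with $\omega(\mb z)\in\sK$; the linear independence of $\{\mb a_i\colon i\in\omega(\mb z)\}$ gives $R\cap(\R_>,1)^{\omega(\mb z)}=\{\mathbf1\}$, so if $c\in C_\varPsi\cap(\C^\times)^{\omega(\mb z)}$ then $|c|\in R\cap(\R_>,1)^{\omega(\mb z)}=\{\mathbf1\}$, whence $c\in C_\varPsi\cap\T^m=\{\mathbf1\}$.

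For properness I would first note that the product action of $R\times\T^m$ on $U(\sK)$, $(r,t)\cdot\mb z=rt\mb z$, is proper: the product of a proper action and an action of a compact group that commute is proper (a short point-set check, using that $\T^m$ and $\T^m$-orbits are compact), and the $R$-action is proper by Theorem~\ref{zksmooth}(b) (this is where the hypothesis that $\{\sK;\mb a_1,\dots,\mb a_m\}$ defines a fan enters, via the Separation Lemma~\ref{seplemma}). Since $C_\varPsi$ is a closed subgroup of $R\times\T^m$, the restricted $C_\varPsi$-action on $U(\sK)$ is free and proper. A free proper holomorphic action of the complex Lie group $C_\varPsi\cong\C^\ell$ on the complex manifold $U(\sK)$ has a complex-manifold quotient: locally the orbits form a holomorphic foliation and complex transversals of codimension $\ell$ serve as holomorphic charts with holomorphic transition maps, exactly as sketched in the proof of Theorem~\ref{thlvm}. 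Concretely, over each chart $(\C,\C^\times)^I/C_\varPsi$ with $I\in\sK$ an $n$-element simplex, the $C_\varPsi$-equivariant projection onto the $(\C^\times)^{[m]\setminus I}$-factor identifies the quotient with $\C^n\times T^\ell_\C$, where $T^\ell_\C=(\C^\times)^{[m]\setminus I}/C_\varPsi$ is a compact complex $\ell$-torus obtained as in Example~\ref{2torus} (the projected subspace of $\varPsi(\mathfrak c)$ still meets its conjugate only in $\mathbf0$, because $\Ker A_\C$ is invariant under conjugation and meets $\C^I$ trivially). Compactness of $U(\sK)/C_\varPsi$ then follows from part~(b), since $\zk$ is compact.

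For part~(b) I would use that $C_\varPsi$ commutes with $\T^m$, so the $\T^m$-quotient projection $\pi\colon U(\sK)\to(\R_\ge,\R_>)^\sK$ intertwines the $C_\varPsi$-action with the $R$-action via the isomorphism $|\cdot|\colon C_\varPsi\to R$. By the proof of Theorem~\ref{zksmooth}(c), every $R$-orbit in $(\R_\ge,\R_>)^\sK$ meets $\cc(\sK)=\zk/\T^m$ in exactly one point; combining this with $C_\varPsi\cap\T^m=\{\mathbf1\}$, the diagram-chase at the end of that proof shows verbatim that every $C_\varPsi$-orbit in $U(\sK)$ meets the $\T^m$-invariant subspace $\zk=\pi^{-1}(\cc(\sK))$ in exactly one point. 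Since the $C_\varPsi$-action is proper, $U(\sK)/C_\varPsi$ is Hausdorff and locally compact, so the compact subspace $\zk$ meeting every orbit once is homeomorphic to it, $\T^m$-equivariantly. Comparing, on each building block $B_I\subset\zk$, the chart $\C^n\times T^\ell_\C$ above with the chart $\C^n\times\T^{m-n}$ from the proof of Theorem~\ref{zksmooth} shows this homeomorphism is a diffeomorphism for the smooth structure of Theorem~\ref{zksmooth}; transporting the complex structure equips $\zk$ with a complex structure. Finally $\T^m\subset(\C^\times)^m$ acts holomorphically on $U(\sK)$ and commutes with $C_\varPsi$, so it descends to a holomorphic action on $U(\sK)/C_\varPsi\cong\zk$, giving the last assertion. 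I expect the genuinely delicate point to be the construction of the complex-manifold quotient itself — producing the holomorphic slices and verifying holomorphy of the transition maps — and, secondarily, checking that the smooth structure so obtained coincides with that of Theorem~\ref{zksmooth}; everything else is a transcription of the arguments already given for the $R$-action.
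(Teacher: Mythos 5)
Your proposal is correct and follows essentially the same route as the paper: freeness via the embedding $|\cdot|\colon C_\varPsi\to R$ together with Theorem~\ref{zksmooth}(a), properness by reduction to the properness of the $R$-action, the holomorphic atlas built from the charts $(\C,\C^\times)^I/C_\varPsi$ fibring over a compact complex torus as in Example~\ref{2torus}, and the identification with $\zk$ by showing each orbit meets $\pi^{-1}(\cc(\sK))$ exactly once using that $C_\varPsi$ and $R$ act identically on $U(\sK)/\T^m$ and that $C_\varPsi\cap\T^m=\{\mathbf 1\}$. The only (harmless) divergence is the properness step, where you restrict the proper $R\times\T^m$-action to the closed subgroup $C_\varPsi$, whereas the paper chases a compact set through the commutative square involving $\pi\colon U(\sK)\to(\R_\ge,\R_>)^\sK$; both arguments ultimately rest on Theorem~\ref{zksmooth}.
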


\begin{remark}
A result similar to Theorem~\ref{zkcomplex} was obtained by
Tambour~\cite{tamb12}. The approach of Tambour was somewhat
different; he constructed complex structures on manifolds $\zk$
arising from \emph{rationally} starshaped spheres~$\sK$
(underlying complexes of complete rational simplicial fans) by
relating them to a class of generalised LVM-manifolds described by
Bosio in~\cite{bosi01}.
\end{remark}

\begin{proof}[Proof of Theorem~\ref{zkcomplex}]
We first prove statement (a). The isotropy subgroups of the
$(\C^\times)^m$-action on $U(\sK)$ are of the form
$(\C^\times,1)^I$ for $I\in\sK$. In order to show that
$C_\varPsi\subset(\C^\times)^m$ acts freely we need to check that
$C_\varPsi$ has trivial intersection with any isotropy subgroup
of~$(\C^\times)^m$. Since $C_\varPsi$ embeds into $\R^m_>$
by~\eqref{cdiag}, it enough to check that the image of $C_\varPsi$
in $\R^m_>$ intersects the image of $(\C^\times,1)^I$ in $\R^m_>$
trivially. The former image is $R$ and the latter image is
$(\R_>,1)^I$; the triviality of their intersection follows from
Theorem~\ref{zksmooth}~(a).

Now we prove the properness of this action. Consider the
projection $\pi\colon U(\sK)\to(\R_\ge,\R_>)^\sK$ onto the
quotient of the $\T^m$-action, and the commutative square
\[
\begin{CD}
  C_{\varPsi}\times U(\sK) @>h_\C>> U(\sK)\times U(\sK)\\
  @VVf\times\pi V @VV\pi\times\pi V\\
  R\times (\R_\ge,\R_>)^\sK@>h_\R>> (\R_\ge,\R_>)^\sK\times(\R_\ge,\R_>)^\sK
\end{CD}
\]
where $h_\C$ and $h_\R$ denote the group action maps, and $f\colon
C_{\varPsi}\to R$ is the isomorphism given by the restriction of
$|\cdot|\colon (\C^\times )^m\to \R_>^m$. The preimage
$h^{-1}_\C(V)$ of a compact subset $V\in U(\sK)\times U(\sK)$ is a
closed subset in $W=(f\times\pi)^{-1}\circ
h_\R^{-1}\circ(\pi\times\pi)(V)$. The image $(\pi\times\pi)(V)$ is
compact, the action of $R$ on $(\R_\ge,\R_>)^\sK$ is proper by
Theorem~\ref{zksmooth}~(a), and the map $f\times \pi$ is proper as
the quotient projection for a compact group action. Hence, $W$ is
a compact subset in $C_{\varPsi}\times U(\sK)$, and $h^{-1}_\C(V)$
is compact as a closed subset in~$W$.

The group $C_{\varPsi}\cong\C^l$ acts holomorphically, freely and
properly on the complex manifold $U(\sK)$, therefore the quotient
manifold $U(\sK)/C_\varPsi$ has a complex structure.

As in the proof of Theorem~\ref{zksmooth}, it is possible to
describe a holomorphic atlas of $U(\sK)/C_{\varPsi}$. Since the
action of $C_{\varPsi}$ on the quotient
$U(\sK)/\T^m=(\R_\ge,\R_>)^\sK$ coincides with the action of $R$
on the same space, the quotient of $U(\sK)/C_{\varPsi}$ by the
action of $\T^m$ has exactly the same structure of a smooth
manifold with corners as the quotient of $U(\sK)/R$ by $\T^m$ (see
the proof of Theorem~\ref{zksmooth}). This structure is determined
by the atlas $\{q_I\colon(\R_\ge,\R_>)^I/R\to\R^n_\ge\}$, which
lifts to a covering of $U(\sK)/C_{\varPsi}$ by the open subsets
$(\C,\C^\times)^I/C_{\varPsi}$. For any $I\in\sK$, the subset
$(\C,\T)^I\subset(\C,\C^\times)^I$ intersects each orbit of the
$C_{\varPsi}$-action on $(\C,\C^\times)^I$ transversely at a
single point. Therefore, every
$(\C,\C^\times)^I/C_{\varPsi}\cong(\C,\T)^I$ acquires a structure
of a complex manifold. Since
$(\C,\C^\times)^I\cong\C^n\times(\C^\times)^{m-n}$, and the action
of $C_{\varPsi}$ on the $(\C^\times)^{m-n}$ factor is free, the
complex manifold $(\C,\C^\times)^I/C_{\varPsi}$ is the total space
of a holomorphic $\C^n$-bundle over the complex torus
$(\C^\times)^{m-n}/C_{\varPsi}$ (see Example~\ref{2torus}).
Writing trivialisations of these $\C^n$-bundles for every~$I$, we
obtain a holomorphic atlas for $U(\sK)/C_{\varPsi}$.

The proof of statement (b) follows the lines of the proof of
Theorem~\ref{zksmooth}~(b). We need to show that each
$C_{\varPsi}$-orbit intersects $\zk\subset U(\sK)$ at a single
point. First we show that the $C_{\varPsi}$-orbit of any point in
$U(\sK)/\T^m$ intersects $\zk/\T^m=\cc(\sK)$ at a single point;
this follows from the fact that the actions of $C_{\varPsi}$ and
$R$ coincide on $U(\sK)/\T^m$. Then we show that each
$C_{\varPsi}$-orbit intersects the preimage $\pi^{-1}(\cc(\sK))$
at a single point, using the fact that $C_{\varPsi}$ and $\T^m$
have trivial intersection in $(\C^\times)^m$.
\end{proof}

\begin{example}[Hopf manifold]\label{hopf}
Let $\mb a_1,\ldots,\mb a_{n+1}$ be a set of vectors which span
$N_\R\cong\R^n$ and satisfy a linear relation $\lambda_1\mb
a_1+\cdots+\lambda_{n+1}\mb a_{n+1}=\mathbf0$ with all
$\lambda_k>0$. Let $\Sigma$ be the complete simplicial fan in
$N_\R$ whose cones are generated by all proper subsets of $\mb
a_1,\ldots,\mb a_{n+1}$. To make $m-n$ even we add one more ghost
vector $\mb a_{n+2}$. Hence $m=n+2$, $\ell=1$, and we have one
more linear relation $\mu_1\mb a_1+\cdots+\mu_{n+1}\mb a_{n+1}+\mb
a_{n+2}=\mathbf0$ with $\mu_k\in\R$. The subspace $\Ker
A\subset\R^{n+2}$ is spanned by
$(\lambda_1,\ldots,\lambda_{n+1},0)$ and
$(\mu_1,\ldots,\mu_{n+1},1)$.

Then $\sK=\sK_\Sigma$ is the boundary of an $n$-dimensional
simplex with $n+1$ vertices and one ghost vertex, $\zk\cong
S^{2n+1}\times S^1$, and
$U(\sK)=(\C^{n+1}\setminus\{{\bf0}\})\times\C^\times$.

Conditions~(a) and~(b) of Construction~\ref{psi} imply that
$C_\varPsi$ is a 1-dimensional subgroup in $(\C^\times )^m$ given
in appropriate coordinates by
\[
  C_\varPsi=\bigl\{(e^{\zeta_1 w},\ldots,e^{\zeta_{n+1}w},e^w)
  \colon w\in\C\bigl\}\subset(\C^\times )^m,
\]
where $\zeta_k=\mu_k+\alpha\lambda_k$ for some
$\alpha\in\C\setminus\R$. By changing the basis of $\Ker A$ if
necessary, we may assume that $\alpha=i$. The moment-angle
manifold $\zk\cong S^{2n+1}\times S^1$ acquires a complex
structure as the quotient $U(\sK)/C_\varPsi$:
\begin{multline*}
  \bigl(\C^{n+1}\setminus\{\mathbf 0\}\bigr)\times\C^\times\bigl/\;
  \bigl\{(z_1,\ldots,z_{n+1},t)\!\sim
  (e^{\zeta_1w}z_1,\ldots,e^{\zeta_{n+1}w}z_{n+1},
  e^w t)\bigr\}
  \\
  \cong\bigl(\C^{n+1}\setminus\{\mathbf0\}\bigr)\bigl/\;
  \bigl\{(z_1,\ldots,z_{n+1})\!\sim
  (e^{2\pi i\zeta_1}z_1,\ldots,
  e^{2\pi i\zeta_{n+1}}z_{n+1})\bigr\},
\end{multline*}
where $\mb z\in\C^{n+1}\setminus\{0\}$, $t\in\C^\times$. The
latter is the quotient of $\C^{n+1}\setminus\{0\}$ by a
diagonalisable action of $\Z$. It is known as a \emph{Hopf
manifold}. For $n=0$ we obtain the complex torus (elliptic curve)
of Example~\ref{2torus}.
\end{example}

Theorem~\ref{zkcomplex} can be generalised to the quotients of
$\zk$ by freely acting subgroups $H\subset\T^m$, or \emph{partial
quotients} of~$\zk$ in the sense of~\cite[\S7.5]{bu-pa02}. These
include both toric manifolds and LVM-manifolds.

\begin{construction}\label{psi_part}
Let $\Sigma$ be a complete simplicial fan in $N_\R$ defined by the
data $\{\sK;\mb a_1,\ldots,\mb a_m\}$, and let $H\subset\T^m$ be a
subgroup which acts freely on the corresponding moment-angle
manifold~$\zk$. Then $H$ is a product of a torus and a finite
group, and $h=\dim H\le m-n$ by Proposition~\ref{freeaction} ($H$
must intersect trivially with an $n$-dimensional coordinate
subtorus in~$\T^m$). Under an additional assumption on~$H$, we
shall define a holomorphic subgroup $D$ in $(\C^\times)^m$ and
introduce a complex structure on $\zk/H$ by identifying it with
the quotient $U(\sK)/D$.

The additional assumption is the compatibility with the fan data.
Recall the map $A_\R\colon \R^m\to N_\R$, \ $\mb e_i\mapsto\mb
a_i$, and let $\mathfrak h\subset\R^m$ be the Lie algebra of
$H\subset\T^m$. We assume that $\mathfrak h\subset\Ker A_\R$. We
also assume that $2\ell=m-n-h$ is even (this can be satisfied by
adding a zero vector to $\mb a_1,\ldots,\mb a_m$). Let $T=\T^m/H$
be the quotient torus, $\mathfrak t$ its Lie algebra, and
$\rho\colon\R^m\to\mathfrak t$ the map of Lie algebras
corresponding to the quotient projection $\T^m\to T$.

Let $\mathfrak c\cong\C^\ell$, and choose a linear map
$\varOmega\colon \mathfrak c\to\C^m$ satisfying the two
conditions:
\begin{itemize}
\item[(a)] the composite map
$\mathfrak c\stackrel{\varOmega}\longrightarrow\C^m
\stackrel{\Re}\longrightarrow\R^m \stackrel{\rho}\longrightarrow
\mathfrak t$ is a monomorphism;
\item[(b)] the composite map
$\mathfrak c\stackrel{\varOmega}\longrightarrow\C^m
\stackrel{\Re}\longrightarrow\R^m \stackrel{A}\longrightarrow
N_\R$ is zero.
\end{itemize}
Equivalently, choose a complex subspace $\mathfrak
c\subset\mathfrak t_\C$ such that the composite map $\mathfrak
c\to\mathfrak t_\C\stackrel{\Re}\longrightarrow\mathfrak t$ is a
monomorphism.

As in Construction~\ref{psi}, $\exp\varOmega(\mathfrak
c)\subset(\C^\times)^m$ is a holomorphic subgroup isomorphic
to~$\C^\ell$. Let $H_\C\subset(\C^\times)^m$ be the
complexification of $H$ (it is a product of an algebraic torus of
dimension $h$ and a finite group). It follows from (a) that the
subgroups $H_\C$ and $\exp\varOmega(\mathfrak c)$ intersect
trivially in~$(\C^\times)^m$. We therefore define a complex
$(h+\ell)$-dimensional subgroup
\begin{equation} \label{csigma_part}
  D_{H,\varOmega}=H_\C\times\exp\varOmega(\mathfrak c)
  \subset(\C^\times)^m.
\end{equation}
\end{construction}

\begin{theorem}[{\cite[Th.~3.7]{pa-us12}}]\label{zkcomplex_part}
Let $\Sigma$, $\sK$ and $D_{H,\varOmega}$ be as above. Then
\begin{itemize}
\item[(a)] the holomorphic action of the group
$D_{H,\varOmega}$ on $U(\sK)$ is free and proper, and the quotient
$U(\sK)/D_{H,\varOmega}$ has a structure of a compact complex
manifold of complex dimension $m-h-\ell$;

\item[(b)] there is a diffeomorphism between
$U(\sK)/D_{H,\varOmega}$ and $\zk/H$ defining a complex structure
on the quotient $\zk/H$, in which each element of $T=\T^m/H$ acts
by a holomorphic transformation.
\end{itemize}
\end{theorem}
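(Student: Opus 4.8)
The plan is to follow verbatim the scheme of the proofs of Theorems~\ref{zksmooth} and~\ref{zkcomplex}, reducing everything to facts established there. The crucial preliminary observation is that the modulus map $|\cdot|\colon(\C^\times)^m\to\R^m_>$ carries $D_{H,\varOmega}$ \emph{onto} the group $R=\exp(\Ker A)$ of Theorem~\ref{zksmooth}, and that $D_{H,\varOmega}\cap\T^m=H$. Indeed, by condition~(b) of Construction~\ref{psi_part} both $\mathfrak h$ and $\Re\varOmega(\mathfrak c)$ lie in $\Ker A$, and by condition~(a) the projection $\rho$, whose kernel is $\mathfrak h$, is injective on $\Re\varOmega(\mathfrak c)$; since $\dim\mathfrak h+\dim\Re\varOmega(\mathfrak c)=h+2\ell=m-n=\dim\Ker A$, we get a direct sum $\Ker A=\mathfrak h\oplus\Re\varOmega(\mathfrak c)$, whence $|\cdot|$ maps $D_{H,\varOmega}=H_\C\times\exp\varOmega(\mathfrak c)$ onto $\exp(\Ker A)=R$. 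The same decomposition, together with~(a), shows that an element of $D_{H,\varOmega}$ lying in $\T^m$ has trivial $\exp\varOmega$-component, hence lies in $H_\C\cap\T^m=H$.

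Granting this, part~(a) goes as follows. The isotropy subgroups of the $(\C^\times)^m$-action on $U(\sK)$ are the $(\C^\times,1)^I$, $I\in\sK$. If $d\in D_{H,\varOmega}$ fixes $\mb z\in U(\sK)$, then $|d|\in R$ fixes $|\mb z|$, so $|d|=\mathbf1$ by Theorem~\ref{zksmooth}(a), i.e.\ $d\in D_{H,\varOmega}\cap\T^m=H$; since $H$ acts freely on $\zk$ it acts freely on $U(\sK)$ (the $\T^m$-isotropy subgroups on $U(\sK)$ and $\zk$ coincide), so $d=\mathbf1$. Properness is obtained exactly as in Theorem~\ref{zkcomplex}(a): use the commutative square relating the group action map $h_\C$ of $D_{H,\varOmega}$ on $U(\sK)$ to the group action map $h_\R$ of $R$ on $(\R_\ge,\R_>)^\sK$ via $|\cdot|$ and the $\T^m$-quotient projection~$\pi$; here the induced map $f\colon D_{H,\varOmega}\to R$ has compact kernel~$H$, hence is proper, $\pi$ is proper, and $R$ acts properly by Theorem~\ref{zksmooth}(b), so $h_\C^{-1}$ of a compact set is a closed subset of a compact set. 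A free, proper, holomorphic action of $D_{H,\varOmega}\cong\C^{h+\ell}$ on the complex manifold $U(\sK)$ produces a complex manifold $U(\sK)/D_{H,\varOmega}$ of complex dimension $m-h-\ell$; a holomorphic atlas is obtained exactly as in Theorem~\ref{zkcomplex}, the charts $(\C,\C^\times)^I/D_{H,\varOmega}\cong(\C,\T)^I$ being total spaces of holomorphic affine-space bundles over compact complex tori (cf.\ Example~\ref{2torus}). Compactness of $U(\sK)/D_{H,\varOmega}$ will follow from part~(b).

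For part~(b), the inclusion $H\subset D_{H,\varOmega}$ and the $\T^m$-invariance of $\zk\subset U(\sK)$ yield a natural continuous map $\zk/H\to U(\sK)/D_{H,\varOmega}$, and the claim is that it is a diffeomorphism. Surjectivity: a $D_{H,\varOmega}$-orbit projects under $\pi$ to an $R$-orbit in $(\R_\ge,\R_>)^\sK$, which meets $\cc(\sK)=\zk/\T^m$ by Theorem~\ref{zksmooth}, so the orbit meets $\pi^{-1}(\cc(\sK))=\zk$. Injectivity: if $\mb z,\mb z'=d\mb z\in\zk$ with $d\in D_{H,\varOmega}$, reduce as in the proof of Theorem~\ref{zksmooth}(b) to the case $\mb z\in(\C^\times)^m$ (pass to $\lk\omega(\mb z)$ and its star fan, on which everything is compatible); then $\pi(\mb z)$ and $\pi(\mb z')=|d|\pi(\mb z)$ lie in $\cc(\sK)$ and in one $R$-orbit, hence coincide by Theorem~\ref{zksmooth}, so $|d|=\mathbf1$ (the $R$-action on $\R^m_>$ is free), so $d\in D_{H,\varOmega}\cap\T^m=H$ and $\mb z'\in H\mb z$. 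Thus the map is a continuous bijection; since $\zk/H$ is compact ($\zk$ is compact and $H$ is a product of a torus and a finite group) and $U(\sK)/D_{H,\varOmega}$ is Hausdorff, it is a homeomorphism, which also gives compactness of $U(\sK)/D_{H,\varOmega}$. It is a diffeomorphism by comparing charts: the $T=\T^m/H$-quotients of both sides are the same smooth manifold with corners $\cc(\sK)$, and the local models $(\C,\T)^I$ are the ones used in Theorem~\ref{zksmooth}, so the lift is smooth by inspection. Transporting the complex structure along this diffeomorphism endows $\zk/H$ with a complex structure; and because $\T^m$ and $D_{H,\varOmega}$ commute inside the abelian group $(\C^\times)^m$, the holomorphic $\T^m$-action on $U(\sK)$ descends to $U(\sK)/D_{H,\varOmega}$, is trivial on $H$, and hence factors through a holomorphic action of $T=\T^m/H$.

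I expect the main obstacle to be the first paragraph: pinning down that $|\cdot|$ sends $D_{H,\varOmega}$ onto~$R$ and that $D_{H,\varOmega}\cap\T^m=H$, via the direct sum decomposition of $\Ker A$. Everything after that is a faithful adaptation of Theorems~\ref{zksmooth} and~\ref{zkcomplex}; the only secondary point requiring care is the properness step, since $f\colon D_{H,\varOmega}\to R$ is now only a surjection with compact kernel rather than an isomorphism, but a homomorphism of Lie groups with compact kernel is proper, so the argument still closes.
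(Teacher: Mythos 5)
Your proposal is correct and is precisely the adaptation of Theorems~\ref{zksmooth} and~\ref{zkcomplex} that the paper intends (it omits this proof, citing its similarity to Theorem~\ref{zkcomplex}); in particular your key preliminary observation — that conditions (a) and (b) of Construction~\ref{psi_part} force $\Ker A=\mathfrak h\oplus\Re\varOmega(\mathfrak c)$, hence $|D_{H,\varOmega}|=R$ and $D_{H,\varOmega}\cap\T^m=H$ — is exactly the right replacement for the isomorphism $f\colon C_\varPsi\to R$ used there, and your handling of properness via the compact kernel $H$ is sound. One cosmetic slip: $D_{H,\varOmega}$ is isomorphic to $(\C^\times)^h\times F\times\C^\ell$ for a finite group $F$, not to $\C^{h+\ell}$, but this does not affect anything, since the quotient of a complex manifold by a free proper holomorphic action of any complex Lie group of complex dimension $h+\ell$ is a complex manifold of the stated dimension $m-h-\ell$.
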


The proof is similar to that of Theorem~\ref{zkcomplex} and is
omitted.

\begin{example}\

1. If $H$ is trivial ($h=0$) then we obtain
Theorem~\ref{zkcomplex}.

2. Let $H$ be the diagonal circle in~$\T^m$. The condition
$\mathfrak h\subset\Ker A_\R$ implies that the vectors $\mb
a_1,\ldots,\mb a_m$ sum up to zero, which can always be achieved
by rescaling them (as $\Sigma$ is a complete fan). As the result,
we obtain a complex structure on the quotient $\zk/S^1$ by the
diagonal circle in~$\T^m$, provided that $m-n$ is odd. In the
polytopal case $\sK=\sK_P$, the quotient $\zk/S^1$ embeds into
$\C^m\setminus\{\mathbf 0\}/\C^\times=\C P^{m-1}$ as an
intersection of homogeneous quadrics~\eqref{ndef}, and the complex
structure on $\zk/S^1$ coincides with that of an
\emph{LVM-manifold}, see Section~\ref{lvmma}.

3. Let $h=\dim H=m-n$. Then $\mathfrak h=\Ker A$. Since $\mathfrak
h$ is the Lie algebra of a torus, the $(m-n)$-dimensional subspace
$\Ker A\subset\R^m$ is rational. By Gale duality, this implies
that the fan $\Sigma$ is also rational. We have $\ell=0$,
$D_{H,\varOmega}=H_\C\cong(\C^\times)^{m-n}$ and
$U(\sK)/H_\C=\zk/H$ is the toric variety corresponding
to~$\Sigma$.
\end{example}

As it is shown by Ishida~\cite{isid}, any compact complex manifold
with a \emph{maximal} effective holomorphic action of a torus is
biholomorphic to a quotient $\zk/H$ of the moment-angle manifold,
with a complex structure described by
Theorem~\ref{zkcomplex_part}. (An effective action of $T^k$ on an
$m$-dimensional manifold $M$ is called \emph{maximal} if there
exists a point $x\in M$ whose stabiliser has dimension $m-k$; the
two extreme cases are the free action of a torus on itself and the
half-dimensional torus action on a toric manifold.) The argument
of~\cite{isid} recovering a fan $\Sigma$ from a maximal
holomorphic torus action builds up on the works~\cite{i-f-m13}
and~\cite{is-ka}, where the result was proved in particular cases.
The main result of~\cite{is-ka} provides a purely complex-analytic
description of toric manifolds~$V_\Sigma$:

\begin{theorem}[{\cite[Theorem~1]{is-ka}}]
Let $M$ be a compact connected complex manifold of complex
dimension $n$, equipped with an effective action of $T^n$ by
holomorphic transformations. If the action has fixed points, then
there exists a complete regular fan $\Sigma$ and a
$T^n$-equivariant biholomorphism of $V_\Sigma$ with~$M$.
\end{theorem}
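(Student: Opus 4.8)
The plan is to reconstruct a complete regular fan $\Sigma$ from the orbit structure of the action near the fixed points, after first promoting the $T^n$-action to a holomorphic action of $\C^\times_N\cong(\C^\times)^n$; once this is done, $M$ is recognised as $V_\Sigma$ by the usual toric gluing. Concretely, I would begin by studying a neighbourhood of a fixed point $x$. By Bochner's holomorphic linearisation theorem there is a $T^n$-invariant neighbourhood of $x$ that is $T^n$-equivariantly biholomorphic to a neighbourhood of the origin in $\C^n$ carrying a linear $T^n$-action; effectiveness of the action on $M$ (hence, by the identity principle for holomorphic maps, on this chart) forces the $n$ weights of the linear action to form a $\Z$-basis of the character lattice of $T^n$, and dually they span a regular $n$-dimensional cone $\sigma_x\subset N_\R$. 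In particular fixed points are isolated, so there are only finitely many of them. Working in such a chart one also checks that the holomorphic vector fields $\xi_1,\dots,\xi_n$ generating the circle subgroups of $T^n$ are linearly independent over $\C$ (in the chart they become $z_1\partial_{z_1},\dots,z_n\partial_{z_n}$ up to rescaling).

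Next, since the $\xi_j$ are complete ($M$ is compact) and pairwise commuting, the abelian complex Lie algebra $\C\langle\xi_1,\dots,\xi_n\rangle$ integrates to a holomorphic $\C^n$-action on $M$; comparing with the linear model at $x$ identifies the kernel of $\C^n\to\mathrm{Aut}(M)$ with the imaginary lattice $2\pi i\Z^n$ defining $T^n$, so we get an effective holomorphic action of $\C^\times_N=\C^n/2\pi i\Z^n$ extending the $T^n$-action and with the same (finite) fixed-point set. This $\C^\times_N$-action is the unique holomorphic extension of the linearised $T^n$-action, so the $\C^\times_N$-saturation $U_x$ of a Reinhardt-type invariant chart around $x$ is $\C^\times_N$-equivariantly biholomorphic to $\C^n$ with its standard coordinatewise action — that is, to the affine toric variety $V_{\sigma_x}$ of the regular cone $\sigma_x$. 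The $n$-dimensional $\C^\times_N$-orbit inside $U_x$ is open in $M$, hence dense (as $M$ is connected) and thus the unique dense orbit $O\cong(\C^\times)^n$; so $M$ is an equivariant compactification of the torus and $O\subset U_x$ for every fixed point $x$.

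I would then assemble the charts into a fan. The crucial point is that the $U_x$ cover $M$: for $p\in M$ the compact invariant subvariety $\overline{\C^\times_N\cdot p}$ should contain a fixed point $x$ — its minimal nonempty closed invariant subsets are compact orbits, and the fixed-point hypothesis should rule out positive-dimensional compact orbits (these would be complex subtori) — and then a $\C^\times_N$-translate of $p$ lies in the neighbourhood $U_x$ of $x$, whence $p\in U_x$ because $U_x$ is $\C^\times_N$-invariant. Granting this, $M=\bigcup_x U_x$ with each $U_x\cong V_{\sigma_x}$ and the overlaps glued along the common dense torus; because $M$ is Hausdorff, the cones $\sigma_x$ must be pairwise separated, so the collection of all their faces is an honest fan $\Sigma$ — this is the Separation Lemma (Lemma~\ref{seplemma}) read in reverse, exactly as non-Hausdorffness obstructed the algebraic torus quotients earlier. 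Since a $\C^\times_N$-equivariant open embedding of affine toric varieties respecting the dense torus is rigid (it is the canonical monomial embedding $V_\tau\hookrightarrow V_{\sigma_x}$ of a face), the identifications $U_x\cong V_{\sigma_x}$ patch to a $\C^\times_N$-equivariant, hence $T^n$-equivariant, biholomorphism $V_\Sigma\stackrel{\cong}{\longrightarrow}M$. Finally $\Sigma$ is complete because $M=V_\Sigma$ is compact, and regular by the weight computation at the fixed points, which is the assertion.

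The step I expect to be the main obstacle is the global one in the third paragraph: turning the finitely many local linear pictures into a genuine complete fan. In the symplectic Delzant situation of Section~\ref{symred} a moment map organises the orbit space and makes properness and completeness transparent, but here only the complex-analytic data of a compact manifold is available, so one must show purely complex-analytically that every orbit degenerates to a fixed point and that distinct maximal charts overlap along common faces. The fixed-point hypothesis is precisely what rules out the pathologies — without it $M$ could be a complex torus, or contain invariant subtori as orbits, with no fixed points at all. A secondary point needing care is that the glued object is the \emph{algebraic} variety $V_\Sigma$ rather than merely a complex manifold, which rests on the rigidity of the toric transition maps.
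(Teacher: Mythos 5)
First, a point of comparison: the paper does not prove this statement at all --- it is quoted verbatim from Ishida--Karshon [is-ka] as background, so there is no internal proof to measure your argument against. Your sketch does follow what is essentially the strategy of that reference (Bochner linearisation at a fixed point, complexification of the $T^n$-action to a holomorphic $\C^\times_N$-action, assembly of the affine charts into a fan), and the local part of your argument is sound: the isotropy representation at a fixed point is faithful by the identity principle, so its $n$ weights form a $\Z$-basis of the character lattice, the fixed points are isolated, and the commuting holomorphic vector fields $Z_j=\xi_j-iJ\xi_j$ are complete on the compact $M$ and integrate to a $\C^\times_N$-action whose kernel is computed in the linear model.

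The genuine gap is the one you flag yourself but then dispose of with ``should'': that the fixed-point hypothesis rules out positive-dimensional compact orbits, so that every orbit closure contains a fixed point and $M=\bigcup_x U_x$. This is not a routine verification --- it is the heart of the theorem. Holomorphic $\C^\times_N$-actions on compact complex manifolds do in general have compact positive-dimensional orbits: the subgroup $\exp(\C(1,\alpha))\subset(\C^\times)^2$ with $\alpha\notin\R$ is closed with quotient an elliptic curve, and exactly such orbits occur on the Hopf and Calabi--Eckmann manifolds of Examples~\ref{hopf} and~\ref{2torus}. Nothing in your third paragraph explains why a fixed point sitting somewhere in $M$ forbids such an ``irrational'' isotropy subgroup from occurring somewhere else; the standard route (and the one taken in [is-ka]) is to show that all isotropy subgroups are complexified subtori by an induction over the fixed-point submanifolds of subtori of $T^n$, to which the theorem in lower dimension applies. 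A second, smaller gap is the identification $U_x\cong\C^n$: freeness of the extended action on the open orbit follows from your effectiveness argument, but injectivity of the chart map on the lower-dimensional strata requires showing that the global stabiliser of a point with zero-set $I$ is exactly $(\C^\times)^I$ and not a discrete extension of it, which again cannot be read off from the local model alone. Finally, deducing the fan property of $\{\sigma_x\}$ from Hausdorffness of $M$ is only heuristic --- Lemma~\ref{seplemma} gives the implication in the opposite direction --- and should instead be extracted from the fact that $U_x\cap U_{x'}$ is a common invariant open subset of two copies of $\C^n$ containing the free orbit, hence a union of coordinate strata corresponding to a common face. None of these gaps is fatal to the strategy, but as written the proposal proves the theorem only in a neighbourhood of the fixed-point set.
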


\section[Holomorphic principal bundles and Dolbeault
cohomology]{Holomorphic principal bundles over toric varieties and
Dolbeault cohomology} In the case of rational simplicial normal
fans $\Sigma_P$ a construction of
Meersseman--Verjovsky~\cite{me-ve04} identifies the corresponding
projective toric variety $V_P$ as the base of a holomorphic
principal \emph{Seifert fibration}, whose total space is the
moment-angle manifold~$\zp$ equipped with a complex structure of
an LVM-manifold, and fibre is a compact complex torus of complex
dimension $\ell=\frac{m-n}2$. (Seifert fibrations are
generalisations of holomorphic fibre bundles to the case when the
base is an orbifold.) If $V_P$ is a projective toric manifold,
then there is a holomorphic free action of a complex
$\ell$-dimensional torus $T^{\ell}_\C$ on $\zp$ with
quotient~$V_P$.

Using the construction of a complex structure on $\zk$ described
in the previous section, in~\cite{pa-us12} holomorphic (Seifert)
fibrations with total space $\zk$ were defined for arbitrary
complete rational simplicial fans~$\Sigma$. By an application of
the Borel spectral sequence to the holomorphic fibration $\zk\to
V_\Sigma$, the Dolbeault cohomology of $\zk$ can be described and
some Hodge numbers can be calculated explicitly.

Here we make additional assumption that the set of integral linear
combinations of the vectors $\mb a_1,\ldots,\mb a_m$ is a
full-rank lattice (a discrete subgroup isomorphic to~$\Z^n$)
in~$N_\R\cong\R^n$. We denote this lattice by $N_\Z$ or
simply~$N$. This assumption implies that the complete simplicial
fan $\Sigma$ defined by the data $\{\sK;\mb a_1,\ldots,\mb a_m\}$
is \emph{rational}. We also continue assuming that $m-n$ is even
and setting $\ell=\frac{m-n}2$.

Because of our rationality assumption, the algebraic group $G$ is
defined by~\eqref{gexpl}. Furthermore, since we defined $N$ as the
lattice generated by $\mb a_1,\ldots,\mb a_m$, the group $G$ is
isomorphic to~$(\C^\times)^{2\ell}$ (i.e. there are no finite
factors). We also observe that $C_{\varPsi}$ lies in~$G$ as an
$\ell$-dimensional complex subgroup. This follows from
condition~(b') of Construction~\ref{psi}.

The quotient construction (Subsection~\ref{quotv}) identifies the
toric variety $V_\Sigma$ with $U(\sK)/G$, provided that $\mb
a_1,\ldots,\mb a_m$ are \emph{primitive} generators of the edges
of~$\Sigma$. In our data $\{\sK;\mb a_1,\ldots,\mb a_m\}$, the
vectors $\mb a_1,\ldots,\mb a_m$ are not necessarily primitive in
the lattice $N$ generated by them. Nevertheless, the quotient
$U(\sK)/G$ is still isomorphic to~$V_\Sigma$,
see~\cite[Proposition~II.3.1.7]{a-d-h-l}. Indeed, let $\mb a'_i\in
N$ be the primitive generator along $\mb a_i$, so that $\mb
a'_i=r_i\mb a_i$ for some positive integer~$r_i$. Then we have a
finite branched covering
\[
  U(\sK)\to U(\sK),\quad(z_1,\ldots,z_m)\mapsto
  (z_1^{r_1},\ldots,z_m^{r_m}),
\]
which maps the group $G'$ defined by $\mb a'_1,\ldots,\mb a'_m$ to
the group $G$ defined by $\mb a_1,\ldots,\mb a_m$,
see~\eqref{gexpl}. We therefore obtain a covering $U(\sK)/G'\to
U(\sK)/G$ of the toric variety $V_\Sigma\cong U(\sK)/G'\cong
U(\sK)/G$ over itself. Having this in mind, we can relate the
quotients $V_\Sigma\cong U(\sK)/G$ and $\zk\cong U(\sK)/C_\varPsi$
as follows:

\begin{proposition}\label{toricfib}Assume that the data
$\{\sK;\mb a_1,\ldots,\mb a_m\}$ defines a complete simplicial
rational fan~$\Sigma$, and let $G$ and $C_\varPsi$ be the groups
defined by~\eqref{gexpl} and~\eqref{csigma}.
\begin{itemize}
\item[(a)]The toric variety $V_\Sigma$ is identified, as a
topological space, with the quotient of $\zk$ by the holomorphic
action of the complex compact torus~$G/C_{\varPsi}$.

\item[(b)]If the fan $\Sigma$ is regular, then $V_\Sigma$ is the base of a
holomorphic principal bundle with total space $\zk$ and fibre the
complex compact torus $G/C_{\varPsi}$.
\end{itemize}
\end{proposition}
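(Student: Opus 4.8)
The plan is to obtain both statements by descending the Cox quotient $V_\Sigma\cong U(\sK)/G$ through the intermediate quotient $\zk\cong U(\sK)/C_\varPsi$. The two presentations are already in hand: the first is Theorem~\ref{coxth} together with the branched-covering remark preceding the proposition (which makes it valid even though the $\mb a_i$ need not be primitive), the second is Theorem~\ref{zkcomplex}. By condition~(b') of Construction~\ref{psi} the subgroup $C_\varPsi$ sits inside $G$ as a closed complex subgroup, and since $G$ is abelian it is normal, so $G/C_\varPsi$ acts on the complex manifold $\zk=U(\sK)/C_\varPsi$, holomorphically because the $G$-action on $U(\sK)$ is. First I would observe that the canonical surjection $\pi\colon\zk\to V_\Sigma$ is precisely the orbit map of this action: the fibre through $[\mb z]$ is the image in $\zk$ of the orbit $G\cdot\mb z$, which is the $G/C_\varPsi$-orbit of $[\mb z]$. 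Because $U(\sK)\to V_\Sigma$ is a topological quotient map (the geometric quotient of Theorem~\ref{coxth}(b), as $\Sigma$ is simplicial) and it factors through the quotient map $U(\sK)\to\zk$, the map $\pi$ is itself a quotient map; hence $V_\Sigma\cong\zk/(G/C_\varPsi)$ as topological spaces. That is statement~(a), once $G/C_\varPsi$ is identified as a compact complex torus.

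That identification is the first substantive step, and the only place the standing rationality hypothesis is genuinely used. The Lie algebra of $G\subset(\C^\times)^m$ is $\Ker(A_\C\colon\C^m\to N_\C)$, of complex dimension $m-n=2\ell$; since $\mb a_1,\ldots,\mb a_m$ span a full-rank lattice, the real subspace $\Ker A\subset\R^m$ is rational and $G\cong(\C^\times)^{2\ell}$ with no finite part, so the componentwise exponential restricts to an isomorphism $G\cong\Ker A_\C/\Lambda$ with $\Lambda=2\pi i(\Ker A\cap\Z^m)$ a lattice of real rank $2\ell$ spanning the imaginary subspace $i\Ker A\subset\Ker A_\C$. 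Then $G/C_\varPsi\cong\Ker A_\C/(\varPsi(\mathfrak c)+\Lambda)$, and I would run the argument of Example~\ref{2torus} inside $\Ker A_\C$: projecting modulo the complex $\ell$-plane $\varPsi(\mathfrak c)$ gives $Q:=\Ker A_\C/\varPsi(\mathfrak c)\cong\C^\ell$, and condition~(a) of Construction~\ref{psi}, which forces $\varPsi(\mathfrak c)\cap i\R^m=\{\mathbf 0\}$ and in particular $\varPsi(\mathfrak c)\cap i\Ker A=\{\mathbf 0\}$, shows that the real $2\ell$-plane $i\Ker A$ maps isomorphically onto the underlying real space of $Q$. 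Hence the image of $\Lambda$ is a rank-$2\ell$ lattice in $Q$, so $G/C_\varPsi\cong Q/(\text{image of }\Lambda)$ is a compact complex torus of dimension $\ell$, and its induced action on $\zk$ is holomorphic. This completes~(a).

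For~(b) I would bring in regularity to upgrade $\pi$ to a holomorphic principal bundle. By Proposition~\ref{freeaction}(b) the $G$-action on $U(\sK)$ is free, and $U(\sK)\to V_\Sigma$ admits holomorphic local sections over the standard affine cover of $V_\Sigma$: for a maximal simplex $I\in\sK$ the chart is $(\C,\C^\times)^I/G\cong\C^n$, and a section is obtained by sending a point to its unique representative with all coordinates outside $I$ equal to~$1$ --- this slice meets each $G$-orbit in $(\C,\C^\times)^I$ exactly once because for a regular fan $G$ maps isomorphically onto the coordinate torus $(\C^\times)^{[m]\setminus I}$. Thus $U(\sK)\to V_\Sigma$ is a holomorphic principal $G$-bundle. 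Finally I would divide out the closed normal subgroup $C_\varPsi$: over a trivialising chart $V_\alpha$ with $U(\sK)|_{V_\alpha}\cong V_\alpha\times G$ one gets $\zk|_{V_\alpha}=U(\sK)|_{V_\alpha}/C_\varPsi\cong V_\alpha\times(G/C_\varPsi)$, and the $G$-valued transition functions descend to holomorphic $(G/C_\varPsi)$-valued ones. Hence $\pi\colon\zk\to V_\Sigma$ is a holomorphic principal bundle with fibre and structure group the compact complex torus $G/C_\varPsi$.

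I expect the torus computation in the second paragraph to be the main point requiring care: it is where rationality enters, and one must confirm that non-primitivity of the $\mb a_i$ does no harm (it does not --- $U(\sK)/G$ is the toric variety regardless, by the branched-covering observation preceding the proposition). The remaining ingredients are standard: local sections of the Cox quotient for regular $\Sigma$, and the elementary descent of a principal bundle modulo a normal subgroup. A minor point used implicitly in~(a) is that a continuous surjection through which a quotient map factors is again a quotient map.
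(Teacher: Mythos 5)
Your proposal is correct and follows the same route as the paper: factor the Cox quotient $U(\sK)/G$ through $\zk\cong U(\sK)/C_\varPsi$, identify $G/C_\varPsi$ as a compact complex $\ell$-torus by the mechanism of Example~\ref{2torus}, and use freeness of the $G$-action (Proposition~\ref{freeaction}) for part~(b). The only difference is one of detail: where the paper simply asserts that a free holomorphic action of the compact torus $G/C_\varPsi$ yields a principal bundle, you construct the local trivialisations explicitly via the slices $z_j=1$ for $j\notin I$, which is a correct and more self-contained rendering of the same argument.
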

\begin{proof}
To prove~(a) we just observe that
\[
  V_\Sigma=U(\sK)/G=
  \bigl(U(\sK)/C_{\varPsi}\bigr)\big/(G/C_{\varPsi})\cong
  \mathcal Z_{\sK}\big/(G/C_{\varPsi}),
\]
where we used Theorem~\ref{zkcomplex}. The quotient
$G/C_{\varPsi}$ is a compact complex $\ell$-torus by
Example~\ref{2torus}. To prove~(b) we observe that the holomorphic
action of $G$ on $U(\sK)$ is free by Proposition~\ref{freeaction},
and the same is true for the action of $G/C_{\varPsi}$ on~$\zk$. A
holomorphic free action of the torus $G/C_{\varPsi}$ gives rise to
a principal bundle.
\end{proof}

\begin{remark}
Like in the projective situation of~\cite{me-ve04}, if the fan
$\Sigma$ is not regular, then the quotient projection $\zk\to
V_\Sigma$ of Proposition~\ref{toricfib}~(a) is a holomorphic
principal \emph{Seifert fibration} for an appropriate orbifold
structure on~$V_\Sigma$.
\end{remark}

Let $M$ be a complex $n$-dimensional manifold. The space
$\varOmega_\C^*(M)$ of complex differential forms on $M$
decomposes into a direct sum of the subspaces of
\emph{$(p,q)$-forms}, $\varOmega_\C^*(M)=\bigoplus_{0\le p,q\le
n}\varOmega^{p,q}(M)$, and there is the \emph{Dolbeault
differential} $\bar\partial\colon\varOmega^{p,q}(M)\to
\varOmega^{p,q+1}(M)$. The dimensions $h^{p,q}(M)$ of the
Dolbeault cohomology groups $H_{\bar\partial}^{p,q}(M)$ are known
as the \emph{Hodge numbers} of~$M$. They are important invariants
of the complex structure of~$M$.

The Dolbeault cohomology of a compact complex $\ell$-torus
$T_\C^{\ell}$ is isomorphic to an exterior algebra on $2\ell$
generators:
\begin{equation}\label{dolbtorus}
  H_{\bar\partial}^{*,*}(T_\C^{\ell})\cong
  \Lambda[\xi_1,\ldots,\xi_\ell,\eta_1,\ldots,\eta_\ell],
\end{equation}
where $\xi_1,\ldots,\xi_\ell\in
H_{\bar\partial}^{1,0}(T_\C^{\ell})$ are the classes of basis
holomorphic 1-forms, and $\eta_1,\ldots,\eta_\ell\in
H_{\bar\partial}^{0,1}(T_\C^{\ell})$ are the classes of basis
antiholomorphic 1-forms. In particular, the Hodge numbers are
given by $h^{p,q}(T_\C^{\ell})=\binom\ell p\binom\ell q$.

The de Rham cohomology of a complete nonsingular toric variety
$V_\Sigma$ admits a Hodge decomposition with only nontrivial
components of bidegree~$(p,p)$, $0\le p\le
n$~{\cite[\S12]{dani78}}. This together with the cohomology
calculation due to Danilov--Jurkiewicz~\cite[\S10]{dani78} gives
the following description of the Dolbeault cohomology:
\begin{equation}\label{dolbtoric}
  H_{\bar\partial}^{*,*}(V_\Sigma)\cong
  \C[v_1,\ldots,v_m]/(\mathcal I_{\sK}+\mathcal J_{\Sigma}),
\end{equation}
where $v_i\in H_{\bar\partial}^{1,1}(V_\Sigma)$ are the cohomology
classes corresponding to torus-invariant divisors (one for each
one-dimensional cone of~$\Sigma$), the ideal $\mathcal I_{\sK}$ is
generated by the monomials $v_{i_1}\cdots v_{i_k}$ for which $\mb
a_{i_1},\ldots,\mb a_{i_k}$ do not span a cone of $\Sigma$ (the
\emph{Stanley--Reisner ideal} of~$\sK$), and $\mathcal J_{\Sigma}$
is generated by the linear forms $\sum_{j=1}^m\langle\mb a_j,\mb
u\rangle v_j$, $\mb u\in N^*$. We have $h^{p,p}(V_\Sigma)=h_p$,
where $(h_0,h_1,\ldots,h_n)$ is the \emph{$h$-vector}
of~$\sK$~\cite[\S2.1]{bu-pa02}, and $h^{p,q}(V_\Sigma)=0$ for
$p\ne q$.

\begin{theorem}[{\cite{pa-us12}}]\label{dolbzp}
Assume that the data $\{\sK;\mb a_1,\ldots,\mb a_m\}$ define a
complete regular fan~$\Sigma$ in~$N_\R\cong\R^n$, $m-n=2\ell$, and
let $\zk$ be the corresponding moment-angle manifold with a
complex structure defined by Theorem~\ref{zkcomplex}. Then the
Dolbeault cohomology algebra $H_{\bar\partial}^{*,*}(\zk)$ is
isomorphic to the cohomology of the differential bigraded algebra
\begin{equation}\label{zkmult}
\bigl[\Lambda[\xi_1,\ldots,\xi_\ell,\eta_1,\ldots,\eta_\ell]\otimes
  H_{\bar\partial}^{*,*}(V_\Sigma),d\bigr]
\end{equation}
with differential $d$ of bidegree $(0,1)$ defined on the
generators as follows:
\[
  dv_i=d\eta_j=0,\quad d\xi_j=c(\xi_j),\quad
  1\le i\le m,\;1\le j\le\ell,
\]
where $c\colon H^{1,0}_{\bar\partial}(T_\C^{\ell})\to
H^2(V_\Sigma, \C)=  H_{\bar\partial}^{1,1}(V_\Sigma)$ is the first
Chern class map of the principal $T^\ell_\C$-bundle $\zk\to
V_\Sigma$.
\end{theorem}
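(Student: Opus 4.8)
The plan is to place $\zk$, with the complex structure of Theorem~\ref{zkcomplex}, at the top of the holomorphic principal bundle $\zk\to V_\Sigma$ furnished by Proposition~\ref{toricfib}(b), whose fibre is the compact complex torus $T=G/C_\varPsi\cong T^\ell_\C$, and to run the Borel spectral sequence of this holomorphic fibration. This is legitimate because the fibre, being a complex torus, is K\"ahler, while the Borel spectral sequence imposes no K\"ahler hypothesis on the base --- which matters, since a complete regular fan need not be polytopal, so $V_\Sigma$ need not be projective. One obtains a multiplicative spectral sequence $\{E_r,d_r\}$ of bigraded algebras with
\[
  E_2^{*,*}\;\cong\;H_{\bar\partial}^{*,*}(V_\Sigma)\otimes H_{\bar\partial}^{*,*}(T)\;\cong\;\Lambda[\xi_1,\ldots,\xi_\ell,\eta_1,\ldots,\eta_\ell]\otimes H_{\bar\partial}^{*,*}(V_\Sigma),
\]
converging to $H_{\bar\partial}^{*,*}(\zk)$, where I have used the identifications~\eqref{dolbtorus} and~\eqref{dolbtoric}. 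Thus the underlying bigraded algebra of the complex~\eqref{zkmult} is exactly the $E_2$-page, and it remains to compute the differentials and to settle convergence.

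The first task is to pin down $d_2$, which has bidegree $(0,1)$. The classes $v_i\in H_{\bar\partial}^{1,1}(V_\Sigma)$ are pulled back from the base, hence are permanent cocycles, so $dv_i=0$. The antiholomorphic generators $\eta_j\in H_{\bar\partial}^{0,1}(T)$ satisfy $d_r\eta_j=0$ for every $r$: a differential out of $\eta_j$ raises the base-filtration degree and carries this class, of bidegree $(0,1)$, into a subquotient built from the groups $H_{\bar\partial}^{0,q}(V_\Sigma)$ with $q\ge1$, all of which vanish by the Danilov--Jurkiewicz description~\eqref{dolbtoric} (the Dolbeault cohomology of $V_\Sigma$ is concentrated in bidegrees $(p,p)$). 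Finally each holomorphic generator $\xi_j\in H_{\bar\partial}^{1,0}(T)$ transgresses, and $d_2\xi_j$ is the image of $\xi_j$ under the first Chern class map $c\colon H_{\bar\partial}^{1,0}(T)\to H_{\bar\partial}^{1,1}(V_\Sigma)=H^2(V_\Sigma;\C)$ of the principal $T$-bundle. I would establish this by naturality of the Borel spectral sequence, reducing to the classical computation for the universal holomorphic compact-torus bundle, or equivalently by working with a connection form $\theta$ whose curvature is of type $(1,1)$: its $(0,1)$-components are then $\bar\partial$-closed and realise the $\eta_j$, while the $\bar\partial$-derivative of its $(1,0)$-components represents $c(\xi_j)$. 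This identifies $(E_2,d_2)$ with the differential bigraded algebra~\eqref{zkmult}, so $E_3$ is isomorphic to the cohomology of~\eqref{zkmult}.

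It then remains to show the spectral sequence collapses at $E_3$ and that there is no multiplicative extension ambiguity. Collapse cannot be read off bidegrees alone: since $H_{\bar\partial}^{*,*}(V_\Sigma)$ is a finite-dimensional algebra, the $\ell$ elements $c(\xi_1),\ldots,c(\xi_\ell)$ never form a regular sequence (for $\ell\ge1$), so $E_3$ carries genuine higher-syzygy classes of large fibre degree which a priori could support a differential $d_r$ with $r\ge4$. Instead I would argue by a dimension count: the Borel spectral sequence gives $\dim H_{\bar\partial}^{p,q}(\zk)\le\dim E_3^{p,q}$; the Fr\"olicher spectral sequence gives $\sum_{p+q=k}\dim H_{\bar\partial}^{p,q}(\zk)\ge b_k(\zk)$, the $k$-th Betti number; and $\sum_{p+q=k}\dim E_3^{p,q}=b_k(\zk)$, because the combinatorially known algebra $H^*(\zk;\C)$ has exactly the same Koszul shape --- it is the cohomology of $\Lambda[u_1,\ldots,u_{m-n}]\otimes H^*(V_\Sigma;\C)$ with the $u_i$ transgressing to the Euler classes of the underlying topological $T^{m-n}$-bundle $\zk\to V_\Sigma$, and the complex structure of the fibre pairs those $m-n=2\ell$ Euler classes into the $\ell$ Chern classes $c(\xi_j)$ together with $\ell$ zeros (the Chern classes being integral, hence real, the transgressions of the conjugate generators cancel). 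The sandwich forces equality throughout, so $E_3=E_\infty$. For the algebra statement I would lift $v_i,\eta_j,\xi_j$ to explicit $\bar\partial$-cocycles on $\zk$ --- pullbacks from $V_\Sigma$ for the $v_i$, connection-form components for the $\eta_j$ and $\xi_j$ --- verify that the defining relations of~\eqref{zkmult} hold, and conclude that the resulting algebra homomorphism $\text{\eqref{zkmult}}\to H_{\bar\partial}^{*,*}(\zk)$ realises the associated-graded isomorphism $E_\infty$, hence is an isomorphism.

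The main obstacle I expect is precisely this last stage: degeneration at $E_3$ genuinely uses the external input on the Betti numbers of $\zk$ and the Fr\"olicher sandwich rather than mere bidegree bookkeeping, and identifying the transgression with the Chern class map with the correct normalisation needs a comparison argument. A more hands-on route, should the spectral-sequence bookkeeping prove unwieldy, would be to build the algebra map $\text{\eqref{zkmult}}\to\varOmega_\C^{*,*}(\zk)$ directly from a connection on $\zk\to V_\Sigma$ and to prove it is a quasi-isomorphism by filtering both sides by base degree and comparing the induced spectral sequences.
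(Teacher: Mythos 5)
Your skeleton (Borel spectral sequence, identification of $E_2$ and of $d_2$ with the first Chern class map, vanishing of all differentials on the $\eta_j$ because $H^{0,q}_{\bar\partial}(V_\Sigma)=0$ for $q\ge1$) is sound, but the step on which everything hinges --- degeneration at $E_3$ --- rests on a dimension count that is false. You claim $\sum_{p+q=k}\dim E_3^{p,q}=b_k(\zk)$ on the grounds that the de Rham model of Theorem~\ref{cohomzpred} has ``the same Koszul shape'', with the $2\ell$ transgressions pairing up into the $\ell$ Chern classes together with $\ell$ zeros. This is not so: in the de Rham model the elements $du_1,\ldots,du_{2\ell}$ are the images of a basis of linear relations among $\mb a_1,\ldots,\mb a_m$ and form a \emph{basis} of $H^2(V_\Sigma;\C)$ (in the absence of ghost vertices), whereas in~\eqref{zkmult} only $\ell$ of the $2\ell$ exterior generators have nonzero differential. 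No change of basis converts $2\ell$ linearly independent transgressions into $\ell$ independent ones plus $\ell$ zeros, and the two complexes have different cohomology in general. Concretely, for $\mbox{\textit{C\!E}}(1,1)\cong S^3\times S^3$ the algebra~\eqref{zkmult} is $\Lambda[\xi,\eta]\otimes\C[x,y]/(x^2,y^2)$ with $d\xi=x-y$, $d\eta=0$; its cohomology (your $E_3$) has total dimension $8$ and is $1$-dimensional in total degree $2$, while $\sum_kb_k(S^3\times S^3)=4$ and $b_2=0$. So the sandwich $\dim H^{p,q}_{\bar\partial}(\zk)\le\dim E_3^{p,q}$ and $\sum_{p+q=k}h^{p,q}(\zk)\ge b_k(\zk)$ does not close up: the Fr\"olicher spectral sequence of $\zk$ genuinely fails to degenerate at $E_1$ here (it degenerates at $E_2$, and even that is a \emph{consequence} of the theorem, not an available input).

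The paper closes this gap by a different mechanism: strict formality of the base. Danilov's Hodge decomposition shows that a complete nonsingular $V_\Sigma$ (not necessarily projective) satisfies the $\partial\bar\partial$-lemma, which together with its de Rham formality makes $V_\Sigma$ strictly formal; the model of \cite[Corollary~4.66]{f-o-t08} for the Dolbeault cohomology of the total space of a holomorphic principal torus bundle over a strictly formal base then yields~\eqref{zkmult} directly, multiplicative structure included. In your language, strict formality of $V_\Sigma$ is exactly the input needed to kill the higher differentials on the higher-syzygy classes you correctly worry about; a dimension count against the de Rham cohomology of $\zk$ cannot do it. Your fallback route (an explicit algebra map built from a connection, compared via the base-degree filtration) runs into the same issue at the same point.
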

\begin{proof}
We use the notion of a minimal Dolbeault model of a complex
manifold~\cite[\S4.3]{f-o-t08}. Let $[B,d_B]$ be such a model
for~$V_\Sigma$, i.e. $[B,d_B]$ is a minimal commutative bigraded
differential algebra together with a quasi-isomorphism $f\colon
B^{*,*}\to \varOmega^{*,*}(V_\Sigma)$  (i.e. $f$ commutes with the
differentials $d_B$ and $\bar\partial$, and induces an isomorphism
in cohomology). Consider the differential bigraded algebra
\begin{equation}\label{dolbmod}
\begin{gathered}
  \bigl[\Lambda[\xi_1,\ldots,\xi_\ell,\eta_1,\ldots,\eta_\ell]\otimes
  B, d\bigr],\qquad\text{where}\\
  d|_B=d_B,\quad  d(\xi_i)=c(\xi_i)\in
  B^{1,1}= H^{1,1}_{\bar\partial}(V_\Sigma),\quad d(\eta_i)=0.
\end{gathered}
\end{equation}
By \cite[Corollary~4.66]{f-o-t08}, it gives a model for the
Dolbeault cohomology algebra of the total space $\zk$ of the
principal $T^{\ell}_\C$-bundle $\zk\to V_\Sigma$, provided that
$V_\Sigma$ is strictly formal. Recall
from~\cite[Definition~4.58]{f-o-t08} that a complex manifold $M$
is \emph{strictly formal} if there exists a differential bigraded
algebra $[Z,\delta]$ together with quasi-isomorphisms
\[
\xymatrix{
  [\varOmega^{*,*},\bar\partial]  &
  [Z,\delta] \ar[l]_{\simeq} \ar[r]^{\simeq} \ar[d]^{\simeq}&
  [\varOmega^{*},d_{\mathrm{DR}}]\\
  & [H^{*,*}_{\bar\partial}(M),0]
}
\]
linking together the de Rham algebra, the Dolbeault algebra and
the Dolbeault cohomology.

The toric manifold $V_\Sigma$ is formal in the usual (de Rham)
sense by~\cite[Corollary~7.2]{pa-ra08}. The Hodge decomposition
of~\cite[\S12]{dani78} implies that $V_\Sigma$ satisfies the
$\partial\bar\partial$-lemma~\cite[Lemma~4.24]{f-o-t08}. Therefore
$V_\Sigma$ is strictly formal by the same argument
as~\cite[Theorem~4.59]{f-o-t08}, and~\eqref{dolbmod} is a model
for its Dolbeault cohomology.

The usual formality of $V_\Sigma$ implies the existence of a
quasi-isomorphism $\phi_B\colon B\to
H_{\bar\partial}^{*,*}(V_\Sigma)$, which extends to a
quasi-isomorphism
\[
  \mbox{id}\otimes\phi_B\colon
  \bigl[\Lambda[\xi_1,\ldots,\xi_\ell,\eta_1,\ldots,\eta_\ell]
  \otimes B, d\bigr]\to
  \bigl[\Lambda[\xi_1,\ldots,\xi_\ell,\eta_1,\ldots,\eta_\ell]
  \otimes H_{\bar\partial}^{*,*}(V_\Sigma), d\bigr]
\]
by~\cite[Lemma~14.2]{f-h-t01}. Thus, the differential algebra
$\bigl[\Lambda[\xi_1,\ldots,\xi_\ell,\eta_1,\ldots,\eta_\ell]\otimes
H^{*,*}_{\bar\partial}(V_\Sigma), d\bigr]$ provides a model for
the Dolbeault cohomology of $\zk$, as claimed.
\end{proof}

\begin{remark}
If $V_\Sigma$ is projective, then it is K\"ahler; in this case the
model of Theorem~\ref{dolbzp} coincides with the model for the
Dolbeault cohomology of the total space of a holomorphic torus
principal bundle over a K\"ahler
manifold~\cite[Theorem~4.65]{f-o-t08}.
\end{remark}

The first Chern class map $c$ from Theorem~\ref{dolbzp} can be
described explicitly in terms of the map $\varPsi$ defining the
complex structure on $\zk$. We recall the map $A_\C\colon\C^m\to
N_\C$, $\mb e_i\mapsto\mb a_i$ and the Gale dual $(m-n)\times m$
matrix $\varGamma=(\gamma_{jk})$ whose rows form a basis of
relations between $\mb a_1,\ldots,\mb a_m$. By
Construction~\ref{psi}, $\Im\varPsi\subset\Ker A_\C$. Denote by
$\Ann U$ the annihilator of a linear subspace $U\subset\C^m$, i.e.
the subspace of linear functions on $\C^m$ vanishing on~$U$.

\begin{lemma}\label{mumatrix}
The first Chern class map
\[
  c\colon H^{1,0}_{\bar\partial}(T_\C^{\ell})\to H^2(V_\Sigma, \C)=
  H_{\bar\partial}^{1,1}(V_\Sigma)
\]
of the principal $T^\ell_\C$-bundle $\zk\to V_\Sigma$ is given by
the composition
\[
\begin{CD}
  \Ann\Im\varPsi/\Ann\Ker A_\C @>i>> \C^m/\Ann\Ker A_\C
  @>p>> \C^{m-k}/\Ann\Ker A_\C
\end{CD}
\]
where $i$ is the inclusion, $k$ is the number of zero vectors
among $\mb a_1,\ldots,\mb a_m$, and $p$ is the projection
forgetting the coordinates in $\C^m$ corresponding to zero
vectors. Explicitly, the value of $c$ on the generators of
$H^{1,0}_{\bar\partial}(T_\C^{\ell})$ is given by
\[
  c(\xi_j)=\mu_{j1}v_1+\cdots+\mu_{jm}v_m,\quad 1\le j\le \ell,
\]
where $M=(\mu_{jk})$ is an $\ell\times m$-matrix satisfying the
two conditions:
\begin{itemize}
\item[(a)] $\varGamma M^t\colon\C^\ell\to\C^{2\ell}$ is a
monomorphism;

\item[(b)] $M\varPsi=0$.
\end{itemize}
\end{lemma}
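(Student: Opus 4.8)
The plan is to compute the first Chern class $c$ of the holomorphic principal $T^\ell_\C$-bundle $\zk\to V_\Sigma$ of Proposition~\ref{toricfib}(b) by realising it as the bundle associated to the principal $G$-bundle $U(\sK)\to V_\Sigma$ along the quotient homomorphism $\pi\colon G\to G/C_\varPsi=T^\ell_\C$, and then combining the known Chern class of the $G$-bundle with the naturality of the Dolbeault Chern class under change of structure group.

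First I would record the Chern class of the principal $(\C^\times)^{2\ell}$-bundle $q\colon U(\sK)\to V_\Sigma$ (here $\Sigma$ is regular and $G\cong(\C^\times)^{2\ell}$ by the standing rationality assumption). Restricting the $i$th coordinate character of $(\C^\times)^m$ to $G$ gives a character of $G$ whose associated line bundle $U(\sK)\times_G\C$ is the torus-invariant divisor line bundle $\mathcal O_{V_\Sigma}(D_i)$, with Dolbeault (Atiyah) first Chern class $v_i\in H^{1,1}_{\bar\partial}(V_\Sigma)$; this is the Cox/Danilov--Jurkiewicz fact underlying the presentation~\eqref{dolbtoric} (see \cite{cox95},\cite{c-l-s11}). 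Dualising \eqref{ggrou} identifies the character lattice of $G$ with $\Z^m/A^tN^*$, hence with $\C^m/\Ann\Ker A_\C$ after tensoring with $\C$; so on invariant holomorphic $1$-forms the Dolbeault Chern class of $q$ is the map $\kappa_G\colon\C^m/\Ann\Ker A_\C\to H^{1,1}_{\bar\partial}(V_\Sigma)$ sending the class of $\mathbf e_i^*$ to $v_i$. Here $\Ann\Ker A_\C$ equals $\Im A^t_\C$ (because $A_\C$ is onto), which in turn equals $\Ker\varGamma_\C$ (because $\varGamma A^t=0$ and both spaces have dimension $n$); and since $v_i$ lies in the Stanley--Reisner ideal whenever $\mathbf a_i=\mathbf 0$, the map $\kappa_G$ factors through the coordinate projection as $\C^m/\Ann\Ker A_\C\xrightarrow{p}\C^{m-k}/\Ann\Ker A_\C=H^{1,1}_{\bar\partial}(V_\Sigma)$.

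Next I would identify $\zk\to V_\Sigma$ with the $\pi$-associated bundle. By Theorem~\ref{zkcomplex}, $\zk\cong U(\sK)/C_\varPsi$, and $T^\ell_\C=G/C_\varPsi$, so the projection $U(\sK)\to\zk$ is a morphism of bundles over $V_\Sigma$ covering $\pi$, exhibiting $\zk\to V_\Sigma$ as $U(\sK)\times_\pi T^\ell_\C$. Naturality of the Dolbeault first Chern class under this change of structure group then gives $c=\kappa_G\circ\pi^*$, where $\pi^*\colon H^{1,0}_{\bar\partial}(T^\ell_\C)\to H^{1,0}_{\bar\partial}(G)=\C^m/\Ann\Ker A_\C$ is the pullback of invariant holomorphic $1$-forms. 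Since holomorphic $1$-forms on $G$ and on the compact complex torus $T^\ell_\C$ are the duals of their Lie algebras $\mathfrak g=\Ker A_\C$ and $\mathfrak g/\Im\varPsi$, the map $\pi^*$ is the dual of the projection $\Ker A_\C\to\Ker A_\C/\Im\varPsi$, i.e. the inclusion $\Ann\Im\varPsi/\Ann\Ker A_\C\hookrightarrow\C^m/\Ann\Ker A_\C$ (the containment $\Ann\Ker A_\C\subset\Ann\Im\varPsi$ being exactly condition (b$'$) of Construction~\ref{psi}). This is precisely the map $i$, whence $c=p\circ i$. For the explicit formula, recall that $\varGamma_\C$ induces an isomorphism $\C^m/\Ann\Ker A_\C=\C^m/\Im A^t_\C\xrightarrow{\ \sim\ }\C^{2\ell}$. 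An identification $H^{1,0}_{\bar\partial}(T^\ell_\C)=\C\xi_1\oplus\cdots\oplus\C\xi_\ell$ amounts to a basis of the $\ell$-dimensional space $\Ann\Im\varPsi/\Ann\Ker A_\C$, i.e. to rows $\mu_1,\dots,\mu_\ell$ of a matrix $M$ lying in $\Ann\Im\varPsi$ (condition (b): $M\varPsi=0$) and independent modulo $\Ann\Ker A_\C=\Ker\varGamma_\C$ (condition (a): $\varGamma M^t$ injective); the dimension count then forces $\{[\mu_j]\}$ to be a basis. With $\xi_j=[\mu_j]$ one obtains $c(\xi_j)=p(i([\mu_j]))=\sum_{k=1}^m\mu_{jk}v_k$ (with $v_k=0$ for $\mathbf a_k=\mathbf 0$), which is the asserted formula.

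I expect the genuine obstacle to be the naturality step: one must fix the precise notion of Dolbeault first Chern class for a holomorphic principal bundle whose fibre is a \emph{compact} complex torus (as developed in \cite[\S4.3]{f-o-t08}), verify that it is contravariantly functorial in holomorphic homomorphisms of the structure group, and — the real subtlety — keep in mind that $\pi\colon G=(\C^\times)^{2\ell}\to T^\ell_\C$ does not respect the coordinate Hodge structure of $G$, so $\pi^*$ on $H^{1,0}_{\bar\partial}$ must be read off from the intrinsic Lie-coalgebra map rather than from the standard coordinates on $G$. If one prefers to sidestep this point, an alternative is to compute $c$ directly from the monomial transition functions of $\zk\to V_\Sigma$ relative to the charts $(\C,\C^\times)^I/C_\varPsi$ appearing in the proof of Theorem~\ref{zkcomplex}, taking a \v Cech cocycle representative and applying $\bar\partial\log$; this is longer but fully explicit.
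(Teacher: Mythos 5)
Your argument is correct and is essentially the paper's own proof: both reduce to identifying the transgression on all of $H^1(T_\C^\ell;\C)=\C^m/\Ann\Ker A_\C$ with the projection $p$, and then cutting out the holomorphic part $H^{1,0}_{\bar\partial}(T_\C^\ell)$ as $\Ann\Im\varPsi/\Ann\Ker A_\C$ via the description $T_\C^\ell=(\Ker\exp A_\C)/\exp\Im\varPsi$, after which conditions (a) and (b) on $M$ fall out by the same dimension count. The only difference is presentational — you justify the transgression step through the Cox divisor classes $v_i$ and the associated-bundle functoriality, where the paper simply asserts it — so nothing further is needed.
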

\begin{proof}
Let $A^t_\C\colon N_\C^*\to\C^m$, $\mb u\mapsto(\langle\mb a_1,\mb
u\rangle,\ldots,\langle\mb a_m,\mb u\rangle)$, be the dual map. We
have $H^1(T_\C^{\ell};\C)=\C^m/\Im A^t_\C=(\Ker A_\C)^*$ and
$H^2(V_\Sigma;\C)=\C^{m-k}/\Im A^t_\C$. The first Chern class map
$c\colon H^1(T_\C^{\ell};\C)\to H^2(V_\Sigma;\C)$ (the
transgression) is then given by $p\colon \C^m/\Im
A^t_\C\to\C^{m-k}/\Im A^t_\C$. In order to separate the
holomorphic part of $c$ we need to identify the subspace of
holomorphic differentials
$H^{1,0}_{\bar\partial}(T_\C^{\ell})\cong\C^\ell$ inside the space
of all 1-forms $H^1(T_\C^{\ell};\C)\cong\C^{2\ell}$. Since
\[
  T_\C^{\ell}=G/C_{\varPsi}=(\Ker\exp A_\C)/(\exp\Im\varPsi),
\]
holomorphic differentials on $T_\C^{\ell}$ correspond to
$\C$-linear functions on $\Ker A_\C$ which vanish on $\Im\varPsi$.
The space of functions on $\Ker A_\C$ is $\C^m/\Im
A^t_\C=\C^m/\Ann\Ker A_\C$, and the functions vanishing on
$\Im\varPsi$ form the subspace $\Ann\Im\varPsi/\Ann\Ker A_\C$.
Condition~(b) says exactly that the linear functions on $\C^m$
corresponding to the rows of $M$ vanish on $\Im\varPsi$.
Condition~(a) says that the rows of $M$ constitute a basis in the
complement of $\Ann\Ker A_\C$ in $\Ann\Im\varPsi$.
\end{proof}

It is interesting to compare Theorem~\ref{dolbzp} with the
following description of the de Rham cohomology of~$\zk$.

\begin{theorem}[{\cite[Theorem~7.36]{bu-pa02}}]\label{cohomzpred}
Let $\zk$ and $V_\Sigma$ be as in Theorem~\ref{dolbzp}. The de
Rham cohomology $H^*(\zk)$ is isomorphic to the cohomology of the
differential graded algebra
\[
  \bigl[\Lambda[u_1,\ldots,u_{m-n}]\otimes
  H^*(V_\Sigma),d\bigr],
\]
with $\deg u_j=1$, $\deg v_i=2$, and differential $d$ defined on
the generators as
\[
  dv_i=0,\quad du_j=\gamma_{j1}v_1+\cdots+\gamma_{jm}v_m,\quad
  1\le j\le m-n.
\]
\end{theorem}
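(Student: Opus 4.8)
The plan is to realise $\zk$ as the total space of a principal torus bundle over $V_\Sigma$ and then run the de Rham counterpart of the minimal-model argument that proves Theorem~\ref{dolbzp}. By Proposition~\ref{toricfib}(b), since $\Sigma$ is regular the toric manifold $V_\Sigma$ is the base of a holomorphic principal bundle with total space $\zk$ and fibre the compact complex $\ell$-torus $G/C_\varPsi$; forgetting the complex structure, $\zk\to V_\Sigma$ is a smooth principal $T^{m-n}$-bundle, classified by a map $\chi\colon V_\Sigma\to BT^{m-n}$.

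The toric manifold $V_\Sigma$ is formal in the de Rham sense by~\cite[Corollary~7.2]{pa-ra08}. For a principal $T^{k}$-bundle over a formal base the de Rham cohomology of the total space is computed by a Koszul model: the universal bundle $ET^{k}\to BT^{k}$ has model $\bigl(\Q[w_1,\dots,w_{k}]\otimes\Lambda[u_1,\dots,u_{k}],\,du_j=w_j\bigr)$, pulling back along a model of $\chi$ gives $\bigl((\text{model of }V_\Sigma)\otimes\Lambda[u_1,\dots,u_{k}],\,du_j=\chi^{*}w_j\bigr)$, and formality lets one replace the model of $V_\Sigma$ by $H^{*}(V_\Sigma)$. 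This is precisely the argument used in the proof of Theorem~\ref{dolbzp} (via~\cite[Cor.~4.66]{f-o-t08} and~\cite[Lemma~14.2]{f-h-t01}), only easier: in the de Rham world ordinary formality suffices, there is no need for strict formality or the $\partial\bar\partial$-lemma, and the fibre is the full real torus rather than a complex torus, so no $\xi,\eta$-splitting is required. Hence $H^{*}(\zk)$ is the cohomology of $\bigl[\Lambda[u_1,\dots,u_{m-n}]\otimes H^{*}(V_\Sigma),\,d\bigr]$ with $du_j=e_j\in H^{2}(V_\Sigma)$, the components of the Euler class of the bundle.

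It remains to identify the $e_j$ with $\gamma_{j1}v_1+\dots+\gamma_{jm}v_m$ and to substitute the Danilov--Jurkiewicz presentation $H^{*}(V_\Sigma)\cong\C[v_1,\dots,v_m]/(\mathcal I_{\sK}+\mathcal J_{\Sigma})$, cf.~\eqref{dolbtoric}. The structure torus is $T_\varGamma\subset\T^m$ from~\eqref{cgrou}, $\varphi\mapsto\bigl(e^{2\pi i\langle\gamma_k,\varphi\rangle}\bigr)_{k}$, and $v_i\in H^{2}(V_\Sigma)$ is the first Chern class of the line bundle over $V_\Sigma$ associated with the principal $T_\varGamma$-bundle $\zk$ and the $i$th coordinate character of $\T^m$; since that character restricts to $\prod_{l}\chi_l^{\gamma_{li}}$ on $T_\varGamma$, one obtains $v_i=\sum_{l}\gamma_{li}e'_l$, where $e'_l$ are the Euler classes in the coordinate basis of $H^{1}(T_\varGamma)$ --- the real analogue of Lemma~\ref{mumatrix}. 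Because $\varGamma A^{t}=0$ forces $\Im A^{t}=\Ker\varGamma$ while $\rank\varGamma=m-n$, the row space of $\varGamma$ is a direct complement to $\Im A^{t}$ in $\C^{m}$ (indeed $\varGamma\varGamma^{t}$ is invertible), so the classes $\sum_i\gamma_{ji}v_i$ form a basis of $H^{2}(V_\Sigma)$, and after a suitable change of basis of $H^{1}(T_\varGamma)$ the generators $u_j$ satisfy $du_j=\sum_i\gamma_{ji}v_i$. A more algebraic variant avoids Chern classes: starting from the general description $H^{*}(\zk)\cong\Tor_{\C[v_1,\dots,v_m]}(\C[\sK],\C)$ of~\cite{bu-pa00} --- the Koszul complex $\bigl[\Lambda[\bar u_1,\dots,\bar u_m]\otimes\C[\sK],\,d\bar u_i=v_i\bigr]$ --- one notes that the linear forms $\theta_k=\sum_i\langle\mb a_i,\mb u^{(k)}\rangle v_i$ generating $\mathcal J_{\Sigma}$ form a linear system of parameters, hence a regular sequence, in the Cohen--Macaulay ring $\C[\sK]$ (Reisner's criterion, $\sK=\sK_\Sigma$ being a triangulated sphere), so the Koszul subcomplex on $\tilde u_k=\sum_i\langle\mb a_i,\mb u^{(k)}\rangle\bar u_i$ collapses onto $\C[\sK]/(\theta_1,\dots,\theta_n)=H^{*}(V_\Sigma)$; completing $\{\tilde u_k\}$ to a basis of $\langle\bar u_1,\dots,\bar u_m\rangle$ by the rows of $\varGamma$ --- legitimate exactly because that row space complements $\Im A^{t}$, which is the defining property of Gale duality (Section~\ref{galediag}) --- leaves $\bigl[\Lambda[u_1,\dots,u_{m-n}]\otimes H^{*}(V_\Sigma),\,d\bigr]$ with $du_j=\sum_i\gamma_{ji}v_i$.

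The main obstacle is this last step: pinning the Euler class (equivalently, the transgression) to the Gale-dual matrix $\varGamma$. In the geometric route it is a naturality statement about first Chern classes under the exact sequence of tori~\eqref{cgrou}, entirely parallel to Lemma~\ref{mumatrix}; in the homological route it is the bookkeeping of the Koszul collapse together with the Cohen--Macaulay input. Neither is deep, but both demand care with indices. One should also dispose of the degenerate case of a ghost vertex (a zero vector $\mb a_i$): then $v_i=0$ in $H^{*}(V_\Sigma)$ and $\zk$ acquires a free circle factor, matching a generator $u$ with $du=0$, and the general case reduces to the ghost-free one.
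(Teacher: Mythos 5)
Your proposal is correct, and it is worth noting that the paper itself does not prove this statement: it simply cites \cite[Theorem~7.36]{bu-pa02} and remarks that it follows from the more general \cite[Theorem~7.7]{bu-pa02}. Your ``more algebraic variant'' is essentially that cited proof: one starts from $H^*(\zk)\cong\Tor_{\C[v_1,\ldots,v_m]}(\C[\sK],\C)$ computed by the Koszul complex $\bigl[\Lambda[\bar u_1,\ldots,\bar u_m]\otimes\C[\sK],\,d\bar u_i=v_i\bigr]$, observes that the generators of $\mathcal J_\Sigma$ are an l.s.o.p.\ and hence a regular sequence in the Cohen--Macaulay ring $\C[\sK]$ ($\sK_\Sigma$ being a triangulated sphere), and collapses the corresponding $n$ exterior generators; the completion of the basis by the rows of $\varGamma$ is legitimate exactly because $\Im\varGamma^t$ complements $\Im A^t$ in $\R^m$ (Gale duality, $\varGamma\varGamma^t$ invertible), which is the index bookkeeping you flag. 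Your geometric route --- the smooth principal $T^{m-n}$-bundle $\zk\to V_\Sigma$ of Proposition~\ref{toricfib}(b), de Rham formality of $V_\Sigma$, and the Koszul model with $du_j$ the transgressed Euler classes identified with $\sum_i\gamma_{ji}v_i$ via characters of $T_\varGamma$ --- is genuinely different from the cited argument, and is precisely the reading the paper only alludes to in the remark that the theorem ``can also be interpreted as a collapse result for the Leray--Serre spectral sequence'' of this bundle; its advantage is that it runs in exact parallel with the Dolbeault computation of Theorem~\ref{dolbzp} (with ordinary formality replacing strict formality), while the algebraic route needs no smooth or bundle structure and yields the ring structure directly from $\Tor$. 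Both identifications of the transgression with $\varGamma$ check out, and your reduction of the ghost-vertex case (a zero $\mb a_i$ contributes a row $\mb e_i^t$ to $\varGamma$, hence a generator $u_j$ with $du_j=v_i=0$, matching the circle factor of $\zk$) is consistent.
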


This follows from the more general
result~\cite[Theorem~7.7]{bu-pa02} describing the cohomology
of~$\zk$. For more information about $H^*(\zk)$ see~\cite{bu-pa02}
and~\cite[\S4]{pano08}.

There are two classical spectral sequences for the Dolbeault
cohomology. First, the \emph{Borel spectral
sequence}~\cite{bore66} of a holomorphic bundle $E\to B$ with a
compact K\"ahler fibre~$F$, which has
$E_2=H_{\bar\partial}(B)\otimes H_{\bar\partial}(F)$ and converges
to $H_{\bar\partial}(E)$. Second, the \emph{Fr\"olicher spectral
sequence}~\cite[\S3.5]{gr-ha78}, whose $E_1$-term is the Dolbeault
cohomology of a complex manifold $M$ and which converges to the de
Rham cohomology of~$M$. Theorem~\ref{dolbzp} implies a collapse
result for these spectral sequences:

\begin{corollary}\
\begin{itemize}
\item[(a)]
The Borel spectral sequence of the holomorphic principal bundle
$\zk\to V_\Sigma$ collapses at the $E_3$-term, i.e.
$E_3=E_\infty$;

\item[(b)]
the Fr\"olicher spectral sequence of $\zk$ collapses at the
$E_2$-term.
\end{itemize}
\end{corollary}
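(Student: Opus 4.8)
The plan is to deduce both collapse statements from the explicit description of $H_{\bar\partial}(\zk)$ in Theorem~\ref{dolbzp} (providing the $E$-terms), together with Theorem~\ref{cohomzpred} for part~(b). For (a), the Borel spectral sequence of the holomorphic principal bundle $\zk\to V_\Sigma$ with fibre the compact complex torus $T^\ell_\C$ has
\[
  E_2\;=\;H_{\bar\partial}(V_\Sigma)\otimes H_{\bar\partial}(T^\ell_\C)
  \;=\;H_{\bar\partial}(V_\Sigma)\otimes
  \Lambda[\xi_1,\ldots,\xi_\ell,\eta_1,\ldots,\eta_\ell].
\]
First I would check that the first (transgression) differential $d_2$ is exactly the differential $d$ of Theorem~\ref{dolbzp}: it is multiplicative, vanishes on $H_{\bar\partial}(V_\Sigma)$ and, for bidegree reasons, on the antiholomorphic generators $\eta_j$, and sends $\xi_j$ to its first Chern class $c(\xi_j)\in H^{1,1}_{\bar\partial}(V_\Sigma)$ — this is precisely how the model \eqref{zkmult} was built in the proof of Theorem~\ref{dolbzp}. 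Hence $E_3=H(E_2,d_2)$ is the cohomology of \eqref{zkmult}, which equals $H_{\bar\partial}(\zk)$. Since the Borel spectral sequence converges to $H_{\bar\partial}(\zk)$ and all terms are finite-dimensional over $\C$, one gets $\dim_\C E_3=\dim_\C H_{\bar\partial}(\zk)=\dim_\C E_\infty$, and as $E_{r+1}$ is a subquotient of $E_r$ this forces $d_r=0$ for $r\ge3$, i.e. $E_3=E_\infty$.

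For (b) I would pass to the bigraded model. The proof of Theorem~\ref{dolbzp} shows $V_\Sigma$ is strictly formal, and the same formalism upgrades \eqref{zkmult} to a model of the full Dolbeault bicomplex of $\zk$ in which the conjugate generators carry the holomorphic differential $\partial\eta_j=\overline{c(\eta_j)}\in H^{1,1}_{\bar\partial}(V_\Sigma)$ (with $c$ the map of Lemma~\ref{mumatrix}), so that $\partial+\bar\partial$ reproduces, after the obvious change of the degree-one generators, the de Rham model of Theorem~\ref{cohomzpred}. The Fr\"olicher spectral sequence of $\zk$ is then the spectral sequence of this bicomplex filtered by holomorphic degree: $E_1=H_{\bar\partial}(\zk)$ and $d_1$ is induced by $\partial$. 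Writing $E_1=H_{\bar\partial}(\zk)=C\otimes\Lambda[\eta_1,\ldots,\eta_\ell]$ with $C=H\bigl(\Lambda[\xi_1,\ldots,\xi_\ell]\otimes H^{*,*}_{\bar\partial}(V_\Sigma),\bar\partial\bigr)$, every element of the subalgebra $C$ is represented by a $\bar\partial$-cocycle not involving any $\eta_j$, hence is automatically $\partial$-closed and therefore a permanent cycle; the remaining generators $\eta_j$ satisfy $d_1\eta_j=\overline{c(\eta_j)}$, and whenever this vanishes in $E_1$ the class $\overline{c(\eta_j)}$ is $\bar\partial$-exact, so $\eta_j$ coincides in $E_\infty$ with the leading term of a genuine de Rham class and is again a permanent cycle. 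Combining this with multiplicativity of the Fr\"olicher spectral sequence and a dimension count — $\dim_\C E_2=\dim_\C H\bigl(C\otimes\Lambda[\eta],d_1\bigr)$ against $\dim_\C E_\infty=\dim_\C H^*_{dR}(\zk)=\dim_\C H\bigl(\Lambda[u_1,\ldots,u_{m-n}]\otimes H^*(V_\Sigma),d\bigr)$ from Theorem~\ref{cohomzpred}, two Koszul-type homologies of equal dimension — gives $E_2=E_\infty$.

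The main obstacle is the last step of (b): making precise that the Fr\"olicher spectral sequence of $\zk$ is actually computed by the bigraded model (this is where strict formality of $V_\Sigma$ and the compatibility of its de Rham, Dolbeault and cohomology models must be invoked, as in~\cite{f-o-t08}), and then rigorously identifying $\dim_\C E_2$ with $\dim_\C H^*_{dR}(\zk)$ — equivalently, excluding the possibility that a product of generators which do not survive to $E_2$ supports a higher differential. Part~(a), by contrast, is essentially immediate once $d_2$ has been identified with the transgression.
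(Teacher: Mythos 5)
Your proposal is correct and follows essentially the same route as the paper: part~(a) is obtained by recognising the model~\eqref{zkmult} as the $E_2$-term of the Borel spectral sequence (so that its cohomology, which is $H_{\bar\partial}(\zk)$ by Theorem~\ref{dolbzp}, is already $E_3=E_\infty$), and part~(b) by observing that the only possible nontrivial differential on the classes $\eta_1,\ldots,\eta_\ell\in E_1^{0,1}$ of the Fr\"olicher spectral sequence is $d_1$, and that $H(E_1,d_1)$ agrees with the de Rham cohomology of $\zk$ as computed in Theorem~\ref{cohomzpred}. The step you flag as the main obstacle — identifying $H(E_1,d_1)$ with $H^*_{dR}(\zk)$ — is exactly the comparison the paper invokes, stated there without further elaboration.
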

\begin{proof}
To prove~(a) we just observe that the differential
algebra~\eqref{zkmult} is the $E_2$-term of the Borel spectral
sequence, and its cohomology is the $E_3$-term.

By comparing the Dolbeault and de Rham cohomology algebras of
$\zk$ given by Theorems~\ref{dolbzp} and~\ref{cohomzpred} we
observe that the elements $\eta_1,\ldots,\eta_\ell\in E_1^{0,1}$
cannot survive in the~$E_\infty$-term of the Fr\"olicher spectral
sequence. The only possible nontrivial differential on these
elements is $d_1\colon E_1^{0,1}\to E_1^{1,1}$. By
Theorem~\ref{cohomzpred}, the cohomology algebra of $[E_1,d_1]$ is
exactly the de Rham cohomology of~$\zk$, proving~(b).
\end{proof}

Theorem~\ref{cohomzpred} can also be interpreted as a collapse
result for the Leray--Serre spectral sequence of the principal
$T^{m-n}$-bundle $\zk\to V_\Sigma$.

In order to proceed with calculation of Hodge numbers, we need the
following bounds for the dimension of $\Ker c$ in
Lemma~\ref{mumatrix}:

\begin{lemma}\label{cbounds}
Let $k$ be the number of zero vectors among $\mb a_1,\ldots,\mb
a_m$. Then
\[
  k-\ell\le\dim_\C\Ker\bigl(c\colon
  H^{1,0}_{\bar\partial}(T_\C^{\ell})\to
  H_{\bar\partial}^{1,1}(V_\Sigma)\bigr)\le{\textstyle\frac k2}.
\]
In particular, if $k\le1$ then $c$ is monomorphism.
\end{lemma}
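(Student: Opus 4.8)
The plan is to read off $\Ker c$ from the description of the first Chern class map $c$ in Lemma~\ref{mumatrix}, compute its dimension exactly, and then bound the answer. Set $Z=\{i\in[m]\colon\mb a_i=\mathbf 0\}$, so $|Z|=k$, and let $\C^Z,\C^{[m]\setminus Z}\subset\C^m$ be the corresponding coordinate subspaces. Identifying $(\C^m)^*$ with $\C^m$ via the standard basis, so that $\Ann U=U^\perp$ for the standard symmetric bilinear form, Lemma~\ref{mumatrix} exhibits $c$ as the composite
\[
  (\Im\varPsi)^\perp/(\Ker A_\C)^\perp\stackrel{i}{\hookrightarrow}\C^m/(\Ker A_\C)^\perp\stackrel{p}{\longrightarrow}\C^{[m]\setminus Z}/(\Ker A_\C)^\perp,
\]
where $i$ is the inclusion and $p$ kills the coordinates indexed by $Z$ (this makes sense because $(\Ker A_\C)^\perp=\Im A_\C^t$ already lies in $\C^{[m]\setminus Z}$, the zero vectors contributing zero coordinates). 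A functional $f\in(\Im\varPsi)^\perp$ represents a class in $\Ker c$ exactly when its $\C^{[m]\setminus Z}$-component lies in $(\Ker A_\C)^\perp$, i.e. exactly when $f\in W:=\C^Z+(\Ker A_\C)^\perp$; hence $\Ker c=\bigl(W\cap(\Im\varPsi)^\perp\bigr)/(\Ker A_\C)^\perp$.

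I would then run the routine dimension count. Since $A_\C$ is surjective, $\dim\Ker A_\C=m-n=2\ell$ and $\dim(\Ker A_\C)^\perp=n$; the sum defining $W$ is direct, so $\dim W=n+k$; and $\dim(\Im\varPsi)^\perp=m-\ell$ (recall $\varPsi$ is injective). Taking orthogonal complements, $W^\perp=\C^{[m]\setminus Z}\cap\Ker A_\C$, whence $\bigl(W+(\Im\varPsi)^\perp\bigr)^\perp=W^\perp\cap\Im\varPsi=\C^{[m]\setminus Z}\cap\Im\varPsi$ (using $\Im\varPsi\subset\Ker A_\C$, which is condition~(b') of Construction~\ref{psi}). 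Writing $d:=\dim\bigl(\Im\varPsi\cap\C^{[m]\setminus Z}\bigr)$, so that $\dim\bigl(W+(\Im\varPsi)^\perp\bigr)=m-d$, the dimension formula for $W\cap(\Im\varPsi)^\perp$ together with the quotient in the above description of $\Ker c$ gives
\[
  \dim\Ker c=k-\ell+d.
\]
The lower bound $\dim\Ker c\ge k-\ell$ is then immediate, since $d\ge0$.

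The substance of the lemma is the bound $d\le\ell-k/2$, and this is where the transversality condition behind the complex structure is used. Because $A$ is a real matrix, $\Ker A_\C$ is stable under complex conjugation, and $\C^{[m]\setminus Z}$ is a real coordinate subspace, hence also conjugation-stable; together with $\Im\varPsi\subset\Ker A_\C$ this shows that both $S:=\Im\varPsi\cap\C^{[m]\setminus Z}$ and its conjugate $\overline S=\overline{\Im\varPsi}\cap\C^{[m]\setminus Z}$ lie in $\Ker A_\C\cap\C^{[m]\setminus Z}$. The latter space is a complement of $\C^Z$ inside $\Ker A_\C$, hence has dimension $2\ell-k$. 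On the other hand $S\cap\overline S\subset\Im\varPsi\cap\overline{\Im\varPsi}=\{\mathbf 0\}$ by condition~(a') of Construction~\ref{psi}. Thus $S$ and $\overline S$ are independent subspaces of a $(2\ell-k)$-dimensional space, so $2d=\dim(S\oplus\overline S)\le2\ell-k$, i.e. $d\le\ell-k/2$. Substituting, $\dim\Ker c=k-\ell+d\le k/2$, and when $k\le1$ this forces $\dim\Ker c=0$, so $c$ is a monomorphism.

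The one genuinely delicate step is the first: transcribing the annihilator/quotient bookkeeping of Lemma~\ref{mumatrix} into honest subspaces of $\C^m$ while keeping the inclusions $\C^Z\subset\Ker A_\C$ and $\Im A_\C^t\subset\C^{[m]\setminus Z}$ straight, so that the identity $\dim\Ker c=k-\ell+d$ emerges correctly. After that the argument is just the two-line conjugation estimate above, whose whole content is that condition~(a')---no nonzero vector of $\varPsi(\mathfrak c)$ is, up to scale, real---is exactly what produces the factor $1/2$ in the upper bound.
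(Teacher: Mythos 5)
Your proof is correct, and it takes a genuinely different route from the paper's. The paper draws a commutative diagram in which the left column is the $\R$-linear isomorphism $H^{1,0}_{\bar\partial}(T^\ell_\C)\cong H^1(T^\ell_\C;\R)\cong\R^{m-n}$ (real parts of holomorphic $1$-forms), observes that under this isomorphism $\Ker c$ injects into the kernel of the coordinate projection $\R^{m-n}\to\R^{m-n-k}$, and concludes $\dim_\R\Ker c\le k$ in three lines; the lower bound is just domain minus codomain. You instead convert the annihilator bookkeeping of Lemma~\ref{mumatrix} into an exact computation, $\dim_\C\Ker c=k-\ell+d$ with $d=\dim_\C\bigl(\Im\varPsi\cap\C^{[m]\setminus Z}\bigr)$, and then prove $d\le\ell-k/2$ by noting that $\Im\varPsi\cap\C^{[m]\setminus Z}$ and its conjugate sit independently (by condition~(a')) inside the $(2\ell-k)$-dimensional space $\Ker A_\C\cap\C^{[m]\setminus Z}$. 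Both arguments ultimately rest on the same transversality condition $\varPsi(\mathfrak c)\cap\overline{\varPsi(\mathfrak c)\!}=\{\mathbf 0\}$ --- in the paper it is hidden in the injectivity of the real-part map, in yours it appears explicitly in the conjugation estimate --- but your version buys strictly more: an exact formula for $\dim\Ker c$ in terms of the single invariant $d$, from which both inequalities of the lemma drop out as the trivial bounds $0\le d\le\ell-k/2$. The dimension counts ($\dim W=n+k$ via $\Im A^t_\C\subset\C^{[m]\setminus Z}$, the identification $(W+(\Im\varPsi)^\perp)^\perp=\Im\varPsi\cap\C^{[m]\setminus Z}$, and $\Ker A_\C=\C^Z\oplus(\Ker A_\C\cap\C^{[m]\setminus Z})$) all check out.
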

\begin{proof}
Consider the diagram
\[
\begin{CD}
  \Ann\Im\varPsi/\Ann\Ker A_\C @> i>> \C^m/\Ann\Ker A_\C
  @>p>> \C^{m-k}/\Ann\Ker A_\C\\
  @VV{\cong}V @VV\mathrm{Re}V @VV\mathrm{Re}V\\
  \R^{m-n}@=\R^{m-n} @>p'>> \R^{m-n-k}.
\end{CD}
\]
The left vertical arrow is an ($\R$-linear) isomorphism, as it has
the form $H^{1,0}_{\bar\partial}(T_\C^{\ell})\to
H^1(T_\C^{\ell},\C)\to H^1(T_\C^{\ell},\R)$, and any real-valued
function on the lattice $\Gamma$ defining the torus
$T_\C^{\ell}=\C^\ell/\Gamma$ is the real part of the restriction
to $\Gamma$ of a $\C$-linear function on~$\C^\ell$.

Since the diagram above is commutative, the kernel of $c=p\circ i$
has real dimension at most~$k$, which implies the upper bound on
its complex dimension. For the lower bound, $\dim_\C\Ker c\ge\dim
H^{1,0}_{\bar\partial}(T_\C^{\ell})-\dim
H_{\bar\partial}^{1,1}(V_\Sigma) =\ell-(2\ell-k)=k-\ell$.
\end{proof}

\begin{theorem}\label{hodge}
Let $\zk$ be as in Theorem~\ref{dolbzp}, and let $k$ be the number
of zero vectors among $\mb a_1,\ldots,\mb a_m$. Then the Hodge
numbers $h^{p,q}=h^{p,q}(\zk)$ satisfy
\begin{itemize}
\item[(a)]
$\binom{k-\ell}p \le h^{p,0}\le\binom {[k/2]}p$ for $p\ge0$; in
particular, $h^{p,0}=0$ for $p>0$ if $k\le1$;
\item[(b)] $h^{0,q}=\binom\ell q$ for $q\ge0$;
\item[(c)] $h^{1,q}
  =(\ell-k)\binom\ell{q-1}+h^{1,0}\binom{\ell+1}q$ for $q\ge1$;
\item[(d)] $\frac{\ell(3\ell+1)}2-h_2(\sK)-\ell k+(\ell+1)h^{2,0}
 \le h^{2,1}\le\frac{\ell(3\ell+1)}2-\ell k+(\ell+1)h^{2,0}$.
\end{itemize}
\end{theorem}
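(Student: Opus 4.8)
The plan is to work entirely with the differential bigraded algebra model of $H_{\bar\partial}^{*,*}(\zk)$ supplied by Theorem~\ref{dolbzp}, namely $\bigl[\Lambda[\xi_1,\ldots,\xi_\ell,\eta_1,\ldots,\eta_\ell]\otimes H_{\bar\partial}^{*,*}(V_\Sigma),d\bigr]$ with $d\eta_j=dv_i=0$ and $d\xi_j=c(\xi_j)\in H_{\bar\partial}^{1,1}(V_\Sigma)$, and to read off the low-bidegree cohomology by elementary bookkeeping. The first step is that, since $d$ is a derivation vanishing on every $\eta_j$ and on the whole subalgebra $H_{\bar\partial}^{*,*}(V_\Sigma)$, this model is the tensor product of cochain complexes $\Lambda[\eta_1,\ldots,\eta_\ell]\otimes K$, where $\Lambda[\eta_*]$ carries the zero differential and sits in bidegrees $(0,t)$, and $K=\bigl[\Lambda[\xi_1,\ldots,\xi_\ell]\otimes H_{\bar\partial}^{*,*}(V_\Sigma),d\bigr]$ is the Koszul-type complex attached to the linear forms $c(\xi_1),\ldots,c(\xi_\ell)$. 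By the Künneth formula over $\C$,
\[
  h^{p,q}(\zk)=\sum_{t\ge0}\binom{\ell}{t}\dim H^{p,q-t}(K),
\]
so everything reduces to a few groups $H^{p,q-t}(K)$ in small bidegree. One records that an element of $K$ of bidegree $(a,b)$ is a combination of $\xi_S w$ with $|S|=a-b$ and $w\in H_{\bar\partial}^{b,b}(V_\Sigma)$, so $H^{a,b}(K)=0$ unless $0\le a-b\le\ell$; in particular $H^{0,0}(K)=\C$ and $H^{0,b}(K)=0$ for $b\ne0$, while $\dim H_{\bar\partial}^{1,1}(V_\Sigma)=(m-k)-n=2\ell-k$ by Lemma~\ref{mumatrix}.

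Next I would compute the pieces of $H^{*,*}(K)$ entering (a)--(d). The ``top-$\xi$'' groups $H^{p,0}(K)$ are the kernels of the first Koszul differential $\Lambda^p(\C^\ell)\to\Lambda^{p-1}(\C^\ell)\otimes H_{\bar\partial}^{1,1}(V_\Sigma)$; writing $\C^\ell=W\oplus V'$ with $W=\Ker c$ and $c|_{V'}$ injective, the differential factors as $\mathrm{id}_{\Lambda W}\otimes d_{V'}$ with $d_{V'}$ injective in positive degrees (a standard fact), so $H^{p,0}(K)=\Lambda^p W$, of dimension $\binom{\dim\Ker c}{p}$. In bidegree $(1,*)$ the complex is just $\C^\ell\stackrel{c}{\longrightarrow}H_{\bar\partial}^{1,1}(V_\Sigma)$, so $H^{1,0}(K)=\Ker c$, $H^{1,1}(K)=\mathop{\mathrm{coker}}c$ of dimension $(2\ell-k)-\rank c=\ell-k+\dim\Ker c$, and $H^{1,b}(K)=0$ for $b\ge2$. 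Finally $H^{2,1}(K)$ is the cohomology of
\[
  \Lambda^2(\C^\ell)\stackrel{\partial}{\longrightarrow}\C^\ell\otimes H_{\bar\partial}^{1,1}(V_\Sigma)\stackrel{\mu}{\longrightarrow}H_{\bar\partial}^{2,2}(V_\Sigma),
\]
where $\partial$ is Koszul (so $\Ker\partial=\Lambda^2\Ker c$ and $H^{2,0}(K)=\Lambda^2\Ker c$) and $\mu(\xi_j\otimes w)=c(\xi_j)\,w$ is multiplication in $H_{\bar\partial}^{*,*}(V_\Sigma)$; hence $\dim H^{2,1}(K)=\ell(2\ell-k)-\dim\Im\mu-\binom{\ell}{2}+\binom{\dim\Ker c}{2}$.

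Now I assemble the four cases. For (b), only the $t=q$ term contributes, $h^{0,q}(\zk)=\binom{\ell}{q}\dim H^{0,0}(K)=\binom{\ell}{q}$. For (a), only $t=0$ contributes, so $h^{p,0}(\zk)=\binom{\dim\Ker c}{p}$, and the stated bounds follow from Lemma~\ref{cbounds} ($k-\ell\le\dim\Ker c\le[k/2]$, and $\dim\Ker c=0$ if $k\le1$) together with monotonicity of $\binom{\cdot}{p}$. For (c) with $q\ge1$, only $t=q$ and $t=q-1$ contribute, $h^{1,q}(\zk)=\binom{\ell}{q}\dim\Ker c+\binom{\ell}{q-1}(\ell-k+\dim\Ker c)$; using $h^{1,0}=\dim\Ker c$ and $\binom{\ell+1}{q}=\binom{\ell}{q}+\binom{\ell}{q-1}$ this rearranges to the claimed formula. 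For (d), only $t=0$ and $t=1$ contribute, so $h^{2,1}(\zk)=\dim H^{2,1}(K)+\ell\dim H^{2,0}(K)$ with $\dim H^{2,0}(K)=\binom{\dim\Ker c}{2}=h^{2,0}$; substituting the formula for $\dim H^{2,1}(K)$ and simplifying $\ell(2\ell-k)-\binom{\ell}{2}=\tfrac{\ell(3\ell+1)}{2}-\ell k$ gives
\[
  h^{2,1}(\zk)=\tfrac{\ell(3\ell+1)}{2}-\ell k+(\ell+1)h^{2,0}-\dim\Im\mu ,
\]
and since $0\le\dim\Im\mu\le\dim H_{\bar\partial}^{2,2}(V_\Sigma)=h_2(\sK)$, the two inequalities of (d) follow.

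The hard part is precisely the term $\dim\Im\mu$ in (d): the rank of the multiplication map $\mu$ — equivalently, how the $\ell$-dimensional subspace $c(H_{\bar\partial}^{1,0}(T_\C^{\ell}))\subset H_{\bar\partial}^{1,1}(V_\Sigma)$, governed by the choice of $\varPsi$ via Lemma~\ref{mumatrix}, interacts with the ring structure of $H_{\bar\partial}^{*,*}(V_\Sigma)$ — cannot be determined in general, so only the trivial bounds $0$ and $h_2(\sK)$ are available and (d) must be an inequality rather than an identity. The remaining technical points — the lemma identifying the kernel of the first Koszul differential with $\Lambda^p\Ker c$, the vanishing ranges of $H^{a,b}(K)$, and the sign bookkeeping — are routine and will be dispatched quickly.
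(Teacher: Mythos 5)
Your proposal is correct and follows essentially the same route as the paper's proof: both work by elementary low-bidegree linear algebra in the model of Theorem~\ref{dolbzp}, using Lemma~\ref{cbounds} for (a), the identification of the degree-$(p,0)$ cocycles with $\Lambda^p\Ker c$, the count $\dim H^{1,1}_{\bar\partial}(V_\Sigma)=2\ell-k$ for (c), and the unknown rank of the multiplication map into $H^{2,2}_{\bar\partial}(V_\Sigma)$ (bounded between $0$ and $h_2(\sK)$) as the source of the inequality in (d). Your K\"unneth factorisation $\Lambda[\eta_1,\ldots,\eta_\ell]\otimes K$ is only an organisational repackaging of the paper's direct enumeration of cocycles, though a tidy one.
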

\begin{proof}
Let $A^{p,q}$ denote the bidegree $(p,q)$ component of the
differential algebra from Theorem~\ref{dolbzp}, and let
$Z^{p,q}\subset A^{p,q}$ denote the subspace of $d$-cocycles. Then
$d^{1,0}\colon A^{1,0}\to Z^{1,1}$ coincides with the map~$c$, and
the required bounds for $h^{1,0}=\Ker d^{1,0}$ are already
established in Lemma~\ref{cbounds}. Since $h^{p,0}=\dim\Ker
d^{p,0}$, and $\Ker d^{p,0}$ is the $p$th exterior power of the
space $\Ker d^{1,0}$, statement~(a) follows.

The differential is trivial on $A^{0,q}$, hence $h^{0,q}=\dim
A^{0,q}$, proving~(b).

The space $Z^{1,1}$ is spanned by the cocycles $v_i$ and
$\xi_i\eta_j$ with $\xi_i\in\Ker d^{1,0}$. Hence $\dim
Z^{1,1}=2\ell-k+h^{1,0}\ell$. Also, $\dim
d(A^{1,0})=\ell-h^{1,0}$, hence $h^{1,1}=\ell-k+h^{1,0}(\ell+1)$.
Similarly, $\dim
Z^{1,q}=(2\ell-k)\binom\ell{q-1}+h^{1,0}\binom\ell q$ (with basis
of $v_i\eta_{j_1}\cdots\eta_{j_{q-1}}$ and
$\xi_i\eta_{j_1}\cdots\eta_{j_q}$ where $\xi_i\in\Ker d^{1,0}$,
$j_1<\cdots<j_q$), and $d\colon A^{1,q-1}\to Z^{1,q}$ hits a
subspace of dimension $(\ell-h^{1,0})\binom\ell{q-1}$. This
proves~(c).

We have $A^{2,1}=V\oplus W$, where $U$ has basis of monomials
$\xi_iv_j$ and $W$ has basis of monomials $\xi_i\xi_j\eta_k$.
Therefore,
\begin{equation}\label{h21}
  h^{2,1}=\dim U-\dim dU+\dim W-\dim dW-\dim dA^{2,0}.
\end{equation}
Now $\dim U=\ell(2\ell-k)$, $0\le\dim dU\le h_2(\sK)$ (since
$dU\subset H_{\bar\partial}^{2,2}(V_\Sigma))$, $\dim W-\dim
dW=\dim\Ker d|_W=\ell h^{2,0}$, and $\dim
dA^{2,0}=\binom\ell2-h^{2,0}$. By substituting all this
into~\eqref{h21} we obtain the inequalities of~(d).
\end{proof}

\begin{remark}
At most one ghost vertex needs to be added to $\sK$ to make
$\dim\zk=m+n$ even. Since $h^{p,0}(\zk)=0$ when $k\le1$, the
manifold $\zk$ does not have holomorphic forms of any degree in
this case.

If $\zk$ is a torus (so that $\sK$ is empty), then $m=k=2\ell$,
and $h^{1,0}(\zk)=h^{0,1}(\zk)=\ell$. Otherwise
Theorem~\ref{hodge} implies that $h^{1,0}(\zk)<h^{0,1}(\zk)$, and
therefore the moment-angle manifold $\zk$ is not K\"ahler (in the
polytopal case this was observed in~\cite[Theorem~3]{meer00}).
\end{remark}

\begin{example}Let $\zk\cong S^1\times S^{2n+1}$ be a Hopf manifold of
Example~\ref{hopf}. Our rationality assumption is that $\mb
a_1\ldots,\mb a_{n+2}$ span an $n$-dimensional lattice $N$
in~$N_\R\cong\R^n$; in particular, the fan $\Sigma$ defined by the
proper subsets of $\mb a_1,\ldots,\mb a_{n+1}$ is rational. We
assume further that $\Sigma$ is regular (this is equivalent to the
condition $\mb a_1+\cdots+\mb a_{n+1}=\bf0$), so that $\Sigma$ is
a the normal fan of a Delzant $n$-dimensional simplex~$\Delta^n$.
We have $V_\Sigma=\C P^n$, and~\eqref{dolbtoric} describes its
cohomology as the quotient of $\C[v_1,\ldots,v_{n+2}]$ by the two
ideals: $\mathcal I$ generated by $v_1\cdots v_{n+1}$ and
$v_{n+2}$, and $\mathcal J$ generated by
$v_1-v_{n+1},\ldots,v_{n}-v_{n+1}$. The differential algebra of
Theorem~\ref{dolbzp} is therefore given by
$\bigl[\Lambda[\xi,\eta]\otimes\C[t]/t^{n+1},d\bigr]$, with
$dt=d\eta=0$ and $d\xi=t$ for a proper choice of~$t$. The
nontrivial cohomology classes are represented by the cocycles $1$,
$\eta$, $\xi t^n$ and $\xi\eta t^n$, which gives the following
nonzero Hodge numbers of~$\zk$:
$h^{0,0}=h^{0,1}=h^{n+1,n}=h^{n+1,n+1}=1$. Observe that the
Dolbeault cohomology and Hodge numbers do not depend on a choice
of complex structure (the map~$\varPsi$).
\end{example}

\begin{example}[Calabi--Eckmann manifold]
Let $\{\sK;\mb a_1,\ldots,\mb a_{n+2}\}$ be the data defining the
normal fan of the product $P=\Delta^p\times\Delta^q$ of two
Delzant simplices with $p+q=n$, $1\le p\le q\le n-1$. That is,
$\mb a_1,\ldots,\mb a_p,\mb a_{p+2},\ldots,\mb a_{n+1}$ is a basis
of lattice~$N$ and there are two relations $\mb a_1+\cdots+\mb
a_{p+1}=\bf0$ and $\mb a_{p+2}+\cdots+\mb a_{n+2}=\bf0$. The
corresponding toric variety $V_\Sigma$ is $\C P^p\times \C P^q$
and its cohomology ring is isomorphic to $\C[x,y]/(x^{p+1},
y^{q+1})$. The map
\[
  \varPsi\colon\C\to\C^{n+2},\quad w\mapsto
  (1,\ldots,1,\alpha w,\ldots,\alpha w),
\]
where the number of units is $p+1$ and $\alpha\in\C\setminus\R$,
satisfies the conditions of Construction~\ref{psi}. The resulting
complex structure on $\zp\cong S^{2p+1}\times S^{2q+1}$ is that of
a \emph{Calabi--Eckmann manifold}. We denote complex manifolds
obtained in this way by $\mbox{\textit{C\!E}}(p,q)$ (the complex
structure depends on the choice of $\varPsi$, but we do not
reflect this in the notation). Each manifold
$\mbox{\textit{C\!E}}(p,q)$ is the total space of a holomorphic
principal bundle over $\C P^p\times \C P^q$ with fibre the complex
1-torus~$\C/(\Z\oplus\alpha\Z)$.

Theorem~\ref{dolbzp} and Lemma~\ref{mumatrix} provide the
following description of the Dolbeault cohomology of
$\mbox{\textit{C\!E}}(p,q)$:
\[
  H^{*,*}_{\bar\partial}\bigl(\mbox{\textit{C\!E}}(p,q)\bigr)\cong
  H\bigl[\Lambda[\xi,\eta]\otimes\C[x,y]/(x^{p+1},y^{q+1}),d\bigr],
\]
where $dx=dy=d\eta=0$ and $d\xi=x-y$ for an appropriate choice of
$x,y$. We therefore obtain
\begin{equation}\label{dolbce}
  H^{*,*}_{\bar\partial}\bigl(\mbox{\textit{C\!E}}(p,q)\bigr)\cong
  \Lambda[\omega,\eta]\otimes\C[x]/(x^{p+1}),
\end{equation}
where $\omega\in
H^{q+1,q}_{\bar\partial}\bigl(\mbox{\textit{C\!E}}(p,q)\bigr)$ is
the cohomology class of the cocycle
$\xi\frac{x^{q+1}-y^{q+1}}{x-y}$. This calculation is originally
due to~\cite[\S9]{bore66}. We note that Dolbeault cohomology of a
Calabi--Eckmann manifold depends only on $p,q$ and does not depend
on the complex parameter~$\alpha$ (or the map~$\varPsi$).
\end{example}

\begin{example}
Now let $P=\Delta^1\times\Delta^1\times\Delta^2\times\Delta^2$.
Then the moment-angle manifold $\zp$ has two structures of a
product of Calabi--Eckmann manifolds, namely,
$\mbox{\textit{C\!E}}(1,1)\times\mbox{\textit{C\!E}}(2,2)$ and
$\mbox{\textit{C\!E}}(1,2)\times\mbox{\textit{C\!E}}(1,2)$. Using
isomorphism~\eqref{dolbce} we observe that these two complex
manifolds have different Hodge numbers: $h^{2,1}=1$ in the first
case, and $h^{2,1}=0$ in the second. This shows that the choice of
the map $\varPsi$ affects not only the complex structure of~$\zk$,
but also its Hodge numbers, unlike the previous examples of
complex tori, Hopf and Calabi--Eckmann manifolds. Certainly it is
not highly surprising from the complex-analytic point of view.
\end{example}

\section{Hamiltonian-minimal Lagrangian submanifolds}
In this last section we apply the accumulated knowledge on
topology of moment-angle manifolds in a somewhat different area,
Lagrangian geometry. Systems of real quadrics, which we used in
Sections~\ref{intquad} and~\ref{mampol} to define moment-angle
manifolds, also give rise to a new large family of
Hamiltonian-minimal Lagrangian submanifolds in a complex space or
more general toric varieties.

Hamiltonian minimality ($H$-minimality for short) for Lagrangian
submanifolds is a symplectic analogue of minimality in Riemannian
geometry. A Lagrangian immersion is called $H$-minimal if the
variations of its volume along all Hamiltonian vector fields are
zero. This notion was introduced in the work of
Y.-G.~Oh~\cite{oh93} in connection with the celebrated
\emph{Arnold conjecture} on the number of fixed points of a
Hamiltonian symplectomorphism. The simplest example of an
$H$-minimal Lagrangian submanifold is the coordinate
torus~\cite{oh93} $S^1_{r_1}\times\dots \times S^1_{r_m}\subset
{\mathbb C}^m$, where $S^1_{r_k}$ denotes the circle of radius
$r_k>0$ in the $k$th coordinate subspace of~$\C^m$. More examples
of $H$-minimal Lagrangian submanifolds in a complex space were
constructed in the
works~\cite{ca-ur98},~\cite{he-ro02},~\cite{an-ca11}, among
others.

In~\cite{miro04} Mironov suggested a general construction of
$H$-minimal Lagrangian immersions $N\looparrowright\C^m$ from
intersections of real quadrics. These systems of quadrics are the
same as those we used to define moment-angle manifolds, and
therefore one can apply toric methods for analysing the
topological structure of~$N$. In~\cite{mi-pa13f} an effective
criterion was obtained for $N\looparrowright\C^m$ to be an
embedding: the polytope corresponding to the intersection of
quadrics must be Delzant. As a consequence, any Delzant polytope
gives rise to an $H$-minimal Lagrangian submanifold
$N\subset\C^m$. Like in the case of moment-angle manifolds, the
topology of $N$ is quite complicated even for low-dimensional
polytopes: for example, a Delzant 5-gon gives rise to $N$ which is
the total space of a bundle over a 3-torus with fibre a surface of
genus~5. Furthermore, by combining Mironov's construction with
symplectic reduction, a new family of $H$-minimal Lagrangian
submanifolds in of toric varieties was defined in~\cite{mi-pa13u}.
This family includes many previously constructed explicit examples
in~$\C^m$ and~$\C P^{m-1}$.

\subsection{Preliminaries}
Let $(M,\omega)$ be a symplectic manifold of dimension $2n$. An
immersion $i\colon N\looparrowright M$ of an $n$-dimensional
manifold $N$ is called \emph{Lagrangian} if $i^*(\omega)=0$. If
$i$ is an embedding, then $i(N)$ is a \emph{Lagrangian
submanifold} of~$M$. A vector field $X$ on $M$ is
\emph{Hamiltonian} if the 1-form $\omega(X,\,\cdot\,)$ is exact.

Now assume that $M$ is K\"ahler, so that it has compatible
Riemannian metric and symplectic structure. A Lagrangian immersion
$i\colon N\looparrowright M$ is called \emph{Hamiltonian minimal}
(\emph{$H$-minimal}) if the variations of the volume of $i(N)$
along all Hamiltonian vector fields with compact support are zero,
that is,
\[
  \frac d{dt}\mathop{\mathrm{vol}}\bigl(i_t(N)\bigr)\big|_{t=0}=0,
\]
where $i_t(N)$ is a deformation of $i(N)$ along a Hamiltonian
vector field, $i_0(N)=i(N)$, and $\mathop{\mathrm{vol}}(i_t(N))$
is the volume of the deformed part of $i_t(N)$. An immersion $i$
is \emph{minimal} if the variations of the volume of $i(N)$ along
\emph{all} vector fields are zero.

Our basic example is $M=\C^m$ with the Hermitian metric
$2\sum_{k=1}^m d\overline{z}_k\otimes dz_k$. Its imaginary part is
the symplectic form of Example~\ref{simcm}. At the end we consider
a more general case when $M$ is a toric manifold.

\subsection{The construction}\label{construction}
We consider an intersection of quadrics similar to~\eqref{zgamma},
but in the real space:
\begin{equation}\label{rgamma}
  \mathcal R=\Bigl\{\mb u=(u_1,\ldots,u_m)\in\R^m\colon
  \sum_{k=1}^m\gamma_{jk}u_k^2=\delta_j,\quad\text{for }
  1\le j\le m-n\Bigr\}.
\end{equation}

We assume the nondegeneracy and rationality conditions on the
coefficient vectors
$\gamma_i=(\gamma_{1i},\ldots,\gamma_{m-n,i})^t\in\R^{m-n}$,
$i=1,\ldots,m$:
\begin{itemize}
\item[(a)] $\delta\in
\R_\ge\langle\gamma_1,\ldots,\gamma_m\rangle$;

\item[(b)] if $\delta\in\R_\ge\langle
\gamma_{i_1},\ldots\gamma_{i_k}\rangle$, then $k\ge m-n$;

\item[(c)]
the vectors $\gamma_1,\ldots,\gamma_m$ generate a lattice
$L\cong\Z^{m-n}$ in~$\R^{m-n}$.
\end{itemize}

These conditions guarantee that $\mathcal R$ is a smooth
$n$-dimensional submanifold in $\R^m$ (by the argument of
Proposition~\ref{zgsmooth}) and that
\[
  T_\varGamma=
  \bigl\{\bigr(e^{2\pi i\langle\gamma_1,\varphi\rangle},
  \ldots,e^{2\pi i\langle\gamma_m,\varphi\rangle}\bigl)
  \in\T^m\bigr\}
\]
is an $(m-n)$-dimensional torus subgroup in~$\T^m$. We identify
the torus $T_\varGamma$ with $\R^{m-n}/\displaystyle L^*$ and
represent its elements by $\varphi\in\R^{m-n}$. We also define
\[
  D_\varGamma=\frac12 L^*/L^*\cong(\Z_2)^{m-n}.
\]
Note that $D_\varGamma$ embeds canonically as a subgroup in
$T_\varGamma$.

Now we view the intersection $\mathcal R$ as a subset in the
intersection $\mathcal Z$ or Hermitian quadrics (or as a subset in
the whole complex space $\C^m$), and `spread' it by the action of
$T_\varGamma$, that is, consider the set of $T_\varGamma$-orbits
through $\mathcal R$. More precisely, we consider the map
\begin{align*}
  j\colon\mathcal R\times T_\varGamma &\longrightarrow \C^m,\\
  (\mb u,\varphi) &\mapsto \mb u\cdot\varphi=\bigl(u_1e^{2\pi
i\langle\gamma_1,\varphi\rangle},\ldots,u_me^{2\pi
i\langle\gamma_m,\varphi\rangle}\bigr)
\end{align*}
and observe that $j(\mathcal R\times T_\varGamma)\subset\mathcal
Z$. We let $D_\varGamma$ act on $\mathcal R_\varGamma\times
T_\varGamma$ diagonally; this action is free, since it is free on
the second factor. The quotient
\[
  N=\mathcal R\times_{D_\varGamma} T_\varGamma
\]
is an $m$-dimensional manifold.

For any $\mb u=(u_1,\ldots,u_m)\in\mathcal R$, we have the
sublattice
\[
  L_{\mb u}=\Z\langle\gamma_k\colon u_k\ne0\rangle\subset
  L=\Z\langle\gamma_1,\ldots,\gamma_m\rangle.
\]
The set of $T_\varGamma$-orbits through $\mathcal R$ is an
immersion of~$N$:

\begin{lemma}\label{immer}\
\begin{itemize}
\item[(a)] The map $j\colon\mathcal R\times
T_\varGamma \to\C^m$ induces an immersion $i\colon
N\looparrowright\C^m$.

\item[(b)] The immersion $i$ is an embedding if and only
if $L_{\mb u}=L$ for any $\mb u\in\mathcal R$.
\end{itemize}
\end{lemma}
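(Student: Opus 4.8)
The plan is to lift everything to the finite covering $\pi\colon\mathcal R\times T_\varGamma\to N$, on which $j$ is given by an explicit formula. First, the diagonal $D_\varGamma$-action on $\mathcal R\times T_\varGamma$ is free, because it is already free (by translations) on the torus factor; hence $N$ is a smooth manifold of dimension $n+(m-n)=m$ and $\pi$ is a finite covering. Next, if $\psi\in\frac12 L^*$ and $\epsilon_k=e^{2\pi i\langle\gamma_k,\psi\rangle}\in\{\pm1\}$, then
\[
  j\bigl((\epsilon_ku_k)_k,\varphi+\psi\bigr)=\bigl(\epsilon_k^2u_ke^{2\pi i\langle\gamma_k,\varphi\rangle}\bigr)_k=j(\mb u,\varphi)
\]
since $\epsilon_k^2=1$, so $j$ is $D_\varGamma$-invariant and descends to $i\colon N\to\C^m$ with $i\circ\pi=j$.

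To prove (a) it is enough to show that $dj$ is injective everywhere, and since $j(\mb u,\varphi+\varphi_0)$ is the image of $j(\mb u,\varphi)$ under the $T_\varGamma$-action on $\C^m$, it suffices to work at points $(\mb u,0)$. A one-line computation gives, for $(\mb v,\psi)\in T_{\mb u}\mathcal R\oplus\R^{m-n}$,
\[
  dj_{(\mb u,0)}(\mb v,\psi)=\bigl(v_k+2\pi i\,u_k\langle\gamma_k,\psi\rangle\bigr)_{k=1}^m\in\C^m,
\]
whose real part is $(v_k)_k$ and imaginary part is $(2\pi u_k\langle\gamma_k,\psi\rangle)_k$; this vanishes only if $\mb v=0$ and $\langle\gamma_k,\psi\rangle=0$ for every $k$ with $u_k\ne0$. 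By the argument of Proposition~\ref{zgsmooth}, the nondegeneracy conditions (a)--(b) force $\{\gamma_k\colon u_k\ne0\}$ to span $\R^{m-n}$ at each $\mb u\in\mathcal R$, so $\psi=0$; hence $i$ is an immersion.

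For (b) the key preliminary, which avoids any compactness hypothesis on $\mathcal R$, is that $j$, and therefore $i$, is \emph{proper}: if $K\subset\C^m$ is compact and $\mb z=j(\mb u,\varphi)\in K$ then $|u_k|=|z_k|$ is bounded by a constant $C$ independent of the point, so $j^{-1}(K)$ is a closed subset of the compact set $\bigl(\mathcal R\cap[-C,C]^m\bigr)\times T_\varGamma$, and $i^{-1}(K)=\pi(j^{-1}(K))$ is compact. A proper injective immersion into the locally compact Hausdorff space $\C^m$ is a closed embedding, so it only remains to decide when $i$ is injective. If $i([\mb u,\varphi])=i([\mb u',\varphi'])$, comparing moduli gives $|u_k|=|u'_k|$, and on the common support the ratio $u_k/u'_k$ is real of modulus $1$; thus, with $\psi=\varphi-\varphi'$, we get $\langle\gamma_k,\psi\rangle\in\frac12\Z$ whenever $u_k\ne0$, i.e. $2\psi\in L_{\mb u}^*$. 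When $L_{\mb u}=L$ for all $\mb u$, then $\psi\in\frac12 L^*$, its class in $T_\varGamma$ lies in $D_\varGamma$, and this class sends $(\mb u',\varphi')$ to $(\mb u,\varphi)$, so $[\mb u,\varphi]=[\mb u',\varphi']$ and $i$ is injective (this is the real analogue of the freeness criterion $L_{\mb z}=L$ in Theorem~\ref{propmmap}(c)). Conversely, if $L_{\mb u_0}\subsetneq L$ for some $\mb u_0$, then $L_{\mb u_0}$ is a proper finite-index sublattice, since it still spans $\R^{m-n}$, so $\frac12 L^*\subsetneq\frac12 L_{\mb u_0}^*$; choosing $\psi\in\frac12 L_{\mb u_0}^*\setminus\frac12 L^*$, the point $\mb u':=(e^{2\pi i\langle\gamma_k,\psi\rangle}u_{0,k})_k$ lies in $\mathcal R$ (its nonzero entries are $\pm u_{0,k}$), the two points $[\mb u_0,0]$ and $[\mb u',-\psi]$ of $N$ have the same image $\mb u_0$ under $i$, and they are distinct since the class of $\psi$ in $T_\varGamma$ is not in $D_\varGamma$. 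Hence $i$ is not injective, so not an embedding.

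The differential computation in (a) and the properness estimate are routine; the step I expect to require the most care is the last one, where the auxiliary $\psi$ must simultaneously pair in $\frac12\Z$ with every $\gamma_k$ on the support of $\mb u_0$ --- so that $\mb u'$ is a \emph{real} point lying on $\mathcal R$ --- and fail to represent an element of $D_\varGamma$ --- so that $[\mb u_0,0]$ and $[\mb u',-\psi]$ are genuinely different points of $N$. Both conditions hold precisely because the strict inclusion $L_{\mb u_0}\subsetneq L$ produces a strict inclusion $\frac12 L^*\subsetneq\frac12 L_{\mb u_0}^*$.
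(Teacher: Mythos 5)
Your proposal is correct, and its core — analysing when $j(\mb u,\varphi)=j(\mb u',\varphi')$, reducing this to $\varphi-\varphi'\in\frac12 L^*_{\mb u}$ modulo $L^*$ and comparing with $D_\varGamma=\frac12 L^*/L^*$ — is exactly the paper's argument for part (b). Where you genuinely diverge is in how the two topological claims are justified. For the immersion in (a), the paper only observes that $j$ has finite fibres and that $L_{\mb u}=L$ for generic $\mb u$; your direct computation of $dj_{(\mb u,0)}$, separating real and imaginary parts and using that $\{\gamma_k\colon u_k\ne0\}$ spans $\R^{m-n}$ by nondegeneracy, is the honest verification that the differential is injective (and is essentially the tangent-vector computation the paper postpones to the proof of Theorem~\ref{hmin}). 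For (b), the paper proves only injectivity and implicitly treats an injective immersion with the right point-set behaviour as an embedding; your properness argument (boundedness of $|u_k|=|z_k|$ over a compact set together with closedness of $\mathcal R$ in $\R^m$) supplies the missing step from injective immersion to embedding, and your explicit choice of $\psi\in\frac12 L^*_{\mb u_0}\setminus\frac12 L^*$ in the converse direction makes precise the paper's terse ``Statement (b) follows''. Both routes yield the same result; yours is more self-contained at the cost of length.
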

\begin{proof}
Take $\mb u\in\mathcal R$, $\varphi\in T_\varGamma$ and $g\in
D_\varGamma$. We have $\mb u\cdot g\in\mathcal R$, and $j(\mb
u\cdot g,g\varphi)=\mb u\cdot g^2\varphi=\mb u\cdot\varphi=j(\mb
u,\varphi)$. Hence the map $j$ is constant on
$D_\varGamma$-orbits, and therefore induces a map of the quotient
$N=(\mathcal R\times T_\varGamma)/D_\varGamma$, which we denote
by~$i$.

Assume that $j(\mb u,\varphi)=j(\mb u',\varphi')$. Then $L_{\mb
u}=L_{\mb u'}$ and
\begin{equation}\label{uu'}
  u_ke^{2\pi i\langle\gamma_k,\varphi\rangle}=u'_ke^{2\pi
  i\langle\gamma_k,\varphi'\rangle}\quad
  \text{for }k=1,\ldots,m.
\end{equation}
Since both $u_k$ and $u'_k$ are real, this implies that $e^{2\pi
i\langle\gamma_k,\varphi-\varphi'\rangle}=\pm1$ whenever
$u_k\ne0$, or, equivalently,
$\varphi-\varphi'\in\frac12\displaystyle L^*_{\mb u}/L^*$. In
other words,~\eqref{uu'} implies that $\mb u'=\mb u\cdot g$ and
$\varphi'=g\varphi$ for some $g\in\frac12{\displaystyle L^*_{\mb
u}/L^*}$. The latter is a finite group by Lemma~\ref{afree}; hence
the preimage of any point of $\C^m$ under $j$ consists of a finite
number of points. If $L_{\mb u}=L$, then $\frac12{\displaystyle
L^*_{\mb u}/L^*}=\frac12\displaystyle L^*/L^*=D_\varGamma$; hence
$(\mb u,\varphi)$ and $(\mb u',\varphi')$ represent the same point
in~$N$. Statement~(b) follows; to prove~(a), it remains to observe
that we have $L_{\mb u}=L$ for generic $\mb u$ (with all
coordinates nonzero).
\end{proof}

\begin{theorem}[{\cite[Th.~1]{miro04}}]\label{hmin}
The immersion $i\colon N\looparrowright\C^m$ is $H$-minimal
Lagrangian. Moreover, if $\sum_{k=1}^m\gamma_k=0$, then $i$ is a
minimal Lagrangian immersion.
\end{theorem}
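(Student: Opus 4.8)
The plan is to verify the Lagrangian condition first and then the $H$-minimality condition, using the explicit parametrization $j\colon\mathcal R\times T_\varGamma\to\C^m$ from Lemma~\ref{immer}. First I would compute the pullback $j^*\omega$ of the standard symplectic form $\omega=\sum_k 2\,dx_k\wedge dy_k$ (equivalently $\omega=\Im(2\sum d\bar z_k\otimes dz_k)$). Writing $z_k=u_ke^{2\pi i\langle\gamma_k,\varphi\rangle}$ with $u_k\in\R$, the differential is $dz_k=e^{2\pi i\langle\gamma_k,\varphi\rangle}\bigl(du_k+2\pi i\,u_k\,d\langle\gamma_k,\varphi\rangle\bigr)$, so $d\bar z_k\wedge dz_k=2\pi i\,(2u_k\,du_k)\wedge d\langle\gamma_k,\varphi\rangle= 2\pi i\,d(u_k^2)\wedge d\langle\gamma_k,\varphi\rangle$. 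Summing over $k$ and using the defining equations $\sum_k\gamma_{jk}u_k^2=\delta_j$ of $\mathcal R$ — which say precisely that $\sum_k u_k^2\,\gamma_k$ is the \emph{constant} vector $\delta\in\R^{m-n}$ on $\mathcal R$ — one finds $\sum_k d(u_k^2)\wedge d\langle\gamma_k,\varphi\rangle = d\bigl\langle\sum_k u_k^2\gamma_k,\,d\varphi\bigr\rangle$ vanishes identically when restricted to $T_\varGamma$-directions paired against $\mathcal R$-directions. Hence $j^*\omega=0$, so $i$ is a Lagrangian immersion; since $\dim N=m=\tfrac12\dim_\R\C^m$, the dimension count is automatic.

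Next I would establish $H$-minimality. The cleanest route is via the \emph{Maslov form} (mean curvature form) $\sigma_H$ of the Lagrangian immersion: a Lagrangian immersion into a K\"ahler manifold is $H$-minimal iff $\sigma_H$ is \emph{harmonic}, and in fact for Lagrangians in $\C^m$ one has the classical identity (due to Dazord, cf.\ Oh~\cite{oh93}) that $\sigma_H=d\theta$ where $e^{i\theta}$ is the phase of the complex-valued function $\Omega_0|_N$ obtained by restricting the holomorphic volume form $\Omega_0=dz_1\wedge\cdots\wedge dz_m$ to~$N$. So the plan is to compute $i^*\Omega_0$ explicitly. Using the expressions for $dz_k$ above and expanding the wedge product, the $u_k$-factors contribute a real Jacobian while the angular part contributes the overall phase factor $\exp\bigl(2\pi i\sum_{k=1}^m\langle\gamma_k,\varphi\rangle\bigr)=\exp\bigl(2\pi i\langle\sum_k\gamma_k,\varphi\rangle\bigr)$. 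Therefore the phase $\theta$ is the \emph{linear} function $\theta(\varphi)=2\pi\langle\sum_{k=1}^m\gamma_k,\varphi\rangle$ (up to the constant coming from the sign/orientation of the real Jacobian, which contributes only locally constant jumps and does not affect $d\theta$). Consequently $\sigma_H=d\theta$ is a flat $1$-form with constant coefficients in the $\varphi$-coordinates, hence parallel, hence harmonic — which gives $H$-minimality. Moreover if $\sum_{k=1}^m\gamma_k=0$ then $\theta$ is constant, so $\sigma_H=0$, i.e.\ the immersion is \emph{minimal} (it is a special Lagrangian up to phase), giving the second assertion.

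I expect the main obstacle to be the careful bookkeeping in the computation of $i^*\Omega_0$: one must check that, after passing to the quotient $N=\mathcal R\times_{D_\varGamma}T_\varGamma$, the phase function $\theta$ is well defined enough for $d\theta$ to descend (the ambiguity is valued in $\tfrac12 L^*$, so $e^{i\theta}$ itself may not descend but $d\theta$ does), and that the real Jacobian factor $\det$ of the $u$-part is nowhere zero along $N$ — this uses the nondegeneracy conditions (a),(b) exactly as in Proposition~\ref{zgsmooth}, which guarantee that at each point the gradients of the quadrics are independent, so the $n$-plane tangent to $\mathcal R$ projects isomorphically onto a coordinate subspace and the Jacobian is nonzero. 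A secondary technical point is to justify the ``$H$-minimal $\Leftrightarrow$ $\sigma_H$ harmonic'' criterion and the identification $\sigma_H=d\theta$ in the immersed (not embedded) setting with compactly supported Hamiltonian variations; this is standard after Oh~\cite{oh93} and reduces $H$-minimality to the single equation $\delta\sigma_H=0$ (co-closedness), which holds because $\sigma_H$ has constant coefficients. Everything else is a direct, if slightly lengthy, calculation.
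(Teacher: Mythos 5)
Your Lagrangian computation is essentially the one in the paper: there the mixed Hermitian products $\bigl\langle\partial\mb z/\partial x_k,\partial\mb z/\partial\varphi_j\bigr\rangle_\C$ are shown to vanish by differentiating the quadric equations, which is exactly the identity you use in the form $\sum_k d(u_k^2)\wedge\langle\gamma_k,d\varphi\rangle=0$ on $\mathcal R$; so that half matches. Where you genuinely diverge is the $H$-minimality half: the survey does not prove it at all (its proof says explicitly that only the Lagrangian assertion is verified, the rest being deferred to Mironov~\cite{miro04}), whereas you supply the Lagrangian-angle argument: $i^*(dz_1\wedge\cdots\wedge dz_m)=e^{i\theta}\mathrm{vol}$, $\sigma_H=d\theta$, and Oh's criterion. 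Your identification of the phase is correct: on the tangent space the columns coming from $T_\varGamma$ are $i$ times real vectors, so the inner determinant is $i^{m-n}$ times a real Jacobian, and up to locally constant terms $\theta=2\pi\langle\gamma_1+\cdots+\gamma_m,\varphi\rangle$; the $D_\varGamma$-ambiguity indeed only shifts $\theta$ by constants, so $d\theta$ descends. This is in substance Mironov's own route, so your proposal fills in precisely the part the survey omits, at the cost of invoking the Dazord--Oh machinery which the survey avoids by citation.

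One step needs repair: ``constant coefficients, hence parallel, hence harmonic'' is not right as stated. In the coordinates $(\mb x,\varphi)$ the induced metric is block diagonal with blocks $g_{\mathcal R}(\mb x)$ and $h_{jl}(\mb x)=4\pi^2\sum_k u_k^2\gamma_{jk}\gamma_{lk}$, both depending on $\mb x$, so the mixed Christoffel symbols are nonzero and $d\theta$ is in general not parallel for the Levi-Civita connection of the induced metric. What is true, and is all you need, is co-closedness: $\sigma_H=2\pi\sum_j s_j\,d\varphi_j$ with constant $s_j$ has only $d\varphi$-components, and every metric coefficient is independent of $\varphi$ because $T_\varGamma$ acts by isometries, so $\delta\sigma_H=-|g|^{-1/2}\,\partial_{\varphi_l}\bigl(|g|^{1/2}h^{lj}\,2\pi s_j\bigr)=0$. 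Since $\sigma_H=d\theta$ is closed, this gives harmonicity, hence $H$-minimality by Oh's criterion; and $\sum_k\gamma_k=0$ forces $\sigma_H=0$, i.e.\ minimality, as you say. (The nonvanishing of $i^*\Omega_0$ needs no separate Jacobian argument: the restriction of the holomorphic volume form to any Lagrangian plane has modulus equal to the volume form.) With that correction your outline is sound.
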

\begin{proof}
We only prove that $i$ is a Lagrangian immersion here. Let
\[
  (\mb x,\varphi)\mapsto
  \mb z(\mb x,\varphi)=\Bigl(u_1(\mb x)e^{2\pi
  i\langle\gamma_1,\varphi\rangle},\ldots,u_m(\mb x)e^{2\pi
  i\langle\gamma_m,\varphi\rangle}\Bigr)
\]
be a local coordinate system on $N=\mathcal R\times_{D_\varGamma}
T_\varGamma$, where $\mb x=(x_1,\ldots,x_n)\in\R^n$ and
$\varphi=(\varphi_1,\ldots,\varphi_{m-n})\in\R^{m-n}$. Let
$\langle\xi,\eta\rangle_\C=\sum_{i=1}^m\overline\xi_i\eta_i
=\langle\xi,\eta\rangle+i\omega(\xi,\eta)$ be the Hermitian scalar
product of $\xi,\eta\in\C^m$. Then
\[
  \Bigl\langle\frac{\partial\mb z}{\partial x_k},
  \frac{\partial\mb z}{\partial\varphi_j}\Bigr\rangle_\C=
  2\pi i\Bigr(\gamma_{j1}u_1\frac{\partial u_1}{\partial x_k}+\cdots+
  \gamma_{jm}u_m\frac{\partial u_m}{\partial x_k}\Bigl)=0
\]
where the second identity follows by differentiating the quadrics
equations~\eqref{rgamma}. Also, $\bigl\langle\frac{\partial\mb
z}{\partial x_k},\frac{\partial\mb z}{\partial
x_j}\bigr\rangle_\C\in\R$ and $\bigl\langle\frac{\partial\mb
z}{\partial\varphi_k},\frac{\partial\mb
z}{\partial\varphi_j}\bigr\rangle_\C\in\R$. It follows that
\[
  \omega\Bigl(\frac{\partial\mb z}{\partial x_k},
  \frac{\partial\mb z}{\partial\varphi_j}\Bigr)=
  \omega\Bigl(\frac{\partial\mb z}{\partial x_k},
  \frac{\partial\mb z}{\partial x_j}\Bigr)=
  \omega\Bigl(\frac{\partial\mb z}{\partial \varphi_k},
  \frac{\partial\mb z}{\partial\varphi_j}\Bigr)=0,
\]
i.e. the restriction of the symplectic form to the tangent space
of~$N$ is zero.
\end{proof}

\begin{remark}
The identity $\sum_{k=1}^m\gamma_k=0$ can not hold for a compact
$\mathcal R$ (or~$N$).
\end{remark}

We recall from Theorem~\ref{polquad} that a nondegenerate
intersection of quadrics~\eqref{zgamma} or~\eqref{rgamma} defines
a simple polyhedron~\eqref{ptope}, and $\mathcal Z$ is identified
with the moment-angle manifold~$\zp$. Now we can summarise the
results of the previous sections in the following criterion for
$i\colon N\to\C^m$ to be an embedding:

\begin{theorem}\label{nembed}
Let $\mathcal Z$ and $\mathcal R$ be the intersections of
Hermitian and real quadrics defined by~\eqref{zgamma}
and~\eqref{rgamma} respectively, and satisfying
conditions~{\rm(a)--\,(c)} above. Let $P$ be the corresponding
simple polyhedron, and $N=\mathcal R\times_{D_\varGamma}
T_\varGamma$. The following conditions are equivalent:
\begin{itemize}
\item[(a)] $i\colon N\to\C^m$ is an embedding of an
$H$-minimal Lagrangian submanifold;

\item[(b)] $L_{\mb u}=L$ for every $\mb u\in\mathcal R$;

\item[(c)] $T_\varGamma$ acts freely on the moment-angle manifold $\mathcal Z=\zp$.

\item[(d)] $P$ is a Delzant polyhedron.
\end{itemize}
\end{theorem}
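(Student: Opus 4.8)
The plan is to prove the chain of equivalences (a)$\Leftrightarrow$(b)$\Leftrightarrow$(c)$\Leftrightarrow$(d) by assembling results already established in the excerpt, with Lemma~\ref{immer} supplying the only genuinely new link. First I would observe that (a)$\Leftrightarrow$(b) is precisely Lemma~\ref{immer}(b): the immersion $i\colon N\looparrowright\C^m$ is already known to be $H$-minimal Lagrangian by Theorem~\ref{hmin}, so $i$ is an embedding of an $H$-minimal Lagrangian submanifold if and only if $i$ is injective, which by Lemma~\ref{immer}(b) happens exactly when $L_{\mb u}=L$ for all $\mb u\in\mathcal R$.

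Next, for (b)$\Leftrightarrow$(c) the key point is that the condition $L_{\mb u}=L$ is the real-quadrics analogue of the condition appearing in Lemma~\ref{afree}. Given $\mb u=(u_1,\ldots,u_m)\in\mathcal R$, write $\mb z(\mb u)=(u_1,\ldots,u_m)\in\mathcal Z=\zp$ (viewing $\mathcal R\subset\mathcal Z$), so that $\omega(\mb z(\mb u))=\{k\colon u_k=0\}$ and hence $L_{\mb z(\mb u)}=L_{\mb u}$ in the notation of Lemma~\ref{afree}. By that lemma the stabiliser of $\mb z(\mb u)$ under the $T_\varGamma$-action is $L^*_{\mb u}/L^*$, which is trivial iff $L_{\mb u}=L$. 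So I would argue: if $T_\varGamma$ acts freely on $\zp$ then in particular all stabilisers of points of $\mathcal R$ are trivial, giving (b); conversely, every $\mb z\in\zp$ lies in the same $T_\varGamma$-orbit as some point of $\mathcal R$ (indeed $\mathcal Z=j(\mathcal R\times T_\varGamma)$ since the moment-angle manifold is the union of $T_\varGamma$-orbits through the real points — this is essentially the content of the identification $\zp\cong P\times\T^m/\!\sim$ together with the fact that $\mathcal R$ surjects onto $P$), and stabilisers are constant along orbits, so (b) implies (c).

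Finally (c)$\Leftrightarrow$(d) is exactly Theorem~\ref{propmmap}(c): the action of $T_\varGamma$ on $\mu_\varGamma^{-1}(\delta)=\mathcal Z_{\varGamma,\delta}=\zp$ is free if and only if the associated polyhedron $P$ is Delzant. I should note that the rationality hypothesis~(c) on $\gamma_1,\ldots,\gamma_m$ in the statement of Theorem~\ref{nembed} is exactly what is needed to invoke Theorem~\ref{propmmap}, since $T_\varGamma$ must be a genuine torus, and by Gale duality this is equivalent to $P$ having a rational presentation, so "Delzant polyhedron" makes sense.

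The main obstacle will be step (b)$\Rightarrow$(c): one must be careful that every point of the \emph{whole} moment-angle manifold $\zp$, not just the real points $\mathcal R$, is $T_\varGamma$-equivalent to a point of $\mathcal R$, so that freeness on $\mathcal R$ propagates to freeness on all of $\zp$. This requires unwinding that $\mathcal R\to P$ is surjective (each fibre of $\mathcal Z\to P$ is a $\T^{I_{\mb x}}$-orbit meeting the real locus) and that the relevant stabiliser depends only on the set of vanishing coordinates, which in turn only depends on the image point in $P$; both facts are implicit in the constructions of Sections~\ref{intquad}--\ref{mampol} but deserve an explicit sentence. Everything else is a direct citation.
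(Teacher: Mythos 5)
Your proof follows the paper's own argument exactly: the paper's proof consists precisely of the three citations you give --- Lemma~\ref{immer} together with Theorem~\ref{hmin} for (a)$\Leftrightarrow$(b), Lemma~\ref{afree} for (b)$\Leftrightarrow$(c), and Theorem~\ref{propmmap}~(c) for (c)$\Leftrightarrow$(d) --- and your extra care over (b)$\Rightarrow$(c) fills in what the paper leaves implicit.

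One assertion in your treatment of (b)$\Rightarrow$(c) is false, however: it is not true that $\mathcal Z=j(\mathcal R\times T_\varGamma)$, i.e.\ that every point of $\zp$ is $T_\varGamma$-equivalent to a real point. The identification $\zp\cong P\times\T^m/\!\sim$ exhibits $\zp$ as the union of \emph{$\T^m$}-orbits through $\mathcal R$, and $T_\varGamma$ is only an $(m-n)$-dimensional subtorus; already for a single quadric in $\C^2$ the set $j(\mathcal R\times T_\varGamma)$ is $2$-dimensional inside $\mathcal Z\cong S^3$ (in general $\dim N=m$ while $\dim\mathcal Z=m+n$). Fortunately this claim is not needed: as you indicate at the end, by Lemma~\ref{afree} the $T_\varGamma$-stabiliser of $\mb z\in\mathcal Z$ depends only on the set $\omega(\mb z)$ of vanishing coordinates, and the sets $\omega(\mb z)$ realised on $\mathcal Z$ coincide with those realised on $\mathcal R$ (both equal $I_{\mb x}$ for $\mb x$ ranging over $P$; concretely, $(|z_1|,\ldots,|z_m|)\in\mathcal R$ has the same vanishing set as $\mb z$). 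With that substitution in place of the orbit claim, your argument is complete and agrees with the paper's.
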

\begin{proof}
Equivalence (a)$\,\Leftrightarrow\,$(b) follows from
Lemma~\ref{immer} and Theorem~\ref{hmin}. Equivalence
(b)$\,\Leftrightarrow\,$(c) is Lemma~\ref{afree}. Equivalence
(c)$\,\Leftrightarrow\,$(d) is Theorem~\ref{propmmap}~(c).
\end{proof}

Toric topology provides large families of explicitly constructed
Delzant polytopes. Basic examples include simplices and cubes in
all dimensions. It is easy to see that the Delzant condition is
preserved under several operations on polytopes, such as taking
products or cutting vertices or faces by well-chosen hyperplanes.
This is sufficient to show that many important families of
polytopes, such as \emph{associahedra} (Stasheff polytopes),
\emph{permutahedra}, and general \emph{nestohedra}, admit Delzant
realisations (see, for example,~\cite{post09} and~\cite{buch08}).

\subsection{Topology of Lagrangian submanifolds~$N$}
We start by reviewing three simple properties linking the
topological structure of $N$ to that of the intersections of
quadrics $\mathcal Z$ and~$\mathcal R$.

\begin{proposition}\label{nprop}\
\begin{itemize}
\item[(a)] The immersion of $N$ in $\C^m$ factors as
$N\looparrowright \mathcal Z\hookrightarrow\C^m$;
\item[(b)] $N$ is the total space of a bundle over the torus
$T^{m-n}$ with fibre $\mathcal R$;
\item[(c)] if $N\to\C^m$ is an embedding, then $N$ is the total space of a principal
$T^{m-n}$-bundle over the $n$-dimensional manifold $\mathcal
R/D_\varGamma$.
\end{itemize}
\end{proposition}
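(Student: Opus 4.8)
The plan is to deduce all three statements from the two descriptions of $N$ already available: the intrinsic quotient $N=\mathcal R\times_{D_\varGamma}T_\varGamma$ and the map $j\colon\mathcal R\times T_\varGamma\to\C^m$, $(\mb u,\varphi)\mapsto\mb u\cdot\varphi$, that induces~$i$.

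Part (a) is essentially immediate: one checks, exactly as in the proof of Theorem~\ref{hmin}, that $\sum_k\gamma_{jk}|u_ke^{2\pi i\langle\gamma_k,\varphi\rangle}|^2=\sum_k\gamma_{jk}u_k^2=\delta_j$, so $j(\mathcal R\times T_\varGamma)\subset\mathcal Z$; since $i$ is by definition the descent of $j$ to $N=(\mathcal R\times T_\varGamma)/D_\varGamma$, it takes values in $\mathcal Z$, and composing with $\mathcal Z\hookrightarrow\C^m$ (using the identification $\mathcal Z\cong\zp$ of Theorem~\ref{polquad}) gives the asserted factorisation, with nothing further to prove. For part (b), I would observe that the second-factor projection $\mathcal R\times T_\varGamma\to T_\varGamma$ intertwines the diagonal $D_\varGamma$-action with the translation action, hence descends to a map $N\to T_\varGamma/D_\varGamma$; since $D_\varGamma=\frac12 L^*/L^*$ acts freely on $T_\varGamma=\R^{m-n}/L^*$ with quotient $\R^{m-n}/\frac12 L^*$, again an $(m-n)$-torus, and $T_\varGamma\to T_\varGamma/D_\varGamma$ is a finite covering, the associated-bundle construction exhibits $N$ as a fibre bundle over $T^{m-n}$ with fibre~$\mathcal R$ and finite structure group~$D_\varGamma$.

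For part (c) I would use part (a) to view $N\subset\mathcal Z=\zp$ as the union of the $T_\varGamma$-orbits meeting~$\mathcal R$; this is a $T_\varGamma$-invariant subset, and by Theorem~\ref{nembed} the embedding hypothesis is precisely the condition that $T_\varGamma$ act freely on~$\zp$, hence on~$N$. Compactness of $T_\varGamma$ then makes the action proper, so $N\to N/T_\varGamma$ is a principal $T_\varGamma\cong T^{m-n}$-bundle. To identify the base I would check that the orbit map $\mathcal R\to N/T_\varGamma$ is surjective and that $\mb u,\mb u'\in\mathcal R$ lie in the same $T_\varGamma$-orbit iff $u'_k=u_ke^{2\pi i\langle\gamma_k,\varphi\rangle}$ for some~$\varphi$; reality of the coordinates forces $e^{2\pi i\langle\gamma_k,\varphi\rangle}=\pm1$ whenever $u_k\ne0$, i.e.\ $\varphi\in\frac12 L_{\mb u}^*$, and under the hypothesis $L_{\mb u}=L$ this says exactly that $\varphi$ represents an element of $D_\varGamma$ — the same computation as in Lemma~\ref{immer}. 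Hence $N/T_\varGamma=\mathcal R/D_\varGamma$; taking $\mb u=\mb u'$ in that computation (compare Lemma~\ref{afree}) shows in addition that $D_\varGamma$ acts freely on the $n$-dimensional manifold~$\mathcal R$, so the base $\mathcal R/D_\varGamma$ is an $n$-manifold, as claimed.

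The routine points I would suppress are the local triviality in (b) (a covering-space argument) and the slice-theorem input in (c) (free proper action of a compact Lie group). The only step requiring genuine care is the bookkeeping in (c): reconciling the $T_\varGamma$-action on the subset $N\subset\zp$ with the intrinsic model $\mathcal R\times_{D_\varGamma}T_\varGamma$, and confirming that the stabiliser calculations underlying Lemma~\ref{immer} and Lemma~\ref{afree} simultaneously give the freeness of the $D_\varGamma$-action on~$\mathcal R$ and the identification $N/T_\varGamma=\mathcal R/D_\varGamma$. I expect that to be the main — though still modest — obstacle.
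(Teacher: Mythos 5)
Your proposal is correct and follows essentially the same route as the paper: (a) from the observation $j(\mathcal R\times T_\varGamma)\subset\mathcal Z$, (b) from the second-factor projection of $\mathcal R\times_{D_\varGamma}T_\varGamma$ with $T_\varGamma/D_\varGamma\cong T^{m-n}$, and (c) from the freeness of the $T_\varGamma$-action on $\zp$ (Theorem~\ref{nembed}) together with the freeness of $D_\varGamma$ on~$\mathcal R$. The only cosmetic difference is in (c), where the paper reads off the principal bundle directly as the first-factor projection $\mathcal R\times_{D_\varGamma}T_\varGamma\to\mathcal R/D_\varGamma$, while you quotient the extrinsic image by $T_\varGamma$ and then identify the base via the stabiliser computation of Lemma~\ref{immer}; both rest on the same lemmas.
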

\begin{proof}
Statement (a) is clear. Since $D_\varGamma$ acts freely on
$T_\varGamma$, the projection $N=\mathcal
R\times_{D_\varGamma}T_\varGamma\to T_\varGamma/D_\varGamma$ onto
the second factor is a fibre bundle with fibre~$\mathcal R$.
Then~(b) follows from the fact that $T_\varGamma/D_\varGamma\cong
T^{m-n}$.

If $N\to\C^m$ is an embedding, then $T_\varGamma$ acts freely
on~$\mathcal Z$ by Theorem~\ref{nembed} and the action of
$D_\varGamma$ on $\mathcal R$ is also free. Therefore, the
projection $N=\mathcal R\times_{D_\varGamma}T_\varGamma\to
\mathcal R/D_\varGamma$ onto the first factor is a principal
$T_\varGamma$-bundle, which proves~(c).
\end{proof}

\begin{remark}
The quotient $\mathcal R/D_\varGamma$ is a \emph{real toric
variety}, or a \emph{small cover}, over the corresponding
polytope~$P$, see~\cite{da-ja91} and~\cite{bu-pa02}.
\end{remark}

\begin{example}[one quadric]\label{1quad}
Suppose that $\mathcal R$ is given by a single equation
\begin{equation}\label{1q}
  \gamma_1u_1^2+\cdots+\gamma_mu_m^2=\delta
\end{equation}
in $\R^m$, where $\gamma_k\in\R$. If $\mathcal R$ is compact, then
$\mathcal R\cong S^{m-1}$, and the associated polytope $P$ is an
$n$-simplex~$\varDelta^n$. In this case, $N\cong
S^{m-1}\times_{\Z_2}S^1$, where the generator of $\Z_2$ acts by
the standard free involution on $S^1$ and by a certain involution
$\tau$ on~$S^{m-1}$. The topological type of $N$ depends
on~$\tau$. Namely,
\[
  N\cong\begin{cases}S^{m-1}\times S^1&\text{if $\tau$ preserves the orientation of }S^{m-1},\\
  \mathcal K^{m}&\text{if $\tau$ reverses the orientation of }S^{m-1},\end{cases}
\]
where $\mathcal K^m$ is the \emph{$m$-dimensional Klein bottle}.

\begin{proposition}\label{1qemb}
In the case $m-n=1$ (one quadric) we obtain an $H$-minimal
Lagrangian embedding of $N\cong S^{m-1}\times_{\Z_2}S^1$ in $\C^m$
if and only if $\gamma_1=\cdots=\gamma_m$ in~\eqref{1q}. In this
case, the topological type of $N=N(m)$ depends only on the parity
of~$m$ and is given by
\begin{align*}\label{m=1}
  N(m)&\cong S^{m-1}\times S^1&&\text{if $m$ is even},\\
  N(m)&\cong\mathcal K^{m}&&\text{if $m$ is odd}.
\end{align*}
\end{proposition}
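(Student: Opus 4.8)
The plan is to apply the general criterion of Theorem~\ref{nembed}, specialised to the case $m-n=1$, and then to identify the topological type by analysing the involution~$\tau$ directly. First I would establish the embedding condition. With a single quadric, $L=\Z\langle\gamma_1,\ldots,\gamma_m\rangle\subset\R$ is generated by the real numbers $\gamma_k$ (which we may assume nonzero, by condition~(b)), and for $\mb u\in\mathcal R$ we have $L_{\mb u}=\Z\langle\gamma_k\colon u_k\ne0\rangle$. By Theorem~\ref{nembed}, the immersion $i\colon N\to\C^m$ is an embedding if and only if $L_{\mb u}=L$ for every $\mb u\in\mathcal R$. Equivalently (by Lemma~\ref{afree} and Example~\ref{cp2sq}), every single $\gamma_k$ generates the same subgroup of $\R$ as the whole set $\gamma_1,\ldots,\gamma_m$. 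Since $\mathcal R$ is compact, each coordinate hyperplane $u_k=0$ meets $\mathcal R$ (this is condition~(b): no $m-1$ of the $\gamma_k$ suffice to express $\delta$, so there is a point of $\mathcal R$ with $u_k=0$ and all other $u_j\ne 0$), so the condition must hold for each index~$k$ separately. Two nonzero reals $\gamma_k$ and $\gamma_\ell$ generate the same subgroup of $\R$ if and only if $\gamma_k=\pm\gamma_\ell$; combined with the compactness/Delzant requirement that forces all $\gamma_k$ to have the same sign (Example~\ref{mamsimplex}: $\mathcal R$ is an ellipsoid, so all $\gamma_k>0$ after rescaling~$\delta$), we conclude $\gamma_1=\cdots=\gamma_m$. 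Conversely, if all $\gamma_k$ are equal then $L_{\mb u}=L$ for every $\mb u$, so $i$ is an embedding.

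Next I would identify $N$ as a twisted product. Under $\gamma_1=\cdots=\gamma_m$ the quadric~\eqref{1q} becomes a sphere $\mathcal R\cong S^{m-1}$, the torus $T_\varGamma$ is the diagonal circle $S^1$ in $\T^m$, and $D_\varGamma=\tfrac12 L^*/L^*\cong\Z_2$ is the order-two subgroup of this diagonal circle, namely $\{(1,\ldots,1),(-1,\ldots,-1)\}$. Thus $N=\mathcal R\times_{D_\varGamma}T_\varGamma=S^{m-1}\times_{\Z_2}S^1$, where $\Z_2$ acts on the $S^1$ factor by the free antipodal involution and on $S^{m-1}$ by the involution $\tau(\mb u)=-\mb u$, i.e. the antipodal map of $S^{m-1}$. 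The bundle $N\to S^1/\Z_2=S^1$ of Proposition~\ref{nprop}(b) has fibre $S^{m-1}$ and monodromy~$\tau$, so $N$ is the mapping torus of $\tau\colon S^{m-1}\to S^{m-1}$.

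Finally I would read off the topological type from the parity of~$m$, using the general discussion already in Example~\ref{1quad}: the mapping torus of an involution of $S^{m-1}$ is $S^{m-1}\times S^1$ if the involution is orientation-preserving, and the $m$-dimensional Klein bottle $\mathcal K^m$ if it is orientation-reversing. The antipodal map $-\id$ on $S^{m-1}\subset\R^m$ is the restriction of $-\id$ on $\R^m$, which has determinant $(-1)^m$; hence it preserves the orientation of $S^{m-1}$ precisely when $m$ is even and reverses it when $m$ is odd. Therefore $N(m)\cong S^{m-1}\times S^1$ for $m$ even and $N(m)\cong\mathcal K^m$ for $m$ odd. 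The only mildly delicate point is the claim that each coordinate hyperplane actually meets the compact quadric $\mathcal R$, which is what forces the condition $\gamma_k=\pm\gamma_\ell$ to hold for \emph{all} pairs rather than merely generically; this follows from nondegeneracy condition~(b), since if $u_k=0$ had no solution on $\mathcal R$ then $\delta$ would lie in the cone spanned by the remaining $m-1$ vectors $\gamma_j$, contradicting~(b). $\square$
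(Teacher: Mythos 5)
Your overall route is the same as the paper's: apply Theorem~\ref{nembed}, reduce the embedding criterion to ``each single $\gamma_k$ generates $L$'', conclude $\gamma_1=\cdots=\gamma_m$, and then identify $N$ from the parity of the degree of the antipodal map on~$S^{m-1}$. The conclusion and the topological identification are correct. However, the justification you give for why the singleton condition is forced is wrong. You argue from points of $\mathcal R$ lying on the coordinate \emph{hyperplanes} $u_k=0$ (with all other coordinates nonzero); at such a point $L_{\mb u}=\Z\langle\gamma_j\colon j\ne k\rangle$, so this only shows that every $(m-1)$-element subset of $\{\gamma_1,\ldots,\gamma_m\}$ generates $L$ --- a strictly weaker condition that does not imply $\gamma_1=\cdots=\gamma_m$ (take $m=3$ and $(\gamma_1,\gamma_2,\gamma_3)=(1,1,2)$: every pair generates $\Z$, yet the $\gamma_k$ are not all equal). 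The constraint you actually need comes from points of $\mathcal R$ on the coordinate \emph{axes}, i.e.\ with exactly one nonzero coordinate $u_k=\sqrt{\delta/\gamma_k}$; these exist because the compact quadric is an ellipsoid (equivalently, the associated polytope is a simplex and its vertices lift to such points), and at them $L_{\mb u}=\Z\langle\gamma_k\rangle$. This is precisely the one-line argument the paper gives. Relatedly, your reading of condition~(b) is backwards: for $m-n=1$ it says $k\ge1$, i.e.\ a \emph{single} $\gamma_k$ may (and does) express $\delta$; it is compactness, not condition~(b), that forces all $\gamma_k$ to be nonzero and of the same sign. With the hyperplane argument replaced by the coordinate-axis argument, your proof is complete and coincides with the paper's.
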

\begin{proof}
Since there is $\mb u\in\mathcal R$ with only one nonzero
coordinate, Theorem~\ref{nembed} implies that $N$ embeds in $\C^m$
if only if $\gamma_i$ generates the same lattice as the whole set
$\gamma_1,\ldots,\gamma_m$ for each~$i$. Therefore,
$\gamma_1=\cdots=\gamma_m$. In this case $D_\varGamma\cong\Z_2$
acts by the standard antipodal involution on $S^{m-1}$, which
preserves orientation if $m$ is even and reverses orientation
otherwise.
\end{proof}

Both examples of $H$-minimal Lagrangian embeddings given by
Proposition~\ref{1qemb} are well known. The Klein bottle $\mathcal
K^m$ with even $m$ does not admit Lagrangian embeddings in~$\C^m$
(see~\cite{nemi09} and~\cite{shev09}).
\end{example}

\begin{example}[two quadrics]
In the case $m-n=2$, the topology of $\mathcal R$ and $N$ can be
described completely by analysing the action of the two commuting
involutions on the intersection of quadrics. We consider the
compact case here.

Using Proposition~\ref{propcf}, we write $\mathcal R$ in the form
\begin{equation}\label{2q}
\begin{aligned}
  \gamma_{11}u_1^2+\cdots+\gamma_{1m}u_m^2&=c,\\
  \gamma_{21}u_1^2+\cdots+\gamma_{2m}u_m^2&=0,
\end{aligned}
\end{equation}
where $c>0$ and $\gamma_{1i}>0$ for all $i$.

\begin{proposition}
There is a number $p$, \ $0<p<m$, such that $\gamma_{2i}>0$ for
$i=1,\ldots,p$ and $\gamma_{2i}<0$ for $i=p+1,\ldots,m$
in~\eqref{2q}, possibly after a reordering of the coordinates
$u_1,\ldots,u_m$. The corresponding manifold $\mathcal R=\mathcal
R(p,q)$, where $q=m-p$, is diffeomorphic to $S^{p-1}\times
S^{q-1}$. Its associated polytope $P$ either coincides with
$\Delta^{m-2}$ (if one of the inequalities in~\eqref{ptope} is
redundant) or is combinatorially equivalent to the product
$\Delta^{p-1}\times\Delta^{q-1}$ (if there are no redundant
inequalities).
\end{proposition}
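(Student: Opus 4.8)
The plan is to transplant the argument of Example~\ref{prodsimex} (the Hermitian two-quadrics case) to the real quadrics~\eqref{2q}. There are three things to establish: (i) each $\gamma_{2i}$ is nonzero and both signs occur, so the asserted reordering exists with $0<p<m$; (ii) $\mathcal R(p,q)\cong S^{p-1}\times S^{q-1}$; (iii) the associated polytope is as stated. Throughout I use the standing hypotheses $c>0$ and $\gamma_{1i}>0$ for all $i$ (from Proposition~\ref{propcf}), together with the nondegeneracy conditions (a), (b) on $(\varGamma,\delta)$ listed in Subsection~\ref{construction}, where $\delta=(c,0)^{t}$.

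First I would prove (i). By condition~(a), $\delta=\sum_i\lambda_i\gamma_i$ with all $\lambda_i\ge0$; comparing first coordinates gives $\sum_i\lambda_i\gamma_{1i}=c>0$, so some $\lambda_i>0$. If $\gamma_{2i}=0$ for some $i$, then $\gamma_i=(\gamma_{1i},0)^{t}$ with $\gamma_{1i}>0$, whence $\delta\in\R_\ge\langle\gamma_i\rangle$, contradicting condition~(b) (which requires at least $m-n=2$ generating vectors). Hence all $\gamma_{2i}\ne0$; and if they all shared one sign, the second-coordinate identity $\sum_i\lambda_i\gamma_{2i}=0$ could not hold with $\lambda_i\ge0$ not all zero. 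So both signs occur, and after reordering $\gamma_{2i}>0$ for $i\le p$, $\gamma_{2i}<0$ for $i>p$, with $q=m-p$ and $0<p<m$.

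Next, for (ii), I would exhibit an explicit diffeomorphism. Writing $\mathbf u=(\mathbf u',\mathbf u'')\in\R^{p}\times\R^{q}$ and $Q'(\mathbf u')=\sum_{i\le p}\gamma_{2i}u_i^{2}$, $Q''(\mathbf u'')=\sum_{i>p}(-\gamma_{2i})u_i^{2}$ (both positive definite), the second quadric of~\eqref{2q} reads $Q'(\mathbf u')=Q''(\mathbf u'')=:t$ on $\mathcal R$, and $t>0$ since $t=0$ forces $\mathbf u=0$, contradicting $\sum_i\gamma_{1i}u_i^{2}=c>0$. Then $\mathbf u'=\sqrt t\,\mathbf x$, $\mathbf u''=\sqrt t\,\mathbf y$ with $\mathbf x$ on the ellipsoid $E'=\{Q'=1\}\cong S^{p-1}$ and $\mathbf y$ on $E''=\{Q''=1\}\cong S^{q-1}$, and the first quadric becomes $t\,f(\mathbf x,\mathbf y)=c$, where $f=\sum_{i\le p}\gamma_{1i}x_i^{2}+\sum_{i>p}\gamma_{1i}y_i^{2}$ is strictly positive on the compact set $E'\times E''$. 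So $t=c/f(\mathbf x,\mathbf y)$ is uniquely determined, and $(\mathbf x,\mathbf y)\mapsto(\sqrt t\,\mathbf x,\sqrt t\,\mathbf y)$ is a diffeomorphism $E'\times E''\to\mathcal R$, with smooth inverse $\mathbf u\mapsto\bigl(\mathbf u'/\sqrt{Q'(\mathbf u')},\,\mathbf u''/\sqrt{Q''(\mathbf u'')}\bigr)$. This gives $\mathcal R(p,q)\cong S^{p-1}\times S^{q-1}$; the dimensions check, since $(p-1)+(q-1)=m-2=n$.

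Finally, for (iii): by Theorem~\ref{polquad} the associated polyhedron $P=P(A,\mathbf b)$ has $m$ inequalities, dimension $n=m-2$, and depends only on $\varGamma,\delta$; its combinatorics is encoded by which subsets $I$ satisfy $\bigcap_{i\in I}F_i\ne\varnothing$, and $\bigcap_{i\in I}F_i\ne\varnothing$ exactly when some $\mathbf u\in\mathcal R$ has $u_i=0$ for all $i\in I$. Solving~\eqref{2q} subject to $u_i=0$ $(i\in I)$ by the same elementary balancing of positive and negative terms used in (ii), one finds: when $p\ge2$ and $q\ge2$ the only \emph{minimal} empty intersections are $I=\{1,\dots,p\}$ and $I=\{p+1,\dots,m\}$, which are precisely the minimal non-faces of $\Delta^{p-1}\times\Delta^{q-1}$, so $P\cong\Delta^{p-1}\times\Delta^{q-1}$ and (by Example~\ref{prodsimex}) no inequality is redundant; when $p=1$, the constraint $u_1=0$ forces $\mathbf u=0$, so $F_1=\varnothing$ is the unique redundant facet and the remaining $n+1$ irredundant facets make $P$ the simplex $\Delta^{m-2}$ (Example~\ref{mamsimplex}), and symmetrically for $q=1$. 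I expect the main obstacle to be exactly this last bookkeeping step — checking that no empty intersection other than the two listed ones is minimal, and that precisely one facet drops out when $p$ or $q$ is $1$ — though this is essentially contained in Example~\ref{prodsimex} and the remark following Proposition~\ref{galecomb}.
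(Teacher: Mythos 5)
Your proof is correct, and for the first two assertions it follows the paper's argument: the same use of nondegeneracy condition (b) to rule out $\gamma_{2i}=0$ (a single $\gamma_i$ cannot generate a cone containing $\delta$ when $m-n=2$), the same sign analysis forcing $0<p<m$, and the same geometric picture of $\mathcal R$ as the intersection of the cone over a product of two ellipsoids with an ellipsoid — you merely make the resulting diffeomorphism $E'\times E''\to\mathcal R$ explicit via the scaling parameter $t=c/f(\mb x,\mb y)$, which the paper leaves implicit. The one place you genuinely diverge is the identification of the polytope: the paper simply invokes the combinatorial fact that a simple $n$-polytope with at most $n+2$ facets is a product of simplices (with redundancy corresponding to $p=1$ or $q=1$), whereas you rederive this from the quadrics themselves by computing which intersections $F_{i_1}\cap\cdots\cap F_{i_k}$ are empty and reading off the minimal non-faces $\{1,\ldots,p\}$ and $\{p+1,\ldots,m\}$ of $\sK_P$. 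Your route is more self-contained and has the virtue of exhibiting the face structure directly (via Proposition~\ref{easyzp}), at the cost of the bookkeeping you acknowledge; the paper's appeal to the classification is shorter but imports an external combinatorial result. Both are sound; your direct verification that every $I$ not containing one of the two distinguished index sets admits a solution with the prescribed zero coordinates is exactly the point that needs checking, and you handle it correctly.
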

\begin{proof}
We observe that $\gamma_{2i}\ne0$ for all~$i$ in~\eqref{2q}, as
$\gamma_{2i}=0$ implies that $\delta=(c\; 0)^t$ is in the cone
generated by one vector $\gamma_i$, which contradicts
Proposition~\ref{zgsmooth}~(b). By reordering the coordinates, we
can achieve that the first $p$ of $\gamma_{2i}$ are positive and
the rest are negative. Then $1<p<m$, because otherwise~\eqref{2q}
is empty. Now,~\eqref{2q} is the intersection of the cone over the
product of two ellipsoids of dimensions $p-1$ and $q-1$ (given by
the second quadric) with an $(m-1)$-dimensional ellipsoid (given
by the first quadric). Therefore, $\mathcal R(p,q)\cong
S^{p-1}\times S^{p-1}$. The statement about the polytope follows
from the combinatorial fact that a simple $n$-polytope with up to
$n+2$ facets is combinatorially equivalent to a product of
simplices; the case of one redundant inequality corresponds to
$p=1$ or $q=1$.
\end{proof}

An element $\varphi\in
D_\varGamma=\frac12L^*/L^*\cong\Z_2\times\Z_2$ acts on $\mathcal
R(p,q)$ by
\[
  (u_1,\ldots,u_m)\mapsto
  (\varepsilon_1(\varphi)u_1,\ldots,\varepsilon_m(\varphi)u_m),
\]
where $\varepsilon_k(\varphi)=e^{2\pi
i\langle\gamma_k,\varphi\rangle}=\pm1$ for $1\le k\le m$.

\begin{lemma}\label{free1}
Suppose that $D_\varGamma$ acts on $\mathcal R(p,q)$ freely and
$\varepsilon_i(\varphi)=1$ for some~$i$, $1\le i\le p$, and
$\varphi\in D_\varGamma$. Then $\varepsilon_l(\varphi)=-1$ for all
$l$ with $p+1\le l\le m$.
\end{lemma}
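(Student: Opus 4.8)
The plan is to use freeness of the $D_\varGamma$-action together with the existence of points of $\mathcal R(p,q)$ having only two nonzero coordinates. Throughout, $\varphi$ is to be understood as a nontrivial element of $D_\varGamma$, so that by hypothesis $\varphi$ has no fixed point on $\mathcal R(p,q)$; recall also that $\varepsilon_k(\varphi)=e^{2\pi i\langle\gamma_k,\varphi\rangle}\in\{\pm1\}$ for every $k$, since $2\varphi\in L^*$ and $\gamma_k\in L$.

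Fix an index $l$ with $p+1\le l\le m$. First I would produce a point $\mb u\in\mathcal R(p,q)$ whose only nonzero coordinates are $u_i$ and $u_l$. Using the normal form~\eqref{2q}, in which $\gamma_{1k}>0$ for all $k$, $c>0$, $\gamma_{2i}>0$ (because $i\le p$) and $\gamma_{2l}<0$, one solves the $2\times2$ system
\[
  \gamma_{1i}u_i^2+\gamma_{1l}u_l^2=c,\qquad
  \gamma_{2i}u_i^2+\gamma_{2l}u_l^2=0 .
\]
The second equation gives $u_l^2=(\gamma_{2i}/|\gamma_{2l}|)\,u_i^2$; substituting into the first yields $u_i^2\bigl(\gamma_{1i}+\gamma_{1l}\gamma_{2i}/|\gamma_{2l}|\bigr)=c$, and since the bracketed coefficient and $c$ are both positive this has a unique positive solution for $u_i^2$, hence for $u_l^2$. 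Taking the positive square roots and setting all other coordinates to $0$ gives the desired $\mb u\in\mathcal R(p,q)$.

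Finally, $\varphi$ fixes $\mb u$ if and only if $\varepsilon_k(\varphi)=1$ for every $k$ with $u_k\ne0$, i.e. for $k=i$ and $k=l$. By hypothesis $\varepsilon_i(\varphi)=1$; if we also had $\varepsilon_l(\varphi)=1$, then $\varphi\cdot\mb u=\mb u$, contradicting freeness of the $D_\varGamma$-action. Hence $\varepsilon_l(\varphi)=-1$, and since $l$ was an arbitrary index in $\{p+1,\dots,m\}$ the lemma follows. There is no genuine obstacle here; the only small points demanding care are the remark that $\varphi$ must be taken nontrivial and the bookkeeping of signs in~\eqref{2q} (and in the chosen ordering of coordinates) that guarantees the $2\times2$ system really does have a solution with $u_i^2,u_l^2>0$.
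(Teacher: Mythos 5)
Your proof is correct and follows essentially the same route as the paper: exhibit a point of $\mathcal R(p,q)$ whose only nonzero coordinates are $u_i$ and $u_l$ (possible because $\gamma_{2i}>0$ and $\gamma_{2l}<0$), and note that if $\varepsilon_l(\varphi)$ were $+1$ this point would be fixed by the (nontrivial) element $\varphi$, contradicting freeness. Your explicit solution of the $2\times2$ system and the remark that $\varphi$ must be nontrivial are just spelled-out versions of steps the paper leaves implicit.
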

\begin{proof}
Assume the opposite, that is, that $\varepsilon_i(\varphi)=1$ for
some $1\le i\le p$ and $\varepsilon_j(\varphi)=1$ for some $p+1\le
j\le m$. Then $\gamma_{2i}>0$ and $\gamma_{2j}<0$ in~\eqref{2q},
so we can choose $\mb u\in\mathcal R(p,q)$ whose only nonzero
coordinates are $u_i$ and~$u_j$. The element $\varphi\in
D_\varGamma$ fixes this $\mb u$, leading to a contradiction.
\end{proof}

\begin{lemma}\label{free2}
Suppose that $D_\varGamma$ acts on $\mathcal R(p,q)$ freely. Then
there exist two generating involutions $\varphi_1,\varphi_2\in
D_\varGamma\cong\Z_2\times\Z_2$ whose action on $\mathcal R(p,q)$
is described by either~{\rm(a)} or~{\rm(b)} below, possibly after
a reordering of coordinates:
\begin{itemize}
\item[(a)]
$\begin{aligned}
  \varphi_1\colon(u_1,\ldots,u_m)&\mapsto
  (u_1,\ldots,u_k,-u_{k+1},\ldots,-u_p,-u_{p+1},\ldots,-u_m),\\[-2pt]
  \varphi_2\colon(u_1,\ldots,u_m)&\mapsto
  (-u_1,\ldots,-u_k,u_{k+1},\ldots,u_p,-u_{p+1},\ldots,-u_m);
\end{aligned}$\\
\item[(b)]
$\begin{aligned}
  \varphi_1\colon(u_1,\ldots,u_m)&\mapsto
  (-u_1,\ldots,-u_p,u_{p+1},\ldots,u_{p+l},-u_{p+l+1},\ldots,-u_m),\\[-2pt]
  \varphi_2\colon(u_1,\ldots,u_m)&\mapsto
  (-u_1,\ldots,-u_p,-u_{p+1},\ldots,-u_{p+l},u_{p+l+1},\ldots,u_m);
\end{aligned}$
\end{itemize}
here $0\le k\le p$ and $0\le l\le q$.
\end{lemma}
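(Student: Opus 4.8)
The plan is to analyze the group $D_\varGamma \cong (\Z_2)^2$ acting on $\mathcal R(p,q)$ under the freeness hypothesis, and deduce the restricted structure of the three nontrivial involutions. First I would recall that each nontrivial element $\varphi \in D_\varGamma$ acts diagonally by signs $\varepsilon_1(\varphi),\dots,\varepsilon_m(\varphi) \in \{\pm1\}$, and that $\varphi$ fixes a point $\mb u \in \mathcal R(p,q)$ precisely when $u_k = 0$ for every $k$ with $\varepsilon_k(\varphi) = -1$; thus freeness says exactly that for each nontrivial $\varphi$ the sign pattern $\{k : \varepsilon_k(\varphi) = -1\}$ is \emph{not} contained in any simplex of the associated simplicial complex. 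Since $P$ is combinatorially $\Delta^{p-1}\times\Delta^{q-1}$, the simplices of $\sK_P$ are exactly the subsets $I \subset [m]$ that omit at least one index from $\{1,\dots,p\}$ and at least one index from $\{p+1,\dots,m\}$. Hence freeness of $\varphi$ is equivalent to: the sign-minus set of $\varphi$ \emph{contains} all of $\{1,\dots,p\}$ or \emph{contains} all of $\{p+1,\dots,m\}$.

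Next I would encode each involution as a pair $(S_\varphi^+, S_\varphi^-)$ where $S_\varphi^+ = \{k \le p : \varepsilon_k(\varphi) = -1\}$ and $S_\varphi^- = \{k > p : \varepsilon_k(\varphi)=-1\}$. The freeness condition becomes: for every nontrivial $\varphi$, either $S_\varphi^+ = \{1,\dots,p\}$ or $S_\varphi^- = \{p+1,\dots,m\}$. Call $\varphi$ of \emph{type P} in the first case and \emph{type Q} in the second (these may overlap). I would then do a small case analysis on how many of the three nontrivial elements $\varphi_1, \varphi_2, \varphi_3 = \varphi_1\varphi_2$ are of type P versus type Q, using the multiplicativity $S^+_{\varphi_1\varphi_2} = S^+_{\varphi_1} \bigtriangleup S^+_{\varphi_2}$ (symmetric difference) and likewise on the minus side. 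The key observation is that the three $S^+$'s are $\Z_2$-linearly related (any two determine the third by symmetric difference), and similarly the $S^-$'s; combined with the constraint that each nontrivial element has its minus-set hitting \emph{all} of one block, only a few configurations survive. Concretely: if two of them are type Q with $S^- = \{p+1,\dots,m\}$, then their product has $S^- = \varnothing$, so the product must be type P, i.e. its minus set contains $\{1,\dots,p\}$; re-labeling so this product is $\varphi_1$ and picking $\varphi_2$ to be one of the type-Q generators, one reads off form (b). Symmetrically, if two are type P, one gets form (a) after reordering within the first block. The remaining logical possibility (all three purely ``mixed'') is excluded because the product of two type-P elements that are \emph{not} also type Q is again type P, forcing at least two of type P.

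The main obstacle I anticipate is bookkeeping the reorderings cleanly: the statement allows permuting coordinates \emph{within} the two blocks $\{1,\dots,p\}$ and $\{p+1,\dots,m\}$, and one must check that after such a permutation the generators can be brought to exactly the displayed shape with the integer parameters $k \in \{0,\dots,p\}$ or $l \in \{0,\dots,q\}$ recording where the sign changes within a block. This requires observing that in, say, form (a), $\varphi_1$ has minus-set $\{k+1,\dots,p\} \cup \{p+1,\dots,m\}$ (type Q) and $\varphi_2$ has minus-set $\{1,\dots,k\} \cup \{p+1,\dots,m\}$ (also type Q), with product of type P — so (a) is the ``both generators type Q, product type P'' configuration, and (b) is the mirror ``both generators type P, product type Q'' configuration. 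One then verifies that the freeness constraints impose no condition on $k$ (resp. $l$) beyond the stated range, because any choice yields generators whose minus-sets each contain a full block. I would conclude by noting that, up to swapping the roles of the two blocks (which is a relabeling) and the automorphism of $\Z_2 \times \Z_2$ permuting the three nontrivial elements, every free $D_\varGamma$-action is of one of these two forms.
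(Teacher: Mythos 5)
Your overall strategy is the same as the paper's: a per-element freeness criterion (your derivation from $\sK_P=\partial\Delta^{p-1}*\partial\Delta^{q-1}$ is a correct substitute for the paper's Lemma~\ref{free1}), a pigeonhole argument showing that two of the three nontrivial elements of $D_\varGamma$ share a type, and then the freeness of their product to pin down the sign pattern, up to reordering within the two blocks. That skeleton is sound. However, your concrete case analysis contains two genuine slips. First, the case-to-form assignment in the middle of your argument is wrong and moreover contradicts your own (correct) identification later: if two elements are of type~Q (minus on all of $\{p+1,\ldots,m\}$), the right move is to take \emph{those two} as $\varphi_1,\varphi_2$, which after reordering the first block gives form~(a); your proposed relabelling, taking the product (which is of type~P) as $\varphi_1$ and one type-Q element as $\varphi_2$, yields a mixed pair that matches neither displayed form, and in any case it leads to~(a), not~(b). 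The paper's one-line version of this step is: reorder so $\varphi_1$ is as in~(a); then $\varphi_2$ must also be as in~(a), since otherwise $\varphi_1\varphi_2$ would have a fixed point. In your language, the symmetric difference of the two minus-sets restricted to the first block must equal all of $\{1,\ldots,p\}$, forcing the complementary pattern $\{1,\ldots,k\}$ for $\varphi_2$.

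Second, your exclusion of the ``remaining possibility'' rests on a false statement: the product of two type-P elements is \emph{not} again type P --- both have minus on the entire first block, so their product has plus there and, by freeness, must be of type~Q (this is exactly how form~(b) arises). Fortunately no such exclusion is needed at all: since each of the three nontrivial elements is of type P or type Q, two of them share a type by pigeonhole, and the two cases (a) and (b) exhaust everything. So the proof goes through once you delete the spurious third case and fix the labelling, but as written those two steps are incorrect and need to be repaired.
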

\begin{proof}
By Lemma~\ref{free1}, for each of the three nonzero elements
$\varphi\in D_\varGamma$, we have either
$\varepsilon_i(\varphi)=-1$ for $1\le i\le p$ or
$\varepsilon_i(\varphi)=-1$ for $p+1\le i\le m$. Therefore, we can
choose two different nonzero elements $\varphi_1,\varphi_2\in
D_\varGamma$ such that either $\varepsilon_i(\varphi_j)=-1$ for
$j=1,2$ and $p+1\le i\le m$, or $\varepsilon_i(\varphi_j)=-1$ for
$j=1,2$ and $1\le i\le p$. This corresponds to the cases (a) and
(b) above, respectively. In the former case, after reordering the
coordinates, we may assume that $\varphi_1$ acts as in~(a). Then
$\varphi_2$ also acts as in~(a), since otherwise the sum
$\varphi_1\cdot \varphi_2$ cannot act freely by Lemma~\ref{free1}.
The second case is treated similarly.
\end{proof}

Each of the actions of $D_\varGamma$ described in
Lemma~\ref{free2} can be realised by a particular intersection of
quadrics~\eqref{2q}. For example,
\begin{equation}\label{2qex}
\begin{aligned}
  2u_1^2+\cdots+2u_k^2+u_{k+1}^2+\cdots+u_p^2+
  u_{p+1}^2+\cdots+u_m^2&=3,\\
  u_1^2+\cdots+u_k^2+2u_{k+1}^2+\cdots+2u_p^2-
  u_{p+1}^2-\cdots-u_m^2&=0
\end{aligned}
\end{equation}
gives the first action of Lemma~\ref{free2}; the second action is
realised similarly. Note that the lattice $L$ corresponding
to~\eqref{2qex} is a sublattice of index 3 in~$\Z^2$. We can
rewrite~\eqref{2qex} as
\begin{equation}\label{2qex1}
\begin{aligned}
  u_1^2+\cdots+u_k^2&+u_{k+1}^2+\cdots+u_p^2&&=1,\\
  u_1^2+\cdots+u_k^2&&+u_{p+1}^2+\cdots+u_m^2&=2,
\end{aligned}
\end{equation}
in which case $L=\Z^2$. The action of the two involutions
$\psi_1,\psi_2\in D_\varGamma=\frac12\Z^2/\Z^2$ corresponding to
the standard basis vectors of $\frac12\Z^2$ is given by
\begin{equation}\label{2inv}
\begin{aligned}
  \psi_1\colon(u_1,\ldots,u_m)&\mapsto
  (-u_1,\ldots,-u_k,-u_{k+1},\ldots,-u_p,u_{p+1},\ldots,u_m),\\
  \psi_2\colon(u_1,\ldots,u_m)&\mapsto
  (-u_1,\ldots,-u_k,u_{k+1},\ldots,u_p,-u_{p+1},\ldots,-u_m).
\end{aligned}
\end{equation}

We denote the manifold $N$ corresponding to~\eqref{2qex1} by
$N_k(p,q)$. We have
\begin{equation}\label{nkpq}
  N_k(p,q)\cong\mathcal (S^{p-1}\times
  S^{q-1})\times_{\Z_2\times\Z_2}(S^1\times S^1),
\end{equation}
and the action of the two involutions on $S^{p-1}\times S^{q-1}$
is given by~\eqref{2inv}. Note that $\psi_1$ acts trivially on
$S^{q-1}$ and acts antipodally on $S^{p-1}$. Therefore,
\[
  N_k(p,q)\cong N(p)\times_{\Z_2}(S^{q-1}\times S^1),
\]
where $N(p)$ is the manifold from Proposition~\ref{1qemb}. If
$k=0$ then the second involution $\psi_2$ acts trivially on
$N(p)$, and $N_0(p,q)$ coincides with the product $N(p)\times
N(q)$ of the two manifolds from Example~\ref{1quad}. In general,
the projection
\[
  N_k(p,q)\to S^{q-1}\times_{\Z_2}S^1=N(q)
\]
describes $N_k(p,q)$ as the total space of a fibration over $N(q)$
with fibre~$N(p)$.

We summarise the above facts and observations in the following
topological classification result for compact $H$-minimal
Lagrangian submanifolds $N\subset\C^m$ obtained from intersections
of two quadrics.

\begin{theorem}\label{2qemb}
Let $N\to\C^m$ is the embedding of the $H$-minimal Lagrangian
submanifold corresponding to a compact intersection of two
quadrics. Then $N$ is diffeomorphic to some $N_k(p,q)$ given
by~\eqref{nkpq}, where $p+q=m$, $0<p<m$ and $0\le k\le p$.
Moreover, any such triple $(k,p,q)$ can be realised by~$N$.
\end{theorem}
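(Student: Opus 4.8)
The plan is to assemble the ingredients already prepared above. Suppose $N\to\C^m$ is an embedding arising from a compact intersection of two quadrics \eqref{2q}. By Theorem~\ref{nembed}, the embedding hypothesis is equivalent to $T_\varGamma$ acting freely on $\mathcal Z=\zp$, which by Lemma~\ref{afree} is the same as the freeness of the $D_\varGamma\cong\Z_2\times\Z_2$ action on $\mathcal R$. By the structure result for two quadrics proved just above (the description of $\mathcal R(p,q)$), after reordering the coordinates we have $\mathcal R=\mathcal R(p,q)\cong S^{p-1}\times S^{q-1}$ with $p+q=m$ and $0<p<m$, the system \eqref{2q} being normalised so that $c>0$, $\gamma_{1i}>0$ for all $i$, and $\gamma_{2i}>0$ exactly for $i\le p$. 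This reduces the first assertion to normalising the $D_\varGamma$-action.

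Next I would apply Lemma~\ref{free2}: since $D_\varGamma$ acts freely, after a further reordering of the coordinates its two generators act on $S^{p-1}\times S^{q-1}$ in one of the normal forms (a), (b) of that lemma. Form (b) is form (a) with the two factors (equivalently the pair $(p,q)$) interchanged, so it is enough to treat form (a), which depends on a single integer $k$ with $0\le k\le p$. The key observation is that the diffeomorphism type of $N=\mathcal R\times_{D_\varGamma}T_\varGamma$ depends only on the diffeomorphism type of $\mathcal R$, on the action of the finite group $D_\varGamma\cong(\Z_2)^{m-n}$ on it, and on the standard free translation action of $D_\varGamma=\tfrac12 L^*/L^*$ on $T_\varGamma\cong T^{m-n}$; rescaling the quadrics by a linear equivalence (which does not change $\mathcal R$) and choosing a convenient basis of $L$ — exactly the passage from \eqref{2qex} to \eqref{2qex1} — brings the two involutions to $\psi_1,\psi_2$ of \eqref{2inv}. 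With this normalisation $N$ is by definition the manifold $N_k(p,q)$ of \eqref{nkpq}, proving the first assertion.

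For the second assertion I would run the construction in reverse: given a triple $(k,p,q)$ with $p+q=m$, $0<p<m$, $0\le k\le p$, take the explicit system \eqref{2qex1}. One checks directly that it satisfies the nondegeneracy and rationality conditions (a)--(c) of Subsection~\ref{construction} — the right-hand side lies in the interior of the cone spanned by the columns $\gamma_i$, condition (b) holds since no single column has $\delta$ in its ray, and the $\gamma_i$ generate $\Z^2$ — so that \eqref{2qex1} defines a compact $\mathcal R(p,q)\cong S^{p-1}\times S^{q-1}$ and a torus $T_\varGamma\cong T^2$. Then one checks that the induced action of $D_\varGamma=\tfrac12\Z^2/\Z^2$, namely \eqref{2inv}, is free: a fixed point of a nonzero element of $D_\varGamma$ would have all its nonzero coordinates in an index set on which the corresponding signs are $+1$, and \eqref{2inv} together with the block pattern of \eqref{2qex1} leaves no such set available. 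Hence by Theorem~\ref{nembed} the associated immersion is an embedding, and by construction the resulting submanifold is $N_k(p,q)$.

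The main obstacle is the normalisation step in the second paragraph: one must argue carefully that replacing the given quadrics by the standard model \eqref{2qex1}--\eqref{2inv} does not alter the diffeomorphism type of $N$, i.e. that a linear equivalence of the quadrics together with a change of basis of $L$ is absorbed by an automorphism of $T_\varGamma\cong T^{m-n}$ covering the identity on $\mathcal R$, and that the remaining freedom in Lemma~\ref{free2} (the value of $k$ and the choice of which factor is $S^{p-1}$) exactly matches the parameter range in the statement. The verification of conditions (a)--(c) and of freeness for \eqref{2qex1} is routine and can be carried out by inspection.
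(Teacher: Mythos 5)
Your proposal is correct and follows essentially the same route as the paper, which "proves" the theorem by assembling exactly the ingredients you cite: the structure proposition giving $\mathcal R\cong S^{p-1}\times S^{q-1}$, Lemma~\ref{free2} to normalise the free $D_\varGamma$-action (with case (b) being case (a) with the factors swapped), and the explicit system~\eqref{2qex1} with involutions~\eqref{2inv} to realise every triple $(k,p,q)$. The normalisation point you flag as the "main obstacle" is handled correctly: $N$ is the $\mathcal R$-bundle over $T_\varGamma/D_\varGamma\cong T^{m-n}$ associated to the standard covering, so its diffeomorphism type depends only on the $D_\varGamma$-action on $\mathcal R$ up to automorphism of $D_\varGamma$, which is precisely what Lemma~\ref{free2} controls.
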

\end{example}

In the case of up to two quadrics considered above, the topology
of $\mathcal R$ is relatively simple, and in order to analyse the
topology of $N$, one only needs to describe the action of
involutions on~$\mathcal R$. When the number of quadrics is more
than two, the topology of $\mathcal R$ becomes an issue as well.

\begin{example}[three quadrics]\label{3quad}
In the case $m-n=3$, the topology of compact manifolds $\mathcal
R$ and $\mathcal Z$ was fully described
in~\cite[Theorem~2]{lope89}. Each of these manifolds is
diffeomorphic to a product of three spheres or to a connected sum
of products of spheres with two spheres in each product.

Note that, for $m-n=3$, the manifolds $\mathcal R$ (or~$\mathcal
Z$) can be distinguished topologically by looking at the planar
Gale diagrams of the associated simple polytopes~$P$ (see
Section~\ref{galediag}). This chimes with the classification of
$n$-dimensional simple polytopes with $n+3$ facets, well-known in
combinatorial geometry.

The smallest polytope with $m-n=3$ is a pentagon. It has many
Delzant realisations, for instance,
\[
  P=\bigl\{(x_1,x_2)\in\R^2\colon x_1\ge0,\;x_2\ge0,\;-x_1+2\ge0,\;-x_2+2\ge0,\;
  -x_1-x_2+3\ge0\bigr\}.
\]
In this case, $\mathcal R$ is an oriented surface of genus~5
(see~\cite[Example~6.40]{bu-pa02}), and the moment-angle manifold
$\mathcal Z$ is diffeomorphic to a connected sum of 5 copies of
$S^3\times S^4$.

We therefore obtain an $H$-minimal Lagrangian submanifold
$N\subset\C^5$, which is the total space of a bundle over $T^3$
with fibre a surface of genus~5.
\end{example}

Now assume that the polytope $P$ associated with intersection of
quadrics~\eqref{rgamma} is a polygon (i.e., $n=2$). If there are
no redundant inequalities then $P$ is an $m$-gon and $\mathcal R$
is an orientable surface $S_g$ of genus $g=1+2^{m-3}(m-4)$
by~\cite[Example~6.40]{bu-pa02}. If there are $k$ redundant
inequalities, then $P$ is an $(m-k)$-gon. In this case $\mathcal
R\cong\mathcal R'\times(S^0)^k$, where $\mathcal R'$ corresponds
to an $(m-k)$-gon without redundant inequalities. That is,
$\mathcal R$ is a disjoint union of $2^k$ surfaces of genus
$1+2^{m-k-3}(m-k-4)$.

The corresponding $H$-minimal Lagrangian submanifold
$N\subset\C^m$ is the total space of a bundle over $T^{m-2}$ with
fibre~$S_g$. This is an aspherical manifold for $m\ge4$.

\subsection{Generalisation to toric manifolds}
Consider two sets of quadrics:
\begin{align*}
  \mathcal Z_{\varGamma}&=\Bigl\{\mb z\in\C^m\colon
  \sum\nolimits_{k=1}^m\gamma_k|z_k|^2=\mb c\Bigr\},\quad \gamma_k,\mb c\in\R^{m-n};\\
  \mathcal Z_{\varDelta}&=\Bigl\{\mb z\in\C^m\colon
  \sum\nolimits_{k=1}^m\delta_k|z_k|^2=\mb d\Bigr\},\quad \delta_k,\mb
  d\in\R^{m-\ell};
\end{align*}
such that $\mathcal Z_{\varGamma}$, $\mathcal Z_{\varDelta}$ and
$\mathcal Z_{\varGamma}\cap\mathcal Z_{\varDelta}$ satisfy the
nondegeneracy and rationality conditions (a)--(c) from
Subsection~\ref{construction}. Assume also that the polyhedra
associated with $\mathcal Z_{\varGamma}$, $\mathcal Z_{\varDelta}$
and $\mathcal Z_{\varGamma}\cap\mathcal Z_{\varDelta}$ are
Delzant.

The idea is to use the first set of quadrics to produce a toric
manifold $V$ via symplectic reduction (as described in
Section~\ref{symred}), and then use the second set of quadrics to
define an $H$-minimal Lagrangian submanifold in~$V$.

\begin{construction}
Define the real intersections of quadrics $\mathcal
R_{\varGamma}$, $\mathcal R_{\varDelta}$, the tori
$T_{\varGamma}\cong\T^{m-n}$, $T_{\varDelta}\cong\T^{m-\ell}$, and
the groups $D_{\varGamma}\cong\Z_2^{m-n}$,
$D_{\varDelta}\cong\Z_2^{m-\ell}$ as before.

We consider the toric variety $V$ obtained as the symplectic
quotient of $\C^m$ by the torus corresponding to the first set of
quadrics: $V=\mathcal Z_{\varGamma}/T_{\varGamma}$. It is a
K\"ahler manifold of real dimension~$2n$. The quotient $\mathcal
R_{\varGamma}/D_{\varGamma}$ is the set of real points of $V$ (the
fixed point set of the complex conjugation, or the real toric
manifold); it has dimension~$n$. Consider the subset of $\mathcal
R_{\varGamma}/D_{\varGamma}$ defined by the second set of
quadrics:
\[
  \mathcal S=(\mathcal R_{\varGamma}\cap\mathcal
  R_{\varDelta})/D_{\varGamma},
\]
we have $\dim\mathcal S=n+\ell-m$. Finally define the
$n$-dimensional submanifold of $V$:
\[
  N=\mathcal S\times_{D_{\varDelta}}T_{\varDelta}.
\]
\end{construction}

\begin{theorem}
$N$ is an $H$-minimal Lagrangian submanifold in~$V$.
\end{theorem}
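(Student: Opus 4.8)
The plan is to deduce the statement from Mironov's theorem (Theorem~\ref{hmin}) together with the symplectic reduction machinery of Section~\ref{symred}: the submanifold $N$ is exactly the image, under the Kähler reduction $\C^m\rightsquigarrow V$, of the Mironov submanifold attached to the \emph{combined} system of quadrics $\mathcal Z_{\varGamma}\cap\mathcal Z_{\varDelta}$.

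First I would organise the combined data. The coefficient vectors of $\mathcal Z_{\varGamma}$ and $\mathcal Z_{\varDelta}$ together form a $(2m-n-\ell)\times m$ matrix, and by hypothesis the resulting intersection $\mathcal Z_{\varGamma}\cap\mathcal Z_{\varDelta}$ is a nondegenerate rational intersection of quadrics whose associated polyhedron is Delzant. Applying the construction of Subsection~\ref{construction} to this combined system produces the real intersection $\mathcal R_{\varGamma}\cap\mathcal R_{\varDelta}$, the torus $T_{\varGamma,\varDelta}=T_{\varGamma}\cdot T_{\varDelta}\subset\T^m$ (here $T_{\varGamma}\cap T_{\varDelta}$ is finite by the dimension count $2m-n-\ell\le m$), a finite group $D_{\varGamma,\varDelta}$, and a submanifold
\[
  \widetilde N=(\mathcal R_{\varGamma}\cap\mathcal R_{\varDelta})\times_{D_{\varGamma,\varDelta}}T_{\varGamma,\varDelta}\subset\C^m .
\]
By Theorem~\ref{hmin} the map $\widetilde N\to\C^m$ is an $H$-minimal Lagrangian immersion, and by Theorem~\ref{nembed} (using the Delzant hypothesis) it is an embedding; moreover $\widetilde N\subset\mathcal Z_{\varGamma}\cap\mathcal Z_{\varDelta}\subset\mathcal Z_{\varGamma}$ and $\widetilde N$ is invariant under the $T_{\varGamma}$-action. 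A first, routine step is then the bookkeeping identification: since $T_{\varGamma}$ acts only on the $T_{\varGamma,\varDelta}$-factor and $T_{\varGamma,\varDelta}/T_{\varGamma}$ is a torus of dimension $m-\ell$, one obtains
\[
  \widetilde N/T_{\varGamma}\;\cong\;(\mathcal R_{\varGamma}\cap\mathcal R_{\varDelta})\times_{D_{\varGamma,\varDelta}}T_{\varDelta}\;\cong\;\mathcal S\times_{D_{\varDelta}}T_{\varDelta}=N
\]
inside $V=\mathcal Z_{\varGamma}/T_{\varGamma}$, after matching $D_{\varGamma,\varDelta}$ with the extension of $D_{\varDelta}$ by $D_{\varGamma}$ (absorbing the finite group $T_{\varGamma}\cap T_{\varDelta}$).

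Next I would prove the general reduction principle that does the real work: if a compact Lie group $G$ acts on a Kähler manifold $M$ by holomorphic isometries with moment map $\mu$, if $\mathbf c$ is a regular value at which $G$ acts freely on $\mu^{-1}(\mathbf c)$, and if $L\subset\mu^{-1}(\mathbf c)$ is a $G$-invariant $H$-minimal Lagrangian submanifold of $M$, then $L/G$ is an $H$-minimal Lagrangian submanifold of the Kähler quotient $M_{\mathbf c}=\mu^{-1}(\mathbf c)/G$. Applying this to $M=\C^m$, $G=T_{\varGamma}$, $\mathbf c$ the value defining $\mathcal Z_{\varGamma}$, and $L=\widetilde N$ gives the theorem. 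The Lagrangian part is immediate: with $\iota\colon\mu^{-1}(\mathbf c)\hookrightarrow M$ and $p\colon\mu^{-1}(\mathbf c)\to M_{\mathbf c}$ satisfying $p^*\omega'=\iota^*\omega$, any two tangent vectors to $L/G$ lift along the submersion $p|_L\colon L\to L/G$ to tangent vectors of $L$, on which $\omega$ vanishes since $L$ is Lagrangian in $M$; hence $\omega'|_{L/G}=0$, and the count $\dim L/G=m-(m-n)=n=\tfrac12\dim V$ shows it is Lagrangian. For $H$-minimality one uses that every compactly supported Hamiltonian vector field on $M_{\mathbf c}$ lifts to a $G$-invariant Hamiltonian vector field defined near $\mu^{-1}(\mathbf c)$ in $M$ whose flow preserves $\mu^{-1}(\mathbf c)$; a Hamiltonian deformation $N_t$ of $L/G$ is then covered by the corresponding deformation $\widetilde N_t$ of $L$, and one relates $\tfrac{d}{dt}\mathrm{vol}(N_t)|_{t=0}$ to $\tfrac{d}{dt}\mathrm{vol}(\widetilde N_t)|_{t=0}=0$.

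The main obstacle is exactly this last comparison of volumes. Because $L\to L/G$ is a $G$-bundle whose fibre volumes (for the reduced metric) vary over the base, one does \emph{not} have $\mathrm{vol}(\widetilde N_t)=\mathrm{const}\cdot\mathrm{vol}(N_t)$, so vanishing of the first variation downstairs is not a purely formal consequence of vanishing upstairs. The way around it is to exploit that $\widetilde N$ is $H$-minimal against \emph{all} Hamiltonian deformations of $\C^m$, in particular the $G$-invariant ones: restricting the first-variation formula to $G$-invariant Hamiltonian fields and using that $G$ acts by isometries, the variation of $\mathrm{vol}(\widetilde N)$ splits into a term controlled by the variation of the fibrewise volume and a term controlled by the variation of $\mathrm{vol}(N)$, and the mean-curvature characterisation of $H$-minimality forces the latter to vanish. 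Equivalently, one may invoke the corresponding statement of~\cite{mi-pa13u}, where this reduction argument is carried out in detail. The remaining verifications — that the combined quadrics satisfy conditions (a)--(c), that $\widetilde N$ is $T_{\varGamma}$-invariant, and the identification $\widetilde N/T_{\varGamma}\cong N$ — are routine.
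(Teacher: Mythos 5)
Your reduction of the Lagrangian property from $\widetilde N\subset\C^m$ to $N\subset V$ is fine, but the key step --- descending \emph{$H$-minimality} through the K\"ahler reduction $\C^m\rightsquigarrow V=\mathcal Z_\varGamma/T_\varGamma$ --- has a genuine gap, and it is exactly the point you flag yourself. If you lift a compactly supported Hamiltonian deformation $N_t$ of $N$ to a $T_\varGamma$-invariant Hamiltonian deformation $\widetilde N_t$ of $\widetilde N$, then what the vanishing of $\frac{d}{dt}\mathop{\mathrm{vol}}(\widetilde N_t)\big|_{t=0}$ gives you is criticality of the \emph{weighted} functional $\int_{N_t}V_{\mathrm{orb}}\,d\mathrm{vol}$, where $V_{\mathrm{orb}}$ is the (nonconstant) volume of the $T_\varGamma$-orbit through a point of $\mathcal Z_\varGamma$. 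Criticality of the weighted volume is not $H$-minimality; the mean curvature of $N$ in $V$ differs from the projected mean curvature of $\widetilde N$ by a term involving $\nabla\log V_{\mathrm{orb}}$, and nothing in your sketch kills that term. The asserted ``splitting'' of the first variation into a fibrewise part and a base part that vanish separately is not justified and is false in general, so $H$-minimality does not simply push down through symplectic reduction. The appeal to~\cite{mi-pa13u} does not rescue this, because the argument carried out there (and in the paper) goes in the opposite direction.

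The paper's proof avoids the issue entirely: it reduces one level \emph{further}, forming $\widehat V=(\mathcal Z_\varGamma\cap\mathcal Z_\varDelta)/(T_\varGamma\times T_\varDelta)$ and $\widehat N=N/T_\varDelta$, observes that $\widehat N$ is the fixed point set of complex conjugation on the toric manifold $\widehat V$, hence totally geodesic and in particular \emph{minimal}, and then invokes Dong's result~\cite[Corollary~2.7]{dong07}, which says that the preimage in the level set of a minimal Lagrangian submanifold of a K\"ahler quotient is $H$-minimal Lagrangian upstairs. That is an ascent statement (minimal downstairs $\Rightarrow$ $H$-minimal one level up, from $\widehat V$ to $V$), which is exactly the direction that works; your plan relies on the unproved descent statement. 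To repair your proof you would either have to prove the weighted-to-unweighted comparison (essentially redoing Dong's computation), or simply switch to the paper's route; note also that Mironov's Theorem~\ref{hmin} for the combined system is then not needed at all --- what is needed downstairs is genuine minimality, which comes for free from total geodesy of the real locus, not from $H$-minimality in $\C^m$.
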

\begin{proof}
Let $\widehat V$ be the symplectic quotient of $V$ by the torus
corresponding to the second set of quadrics, that is, $\widehat
V=(V\cap\mathcal Z_\varDelta)/T_\Delta=(\mathcal
Z_{\varGamma}\cap\mathcal Z_{\varDelta})/(T_{\varGamma}\times
T_{\varDelta})$. It is a toric manifold of real dimension
$2(n+\ell-m)$. The submanifold of real points
\[
  \widehat N=N/T_{\varDelta}=(\mathcal R_{\varGamma}\cap\mathcal
  R_{\varDelta})/(D_{\varGamma}\times D_{\varDelta})\hookrightarrow(\mathcal
  Z_{\varGamma}\cap\mathcal Z_{\varDelta})/(T_{\varGamma}\times
  T_{\varDelta})=\widehat V
\]
is the fixed point set of the complex conjugation, hence it is a
totally geodesic submanifold. In particular, $\widehat N$ is a
minimal submanifold in~$\widehat V$. According
to~\cite[Corollary~2.7]{dong07}, $N$ is an $H$-minimal submanifold
in~$V$.
\end{proof}

\begin{example}\

1. If $m-\ell=0$, i.e. $\mathcal Z_{\Delta}=\varnothing$, then
$V=\C^m$ and we get the original construction of $H$-minimal
Lagrangian submanifolds $N$ in~$\C^m$.

2. If $m-n=0$, i.e. $\mathcal Z_{\varGamma}=\varnothing$, then $N$
is set of real points of~$V$. It is minimal (totally geodesic).

3. If $m-\ell=1$, i.e. $\mathcal Z_{\Delta}\cong S^{2m-1}$, then
we get $H$-minimal Lagrangian submanifolds in $V=\C P^{m-1}$. This
includes the families of projective examples
of~\cite{miro03},~\cite{ma05} and~\cite{mi-zu08}.
\end{example}

\end{document}